\theoremstyle{plain}
\numberwithin{equation}{section}
\newtheorem{thm}{Theorem}[section]
\newtheorem{prop}[thm]{Proposition}
\newtheorem{prop-dfn}[thm]{Proposition-Definition}
\newtheorem{cor}[thm]{Corollary}
\newtheorem{lem}[thm]{Lemma}
\newtheorem{claim}[thm]{Claim}
\newtheorem{prob}[thm]{\textcolor{black}{Problem}}
\theoremstyle{definition}
\newtheorem{dfn}[thm]{Definition}
\newtheorem{rmk}[thm]{Remark}
\newcommand{\noopsort}[1]{}
\def\rank{\mathop{\mathrm{rank}}\nolimits}
\def\dim{\mathop{\mathrm{dim}}\nolimits}
\def\Im{\mathop{\mathrm{Im}}\nolimits}
\def\Ker{\mathop{\mathrm{Ker}}\nolimits}
\def\Hom{\mathop{\mathrm{Hom}}\nolimits}
\def\fib{\mathop{\mathsf{fib}}\nolimits}
\def\cof{\mathop{\mathsf{cof}}\nolimits}
\def\ker{\mathop{\mathsf{ker}}\nolimits}
\def\cok{\mathop{\mathsf{cok}}\nolimits}
\def\im{\mathop{\mathsf{im}}\nolimits}
\def\<{{\langle}}
\def\>{{\rangle}}
\def\Mu{\mathit M_{\beta, \omega}^{Z_{1}, Z_{2}}}
\def\+{\mathop{\oplus}\nolimits}
\def\1{\mathop{\mathrm{id}}\nolimits}
\def\Spec{\mathop{\mathrm{Spec}}\nolimits}
\def\R{\mathop{\mathfrak{Re}}\nolimits}
\def\I{\mathop{\mathfrak{Im}}\nolimits}
\newcommand{\m}[1]{\mathop{(\mathrm{Mor{#1}})}}
\newcommand{\gl}[2]{{\mathsf{gl}\left({#1},  {#2}\right)}}
\newcommand{\tilt}[3]{{{#1}_{\beta, \omega}^{{#2}, {#3}}}}
\newcommand{\til}[3]{{{#1}_{{#2}, {#3}}^{\sigma_{1}, \sigma_{2}}}}
\newcommand{\met}[2]{{\mathsf{d}\left({#1},  {#2}\right)}}
\newcommand{\HNF}[3]{{
\xymatrix{
0	\ar[r]	&	{#2}_1	\ar[r]\ar[d]	&	{#2}_2	\ar[r]\ar[d]	&	\cdots\ar[r]	&	{#2}_{n-1}	\ar[r]\ar[d]	&	{#2}_n={#1}\ar[d]	\\
			&	{#3}_1\ar@{-->}[ul]	&	{#3}_2\ar@{-->}[ul] &					&	{#3}_{n-1}\ar@{-->}[ul]		&	{#3}_n\ar@{-->}[ul]	
}
}}
\newcommand{\ho}[1]{\mathrm{h}({#1})}   
\newcommand{\sod}[2]{{\left\langle {#1},{#2} \right\rangle}}
\newcommand{\Aut}[1]{{\mathrm{Aut}\,{#1}}}
\newcommand{\Stab}[1]{{\mathsf{Stab}\,{#1}}}
\newcommand{\Stabr}[1]{{\mathsf{Stab}^{\mathsf{r}}\,{#1}}}
\newcommand{\Stabf}[1]{{\mathsf{Stab}^{\mathsf{fl}}\,{#1}}}
\newcommand{\bb}[1]{{\mathbb{#1}}}
\newcommand{\mca}[1]{{\mathcal{#1}}}
\newcommand{\mr}[1]{{\mathrm{#1}}}
\newcommand{\ms}[1]{{\mathscr{#1}}}
\newcommand{\mb}[1]{{\mathbf{#1}}}
\title{On a deformation of gluing stability conditions}
\author[K. Kawatani]{Kotaro Kawatani}
\email{kawatanikotaro@gmail.com}
\address{Osaka Prefecture University, Gakuen-cho, Sakai-city, Osaka, Japan. Osaka /Yamato University, Katayama-cho, Suita-city, Osaka, Japan. Osaka }
\date{\today}
\keywords{Stability conditions, Triangulated categories, Deformation, Support property}
\subjclass[2020]{18G80, 32G10}
\begin{document}
\maketitle

\begin{abstract}
On a triangulated category $\mathbf D$ equipped with a semiorthogonal decomposition $\mathbf D=\langle{\mathbf D_{1}},{\mathbf D_{2}}\rangle$, 
Collins and Polishchuk develop a gluing construction of stability condition on $\mathbf D$. 
The gluing construction gives a stability condition on $\mathbf D$ from these on $\mathbf D_{1}$ and $\mathbf D_{2}$. 
We study a deformation of gluing stability conditions on for a nice semiorthogonal decomposition. 
As a consequence, we construct a continuous family of stability conditions by showing a deformation property introduced by Bridgeland's original paper. 
Here the deformation property is weaker than the support property which is the standard solution for the continuousness. 
After proving the continuousness of the family, we show that each stability condition in the family satisfies the support property via specialization. 
More precisely we find a stability condition with support property at the boundary of the family. 
Finally applying these results, we study the space of stability conditions on the category of morphisms in a triangulated category. 
\end{abstract}

\section{Introduction}

Motivated by Douglas's work for $\Pi$-stability, Bridgeland \cite{MR2373143} introduced the notion of stability conditions on a triangulated category. 
Let $\Stab{\mb D}$ be the set of locally finite stability conditions on a triangulated category $\mb D$. 
By virtue of \cite{MR2373143}, $\Stab{\mb D}$ is not only a set, but also a topological space. 
Moreover each connected component of $\Stab{\mb D}$ is a complex manifold if the rank of Grothendieck group $K_{0}(\mb D)$ is finite. 
Thus $\Stab{\mb D}$ is well understood locally and geometric properties of $\Stab{\mb D}$ is expected to reflect categorical properties of $\mb D$ (see also \cite{MR3592689}, \cite{MR2376815}).

On the other hand, 
global description of $\Stab{\mb D}$ is difficult.  
For instance the connectedness, which is the most fundamental topological property, is widely open. 
If $\mb D$ is the triangulated category of $A_{n}$-singularity, 
then Ishii, Ueda and Uehara showed that $\Stab {\mb D}$ is connected (the details are in \cite{MR2629510}). 
However for the other Kleinian singularities, the connectedness is still open. 
To discuss other cases, suppose that $\mb D$ is the bounded derived category $\mb D^{b}(X)$ of coherent sheaves on a smooth projective variety $X$. 
If $\dim X=1$, then $\Stab{\mb D^{b}(X)}$ is non-empty and connected by \cite{MR2373143} and \cite{MR2335991}. 
If $\dim X=2$, then $\Stab{\mb D^{b}(X)}$ is non-empty by \cite{MR2998828} but the connectedness is open. 
If $\dim X=3$, then the non-emptiness of $\Stab{\mb D^{b}(X)}$ follows from the generalized Bogomolov inequality proposed in  \cite{MR3121850} but the connectedness is widely open.

The global description of $\Stab{\mb D}$ is difficult but the space $\Stab{\mb D}$ is expected to be contractible (unless it is empty), that is, the homotopy type of $\Stab{\mb D}$ is the simplest in non-empty topological spaces. 
Motivated by the expectation, 
the author studied the space of stability conditions on the category of morphisms in a triangulated category $\mb D$ in the previous paper \cite{morphismstability}. 
Unfortunately the category of morphisms in a triangulated category is not triangulated. 
To solve this, let us suppose that the triangulated category $\mb D$ is the homotopy category $\ho{\ms C}$ of a stable infinity category $\ms C$. 
Then the infinity category $\ms C^{\Delta^{1}}$ of morphisms in the infinity category $\ms C$ is stable by \cite{higheralgebra}, and 
the homotopy category $\ho{\ms C^{\Delta^{1}}}$ is triangulated. 
Thus $\ho{\ms C^{\Delta^{1}}}$ is a reasonable candidate of a triangulated category of morphisms in $\mb D=\ho{\ms C}$. 

Although not directly related to the theme of this article, we would like to introduce some interesting aspects of the category of morphisms. 
Suppose that $\ms C$ is the derived (infinity) category of the affine scheme $\Spec \mb k$ of a field $\mb k$. 
Then the category $\ho{\ms C^{\Delta^{1}}}$ is equivalent to the derived category of the representation of $A_{2}$-quiver by \cite[Corollary 6.2]{morphismstability}. 
Hence the category $\ho{\ms C^{\Delta^{1}}}$ for a stable infinity category $\ms C$ can be regarded as a generalization of the derived category of the quiver representations. 
In addition, recalling that the objects in the derived category $\mb D^{b}(X)$ of a projective variety represent (topological) D-branes, 
one can expect the category $\ho{\ms C^{\Delta^{1}}}$ relates to open strings in string theory (for instance see \cite{MR2567952}).

Our basic motivation is to understand a relation between $\Stab{\ho{\ms C}}$ and $\Stab{\ho{\ms C^{\Delta^{1}}}}$. 
Before recalling our previous results, let us recall basic notation. 
Since any object $f \in \ms C^{\Delta^{1}}$ is a morphism in $\ms C$, 
the object $f$ is written as $[f \colon x \to y]$ for objects $x$ and $y $ in $\ms C$. 
Under these notation, we have three natural functors (as infinity categories) between $\ms C$ and $\ms C^{\Delta^{1}}$
\begin{equation}\label{eq:threefunctors}
\xymatrix{
\ms	C	\ar[r]|(.4)s&\ar@/_8pt/[l]|{d_0}\ar@/^8pt/[l]|{d_1}	\ms C^{\Delta^1}
}
; d_0 \dashv s \dashv d_1, 
\end{equation}
where $d_{0}$ sends the object $[f\colon x \to y] \in \ms C^{\Delta^{1}}$ to $y \in \ms C$, 
$d_{1}$ sends $f \in \ms C^{\Delta^{1}}$ to $x \in \ms C$, 
and $s$ sends an object $z \in \ms C$ to the object $\1_{z} \in \ms C^{\Delta^{1}}$ which represents the identity morphism $[\1 \colon z \to z]$ in $\ms C$. 
These functors give adjoint pairs of functors.  
Precisely $d_{0}$ is a left adjoint of $s$, and $d_{1}$ is a right adjoint of $s$. 

In \cite{morphismstability}, we have shown the following:

\begin{thm}[{\cite[Theorem 1.2]{morphismstability}}]
\label{thm1-1}
Let $\ms  C$ be a stable infinity category with $\rank K_{0}(\ho{\ms C})< \infty$.  
\begin{enumerate}
\item \label{main1-1}If there exists a reasonable stability condition on $\ho{\ms C}$ then there exists a reasonable stability condition on $\ho{\ms C^{\Delta^{1}}}$. 
Moreover both functors $d_0$ and $d_1$ induce continuous and injective maps $d_0^*$ and $d_1^*$ from the space $\Stabr {\ho {\ms C}}$ of reasonable stability conditions on $\ho{\ms C}$ to that of $\ho {\ms C^{\Delta^{1}}}$: 
\[
d_0^*, d_1^* \colon \xymatrix{ \Stabr {\ho{\ms C}}	\ar@<0.6ex>[r] \ar@<-0.6ex>[r] 	& \Stabr {\ho{\ms C^{\Delta^{1}}}}	}. 
\]
\item \label{main1-3}Both images $\Im d_0^*$ and $\Im d_1^*$ are closed in $\Stabr {\ho{\ms C^{\Delta^{1}}}}$ and do not intersect each other. 
\item \label{main1-4}A stability condition $\sigma$ is full if and only if $d_0^* \sigma$ (or $d_1^* \sigma$) is full. 
\end{enumerate}
\end{thm}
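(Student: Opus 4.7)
The plan is to exploit a natural semiorthogonal decomposition of $\ho{\ms C^{\Delta^1}}$ coming from the adjoint triple in \eqref{eq:threefunctors}. In the stable infinity category $\ms C^{\Delta^1}$, the functors $i_0, i_1 \colon \ms C \to \ms C^{\Delta^1}$ defined by $i_0(y) := [0 \to y]$ and $i_1(x) := [x \to 0]$ are fully faithful (they are the right/left adjoints of $d_1$ and $d_0$ restricted suitably), and using fiber/cofiber triangles in $\ms C^{\Delta^1}$ one checks that $\ho{\ms C^{\Delta^1}} = \sod{i_1(\ho{\ms C})}{i_0(\ho{\ms C})}$ is a semiorthogonal decomposition with each piece equivalent to $\ho{\ms C}$. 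This decomposition is the starting point for everything.

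For part (1), given a reasonable stability condition $\sigma$ on $\ho{\ms C}$, I would apply a gluing construction in the spirit of Collins--Polishchuk to produce $d_0^*\sigma$ and $d_1^*\sigma$ on $\ho{\ms C^{\Delta^1}}$, corresponding to the two possible orderings (with a phase shift on one factor, if necessary, to ensure the compatibility of hearts required by gluing). Continuity of $d_j^*$ would then follow from continuity of the gluing construction in its input. Injectivity is by restriction: $d_j^*\sigma$ restricts to $\sigma$ on $i_j(\ho{\ms C})\simeq \ho{\ms C}$, so the restriction map provides a continuous left inverse of $d_j^*$. Reasonability transfers from $\sigma$ to $d_j^*\sigma$ by analyzing the Jordan--Hölder factors, which all arise either from the $i_0$- or from the $i_1$-components of the Harder--Narasimhan filtrations produced by the gluing.

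For part (2), the disjointness of $\Im d_0^*$ and $\Im d_1^*$ would follow from the fact that the two orderings of the semiorthogonal decomposition are incompatible: one arrangement forces $i_0(\ho{\ms C})$ to lie to the right of $i_1(\ho{\ms C})$ in the phase order, and the other forces the opposite, so the families of semistable objects differ. For closedness, I would take a convergent sequence $d_0^*\sigma_n \to \tau$ in $\Stabr{\ho{\ms C^{\Delta^1}}}$ and show that the semiorthogonality property (that objects of $i_1(\ho{\ms C})$ have phases lying in a fixed window relative to those of $i_0(\ho{\ms C})$) is stable under small deformation, so that $\tau$ is itself a gluing and hence in the image. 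Part (3) about fullness reduces to a linear-algebra statement: $\Im Z_{d_j^*\sigma}$ is the $\bb Z$-span of the images of $Z_\sigma$ on both summands of the decomposition, which spans the same $\bb R$-subspace of $\bb C$ as $\Im Z_\sigma$ itself.

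The hardest step is the closedness in part (2). Limits of stability conditions can in principle create new semistable objects, and one needs to rule out that such objects straddle the two pieces of the semiorthogonal decomposition in the limit in a way that destroys the gluing structure. This is where reasonability and the explicit form of the gluing have to work together. A related subtle point is verifying the hypotheses of the Collins--Polishchuk construction in the first place — in particular the compatibility of hearts and phases between the two copies of $\sigma$ — which dictates how much of a phase shift must be applied to make the two pieces glue into a single heart.
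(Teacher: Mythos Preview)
This theorem is not proved in the present paper: it is quoted verbatim from the author's earlier work \cite{morphismstability} (see the citation in the theorem header) and is used here only as background. So there is no proof in this paper to compare your proposal against. That said, two points in your sketch deserve comment before you try to reconstruct the argument from \cite{morphismstability}.

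First, your semiorthogonal decomposition is not the one actually used. The paper (and \cite{morphismstability}) does \emph{not} glue along $\sod{i_1(\ho{\ms C})}{i_0(\ho{\ms C})}$ with $i_0(y)=[0\to y]$ and $i_1(x)=[x\to 0]$. Rather, there are \emph{two distinct} semiorthogonal decompositions, each involving the image of $s$ (the identity morphisms): one is $\ho{\ms C^{\Delta^1}}=\sod{\ho{\ms C}}{\ho{\ms C_{/0}}}$ coming from $d_0\dashv s$, and the other is $\ho{\ms C^{\Delta^1}}=\sod{\ho{\ms C_{0/}}}{\ho{\ms C}}$ coming from $s\dashv d_1$ (see \eqref{eq:0-}, \eqref{eq:-0}). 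The maps $d_0^*$ and $d_1^*$ arise from gluing along these two different decompositions, not from ``the two possible orderings'' of a single one. This matters for part~(2): the disjointness of the images is not an ordering phenomenon within one SOD but a consequence of the two glued hearts being genuinely different.

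Second, your argument for part~(3) is incorrect. Fullness of $\sigma$ is \emph{not} a statement about the $\bb R$-span of $\Im Z_\sigma$ in $\bb C$; it is the condition that $\|U\|_\sigma<\infty$ for every $U\in\Hom(K_0,\bb C)$, equivalently the support property (Proposition~\ref{prop:supportproperty}). This is a bound on $\|[E]\|/|Z(E)|$ over all semistable $E$, and transferring it between $\sigma$ and $d_j^*\sigma$ requires controlling the semistable objects of the glued stability condition in terms of those of $\sigma$ (as in Proposition~\ref{prop:support} of the present paper, which shows that components of a glued-semistable object are themselves semistable). Your linear-algebra reduction would at best show something about the kernel of the central charge, not about fullness.
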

The category $\ho{\ms C^{\Delta^{1}}}$ has two semiorthogonal decompositions $\sod{\ho{\ms C}}{\ho{\ms C_{/0}}}$ and $\sod{\ho{\ms C_{0/}}}{\ho{\ms C}}$ (see also (\ref{eq:0-}) and (\ref{eq:-0})) corresponds to the adjoint pairs $d_{0} \dashv s$ and $s \dashv d_{1}$. 
Collins-Polishchuk \cite{MR2721656} constructed a ``gluing" stability condition from a semiorthogonal decomposition of a triangulated category. 
A reasonable stability condition introduced by \cite{MR2721656}  is necessary for the gluing construction. 
The construction of continuous maps $d_{1}$ and $d_{0}$ is based on the gluing construction. 
Hence we had to focus on reasonable stability conditions. 
Since a full stability condition is reasonable, $\Stab{\ho{\ms C^{\Delta^{1}}}}$ is non-empty when 
there exists a full stability condition on $\ho{\ms C}$ by Theorem \ref{thm1-1}. 

In this article, we would like to understand the relation between $\Stab{\ho{\ms C}}$ and $\Stab{\ho{\ms C^{\Delta^{1}}}}$ deeper. 
Several problems might be considered for the sake and the following is one of these: 
\begin{prob}\label{prob:kihon}
Let $\ms C$ be a stable infinity category and $\sigma$ a stability condition on $\ho{\ms C}$. 
Is $d_{0}^{*}\sigma$ path connected to $d_{1}^{*}\sigma$?
\end{prob}

Once recalling that the space of stability conditions is expected to be contractible, 
one can guess that both $\Stab{\ho{\ms C}}$ and $\Stab{\ho{\ms C^{\Delta^{1}}}}$ are contractible, in particular, they are homotopy equivalent to each other. 
In fact, if $\ho{\ms C}$ is the derived category of an affine Noetherian scheme, $\Stab{\ho{\ms C}}$ and $\Stab{\ho{\ms C^{\Delta^{1}}}}$ are homotopy equivalent by the author \cite{kawatani2020stability}. 
However it seems hard to show the homotopy equivalence in general, 
because of the difficulty of the global description. 


Now if the answer to Problem \ref{prob:kihon} is negative, $\Stab{\ho{\ms C}}$ and $\Stab{\ho{\ms C^{\Delta^{1}}}}$ might not be homotopy equivalent. 
Recalling that there is no example of $\mb D$ whose spaces of full stability conditions is non-connected, 
it is natural to expect to an affirmative answer to Problem \ref{prob:kihon} at least for a full stability condition on $\ho{\ms C}$. 
Moreover, by the previous paper \cite{morphismstability}, the answer is positive when $\ho{\ms C}$ is the derived category $\mb D^{b}(\bb P^{1})$ of the projective line $\bb P^{1}$ over a field $\mb k$. 
The aim of this article is giving a positive answer to Problem \ref{prob:kihon}. 

\begin{thm}[=Theorem \ref{thm:pathconnection}]\label{thm:main}
Let $\ms C$ be a stable infinity category with $\rank K_{0}(\ho{\ms C}) < \infty$. 
If a stability condition $\sigma \in \Stab{\ho{\ms C}} $ is rational, then 
$d_{0}^{*}\sigma$ is path connected to $d_{1}^{*}\sigma$. 
In particular, 
the restriction  of $d_{0}^{*}$ and $d_{1}^{*}$ to the space $\Stabf{\ho{\ms C}}$ of full stability conditions on $\ho{\ms C}$ gives the same map at the level of connected components $\pi_{0}(\Stabf{\ho{\ms C}}) \to \pi_{0}(\Stabf{\ho{\ms C^{\Delta^{1}}}})$. 
\end{thm}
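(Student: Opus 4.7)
The plan is to construct an explicit continuous path in $\Stab{\ho{\ms C^{\Delta^{1}}}}$ from $d_{0}^{*}\sigma$ to $d_{1}^{*}\sigma$ when $\sigma$ is rational, and then to transfer the resulting identification at the level of connected components to all full stability conditions by a density argument.

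First, I would place both $d_{0}^{*}\sigma$ and $d_{1}^{*}\sigma$ into a common framework. By construction, $d_{0}^{*}\sigma$ (resp.\ $d_{1}^{*}\sigma$) is the Collins--Polishchuk gluing of $\sigma$ along the semiorthogonal decomposition $\sod{\ho{\ms C}}{\ho{\ms C_{/0}}}$ (resp.\ $\sod{\ho{\ms C_{0/}}}{\ho{\ms C}}$). Under the equivalences of $\ho{\ms C_{/0}}$ and $\ho{\ms C_{0/}}$ with $\ho{\ms C}$ induced by the adjoint triple $d_{0}\dashv s\dashv d_{1}$, both gluings are built from two copies of the central charge $Z_{\sigma}$, but the relative position of the phases of the two components is swapped. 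I would interpolate between these two configurations by continuously rotating the central charge on one factor through $e^{i\pi t}$, $t\in[0,1]$, with $t=0$ recovering the gluing datum of $d_{0}^{*}\sigma$ and $t=1$ that of $d_{1}^{*}\sigma$.

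Second, I would invoke the deformation theorem developed in the body of this paper. That theorem produces a continuous family of glued stability conditions out of a one-parameter family of gluing data, under the deformation property of Bridgeland (which is weaker than the support property). The rationality of $\sigma$ supplies the finiteness needed to verify this hypothesis uniformly in $t$: it guarantees the Collins--Polishchuk $\Hom$-vanishing condition for the rotated datum and the local finiteness of the glued heart at each intermediate $t$. The resulting continuous family $t\mapsto\sigma_{t}\in\Stab{\ho{\ms C^{\Delta^{1}}}}$ is the desired path. To promote it to $\Stabf{\ho{\ms C^{\Delta^{1}}}}$ when $\sigma$ is full, I would then apply the paper's specialization result, which transfers the support property from a boundary value of the family (where $\sigma_{t}$ is already full by Theorem \ref{thm1-1}) to the interior.

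Third, for the statement on $\pi_{0}(\Stabf{\ho{\ms C}})$, I would use density of rational stability conditions inside $\Stabf{\ho{\ms C}}$: any $\sigma\in\Stabf{\ho{\ms C}}$ lies in the same connected component as some rational $\sigma'$. Continuity of $d_{0}^{*}$ and $d_{1}^{*}$ from Theorem \ref{thm1-1} places $d_{i}^{*}\sigma$ and $d_{i}^{*}\sigma'$ in the same component of $\Stabf{\ho{\ms C^{\Delta^{1}}}}$ for $i=0,1$, and the path already constructed for $\sigma'$ gives $[d_{0}^{*}\sigma']=[d_{1}^{*}\sigma']$ in $\pi_{0}(\Stabf{\ho{\ms C^{\Delta^{1}}}})$, whence the same equality for $\sigma$. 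The main obstacle lies in the second step: as $t$ crosses angles where the phase intervals of the two components of the SOD overlap, the glued heart tilts and the naive gluing datum can cease to define a pre-stability condition. This is exactly the situation the weaker deformation property is designed to handle, and rationality of $\sigma$ is the input that activates it; controlling these wall-crossing moments, and then transferring the support property back from the endpoints via specialization, is the technical heart of the argument.
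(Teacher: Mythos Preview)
Your proposal has a genuine gap in the second step. You suggest that rotating the central charge on one factor of the gluing datum produces a single family interpolating directly between $d_{0}^{*}\sigma$ and $d_{1}^{*}\sigma$. But these two stability conditions arise from \emph{different} semiorthogonal decompositions of $\ho{\ms C^{\Delta^{1}}}$, namely $\sod{\ho{\ms C}}{\ho{\ms C_{/0}}}$ and $\sod{\ho{\ms C_{0/}}}{\ho{\ms C}}$. Deforming the gluing data within one semiorthogonal decomposition does not, by itself, produce a stability condition glued from the other; there is no reason the endpoint of your rotation equals $d_{1}^{*}\sigma$, or any $\widetilde{\mr{GL}}_{2}^{+}(\bb R)$-translate of it.

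What the paper actually does is run \emph{two} tilted families $d_{0}^{*}\sigma_{\beta,\omega}$ and $d_{1}^{*}\sigma_{\beta,\omega}$, one for each decomposition, along the arc $p(t)=(\cos(2\pi t/3),\sin(2\pi t/3))$ from $(1,0)$ to $(-1/2,\sqrt{3}/2)$. The step you are missing is the comparison at the endpoint $p(1)$: there one checks $d_{0}^{*}Z_{p(1)}=\exp(2\pi\sqrt{-1}/3)\cdot d_{1}^{*}Z_{p(1)}$, and Corollary~\ref{cor:heartnohenkei} (resting on Lemmas~\ref{lem:TF1} and~\ref{lem:TF2}) shows $d_{0}^{*}\mca A_{p(1)}\subset (d_{1}^{*}\mca A_{p(1)})_{[-1,0]}$. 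Only with this heart comparison across the two decompositions does \cite[Proposition~4.1]{MR2721656} allow the two families to be joined, completing the path. This comparison is the technical core of Section~7 and is entirely absent from your outline; the wall-crossing you anticipate within a single decomposition is not the real obstacle.
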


To prove Theorem \ref{thm:main}, it is necessary to study an explicit deformation of gluing stability conditions on $\mb D=\sod{\mb D_{1}}{\mb D_{2}}$.  
Similarly to the case of K3 surfaces (cf. \cite{MR2376815}), we construct a complex one dimensional family of stability conditions which will be denoted by 
$\mca S (\epsilon_{1}, \epsilon_{2})= \{	\til{\Sigma}{\beta}{\omega} \mid  (\beta, \omega) \in \mca H^{+}(\epsilon_{1}) \cap \mca H^{-}(\epsilon_{2}) \}$ 
where 
$\sigma_{i} \in \Stab{\mb D_{i}}$ with nice gluing property (cf. Definition \ref{dfn:gluingproperty}).

The construction is divided into mainly four steps: 
\renewcommand{\theenumi}{Step \arabic{enumi}}
\begin{enumerate}
\item An observation of a common property of two semiorthogonal decomposition $\ho{\ms C^{\Delta^{1}}}=\sod{\ho{\ms C}}{\ho{\ms C_{/0}}}$ and $\ho{\ms C^{\Delta^{1}}}=\sod{\ho{\ms C_{0/}}}{\ho{\ms C}}$. 
\item An introduction of a stability on the gluing heart which is an analogy of the slope stability of vector bundles on K3 surfaces (or more generally smooth projective variety $X$ with $\dim X>1$). 
\item Construction of a continuous family of stability conditions on $\mb D=\sod{\mb D_{1}}{\mb D_{2}}$. 
\item Specialization of the semiorthogonal decompositions. 
\end{enumerate}
\renewcommand{\theenumi}{\arabic{enumi}}

We first introduce some conditions for semiorthogonal decompositions (cf. Definition \ref{dfn:gluingproperty}). 
These conditions are motivated by two semiorthogonal decompositions of $\ho{\ms C^{\Delta^{1}}}$

Recall that the tilting construction on K3 surfaces is based on the slope stability of vector bundles (cf. \cite{MR2376815}). 
Similarly to the case of K3 surfaces, we introduce a stability on the heart of a gluing stability condition $\gl{\sigma_{1}}{\sigma_{2}}$ on a triangulated category $\mb D$ equipped with the semiorthogonal decomposition $\sod{\mb D_{1}}{\mb D_{2}}$ discussed in Step 1 above. 
Then such a slope like stability satisfies the support property which is an analogy of the support property of the slope stability. 

Then, we introduce a torsion pair on the heart of a gluing stability condition. 
It is not hard to construct $\til{\Sigma}{\beta}{\omega}$ for rational numbers $(\beta, \omega)\in \bb Q \times \bb Q_{>0}$. 
The hardest part is to show the continuousness of the family. 
As mentioned in \cite[Appendix]{MR3573975}, if the stability condition $\til{\Sigma}{\beta}{\omega}$ satisfies the support property one could deform locally and show the continuousness of the family. 
However the support property is very hard in our situation since the category $\ho{\ms C^{\Delta^{1}}}$ is not geometric but too general. 
In stead of it, we show a weaker property suggested in the original paper due to Bridgeland \cite{MR2373143} so that the stability condition can be deformed locally.  
After that we extend our construction to non rational numbers $(\beta, \omega) \in \bb R \times \bb R_{>0}$ by taking limits of rational ones.

Though we could not prove the support property of any stability condition $\til{\Sigma}{\beta}{\omega} \in \mca S(\epsilon_{1}, \epsilon_{2})$ directly, we will prove the support property via ``specialization''. 
More precisely, at the boundary of the family $\mca S(\epsilon_{1}, \epsilon_{2})$,  there is a stability condition $\til{\Sigma}{1}{0}$ satisfying the support property if $\sigma_{1}$ and $\sigma_{2}$ satisfy the support property (see also \S \ref{sc:support}). 
Then the support property holds for any one in $\mca S(\epsilon_{1}, \epsilon_{2})$ since the family is continuous.

Finally applying the semiorthogonal decomposition $\mb D=\sod{\mb D_{1}}{\mb D_{2}}$ to $\ho{\ms C^{\Delta^{1}}}=\sod{\ho{\ms C}}{\ho{\ms C_{/0}}}$ and $\ho{\ms C^{\Delta^{1}}}=\sod{\ho{\ms C_{0/}}}{\ho{\ms C}}$, we obtain a path which connect $d_{0}^{*} \sigma$ and $d_{1}^{*}\sigma$. 
At the end of introduction, we would like to answer a question that readers may have. 
Problem \ref{prob:kihon} might be very simple for readers. 
However we hope that the construction of the answer, Theorem \ref{thm:pathconnection}, could be interesting and 
that our construction will provide a new insight into deformation of gluing stability conditions in a future study.

\section{Preliminaries}

Let $\mb D$ be a triangulated category. 
We always assume that the rank of the Grothendieck group $K_{0}(\mb D)$ is finite. 
The $t$-structure on $\mb D$ is denoted by $(\mb D^{\leq 0}, \mb D^{\geq 0})$. 
Recall that the subcategory $\mb D^{\leq 0} \cap \mb D^{\geq 0}$ is the heart $\mca A$ of the $t$-structure. 
As usual, let us denote $\mb D^{\leq 0}[p]$ by $\mb D^{\leq -p}$ and denote $\mb D^{\geq 0}[p] $ by $\mb D^{\geq -p}$ 
for any integer $p$. 

Through this article we frequently identify the complex plane $\bb C$ with the $2$-dimensional Euclidian space $\bb R^{2}$ via the canonical isomorphism. 
The upper half-plane is denoted by $\mca H$: 
\begin{equation}
\mca H=	\{	(\beta, \omega) 	\mid	\beta \in \bb R, \omega \in \bb R_{>0}	\}. 
\end{equation}
The closure of $\mca H$ in $\bb C$ is denoted by $\overline{\mca H}$. 
For an arbitrary subset $U$ of $\mca H$, the set of rational points in $U$ is denoted by $U_{\bb Q}$: 
\[
U_{\bb Q}	=	\{	(\beta, \omega ) \in U	\mid \beta \in \bb Q, \omega \in \bb Q	\}. 
\]

Given a complex number $z \in \bb C$, the real part of $z$ is denoted by $\R z$ and the imaginary part of $z$ is denote by $\I z$. 
We define the argument $\arg z$ of a non-zero complex number $z$ so that \begin{equation}\arg z \in (-\pi, \pi]. \end{equation} 
The following proposition is very elemental but is a key ingredient to prove the desired deformation property in Section \ref{sc:deformation}. 

\begin{prop}\label{bunkai-improve}
Let $\theta$ be in the open interval $(0, \pi)$. 
Suppose complex numbers $z_{1}$ and $z_{2} \in \bb C$ satisfy $0 \leq \arg z_{1}-\arg z_{2} \leq \theta $. 
The following holds: 
\begin{equation}
\label{eq:20}
|z_{1}+z_{2}|	\geq \frac{\sqrt{2+2\cos \theta}}{2}(|z_{1}|+|z_{2}|). 
\end{equation}
\end{prop}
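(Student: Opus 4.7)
The plan is to reduce the claim to elementary trigonometry. First I would rewrite the coefficient on the right-hand side using the half-angle identity: since $\theta \in (0,\pi)$,
$$\frac{\sqrt{2+2\cos\theta}}{2} = \cos(\theta/2),$$
so (\ref{eq:20}) is equivalent to $|z_1+z_2| \geq \cos(\theta/2)\,(|z_1|+|z_2|)$. As both sides are non-negative, I would square and use the standard expansion $|z_1+z_2|^2 = |z_1|^2 + |z_2|^2 + 2|z_1||z_2|\cos\phi$, where $\phi := \arg z_1 - \arg z_2$. By hypothesis $\phi \in [0,\theta] \subset [0,\pi)$, so cosine is well-behaved.

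Next I would use the monotonicity of cosine on $[0,\pi]$, which gives $\cos\phi \geq \cos\theta$; this is the only place where the hypothesis $\phi \leq \theta$ enters. It then suffices to prove
$$|z_1|^2 + |z_2|^2 + 2|z_1||z_2|\cos\theta \;\geq\; \frac{1+\cos\theta}{2}\bigl(|z_1|^2 + |z_2|^2 + 2|z_1||z_2|\bigr),$$
after applying the half-angle identity $\cos^2(\theta/2) = (1+\cos\theta)/2$ on the right. Collecting like terms, the inequality reduces to
$$\tfrac{1}{2}(1-\cos\theta)\bigl(|z_1|^2 + |z_2|^2\bigr) \;\geq\; (1-\cos\theta)\,|z_1||z_2|.$$
Since $\theta \in (0,\pi)$ gives $1-\cos\theta > 0$, this cancels down to the AM-GM bound $|z_1|^2 + |z_2|^2 \geq 2|z_1||z_2|$, which holds unconditionally.

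I do not anticipate any genuine obstacle here; the whole argument is a short reduction chain, and the two non-trivial inputs (monotonicity of cosine on $[0,\pi]$ and AM-GM) are both textbook. The only care needed is to observe that $\phi = \arg z_1 - \arg z_2$ automatically lies in $[0,\pi)$ because it is sandwiched between $0$ and $\theta$, so no wrapping issue arises from the convention $\arg z \in (-\pi,\pi]$. As a byproduct, equality in (\ref{eq:20}) occurs precisely when $|z_1|=|z_2|$ and $\arg z_1 - \arg z_2 = \theta$, which is the sharp configuration used later when applying the estimate to Harder–Narasimhan filtrations in Section \ref{sc:deformation}.
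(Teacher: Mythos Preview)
Your proof is correct and essentially the same as the paper's. Both square the inequality, expand $|z_1+z_2|^2$ via the law of cosines, and reduce to the identity $\tfrac{1-\cos\theta}{2}(|z_1|-|z_2|)^2 \geq 0$ (which you phrase as AM--GM and the paper writes as $\tfrac{1-\cos\phi}{2}(x-1)^2 \geq 0$ with $x=|z_2|/|z_1|$); the only cosmetic differences are your half-angle rewrite $\tfrac{\sqrt{2+2\cos\theta}}{2}=\cos(\theta/2)$ and the order in which the monotonicity of cosine is invoked.
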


\begin{proof}
Put $\phi = \arg z_{1}-\arg z_{2}$. 
The following inequality holds for any $x \in \bb R$: 
\begin{equation}\label{eq:improve}
\sqrt{1+ x^{2}+2x \cos \phi } \geq \frac{\sqrt{2+2\cos \phi}}{2}(x+1). 
\end{equation}
Indeed, a very elemental calculation below gives the proof: 
\begin{align*}
{x^{2}+2x \cos \phi + 1} - \left( \frac{\sqrt{2+2\cos \phi}}{2}(x+1)	\right)^{2}
	&= \frac{1-\cos \phi}{2}(x-1)^{2} \geq 0 . 
\end{align*}
On the other hand, we easily see $|z_{1}+z_{2}|	=\sqrt{|z_{1}|^{2}	+	|z_{2}|^{2}	+2|z_{1}|\cdot |z_{2}| \cdot \cos \phi}$. 
Then the inequality (\ref{eq:improve}) implies 
\begin{align*}
|z_{1}+z_{2}|		&\geq |z_{1}| \cdot \frac{\sqrt{2+2\cos \phi}}{2}\left(	\frac{|z_{2}|}{|z_{1}|}+1	\right)	
					=\frac{\sqrt{2+2\cos \phi}}{2}(|z_{2}|+|z_{1}|)
\end{align*}
Since $f(\phi)=\cos \phi$ is monotonically decreasing for $0 \leq \phi <\pi$, 
this gives the proof. 
\end{proof}

\begin{cor}\label{bunkai-sup}
Let $\theta$ be in the open interval $(0, \pi)$. 
Then the supremum 
\[
\sup
\left\{
\frac{|z_{2}|}{|z_{1}+z_{2}|}
\middle|
z_{1}, z_{2} \in \bb C, 
z_{1}+z_{2} \neq 0, 0 \leq| \arg z_{1}-\arg z_{2}| \leq \theta
\right\}
\]
is finite. 
\end{cor}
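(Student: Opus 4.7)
The plan is to apply Proposition \ref{bunkai-improve} as a black box and bound the ratio $|z_{2}|/|z_{1}+z_{2}|$ by a constant depending only on $\theta$. The first thing to handle is the slight mismatch between the hypothesis of the proposition (which assumes $0 \leq \arg z_{1} - \arg z_{2} \leq \theta$) and the symmetric condition $0 \leq |\arg z_{1} - \arg z_{2}| \leq \theta$ appearing in the corollary. This is not really an obstacle, since the proof of the proposition proceeds through $|z_{1}+z_{2}|^{2} = |z_{1}|^{2} + |z_{2}|^{2} + 2|z_{1}||z_{2}|\cos\phi$ with $\phi = \arg z_{1} - \arg z_{2}$, and this quantity depends only on $\cos\phi = \cos|\phi|$. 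So the inequality
\[
|z_{1}+z_{2}| \;\geq\; \frac{\sqrt{2+2\cos\theta}}{2}(|z_{1}|+|z_{2}|)
\]
remains valid under the symmetric hypothesis, either by rerunning the same computation with $|\phi|$ in place of $\phi$, or simply by swapping the roles of $z_{1}$ and $z_{2}$.

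Once this is in place, the conclusion is immediate: since $|z_{2}| \leq |z_{1}| + |z_{2}|$, we get
\[
\frac{|z_{2}|}{|z_{1}+z_{2}|} \;\leq\; \frac{2}{\sqrt{2+2\cos\theta}},
\]
and the right-hand side is a finite positive number because $\theta \in (0,\pi)$ forces $\cos\theta > -1$, hence $2+2\cos\theta > 0$. Therefore the supremum in question is bounded above by $2/\sqrt{2+2\cos\theta}$, proving finiteness. There is no serious obstacle here; the only subtle point worth flagging in the write-up is the symmetry remark above, to make clear why the one-sided hypothesis of Proposition \ref{bunkai-improve} is strong enough to cover the two-sided hypothesis of the corollary.
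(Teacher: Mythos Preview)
Your proof is correct and follows essentially the same approach as the paper: apply Proposition~\ref{bunkai-improve} and use $|z_{2}|\le |z_{1}|+|z_{2}|$ to bound the ratio by $2/\sqrt{2+2\cos\theta}$. Your explicit remark about the one-sided versus two-sided hypothesis is a welcome clarification that the paper leaves implicit.
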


\begin{proof}
By Proposition \ref{bunkai-improve}, 
the following holds: 
\begin{align*}
\frac{|z_{2}|}{|z_{1}+z_{2}|}	&	\leq 
\frac{2}{\sqrt{2+2\cos \theta}}
\cdot 
\left(
\frac{|z_{2}|}{|z_{1}|+|z_{2}|}
\right)
\leq 
\frac{2}{\sqrt{2+2\cos \theta}}. 
\end{align*}
The right hand side is bounded above by the assumption for $\theta$. 
\end{proof}

\subsection{Stability conditions}
This section is devoted to a review of basic notation and conventions for stability conditions. 
Let $\mb D$ be a triangulated category. 

\begin{dfn}[\cite{MR2373143}]
\label{dfn:stabilitycondition}
A \textit{stability condition} $\sigma = (Z, \mca P)$ on $\mb D$ is a collection of a group homomorphism $Z \colon K_{0}(\mb D)\to \bb C$ called the central charge 
and the collection of full subcategories $\mca P(\phi)$ of $\mb D $ for $\phi \in \bb R$ satisfying the following condition: 
\begin{enumerate}
\item for any $E \in \mca P(\phi)$, then $Z(E) \in \bb R_{>0}\cdot \exp(\sqrt{-1}\pi \phi)$, 
\item $E$ is  in $\mca P(\phi)$ if and only if $E[1]$ is in $\mca P(\phi+1)$,  
\item if $\phi > \psi$ then $\Hom_{\mb D}(E, F)=0$ for $E \in \mca P(\phi)$ and $F \in \mca P(\psi)$, 
\item any object $E \in \mb D$ has a finite sequence of distinguished triangles 
\begin{equation}\label{eq:hnf}
\HNF{E}{E}{A}, 
\end{equation}
where $A_{i} \in \mca P(\phi)\setminus\{0 \}$ with $\phi_{i}>\phi_{i+1}$. 
\end{enumerate}

A non-zero object $E \in \mca P(\phi)$ for some $\phi \in \bb R$ is said to be \textit{$\sigma$-semistable} with the phase $\phi$. 
The collection of full subcategories $\bigcup _{\phi \in \bb R}\mca P(\phi)  $ satisfying the axiom (2), (3) and (4) is called the slicing of $\mb D$. 
\end{dfn}

\begin{rmk}
By the definition, the zero object $0 \in \mb D$ is not $\sigma$-semistable. 
We note that $\mca P(\phi)$ is abelian. 
A $\sigma$-semistable object $A \in \mca P(\phi)$ is said to be $\sigma$-stable if there is no non-trivial subobject of $A$ in $\mca P(\phi)$. 

For any $E \in \mb D$, the filtration given by (\ref{eq:hnf}) is called a \textit{Harder-Narasimhan filtration} of $E$. 
The filtration is unique up to isomorphism. 
The first term $E_{1}$ of the filtration is said to be the \textit{maximal destabilizing subobject} of $E$, and the mapping cone $E_{n}/E_{n-1}$ of $E_{n-1} \to E_{n}$ is called the \textit{maximal destabilizing quotient} of $E$. 
Moreover the phase of the maximal destabilizing subobject (resp. quotient) of $E$ is denoted by $\phi_{\sigma}^{+}(E)$ (resp. $\phi_{\sigma}^{-}(E)$). 
\end{rmk}

Now we recall convention which is necessary in this article.

\begin{dfn}
Let $\sigma$ be a stability condition on $\mb D$. 
\begin{enumerate}
\item A stability condition $\sigma$ is said to be \textit{discrete} if the image of $Z \colon K_{0}(\mb D) \to \bb C$ is discrete in $\bb C$. 
A stability condition $\sigma$ is said to be \textit{rational} if the image of $Z \colon K_{0}(\mb D) \to \bb C$ is contained in rational coefficients complex numbers.
\item For an interval $I \subset \bb R$, the extension closure of $\{ \mca P(\phi) \mid \phi \in I \}$ is denoted by $\mca P(I)$. 
\item A stability condition $\sigma=(Z, \mca P) $ is said to be \textit{locally finite} if for any $\phi$ there exists an $\epsilon >0$ such that the quasi-abelian category $\mca P(\phi-\epsilon, \phi + \epsilon)$ is finite length. 
\item For any $E \in \mb D$, 
the \textit{mass} of $E$ which will be denoted by $m_{\sigma}(E)$ is the sum $\sum_{A_{i}} |Z(A_{i})|$ where $A_{i}$ runs semistable factors of the Harder-Narasimhan filtration of $E$. 
\item 
Let $\tau=(W, \mca Q)$ be a stability condition on $\mb D$. 
Set 
\[
\met{\mca P}{\mca Q} =\sup \{	|\phi_{\sigma}^{-}(E) -\phi_{\tau}^{-}(E)|,  |\phi_{\sigma}^{+}(E) -\phi_{\tau}^{+}(E)|	\mid E \in \mb D \setminus \{0\}	\}, 
\]
 and 
a norm on $\Hom(K_{0}(\mb D), \bb C)$ by 
\[
\|	U	\|_{\sigma}	=\sup\left\{	\frac{|U(E)|}{|Z(E)|}	\middle|	 E \in \bigcup \mca P(\phi), E\neq 0		\right\}. 
\]
We note that $\|	U	\|_{\sigma}	$ might be infinity. 
\item The stability condition $\sigma$ is said to be \textit{full} if the norm $\| U \| _{\sigma}$ is finite for any $U \in \Hom(K_{0}(\mb D), \bb C)$. 
\item The stability condition $\sigma $ is said to be \textit{reasonable} if the infimum 
\[
\inf	\left\{	|Z(E)|		\middle|		E \in	\bigcup_{\phi \in \bb R}	\mca P(\phi)	\right\}
\]
is positive. 
\item The set of locally finite stability condition on $\mb D$ is denoted by $\Stab{\mb D}$. 
In addition, the set of full (resp. reasonable) stability conditions is denoted by $\Stabf{\mb D}$ (resp. $\Stabr{\mb D}$). 
\item $\sigma$ satisfies the \textit{support property} if there exists a quadratic form $q$ on $K_{0}(\mb D) \otimes \bb R$ such that 
\begin{itemize}
\item any $\sigma$-semistable object $E \in \mb D$, $q([E]) \geq 0$, and 
\item $q$ is negative definite on $\Ker Z \subset K_{0}(\mb D) \otimes \bb R$. 
\end{itemize}
\end{enumerate}
\end{dfn}

The support property is important since it is necessary to discuss the continuousness of the family of stability conditions, or well-behaved wall crossing phenomena. 
The following gives some equivalent formulation of the support property. 

\begin{prop}[\cite{MR2852118}, \cite{kontsevich2008stability}, \cite{MR3573975}]
\label{prop:supportproperty}

Let $\sigma$ be a locally finite stability condition on $\mb D$. 
The following are equivalent: 
\begin{enumerate}
\item $\sigma$ is full. 
\item $\sigma$ satisfies the support property. 
\item Fix a norm on $K_{0} (\mb D) \otimes \bb R$. 
There exists a positive constant $C >0$ such that the inequality 
\[	\frac{\|	[E]\|}{|Z(E)|} \leq C	\]
holds for any $\sigma$-semistable object $E \in \mb D$. 
\end{enumerate}
\end{prop}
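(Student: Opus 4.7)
My plan is to prove the equivalences by going through (3) as a bridge: first (1) $\Leftrightarrow$ (3), and then (2) $\Leftrightarrow$ (3). The key structural fact I will keep exploiting is that $K_{0}(\mb D) \otimes \bb R$ is finite dimensional, so any two norms on it are equivalent; this is why a single bound of the form $\|[E]\|/|Z(E)| \leq C$ propagates to every linear functional and every quadratic form of interest.

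For (3) $\Rightarrow$ (1), given $U \in \Hom(K_{0}(\mb D), \bb C)$, use finite dimensionality to find $M$ with $|U(v)| \leq M\|v\|$ for all $v$; then $|U(E)|/|Z(E)| \leq MC$ for every $\sigma$-semistable $E$, so $\|U\|_{\sigma} \leq MC < \infty$. Conversely, for (1) $\Rightarrow$ (3), pick a basis of $\Hom(K_{0}(\mb D), \bb C) \otimes \bb R$ consisting of coordinate functionals $U_{1}, \ldots, U_{n}$ with respect to which $\|v\| := \sum_{i} |U_{i}(v)|$ is a norm. Then for semistable $E$,
\[
\frac{\|[E]\|}{|Z(E)|} = \sum_{i} \frac{|U_{i}([E])|}{|Z(E)|} \leq \sum_{i} \|U_{i}\|_{\sigma},
\]
which is finite by (1). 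Since any two norms on $K_{0}(\mb D)\otimes \bb R$ are comparable, (3) holds for the originally fixed norm.

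For (3) $\Rightarrow$ (2), I would simply exhibit the quadratic form. Set $q(v) := C^{2}|Z(v)|^{2} - \|v\|^{2}$. By (3) we have $q([E]) \geq 0$ for every $\sigma$-semistable $E$, and on $\Ker Z$ the first term vanishes so $q(v) = -\|v\|^{2}$ is negative definite. The real content is (2) $\Rightarrow$ (3), which is the main obstacle and where finite dimensionality is essential. Suppose (3) fails; then there exists a sequence of $\sigma$-semistable objects $E_{n}$ with $\|[E_{n}]\|/|Z(E_{n})| \to \infty$. Normalizing $v_{n} := [E_{n}]/\|[E_{n}]\|$, these are unit vectors with $|Z(v_{n})| \to 0$. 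By compactness of the unit sphere in $K_{0}(\mb D)\otimes \bb R$, after passing to a subsequence $v_{n} \to v_{\infty}$ with $\|v_{\infty}\|=1$ and $Z(v_{\infty})=0$, so $v_{\infty} \in \Ker Z \setminus \{0\}$. Now homogeneity of $q$ of degree $2$ gives $q(v_{n}) = q([E_{n}])/\|[E_{n}]\|^{2} \geq 0$, so by continuity $q(v_{\infty}) \geq 0$, contradicting that $q$ is negative definite on $\Ker Z$. This normalize-and-extract-subsequence argument is the only nontrivial point; once this is handled all three conditions are seen to be equivalent.
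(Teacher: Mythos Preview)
The paper does not give its own proof of this proposition; it is stated with attributions to \cite{MR2852118}, \cite{kontsevich2008stability}, \cite{MR3573975} and used as a black box. Your argument is the standard one from those references and is essentially correct.

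There is one small point to tighten. In your construction for (3) $\Rightarrow$ (2) you set $q(v) = C^{2}|Z(v)|^{2} - \|v\|^{2}$. The term $|Z(v)|^{2}$ is always a quadratic form (writing $Z = a + \sqrt{-1}\,b$ with $a,b$ real linear, it equals $a(v)^{2}+b(v)^{2}$), but $\|v\|^{2}$ is a quadratic form only when $\|\cdot\|$ comes from an inner product; for, say, the $\ell^{1}$ norm you used in (1) $\Rightarrow$ (3) it is not. The fix is immediate and you already have the ingredient: since (3) is independent of the choice of norm by equivalence of norms on the finite-dimensional space $K_{0}(\mb D)\otimes\bb R$, you may take $\|\cdot\|$ Euclidean when defining $q$. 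State this explicitly and the argument is complete.
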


By Proposition \ref{prop:supportproperty}, one can deduce 
\begin{prop}
Let $\sigma=(Z, \mca P)$ be a stability condition on a triangulated category $\mb D$. 
If $\sigma$ is full, then $\sigma$ is reasonable. 
In addition, a reasonable stability condition is locally finite. 
\end{prop}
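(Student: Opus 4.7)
The plan is to prove the two implications separately, relying on the fact that the passage from fullness to the norm bound in part (3) of Proposition \ref{prop:supportproperty} is a purely formal manipulation not requiring local finiteness, together with the cone inequality of Proposition \ref{bunkai-improve}.

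For ``full $\Rightarrow$ reasonable'', I would fix a basis $U_{1},\dots,U_{r}$ of $\Hom(K_{0}(\mb D),\bb C)$ and use fullness to obtain finite constants $\|U_{i}\|_{\sigma}$. Choosing a norm on $K_{0}(\mb D)\otimes\bb R$ equivalent to $\sum_{i}|U_{i}(\cdot)|$ yields a uniform bound $\|[E]\|\leq C|Z(E)|$ for every $\sigma$-semistable $E$. The remaining step is to bound $\|[E]\|$ from below by a positive constant. Since $K_{0}(\mb D)$ has finite rank, its image in $K_{0}(\mb D)\otimes\bb R$ is a lattice; moreover a torsion class $[E]$ would force $Z(E)=0$, contradicting $Z(E)\in\bb R_{>0}\cdot\exp(i\pi\phi)$ for semistable $E$. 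Hence $[E]$ represents a nonzero lattice vector whose norm is bounded below by the length of the shortest nonzero lattice vector, and the two bounds combine to give $|Z(E)|\geq c>0$.

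For ``reasonable $\Rightarrow$ locally finite'', fix $\phi\in\bb R$ and choose $\epsilon>0$ with $2\pi\epsilon<\pi$, for instance $\epsilon=1/4$; write $m_{0}:=\inf|Z(E)|>0$ over $\sigma$-semistable $E$. For any $E\in\mca P(\phi-\epsilon,\phi+\epsilon)$ with Harder--Narasimhan factors $A_{1},\dots,A_{n}$, every $Z(A_{i})$ lies in a common cone of angular width $2\pi\epsilon$. Since the constant in Proposition \ref{bunkai-improve} simplifies as $\sqrt{2+2\cos\theta}/2=\cos(\theta/2)$, an iteration of that proposition (equivalently, projection onto the bisector of the cone) yields
\[
|Z(E)|\;\geq\;\cos(\pi\epsilon)\sum_{i=1}^{n}|Z(A_{i})|\;\geq\;\cos(\pi\epsilon)\,m_{0}\,n.
\]
For any strict chain $0=E_{0}\subsetneq E_{1}\subsetneq\dots\subsetneq E_{N}=E$ in $\mca P(\phi-\epsilon,\phi+\epsilon)$ with nonzero quotients, each $|Z(E_{k}/E_{k-1})|\geq m_{0}\cos(\pi\epsilon)$ by the same estimate; a second application of the cone inequality bounds $N$ by $|Z(E)|/(m_{0}\cos(\pi\epsilon))^{2}$, establishing the finite-length property.

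The main obstacle I expect is in the second implication: the quasi-abelian structure of $\mca P(\phi-\epsilon,\phi+\epsilon)$ requires care, since strict subobjects and their quotients in the quasi-abelian sense must be shown to stay in this subcategory, and the counting argument must be formulated so that it genuinely bounds the length of composition series in the quasi-abelian setting. The first implication, once local finiteness is seen to be unnecessary, is essentially linear algebra coupled with the elementary lattice observation.
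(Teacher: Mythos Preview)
Your argument is correct and follows the same route as the paper. For the first implication the paper simply invokes Proposition~\ref{prop:supportproperty} and the discreteness of the lattice $K_{0}(\mb D)/\text{torsion}$; you unpack the same reasoning, and your observation that the direction (1)$\Rightarrow$(3) of that proposition is a formal manipulation not requiring local finiteness is exactly what keeps the argument from being circular (the paper glosses over this point). For the second implication the paper only cites \cite[Lemma~1.1]{MR2721656}; the cone--projection bound you give is precisely the content of that lemma, and the quasi-abelian obstacle you flag is resolved there by the fact that strict short exact sequences in $\mca P(\phi-\epsilon,\phi+\epsilon)$ are distinguished triangles with all three terms in the slice, so each quotient $E_{k}/E_{k-1}$ remains in $\mca P(\phi-\epsilon,\phi+\epsilon)$ and the length bound follows. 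One cosmetic slip: your final bound should read $N\leq |Z(E)|/\bigl(m_{0}\cos^{2}(\pi\epsilon)\bigr)$ rather than $|Z(E)|/(m_{0}\cos(\pi\epsilon))^{2}$.
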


\begin{proof}
Let us fix a norm $\| * \| \colon K_{0}(\mb D) \otimes \bb R \to \bb R_{>0}$. 
If $\sigma$ is full, by Proposition \ref{prop:supportproperty}, 
there exists a constant $C' > 0 $ such that the inequality 
\begin{equation}
C' \|	E	\|	\leq |Z(E)| 
\end{equation}
holds for any $\sigma$-semistable object $E$. 
Since $\|	E\|$ is discrete, $\sigma$ is reasonable. 
The last part follows from \cite[Lemma 1.1]{MR2721656}. 
\end{proof}

\begin{rmk}\label{rmk:openclosed}
Thus we have a sequence of inclusion
\[
\Stabf{\mb D} \subset \Stabr{\mb D}	\subset \Stab {\mb D}. 
\]
One can also check that the support property is open and closed by the argument in \cite[Lemmma 6.2]{MR2373143}. 
Hence $\Stabf{\mb D}$ is open and closed in $\Stab{\mb D}$. 
This implies that any $\sigma \in \mathsf{C}_{0}$ is full if there exists a full stability condition $\sigma_{0}$ in a connected component $\mathsf C_{0}$ of $\Stab{\mb D}$. 
The same assertion for reasonable stability conditions holds by \cite[Proposition 1.2]{MR2721656}. 
\end{rmk}

\begin{rmk}
If a locally finite stability condition $\sigma$ satisfies the support property, then $\sigma$ is said to be a \textit{Bridgeland stability condition}. 
The set of Bridgeland stability condition on a triangulated category $\mb D$ is the same as $\Stabf {\mb D}$ by Proposition \ref{prop:supportproperty}. 
Recently the set of Bridgeland stability conditions is often denoted by $\Stab{\mb D}$ instead of $\Stabf{\mb D}$, but in this paper we denote it by $\Stabf{\mb D}$. 
\end{rmk}

The following is a summary of \cite{MR2373143}. 

\begin{thm}[\cite{MR2373143}]	\label{thm:Bridgeland}
Let $\mb D$ be a triangulated category and $\sigma=(Z, \mca P)$ a locally finite stability condition. 
\begin{enumerate}
\item Set a subset $B_{\epsilon}(\sigma)$ of $\Stab {\mb D}$ by 
\[
B_{\epsilon}(\sigma)=\{	 \tau=(W, \mca Q) \in \Stab{\mb D}  \mid 	\met{\mca P}{\mca Q} < \epsilon, \|	W-Z	\|_{\sigma}< \sin (\pi \epsilon)	\}. 
\]
Then there exists a topology on $\Stab {\mb D}$ such that $ \{	B_{\epsilon}(\sigma)  \mid \sigma \in \Stab{\mb D}, \epsilon \in (0,1/4)	\} $ forms a basis of the topology. 
\item 
For an $\epsilon \in (0, 1/8)$, if $W \colon K_{0}(\mb D) \to \bb C$ satisfies 
\begin{equation}\label{eq:smalldeformation}
\|	W-Z	\|_{\sigma} <\sin (\pi \epsilon)
\end{equation}
then there exists a unique stability condition $\tau $ whose central charge is $W$ and slicing $\mca Q$ satisfies $\met {\mca Q}{\mca P} < \epsilon$. 
\item Each connected components of $\Stab{\mb D}$ is a complex manifold. 
The tangent space $T_{\sigma}$ at $\sigma \in \Stab{\mb D}$ is isomorphic to the linear subspace 
\[
V_{\sigma}	=\{	U \in \Hom(K_{0}(\mb D), \bb C) \mid \| U \|_{\sigma} <\infty	\}. 
\]
\item $\Stab{\mb D}$ has the right action of the universal cover $\widetilde{\mr{GL}}_{2}^{+}(\bb R)$ of $\mr{GL}_{2}^{+}(\bb R)$ and 
the left action of the autoequivalence group $\Aut(\mb D)$ of $\mb D$. 
\end{enumerate}
\end{thm}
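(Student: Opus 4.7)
The plan is to recover the four statements of Bridgeland's original theorem one by one, with essentially all the work concentrated in part (2); parts (1), (3) are formal consequences, and part (4) is a direct verification.

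For (1), I would check directly that $\{B_{\epsilon}(\sigma)\}_{\sigma,\epsilon}$ satisfies the basis axioms. The metric $\met{\mca P}{\mca Q}$ obeys the triangle inequality by its sup-definition, so the task reduces to comparing the norms $\|{-}\|_{\sigma}$ and $\|{-}\|_{\tau}$ when $\tau \in B_{\epsilon}(\sigma)$. Once $\met{\mca P}{\mca Q}<\eta$ is small, each $\tau$-semistable object $F$ admits a $\sigma$-Harder-Narasimhan filtration whose factors have $\sigma$-phases within $\eta$ of the $\tau$-phase of $F$; Proposition \ref{bunkai-improve} applied to these factors shows that $|Z(F)|$ and $|W(F)|$ are comparable up to a constant $c(\eta)$. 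Choosing $\delta$ small enough that both $\met{-}{-}$ and the rescaled norm estimate close up inside $B_{\epsilon}(\sigma)$ then gives $B_{\delta}(\tau)\subset B_{\epsilon}(\sigma)$.

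For (2), which is the core of the theorem, I would construct $\tau$ directly. The hypothesis $|W(E)-Z(E)|<\sin(\pi\epsilon)\,|Z(E)|$ for every $\sigma$-semistable $E$ places $W(E)$ in the open sector around $Z(E)$ of half-angle $\pi\epsilon$ by a plane-geometric computation, so there is a unique $\psi\in(\phi-\epsilon,\phi+\epsilon)$ with $W(E)\in\bb R_{>0}\cdot\exp(i\pi\psi)$, where $\phi$ is the $\sigma$-phase of $E$. I would then define $\mca Q(\psi)$ as the extension closure, inside $\mca P((\psi-\epsilon,\psi+\epsilon))$, of those $\sigma$-semistables $E$ whose $W$-argument is $\pi\psi$. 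The shift compatibility and the $\Hom$-vanishing between different $\mca Q(\psi)$'s are immediate from the $\sigma$-analogues and the sector bound. The delicate step is existence of $\tau$-Harder-Narasimhan filtrations: each $E\in\mb D$ has a $\sigma$-HN filtration, and inside each strip $\mca P((a,b))$ with $b-a<2\epsilon$ the local finiteness assumption makes this a quasi-abelian category of finite length, so one can iteratively refine the $\sigma$-filtration of $E$ into a $\tau$-filtration by standard subobject/quotient maximization arguments. Uniqueness of $\tau$ with $\met{\mca Q}{\mca P}<\epsilon$ follows from uniqueness of HN filtrations.

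For (3), the forgetful map $\mca Z\colon\Stab{\mb D}\to\Hom(K_{0}(\mb D),\bb C)$, $\sigma\mapsto Z$, is by (2) a local homeomorphism onto its image, which is open in the affine subspace $Z+V_{\sigma}$; pulling back the natural complex-analytic structure on $V_{\sigma}$ gives each connected component a complex manifold structure whose tangent space at $\sigma$ is identified with $V_{\sigma}$. For (4), one sets $(Z,\mca P)\cdot(T,f)=(T^{-1}\circ Z,\;\phi\mapsto\mca P(f(\phi)))$ for $(T,f)\in\widetilde{\mr{GL}}_{2}^{+}(\bb R)$ and $\Phi\cdot(Z,\mca P)=(Z\circ[\Phi]^{-1},\;\phi\mapsto\Phi(\mca P(\phi)))$ for $\Phi\in\Aut(\mb D)$, and checks directly that the stability-condition axioms are preserved and the two actions commute. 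The main obstacle is clearly (2): the existence of $\tau$-HN filtrations is where local finiteness of $\sigma$ is genuinely used, and where the precise constant $\sin(\pi\epsilon)$ is forced by the requirement that the $W$-argument of every $\sigma$-semistable stay within $\pi\epsilon$ of its $Z$-argument.
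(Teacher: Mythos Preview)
The paper does not give its own proof of this theorem: it is stated as a summary of results from \cite{MR2373143} and is immediately followed by Definition~\ref{dfn:deformation} with no intervening proof environment. So there is no ``paper's own proof'' to compare your proposal against.

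Your sketch is a reasonable outline of Bridgeland's original argument in \cite{MR2373143}, and the emphasis on part~(2) as the core deformation result is correct. One small remark: invoking Proposition~\ref{bunkai-improve} from the present paper in your argument for~(1) is anachronistic, since Theorem~\ref{thm:Bridgeland} is logically prior to (and independent of) the estimates developed later in this paper; Bridgeland's own comparison of $\|{-}\|_\sigma$ and $\|{-}\|_\tau$ proceeds directly from the mass inequality $m_\sigma(E)\geq |Z(E)|$ and the phase bound, without needing the cosine-law estimate of Proposition~\ref{bunkai-improve}.
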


\begin{dfn}\label{dfn:deformation}
Let $\sigma=(Z, \mca P)$ be a locally finite stability condition on $\mb D$.  
A stability condition $\tau = (W, \mca Q)$ on $\mb D$ is said to be a \textit{deformation of $\sigma$} if 
$\tau$ is in $B_{\epsilon}(\sigma)$
for an $\epsilon \in (0, 1/8)$. 
\end{dfn}

\begin{rmk}
For a locally finite stability condition $\sigma=(Z, \mca P)$ and a group homomorphism $W \colon K_{0}(\mb D) \to \bb C$, 
if $\|	W-Z	 \|_{\sigma}$ is sufficiently small then there exists a unique deformation of $\sigma$ by Theorem \ref{thm:Bridgeland}. 
\end{rmk}

Finally we recall another definition of stability conditions which is helpful to construct $\sigma$ in an explicit way.

\begin{dfn}
Let $\mca A$ be the heart of a bounded $t$-structure $(\mb D^{\leq 0}, \mb D^{\geq 0})$ on $\mb D$ and 
let $Z$ be a group homomorphism $Z \colon K_{0}(\mca A) \to \bb C$. 
\begin{enumerate}
\item $Z$ is said to be a central charge on $\mca A$ if $Z$ satisfies $0 < \arg Z(E) \leq \pi$ for any non-zero object $E \in \mca A$.  
\item Given a central charge $Z$, an object $E$ is said to be $Z$-semistable if the inequality 
$\arg Z(F) \leq \arg Z(E)$ holds for any non-trivial subobject $F$ of $E$. 
\item Let $Z$ be a central charge on $\mca A$. 
The pair $(\mca A, Z)$ has the Harder-Narashimhan property if any non-zero object $E $ has a finite filtration of subobjects 
\[
0=E_{0}\subset E_{1} \subset E_{2} \subset \cdots \subset E_{n-1}\subset E_{n}=E
\]
such that 
\begin{itemize}
\item for each $i \in \{1, 2, \cdots, n\}$, $A_{i}=E_{i}/E_{i-1}$ is $Z$-semistable, and 
\item $\arg Z(A_{i})$ is decreasing. 
\end{itemize}
In addition, the pair $(\mca A, Z)$ with the Harder-Narasimhan property is called a \textit{stability condition on the heart $\mca A$}. 
\end{enumerate}
\end{dfn}

\begin{prop}[{\cite[Proposition 5.3]{MR2373143}}]
Let $ \mb D$ be a triangulated category. Then the following are equivalent:
\begin{enumerate}
\item To give a stability condition $\sigma =(Z,\mca P)$ on $ \mb D$.
\item To give a pair $(\mca A,Z)$ consisting of  the heart $\mca A$ of a bounded t-structure on $ \mb D$ and a central charge $Z$ on $\mca A$ which has the Harder-Narashimhan property. 
\end{enumerate}
\end{prop}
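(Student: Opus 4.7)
The plan is to construct explicit mutually inverse passages between the two sets of data and then verify the defining axioms on each side. The key bookkeeping device is that both formulations are organised by the argument of $Z$, so everything reduces to translating between a slicing indexed by $\bb R$ and a pair (heart, central charge).

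For the direction $(1) \Rightarrow (2)$, starting from $\sigma=(Z,\mca P)$ I would set $\mca A := \mca P(0,1]$, i.e.\ the extension-closed subcategory generated by $\mca P(\phi)$ for $\phi \in (0,1]$. First I would check that $(\mb D^{\leq 0}, \mb D^{\geq 0}) := (\mca P(>0), \mca P(\leq 1))$ is a bounded $t$-structure with heart $\mca A$: the semi-orthogonality $\Hom(\mb D^{\leq 0}, \mb D^{\geq 1})=0$ comes directly from axiom (3) of Definition \ref{dfn:stabilitycondition}, while the truncation triangle for an object $E$ is produced by collecting the terms of its Harder--Narasimhan filtration (\ref{eq:hnf}) according to whether $\phi_i>0$ or $\phi_i \leq 0$. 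Boundedness is immediate from the finiteness of the HN filtration. Then $Z$ restricts to a group homomorphism on $K_0(\mca A)=K_0(\mb D)$, and axiom (1) forces $Z(E) \in \bb R_{>0}\cdot \exp(\sqrt{-1}\pi\phi)$ with $\phi \in (0,1]$ for every semistable $E$ in $\mca A$; combined with the HN filtration existing inside $\mca A$ (again a reorganisation of (\ref{eq:hnf})), one obtains the Harder--Narasimhan property for $(\mca A, Z)$.

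For the converse $(2) \Rightarrow (1)$ I would, given $(\mca A, Z)$, define $\mca P(\phi)$ for $\phi \in (0,1]$ to be the full subcategory of $Z$-semistable objects in $\mca A$ with $\arg Z(E)=\pi \phi$, together with the zero object, and extend to all $\phi \in \bb R$ by the forced rule $\mca P(\phi+n):=\mca P(\phi)[n]$ for $n \in \bb Z$. Axioms (1) and (2) of Definition \ref{dfn:stabilitycondition} are built into the definition. Axiom (3), the vanishing of $\Hom$ between strictly decreasing phases, splits into two cases: when both phases lie in the same shift $(n,n+1]$ of $\mca A$, it follows from the usual argument that a non-zero map between semistable objects of an abelian category with a central charge cannot strictly lower the phase; when they lie in different shifts, it follows from the fact that $\mca A$ is the heart of a $t$-structure, so $\Hom(\mca A[m], \mca A[n])=0$ for $m>n$. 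Axiom (4) is obtained in two stages: first apply the cohomology functors of the $t$-structure on $\mb D$ with heart $\mca A$ to decompose an object into its cohomology sheaves in various shifts of $\mca A$, then refine each such piece by the Harder--Narasimhan filtration guaranteed inside $\mca A$.

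The main obstacle will be verifying axiom (4) cleanly: one must produce, for an arbitrary $E\in \mb D$, a single filtration by objects whose subquotients are semistable with strictly decreasing phases, not merely a two-stage filtration. The delicate point is to splice the internal HN filtrations from different shifts of $\mca A$ into one sequence of distinguished triangles, and to know the process terminates; termination uses that $E$ has non-vanishing cohomology only in finitely many degrees (bounded $t$-structure) and that each heart-level HN filtration is finite. After that is established, uniqueness of the slicing attached to $(\mca A, Z)$ and local finiteness, if required, follow from standard arguments, giving the bijection between the two descriptions.
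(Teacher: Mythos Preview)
The paper does not supply its own proof of this proposition: it is quoted verbatim from \cite[Proposition 5.3]{MR2373143} and used as a black box, so there is nothing in the present paper to compare your argument against. That said, your outline is the standard one and matches Bridgeland's original proof in \cite{MR2373143}: set $\mca A=\mca P(0,1]$ for one direction, and for the other define $\mca P(\phi)$ via $Z$-semistable objects in shifts of $\mca A$, then splice cohomological truncations with heart-level HN filtrations to obtain axiom (4). Your identification of the splicing step as the only point requiring care is accurate, and the termination argument you sketch (bounded $t$-structure plus finite HN in each degree) is exactly what is used. One small remark: local finiteness is not part of the statement here (the paper's Definition \ref{dfn:stabilitycondition} does not require it), so you need not mention it at the end.
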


\begin{rmk}
Let us briefly recall that 
the heart $\mca A$ is given by $\mca P(0,1]$. 
Throughout the article, we call $\mca A=\mca P(0,1]$ the \textit{heart of a stability condition} $\sigma$ and fluently identify the pair $(Z, \mca P)$ with $(\mca A, Z)$. 
Namely we denote by $\mca P$ the slicing of the stability condition $(\mca A, Z)$ and by $\mca A$ the heart of the stability condition $(Z, \mca P)$. 
\end{rmk}

Given a stability condition $\sigma=(\mca A, Z)$ on $\mb D$, 
we obtain a torsion pair on $\mca A$ which is useful in this article: 

\begin{dfn}\label{dfn:sigma-torsion}
Let $\mb D$ be a triangulated category and $\sigma= (\mca A, Z) $ a stability condition on $\mb D$. 
An object $E \in \mca A$ is said to be \textit{$\sigma$-torsion} if $\I Z(E)=0$. 
An object $E' \in \mca A$ is said to be \textit{$\sigma$-free} if $E \in \mca P(0,1)$

The full subcategory of $\sigma$-torsion objects in $\mca A$ is denoted by $\mca T^{\sigma}$ and 
the full subcategory of $\sigma$-torsion free objects in $\mca A$ is denoted by $\mca F^{\sigma}$. 
\end{dfn}

\begin{lem}
Keep the notation as in Definition \ref{dfn:sigma-torsion}. 
Then the pair $(\mca T^{\sigma}, \mca F^{\sigma})$ is a torsion pair on $\mca A$. 
\end{lem}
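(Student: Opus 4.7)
The plan is to identify the two subcategories explicitly with slices of $\mca P$: I expect $\mca T^{\sigma} = \mca P(1)$ and $\mca F^{\sigma} = \mca A \cap \mca P(0,1)$. The reason is that for any $E \in \mca A = \mca P(0,1]$ the Harder--Narasimhan factors $A_{i}$ of $E$ all have phases in $(0,1]$, so $\I Z(A_{i}) \geq 0$ with equality precisely when the phase equals $1$. Hence $\I Z(E) = \sum \I Z(A_{i}) = 0$ forces every HN factor to have phase $1$, which yields the first equality; the second is immediate from the definition of $\sigma$-free. This identification will make every subsequent verification a packaging of Harder--Narasimhan data.

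Granted this identification, checking that $(\mca T^{\sigma}, \mca F^{\sigma})$ is a torsion pair reduces to three routine verifications. First, closure of $\mca T^{\sigma}$ under extensions in $\mca A$ follows from additivity of $Z$, and closure of $\mca F^{\sigma}$ under extensions follows because $\mca P(0,1)$ is defined as the extension closure inside $\mb D$ of the semistable slices $\mca P(\phi)$ for $\phi \in (0,1)$, so extensions in $\mca A$ of $\sigma$-free objects remain in $\mca P(0,1) \cap \mca A$. Second, the orthogonality $\Hom(\mca T^{\sigma}, \mca F^{\sigma}) = 0$ follows from axiom (3) of Definition \ref{dfn:stabilitycondition} together with a straightforward induction on the lengths of HN filtrations, since the semistable constituents of $T \in \mca T^{\sigma}$ have phase $1$ and those of $F \in \mca F^{\sigma}$ have phase strictly less than $1$.

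For the decomposition condition, take an arbitrary $E \in \mca A$ and consider its Harder--Narasimhan filtration $0 = E_{0} \subset E_{1} \subset \cdots \subset E_{n} = E$ inside $\mca A$, with semistable factors $A_{i} = E_{i}/E_{i-1}$ of strictly decreasing phases $\phi_{1} > \cdots > \phi_{n}$ in $(0,1]$. Let $k$ be the largest index with $\phi_{k} = 1$, and set $k = 0$ if no such index exists. Then $E_{k}$ is an iterated extension of phase-$1$ semistables, hence lies in $\mca T^{\sigma}$, while $E/E_{k}$ is an iterated extension of semistables of phases in $(0,1)$, hence lies in $\mca F^{\sigma}$. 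The distinguished triangle $E_{k} \to E \to E/E_{k}$ therefore yields a short exact sequence in $\mca A$ of the required form.

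I do not anticipate any genuine obstacle here; the statement is really a repackaging of Harder--Narasimhan theory. The only point that requires care is that phases in $(0,1]$ are \emph{decreasing} along the HN filtration, so the phase-$1$ factors genuinely occur at the \emph{top} of the filtration, which is what allows $E_{k}$ and $E/E_{k}$ to land in $\mca A$ and realise the torsion and torsion-free parts of $E$ with respect to $(\mca T^{\sigma}, \mca F^{\sigma})$.
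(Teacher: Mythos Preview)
Your proposal is correct and is precisely the unpacking of what the paper means by its one-line proof (``The definition of stability conditions directly implies the assertion''): identify $\mca T^{\sigma}=\mca P(1)$ and $\mca F^{\sigma}=\mca P(0,1)$, then read off orthogonality and the torsion/free decomposition from the Harder--Narasimhan filtration. One cosmetic point: since the HN phases are strictly decreasing, your index $k$ is automatically $0$ or $1$, so ``the largest index with $\phi_{k}=1$'' just means $k=1$ if $\phi_{1}=1$ and $k=0$ otherwise.
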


\begin{proof}
The definition of stability conditions directly implies the assertion. 
\end{proof}

\subsection{Semiorthogonal decompositions and stability conditions}

Let $\mb D= \<\mb D_{1}, \mb D_{2}	\>$ be a semiorthogonal decomposition of $\mb D$. 
Namely we have pairs of adjoint functors
\[
\xymatrix{
\mb D_{2}	\ar@<0.8ex>[r]^{i_{2}}	&	\mb D	\ar@<0.8ex>[r]^{\tau_{1}^{L}}	\ar@<0.8ex>[l]^(.4){\tau_{2}^{R}} &	\mb D_{1}	\ar@<0.8ex>[l]^{i_{1}}
}; 
i_{2}\dashv	\tau_{2}^{R}, \tau_{1}^{L}	\dashv	i_{1} , 
\]
both $i_{2}$ and $i_{1}$ are fully faithful and $\tau_{1}^{L} \circ i_{2} = \tau_{2}^{R} \circ i_{1}=0$. 
If $i_{1} $ has the right adjoint $\tau_{1}^{R} \colon \mb D \to \mb D_{1}$, then the composite $\tau_{1}^{R} \circ i_{2} [1] \colon \mb D_{2} \to \mb D_{1}$ is said to be the \textit{gluing functor} of the semiorthogonal decomposition $\mb D= \<\mb D_{1}, \mb D_{2}	\>$ which will be denoted by $\Phi \colon \mb D_{2} \to \mb D_{1}$. 

We introduce some important conditions derived from the category of morphisms. 

\begin{dfn}\label{dfn:gluingproperty}
Let $\mb D= \<\mb D_{1}, \mb D_{2}	\>$ be a semiorthogonal decomposition of $\mb D$, and let $(\mb D_{i}^{\leq 0}, \mb D_{i}^{\geq 0})$ be the bounded $t$-structure on $\mb D_{i}$. 
A torsion pair on the heart $\mb D_{i}^{\leq 0}\cap \mb D_{i}^{\geq 0}$ is denoted by $(\mca T_{i}, \mca F_{i})$. 
\begin{description}
\item[$\m1$] $i_{1} \colon \mb D_{1} \to \mb D$ has the right adjoint $\tau_{1}^{R} \colon \mb D \to \mb D_{1}$. 
\item[$\m2$] In addition to $\m1$, there exist equivalences $\mb D_{i} \to \mb C$ such that the right adjoint $\tau_{1}^{R} \colon \mb D \to \mb D_{1}$  
gives the left adjoint $\tau_{2}^{L}$ of $i_{2} \colon \mb D_{2} \to \mb D$ via the equivalences $\mb D_{k} \to \mb C$. 
\item[$\m3$]
In addition to $\m1$, the gluing functor $\Phi =\tau_{1}^{R}\circ i_{2}[1] \colon \mb D_{2} \to \mb D_{1}$ is $t$-exact with respect to these $t$-structures $(\mb D_{i}^{\leq 0}, \mb D_{i}^{\geq 0})$. 
\item[$\m4$] In addition to $\m3$, the gluing functor $\Phi$ respects to free part of the torsion pairs, that is, $\Phi (\mca F_{2}) \subset \mca F_{1}$. 
\end{description}

Moreover, let $\sigma_{i}=(\mca A_{i}, Z_{i})$ be in $\Stab{\mb D_{i}}$. 
If the gluing functor is $t$-exact with respect to the $t$-structures of $\sigma _{i}$, 
$(\sigma_{1}, \sigma_{2})$ is said to \textit{satisfy the condition $\m3$}. 
Similarly, $(\sigma_{1}, \sigma_{2})$ is said to \textit{satisfy the condition $\m4$}, if 
the torsion pair $(\mca T^{\sigma_{i}}, \mca F^{\sigma_{i}})$ on the heart $\mca A_{i}$ satisfies the condition $\m4$. 
Finally let us introduce the following condition for $\sigma_{i}$: 
\begin{description}
\item[$\m5$] In addition to $\m4$ for $\sigma_{i}$, the equality $Z_{2}(E_{2})=Z_{1}(\Phi E_{2})$ holds for any $E_{2} \in \mca A_{2}$. 
\end{description}
\end{dfn}

\begin{rmk}
Suppose a semiorthogonal decomposition $\mb D= \sod{\mb D_{1}}{\mb D_{2}}$ satisfies the condition $\m2$. 
Then the gluing functor $\Phi$ is isomorphic to $[1]$ on $\mb C$. 
If $t$-structures $(\mb D_{i}^{\leq 0}, \mb D_{i}^{\geq 0})$ satisfies $\m3$, then the heart $\mca A_{2}$ on $\mb D_{2} \cong \mb C$ is $\mca A_{1}[-1]$ on $\mb D_{1}\cong \mb C$. 
\end{rmk}

\begin{lem}\label{lem:morexact}
Suppose that a semiorthogonal decomposition $\mb D =\sod{\mb D_{1}}{\mb D_{2}}$ satisfies $\m1$. 
The gluing functor $\mb D_{2} \to \mb D_{1}$ is denoted by $\Phi$. 
There exists a distinguished triangle in $\mb D_{1}$
\[
\xymatrix{
\Phi(\tau_{2}^{R}F)[-1]	\ar[r]	&	\tau_{1}^{R}F	\ar[r]	&	\tau_{1}^{L}F	\ar[r]	&	\Phi(\tau_{2}^{R}F)
}
\]
for any $F \in \mb D$. 
\end{lem}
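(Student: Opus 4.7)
The plan is to deduce the desired triangle by applying the functor $\tau_1^R$ (which exists by $\m1$) to the canonical triangle associated to the semiorthogonal decomposition $\mb D = \sod{\mb D_1}{\mb D_2}$.

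First, I would recall that every object $F \in \mb D$ sits in a canonical distinguished triangle
\[
i_2 \tau_2^R F \to F \to i_1 \tau_1^L F \to i_2 \tau_2^R F[1],
\]
where the first map is the counit of the adjunction $i_2 \dashv \tau_2^R$ and the second map is the unit of $\tau_1^L \dashv i_1$. This triangle is the standard "projection" triangle for the decomposition $\sod{\mb D_1}{\mb D_2}$; its composite $i_2 \tau_2^R F \to i_1 \tau_1^L F$ vanishes by semiorthogonality, and the cone calculation is routine.

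Next, I would apply the triangulated functor $\tau_1^R \colon \mb D \to \mb D_1$ to this triangle, obtaining
\[
\tau_1^R i_2 \tau_2^R F \to \tau_1^R F \to \tau_1^R i_1 \tau_1^L F \to \tau_1^R i_2 \tau_2^R F[1].
\]
Two identifications now finish the argument. Since $i_1$ is fully faithful and $\tau_1^R$ is its right adjoint, the unit $\1 \to \tau_1^R i_1$ is an isomorphism, so the third term is naturally isomorphic to $\tau_1^L F$. For the first term, the definition $\Phi = \tau_1^R \circ i_2[1]$ gives $\tau_1^R \circ i_2 \cong \Phi[-1]$ as functors $\mb D_2 \to \mb D_1$, whence $\tau_1^R i_2 \tau_2^R F \cong \Phi(\tau_2^R F)[-1]$. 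Substituting these identifications yields the asserted triangle
\[
\Phi(\tau_2^R F)[-1] \to \tau_1^R F \to \tau_1^L F \to \Phi(\tau_2^R F).
\]

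I do not anticipate a real obstacle here: everything reduces to the existence of the canonical triangle for a semiorthogonal decomposition and to the standard adjunction identity $\tau_1^R i_1 \cong \1$ for a fully faithful inclusion. The only minor point worth writing carefully is the shift bookkeeping between $\Phi$ and $\tau_1^R \circ i_2$ so that the $[-1]$ appears on the correct term.
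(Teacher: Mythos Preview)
Your proof is correct and follows exactly the same route as the paper: apply $\tau_1^R$ to the canonical semiorthogonal triangle $i_2\tau_2^R F \to F \to i_1\tau_1^L F \to i_2\tau_2^R F[1]$ and then identify the outer terms via $\tau_1^R i_1 \cong \1$ and $\tau_1^R i_2 \cong \Phi[-1]$. The paper's version is terser, simply stating ``taking $\tau_1^R$ to the sequence, we obtain the desired sequence,'' whereas you spell out the two identifications explicitly.
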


\begin{proof}
Consider the semiorthogonal decomposition of $F \in \mb D$: 
\begin{equation}\label{eq:sodmor}
\xymatrix{
i_{2}\tau_{2}^{R}(F)		\ar[r]	&	F	\ar[r]	&	i_{1}\tau_{1}^{L}(F)	\ar[r]	&	i_{2}\tau_{2}^{R}(F)[1]. 
}
\end{equation}
Taking $\tau_{1}^{R}$ to the sequence (\ref{eq:sodmor}), 
we obtain the desired sequence. 
\end{proof}

\begin{lem}
\label{lem:morvanishing}
Suppose that a semiorthogonal decomposition $\mb D =\sod{\mb D_{1}}{\mb D_{2}}$ satisfies $\m1$. 
The gluing functor is denoted by $\Phi \colon \mb D_{2} \to \mb D_{1}$. 
Let $\varphi \colon E \to F$ be a morphism in $\mb D$ with $\tau_{1}^{L}\varphi=0$ and $\tau_{2}^{R}\varphi=0$. 
If $\Hom_{\mb D_{1}}(\tau_{1}^{L}E, \Phi(\tau_{2}^{R}F)[-1])=0$, the morphism $f$ is zero. 
\end{lem}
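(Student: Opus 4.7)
The plan is to chase $\varphi$ through the two semiorthogonal decomposition triangles for $E$ and $F$, reducing the vanishing of $\varphi$ to an injectivity statement coming from Lemma \ref{lem:morexact} applied to $F$. Denote by $\iota_{X} \colon i_{2}\tau_{2}^{R}X \to X$ and $p_{X} \colon X \to i_{1}\tau_{1}^{L}X$ the canonical morphisms from the semiorthogonal decomposition triangle of $X\in\mb D$. By naturality of the triangle we have $\varphi \circ \iota_{E} = \iota_{F}\circ i_{2}\tau_{2}^{R}\varphi$, which is zero since $\tau_{2}^{R}\varphi=0$. Consequently $\varphi$ factors as $\varphi = \rho \circ p_{E}$ for some $\rho \colon i_{1}\tau_{1}^{L}E \to F$.

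Next I would compare $p_{F}\circ \rho$ with $i_{1}\tau_{1}^{L}\varphi$. From $(p_{F}\circ \rho)\circ p_{E} = p_{F}\circ \varphi = i_{1}\tau_{1}^{L}\varphi \circ p_{E} = 0$, the morphism $p_{F}\circ \rho$ must factor through the cone of $p_{E}$, namely $i_{2}\tau_{2}^{R}E[1]$. But by semiorthogonality we have $\Hom_{\mb D}(i_{2}\tau_{2}^{R}E[1],\, i_{1}\tau_{1}^{L}F)=0$, since $\tau_{2}^{R}\circ i_{1}=0$ implies $\Hom_{\mb D}(\mb D_{2},\mb D_{1})=0$. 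Therefore $p_{F}\circ \rho=0$.

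Finally I apply the adjunction $i_{1}\dashv \tau_{1}^{R}$ provided by $\m1$: the morphism $\rho$ corresponds to $\bar{\rho}=\tau_{1}^{R}\rho \colon \tau_{1}^{L}E \to \tau_{1}^{R}F$ (using $\tau_{1}^{R}i_{1}\cong \1$). The vanishing $p_{F}\circ \rho = 0$ yields $\tau_{1}^{R}p_{F}\circ \bar\rho = 0$, i.e.\ $\bar\rho$ is killed by the second morphism in the triangle of Lemma \ref{lem:morexact}. Applying $\Hom_{\mb D_{1}}(\tau_{1}^{L}E,\,-)$ to that triangle produces an exact sequence whose left term $\Hom_{\mb D_{1}}(\tau_{1}^{L}E,\, \Phi(\tau_{2}^{R}F)[-1])$ vanishes by hypothesis, so the next map is injective and $\bar\rho=0$; hence $\rho=0$ and $\varphi=\rho\circ p_{E}=0$. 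The argument is purely diagrammatic, so the only mild subtlety is checking that $\tau_{1}^{R}i_{1}\cong \1$ and that the semiorthogonal decomposition triangle is strictly natural in the relevant sense; once these are in place, the chase goes through without any serious obstacle.
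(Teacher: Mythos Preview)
Your argument is correct. Both your proof and the paper's are diagram chases through the semiorthogonal decomposition triangles, using the hypothesis $\Hom_{\mb D_{1}}(\tau_{1}^{L}E,\Phi(\tau_{2}^{R}F)[-1])=0$ as an injectivity statement at the end, so in spirit they are the same.

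The organization differs in a dual way worth noting. The paper first uses $\tau_{1}^{L}\varphi=0$ to factor $\varphi$ through $i_{2}\tau_{2}^{R}F$ on the \emph{codomain} side, obtaining $\psi\in\Hom_{\mb D}(E,i_{2}\tau_{2}^{R}F)$, and then uses $\tau_{2}^{R}\varphi=0$ together with a $2\times 2$ commutative square of Hom groups entirely inside $\mb D$ to kill $\psi$. You instead first use $\tau_{2}^{R}\varphi=0$ to factor through $i_{1}\tau_{1}^{L}E$ on the \emph{domain} side, obtaining $\rho$, and then pass via the adjunction $i_{1}\dashv\tau_{1}^{R}$ into $\mb D_{1}$ and invoke Lemma~\ref{lem:morexact} explicitly. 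Your route has the virtue of making clear exactly where Lemma~\ref{lem:morexact} enters, at the cost of checking the compatibilities $\tau_{1}^{R}i_{1}\cong\1$ and $\tau_{1}^{R}p_{F}=$ (the map $\tau_{1}^{R}F\to\tau_{1}^{L}F$ of that lemma); both are immediate from full faithfulness of $i_{1}$ and the proof of Lemma~\ref{lem:morexact}. The paper's route avoids these identifications by staying in $\mb D$ throughout.
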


\begin{proof}
Taking the semiorthogonal decomposition of $E$ and $F \in \mb D$, 
we obtain the following diagram of exact sequences: 
\begin{equation}\label{eq:morgluing}
\xymatrix{
\Hom_{\mb D}(i_{1}\tau_{1}^{L}E, i_{2}\tau_{2}^{R}F)	\ar[d]	\\
\Hom_{\mb D}(E, i_{2}\tau_{2}^{R}F)	\ar[r]^-{\bar s}\ar[d]_-{\bar t_{2}}	&	\Hom_{\mb D}(E, F)	\ar[r]^-{t_{1}}\ar[d]_-{t_{2}}	&	\Hom_{\mb D}(E, i_{1}\tau_{1}^{L}F)\ar[d]	\\
\Hom_{\mb D}(i_{2}\tau_{2}^{R}E, i_{2}\tau_{2}^{R}F)	\ar[r]^-{s}	&	\Hom_{\mb D}(i_{2}\tau_{2}^{R}E, F)		\ar[r]	 &	\Hom(i_{2}\tau_{2}^{R}E, i_{1}\tau_{1}^{L}F). 
}
\end{equation}
Note that $\Hom_{\mb D}(i_{1}\tau_{1}^{L}E, i_{2}\tau_{2}^{R}F)	 \cong 
\Hom_{\mb D_{1}}(\tau_{1}^{L}E, \Phi(\tau_{2}^{R}F)[-1])$. 
Hence the morphism $\bar t_{2}$ is injective. 
The assumption $\tau_{1}^{L}\varphi =\tau_{2}^{R}\varphi=0$ imply $t_{1}(\varphi)=t_{2}(\varphi)=0$, and we see 
that there exists a $\psi \in \Hom_{\mb D}(E, i_{2}\tau_{2}^{R}F)$ such that $\bar s(\psi )=\varphi$. 
By the semiorthogonality, the morphism $s$ above is an isomorphism. 
Since the composite $s \circ \bar t_{2}$ is injective, $\psi $ has to be in the kernel $\ker \bar t_{2}$ which is trivial. 
Thus $\psi $ is zero and so is $\bar s(\psi)=\varphi$. 
\end{proof}

\begin{cor}\label{cor:keyvanishing}
Let $\ms C$ be a stable infinity category, and put 
$\mb D=\ho{\ms C^{\Delta^{1}}}$. 
For objects $[f \colon x \to y]$ and $[g \colon z \to w]$ in $\ho{\ms C^{\Delta^{1}}}$, 
suppose a morphism $\tau \colon f \to g$ in $\ho{\ms C^{\Delta^{1}}}$ satisfies $d_{1}\tau=d_{0}\tau=0$. 
If $\Hom_{\ho{\ms C}}(x,w[-1])=0$, then $\tau$ is a zero morphism. 
\end{cor}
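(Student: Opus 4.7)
The plan is to apply the semiorthogonal decomposition $\ho{\ms C^{\Delta^{1}}}=\sod{\ho{\ms C}}{\ho{\ms C_{/0}}}$ (associated to the adjunction $d_0 \dashv s$) to the target object $F = [g\colon z \to w]$, and then combine the resulting long exact sequence with a second one coming from the fiber triangle of $g$ inside $\ho{\ms C}$.

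First I would write down the SOD-triangle for $F$,
\[
[\fib g \to 0] \longrightarrow F \longrightarrow s(w) \longrightarrow [\fib g \to 0][1],
\]
in $\ho{\ms C^{\Delta^{1}}}$, where the left-hand object is the $\ho{\ms C_{/0}}$-part of $F$ (identified with $\fib g$ via source-projection) and $s(w)=[\1_{w}]$. Applying $\Hom_{\ho{\ms C^{\Delta^{1}}}}(E,-)$ with $E=[f\colon x\to y]$, the adjunction $d_0 \dashv s$ gives $\Hom(E,s(w)) \cong \Hom_{\ho{\ms C}}(y,w)$, and a direct mapping-space computation gives $\Hom(E,[\fib g \to 0]) \cong \Hom_{\ho{\ms C}}(x, \fib g)$. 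A short diagram chase identifies the connecting map $\Hom(E,F) \to \Hom(y,w)$ with the functor $d_0$. Hence $d_0\tau=0$ lifts $\tau$ to some $\alpha \in \Hom_{\ho{\ms C}}(x, \fib g)$, and tracking the composition of squares shows the $d_1$-component of the lifted square is precisely the composite $x \xrightarrow{\alpha} \fib g \xrightarrow{k} z$, with $k$ the canonical fiber inclusion.

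Next, the condition $d_1\tau=0$ reads $k_{*}\alpha=0$. Applying $\Hom_{\ho{\ms C}}(x,-)$ to the fiber triangle $\fib g \xrightarrow{k} z \xrightarrow{g} w$ produces the long exact sequence
\[
\Hom_{\ho{\ms C}}(x,w[-1]) \longrightarrow \Hom_{\ho{\ms C}}(x, \fib g) \xrightarrow{k_{*}} \Hom_{\ho{\ms C}}(x,z),
\]
and the hypothesis $\Hom_{\ho{\ms C}}(x,w[-1])=0$ forces $k_{*}$ to be injective. Therefore $\alpha=0$ and consequently $\tau=0$.

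The main obstacle is essentially the bookkeeping in the first step: checking that the connecting map of the SOD long exact sequence is literally $d_0$ and that the $d_1$-component of the lift is exactly $k_{*}\alpha$. Both are naturality statements for the SOD-triangle combined with the adjunctions $d_0 \dashv s \dashv d_1$, so no new idea is required. It is worth noting that a naive direct application of Lemma \ref{lem:morvanishing} would require $\tau_{2}^{R}\tau=0$, which is strictly stronger than $d_0\tau=d_1\tau=0$ because $\tau_{2}^{R}$ captures higher infinity-categorical data (namely the homotopy filling the square). The two-step long-exact-sequence argument above is what matches the weaker hypothesis of the corollary.
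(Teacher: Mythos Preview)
Your argument is correct, but it takes a longer route than the paper's, and your closing remark misidentifies the situation. The paper \emph{does} apply Lemma~\ref{lem:morvanishing} directly---just to a different semiorthogonal decomposition than the one you chose. Rather than $\sod{\ho{\ms C}}{\ho{\ms C_{/0}}}$ (coming from $d_0\dashv s$), the paper uses the decomposition $\sod{\ho{\ms C_{/0}}}{\ho{\ms C_{0/}}}$ with $i_1=j_!$, $i_2=j_*$, $\tau_1^L=d_1$, $\tau_2^R=d_0$, and $\tau_1^R=\fib$. In that SOD the hypotheses $\tau_1^L\tau=0$ and $\tau_2^R\tau=0$ of Lemma~\ref{lem:morvanishing} are \emph{literally} $d_1\tau=0$ and $d_0\tau=0$; the gluing functor $\Phi=\tau_1^R\circ i_2[1]$ is the identity, so the vanishing condition becomes $\Hom_{\ho{\ms C}}(x,w[-1])=0$, and the lemma finishes in one line. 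Your two long exact sequences effectively reprove the lemma by hand in a decomposition where $\tau_2^R=\fib$ does not match the given data; this works, but the observation that ``a naive direct application would require $\tau_2^R\tau=0$, which is strictly stronger'' is an artifact of your choice of SOD, not an intrinsic obstacle. The useful takeaway is that $\ho{\ms C^{\Delta^1}}$ carries several semiorthogonal decompositions, and picking the one whose truncation functors are exactly $d_0$ and $d_1$ makes the corollary immediate.
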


\begin{proof}
Let $j_{!} \colon \ms C \to \ms C^{\Delta^{1}}$ be the right adjoint of $d_{1} \colon \ms C^{\Delta^{1}} \to \ms C$ and 
let $j_{*} \colon \ms C \to \ms C^{\Delta^{1}}$ be the left adjoint of $d_{0} \colon \ms C^{\Delta^{1}} \to \ms C$. 
Since the right adjoint $\tau_{1}^{R}$ of $j_{!}$ is taking the fiber of morphisms in $\ms C$, 
we have the diagram of the semiorthogonal decomposition with $\m1$:
\[
\xymatrix{
\ho{\ms C}	\ar@<0.8ex>[r]^-{j_{*}}	&	\ho{\ms C^{\Delta^{1}}}	\ar@<0.8ex>[r]^-{\tau_{1}^{L}=d_{1}}	\ar@<0.8ex>[l]^-{d_{0}}	\ar@<-2.4ex>[r]_-{\tau_{1}^{R}} &	\ho{\ms C}.	\ar@<0.8ex>[l]|-{j_{!}}
}
\]
Now clearly we see 
\begin{align*}
\Hom_{\ho{\ms C^{\Delta^{1}}}}(\tau_{1}^{L}f, \Phi(\tau_{2}^{R}g))&= \Hom_{\ho{\ms C}}(x, w[-1]). 
\end{align*}
Then Lemma \ref{lem:morvanishing} implies the desired assertion. 
\end{proof}

The following lemma is a revision of an argument in \cite[Lemma 2.1 and Theorem 3.6]{MR2721656} in terms of the gluing functor. 

\begin{lem}\label{lm:SODt-structure}
Let $\mb D= \<\mb D_{1}, \mb D_{2}	\>$ be a semiorthogonal decomposition with $\m1$ and let $(\mb D_{i}^{\leq 0}, \mb D_{i}^{\geq 0}) $ be a bounded $t$-structure on $\mb D_{i}$. 
Suppose that the $t$-structures $(\mb D_{i}^{\leq 0}, \mb D_{i}^{\geq 0})$ satisfies $\m3$. 
\begin{enumerate}
\item Then the pair of following full subcategories determines a $t$-structure on $\mb D$: 
\begin{align*}
\mb D^{\leq 0}	&=	\{	E \in \mb D	\mid \tau_{2}^{R}E \in \mb D_{2}^{\leq 0}\text{ and } \tau_{1}^{L}E \in \mb D_{1}^{\leq 0}\},  \text{ and}\\
\mb D^{\geq 0}	&=	\{	E \in \mb D	\mid \tau_{2}^{R}E \in \mb D_{2}^{\geq 0}\text{ and }\tau_{1}^{L}E \in \mb D_{1}^{\geq 0}\}. 
\end{align*}
\item Put $\mca A= \mb D^{\leq 0} \cap \mb D^{\geq 0}$. 
Suppose a torsion pair $(\mca T_{i}, \mca F_{i})$ on the heart $\mca A_{i}= \mb D_{i}^{\leq 0} \cap \mb D_{i}^{\geq 0}$ satisfies the condition $\m4$. 
Then the pair $(\mca T, \mca F)$ (below) gives a torsion pair on $\mca A$. 
\begin{align*}
\mca T	&= \{	E \in \mca A	\mid \tau_{2}^{R}E \in 	 \mca T_{2} \text { and }\tau_{1}^{L} E \in \mca T_{1}\}, \text{ and} \\
\mca F	&= \{	E \in \mca A	\mid \tau_{2}^{R}E \in 	 \mca F_{2}\text { and } \tau_{1}^{L} E \in \mca F_{1}\}. 
\end{align*}
\end{enumerate}
\end{lem}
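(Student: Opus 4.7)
The plan is to verify, for part (1), the axioms of a bounded $t$-structure for $(\mb D^{\leq 0}, \mb D^{\geq 0})$, and for part (2), both the Hom-vanishing and the existence of torsion decompositions for every $E \in \mca A$. The central tool throughout is the semiorthogonal triangle $i_{2}\tau_{2}^{R}E \to E \to i_{1}\tau_{1}^{L}E$ combined with Lemma~\ref{lem:morexact}, which expresses $\tau_{1}^{R}F$ as an extension of $\tau_{1}^{L}F$ by $\Phi(\tau_{2}^{R}F)[-1]$. A direct consequence of the definition of $\mb D^{\leq 0}, \mb D^{\geq 0}$ is that under $\m3$ the functors $i_{1}$, $i_{2}$, $\tau_{1}^{L}$, $\tau_{2}^{R}$ are all $t$-exact for the glued $t$-structure, so once (1) is established $\mca A$ is abelian and the semiorthogonal triangle of any $E \in \mca A$ is a short exact sequence there.

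For part (1), the shift axioms $\mb D^{\leq -1} \subset \mb D^{\leq 0}$ and $\mb D^{\geq 1} \subset \mb D^{\geq 0}$ are immediate from the definition. For the Hom-vanishing $\Hom_{\mb D}(E, F[-1]) = 0$ with $E \in \mb D^{\leq 0}$ and $F \in \mb D^{\geq 0}$, the semiorthogonal triangle of $E$ combined with the adjunctions $i_{2} \dashv \tau_{2}^{R}$ and $i_{1} \dashv \tau_{1}^{R}$ reduces the claim to $\Hom_{\mb D_{2}}(\tau_{2}^{R}E, \tau_{2}^{R}F[-1]) = 0$ and $\Hom_{\mb D_{1}}(\tau_{1}^{L}E, \tau_{1}^{R}F[-1]) = 0$. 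The first is tautological; for the second, applying $\Hom_{\mb D_{1}}(\tau_{1}^{L}E, -)$ to the triangle of Lemma~\ref{lem:morexact} reduces it to vanishings against $\tau_{1}^{L}F[-1] \in \mb D_{1}^{\geq 1}$ and $\Phi(\tau_{2}^{R}F)[-2] \in \mb D_{1}^{\geq 2}$, where $\m3$ is used to control the image of $\Phi$. Existence of the truncation triangles follows by constructing the truncations of $\tau_{2}^{R}E$ on $\mb D_{2}$ and $\tau_{1}^{L}E$ on $\mb D_{1}$ and gluing them via the octahedral axiom along the semiorthogonal triangle of $E$, a standard recollement-type construction.

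For part (2), the Hom-vanishing $\Hom_{\mb D}(T, F) = 0$ for $T \in \mca T$, $F \in \mca F$ proceeds in parallel, again using $\m3$ to place $\Phi(\tau_{2}^{R}F)[-1] \in \mb D_{1}^{\geq 1}$ and the defining vanishings $\Hom(\mca T_{j}, \mca F_{j}) = 0$ on each heart. The main technical obstacle is the construction, for every $E \in \mca A$, of a short exact sequence $0 \to T \to E \to F \to 0$ in $\mca A$ with $T \in \mca T$ and $F \in \mca F$. Let $0 \to t_{2} \to \tau_{2}^{R}E \to f_{2} \to 0$ and $0 \to t_{1} \to \tau_{1}^{L}E \to f_{1} \to 0$ denote the torsion decompositions in $\mca A_{2}$ and $\mca A_{1}$. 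Setting $F_{1} := E/i_{2}t_{2} \in \mca A$ produces a short exact sequence $0 \to i_{2}f_{2} \to F_{1} \to i_{1}\tau_{1}^{L}E \to 0$; the preimage of $i_{1}t_{1} \subset i_{1}\tau_{1}^{L}E$ under this surjection is a subobject $P \subset F_{1}$ fitting in
\[
0 \to i_{2}f_{2} \to P \to i_{1}t_{1} \to 0.
\]
This extension is classified by
\[
\Ext^{1}_{\mca A}(i_{1}t_{1}, i_{2}f_{2}) \cong \Hom_{\mb D}(i_{1}t_{1}, i_{2}f_{2}[1]) \cong \Hom_{\mb D_{1}}(t_{1}, \Phi f_{2}),
\]
using the adjunction $i_{1} \dashv \tau_{1}^{R}$ and the definition $\Phi = \tau_{1}^{R} \circ i_{2}[1]$. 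Condition $\m4$ enters decisively: it gives $\Phi f_{2} \in \mca F_{1}$, so together with $t_{1} \in \mca T_{1}$ the Hom vanishes and the extension splits. The resulting section $i_{1}t_{1} \hookrightarrow F_{1}$ lets us set $F := F_{1}/i_{1}t_{1}$ and $T := \Ker(E \twoheadrightarrow F)$; exactness of $\tau_{2}^{R}$ and $\tau_{1}^{L}$ on $\mca A$ then yields $\tau_{2}^{R}T = t_{2}$, $\tau_{1}^{L}T = t_{1}$, $\tau_{2}^{R}F = f_{2}$, $\tau_{1}^{L}F = f_{1}$, whence $T \in \mca T$ and $F \in \mca F$. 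Isolating the role of $\m4$ as exactly this splitting of the middle extension is the crux of the argument.
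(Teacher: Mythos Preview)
Your proof is correct. Both parts hinge on the same vanishing $\Hom_{\mb D}(i_{1}t_{1}, i_{2}f_{2}[1]) \cong \Hom_{\mb D_{1}}(t_{1}, \Phi f_{2}) = 0$ coming from $\m4$, and for part~(1) your direct verification of the axioms amounts to the same computation the paper feeds into \cite[Lemma~2.1]{MR2721656}. The genuine difference is in how part~(2) builds the torsion decomposition of $E$. The paper stays in the triangulated category: it uses the vanishing to factor the extension class $i_{1}\tau_{1}^{L}E \to i_{2}\tau_{2}^{R}E[1]$ into compatible maps $i_{1}t_{1} \to i_{2}t_{2}[1]$ and $i_{1}f_{1} \to i_{2}f_{2}[1]$, then invokes the $3\times 3$ lemma to produce $T \to E \to F$. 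You instead work entirely inside the abelian heart $\mca A$: form $F_{1}=E/i_{2}t_{2}$, pull back along $i_{1}t_{1}\hookrightarrow i_{1}\tau_{1}^{L}E$ to get an extension of $i_{1}t_{1}$ by $i_{2}f_{2}$, and use the same vanishing to \emph{split} it, yielding a section whose cokernel is $F$. Your route avoids the $3\times 3$ lemma and makes the abelian-category content explicit; the paper's route is shorter and stays closer to the Collins--Polishchuk framework. Either way, the decisive input is exactly the one you isolate: $\m4$ forces $\Phi f_{2}\in\mca F_{1}$, which kills the obstruction.
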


\begin{proof}
Put $\mca A_{i}:=\mb D_{i}^{\leq 0} \cap \mb D_{i}^{\geq 0}$. 
To prove the first assertion, by \cite[Lemma 2.1]{MR2721656}, it is enough to show $\Hom_{\mb D}(i_{1}E_{1}, i_{2}E_{2}[p])=0$ for any $E_{i} \in \mca A_{i} $ ($i=1,2$) and $p \leq 0$. 
By the adjunction, we see 
\[
\Hom_{\mb D}(i_{1} E_{1} , i_{2}E_{2}[p])	\cong \Hom_{\mb D_{1}}(E_{1}, \tau_{1}^{R}i_{2}E_{2}[p])
=\Hom_{\mb D_{1}}(E_{1}, \Phi(E_{2})[p-1]). 
\]
Since the gluing functor is $t$-exact, we have $\Phi (E_{2}) \in \mca A_{1}$. 
Thus $\Hom_{\mb D_{1}}(i_{1}E_{1}, i_{2}E_{2}[p]) =0$. 

To prove the second assertion, put $E_{2} $ by $\tau_{2}^{R}E$ and $E_{1}$ by $\tau_{1}^{L}E$. 
let $T_{i}$ (resp. $F_{i}$) be the torsion part of $E_{i}$ (the free part of $E_{i}$). 
Now we claim $\Hom_{\mb D}(i_{1}T_{1}, i_{2}F_{2}[1])=0$. 
In fact, the adjunction and the condition $\m4$ imply the vanishing as follows: 
\[
\Hom_{\mb D}(i_{1}T_{1}, i_{2}F_{2}[1])=
\Hom_{\mb D_{1}}(T_{1}, \tau_{1}^{R}i_{2}F_{2}[1]) =
\Hom_{\mb D_{1}}(T_{1}, \Phi (F_{2})) =0. 
\]
Then the vanishing $\Hom_{\mb D}(i_{1}T_{1}, i_{2}F_{2}[1])=0$ implies a morphism $i_{1}T_{1} \to i_{2}T_{2}[1]$ and $i_{1}F_{1} \to i_{2}F_{2}[1]$ which commute the following diagram: 
\begin{equation*}
\xymatrix{
i_{1}T_{1}	\ar[r]\ar[d]	&	i_{1}E_{1}	\ar[r]\ar[d]	&	i_{1}F_{1}\ar[d]	\\	
i_{2}T_{2}[1]	\ar[r]	&	i_{2}E_{2}[1]	\ar[r]	&	i_{2}F_{2}[1]. 
}
\end{equation*}
The middle vertical arrow represents the object $E$. 
Then the $3\times 3$ lemma in triangulated categories gives the distinguished triangle 
\[
\xymatrix{
T	\ar[r]	&	E	\ar[r]	&	F
}
\]
where $T$ (resp. $F$) is the mapping cone of $i_{1}T_{1} \to i_{2}T_{2}[1]$ (resp. $i_{1}F_{1} \to i_{2}F_{2}[1]$). 
Finally one can show the vanishing $\Hom(T, F)=0$ by the vanishing $\Hom(i_{1}T_{1}, i_{2}F_{2})=0$ and digram chasing. 
\end{proof}

\begin{dfn}
Keep the notation as in Lemma \ref{lm:SODt-structure}. 
We denote by $\gl{\mca A_{1}}{\mca A_{2}}$ the heart of the $t$-structure defined in Lemma \ref{lm:SODt-structure}. 
\end{dfn}

\begin{lem}\label{lem:morsub}
Let $\mb D=\sod{\mb D_{1}}{\mb D_{2}}$ be a semiorthogonal decomposition with $\m2$. 
Suppose that $t$-structures $(\mb D_{i}^{\leq 0}, \mb D_{i}^{\geq 0})$ on $\mb D_{i}$ satisfies $\m3$. 
The heart of the $t$-structure is denoted by $\mca A_{i}$. 

Then a subobject $F$ of $\tau_{1}^{L}E$ of $E \in \gl{\mca A_{1}}{\mca A_{2}}$ in $\mca A_{1}$ determines a subobject $\tilde{F}$ of $E$ in $\gl{\mca A_{1}}{\mca A_{2}}$. 
\end{lem}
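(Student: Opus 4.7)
The plan is to construct $\tilde{F}$ as a homotopy pullback inside $\mb D$. Since $E \in \gl{\mca A_1}{\mca A_2}$, the canonical semiorthogonal triangle
\[
i_2 \tau_2^R E \to E \to i_1 \tau_1^L E \to i_2 \tau_2^R E[1]
\]
has $\tau_2^R E \in \mca A_2$ and $\tau_1^L E \in \mca A_1$. Composing the given subobject inclusion $i_1 F \to i_1 \tau_1^L E$ with the connecting morphism of this triangle, I would define $\tilde{F}$ by the distinguished triangle
\[
i_2 \tau_2^R E \to \tilde{F} \to i_1 F \to i_2 \tau_2^R E[1],
\]
which realizes $\tilde{F}$ as the homotopy pullback of the diagram $E \to i_1 \tau_1^L E \leftarrow i_1 F$ and supplies a canonical morphism $\tilde{F} \to E$.

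To confirm that $\tilde{F}$ lies in the glued heart, I would apply the two projection functors to the defining triangle. Using $\tau_2^R i_2 = \1$ and $\tau_2^R i_1 = 0$ (the latter obtained from the semiorthogonal triangle of any object of the form $i_1 X$), one obtains $\tau_2^R \tilde{F} \cong \tau_2^R E$, which lies in $\mca A_2$ because $E \in \gl{\mca A_1}{\mca A_2}$. Dually, the identities $\tau_1^L i_2 = 0$ and $\tau_1^L i_1 = \1$ yield $\tau_1^L \tilde{F} \cong F \in \mca A_1$. By the description of the glued $t$-structure in Lemma \ref{lm:SODt-structure}(1), this places $\tilde{F}$ in $\gl{\mca A_1}{\mca A_2}$.

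It then remains to check that the canonical morphism $\tilde{F} \to E$ is monic in the abelian category $\gl{\mca A_1}{\mca A_2}$, which amounts to showing that its mapping cone also sits in the heart. The morphism of triangles whose leftmost component is the identity on $i_2 \tau_2^R E$ and whose right square is the inclusion $i_1 F \hookrightarrow i_1 \tau_1^L E$ shows, via the octahedral axiom (or equivalently the $3\times 3$ lemma), that the mapping cone of $\tilde{F} \to E$ is isomorphic to that of $i_1 F \to i_1 \tau_1^L E$. Since $F \hookrightarrow \tau_1^L E$ is a monomorphism in $\mca A_1$, this cone is $i_1(\tau_1^L E/F)$, which lies in $i_1 \mca A_1 \subset \gl{\mca A_1}{\mca A_2}$. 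Hence $\tilde{F} \to E$ is a monomorphism, and $\tilde{F}$ is the desired subobject. The argument is essentially formal; the only real delicacy is bookkeeping the four adjunction-induced vanishings $\tau_k^? i_l = 0$ for $k \neq l$ together with the $t$-exactness packaged in $\m3$, so no serious obstacle is anticipated.
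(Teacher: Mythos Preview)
Your argument is correct and actually simpler than the paper's: you build $\tilde F$ as the pullback of $E \to i_1\tau_1^L E \leftarrow i_1 F$, so that $\tau_2^R\tilde F \cong \tau_2^R E$ and $\tau_1^L\tilde F \cong F$, and the cone of $\tilde F \to E$ is $i_1(\tau_1^L E/F)$. This uses nothing beyond the glued $t$-structure of Lemma~\ref{lm:SODt-structure}; in particular the equivalences in $\m2$ play no role.

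The paper, however, constructs a \emph{different} (smaller) subobject. It takes the composite $f\colon F \hookrightarrow \tau_1^L E \xrightarrow{g_E} \Phi(\tau_2^R E)$, forms $\im f$ in $\mca A_1$, transports it to $\mb D_2$ via the equivalence in $\m2$, and defines $\tilde F$ by the triangle $i_2(\im f)[-1] \to \tilde F \to i_1 F$. The result has $\tau_1^L\tilde F \cong F$ as in your construction, but $\Phi(\tau_2^R\tilde F) \cong \im f$ rather than $\Phi(\tau_2^R E)$; this extra feature is recorded immediately afterwards in Remark~\ref{rmk:morsub}. That remark is invoked repeatedly later (e.g.\ in the proofs of Proposition~\ref{prop:centralcharge} and Lemma~\ref{lem:comp-slope}), where one needs precisely a subobject with $\tau_1^L\tilde F \cong \tau_1^L E$ and $\Phi(\tau_2^R\tilde F) \cong \tau_1^L E$ after $g_E$ is shown to be a monomorphism. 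Your pullback $\tilde F$ would instead give $\Phi(\tau_2^R\tilde F) \cong \Phi(\tau_2^R E)$, which is not what those arguments require. So while your proof establishes the lemma as literally stated, it does not produce the specific subobject the paper relies on downstream; if you adopt your construction you would need to redo those later steps, whereas the paper's image-based construction is tailored to them.
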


\begin{proof}Set $E_{1}=\tau_{1}^{L}E$ and $E_{2}=\tau_{2}^{R}E$. 

Let $g_{E}$ be the morphism $E_{1} \to \Phi (E_{2})$ which represents the extension class $[E] \colon i_{1}E_{1} \to i_{2}E_{2}[1]$ via the adjunction $\Hom_{\mb D}(i_{1}E_{1}, i_{2}E_{2}[1]) \cong \Hom_{\mb D_{1}}(E_{1}, \Phi(E_{2}))$. 
We denote by $f \colon F \to \Phi (E_{2})$ the restriction of $g_{E}$ to $F$. 
Under the equivalence $\mb D_{1} \sim \mb C$, 
Then we have the following commutative diagram in $\mb C$:
\[
\xymatrix{
E_{1}	\ar[r]^-{g_{E}}	&	\Phi (E_{2})	\\
F	\ar[u]^{\iota}\ar[ur]|{f}\ar[r]	& \im f , \ar[u]	
}
\]
where $\im f$ is the image of $f$. 
Note that $\im f \cong \Phi (\im f[-1])$. 
By  the ismorphism 
$\Hom_{\mb C}(F, \im f) \cong \Hom_{\mb D}(i_{1}F, i_{2}(\im f))$
, we obtain the following diagram in $\mb D$: 
\[
\xymatrix{
i_{1}E_{1}	\ar[r]^-{[E]}	&	(i_{2}E_{2})[1]	\\
i_{1}F	\ar[u]^{i_{1}\iota}\ar[r]	& i_{2}(\im f) .\ar[u]	
}
\]
Then define $\tilde F[1]$ by the mapping cone of $i_{1}F \to i_{2}(\im f)$. 
Then there exists a morphism $j \colon F \to E$ such that $\tau_{1}^{L}j=\iota$. 
Since $\tau_{2}^{R}j$ is the inclusion, $F$ gives a subobject of $E$ by the $3 \times 3$ lemma in triangulated categories. 
\end{proof}

\begin{rmk}\label{rmk:morsub}
By the construction of $\tilde F$, $\tau_{2}^{R}\tilde F$ is isomorphic to $(\im f)[-1]$. 
\end{rmk}

\section{Slope on the gluing heart}

The following construction of stability conditions on $\mb D$ was motivated from \cite[Theorem 3.6]{MR2721656} and the previous work \cite{morphismstability}. 
\begin{prop}
Let $\mb D=\sod{\mb D_{1}}{\mb D_{2}}$ be a semiorthogonal decomposition with $\m1$ 
and let $\sigma_{i}=(\mca A_{i}, Z_{i})$ be a reasonable stability condition on $\mb D_{i}$. 
Suppose $(\sigma_{1}, \sigma_{2})$ satisfies $\m5$. 
Define $\gl{Z_{1}}{Z_{2}}  \colon K_{0}(\mb D) \to \bb C$ by   
\begin{align*}
\gl{Z_{1}}{Z_{2}}(E)	:=	Z_{1}(\tau_{1}^{L}E) + Z_{2}(\tau_{2}^{R}E). 
\end{align*}

If $\Phi(E_{2})$ is $\sigma_{1}$-semistable for any $\sigma_{2}$-semistable object $E_{2}  \in \mca A_{2}$,  
then the pair $(\gl{\mca A_{1}}{\mca A_{2}}, \gl{Z_{1}}{Z_{2}})$ which will be denoted by $\gl{\sigma_{1}}{\sigma_{2}}$ 
is a locally finite stability condition on $\mb D$. 
\end{prop}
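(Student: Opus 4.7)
The plan is to verify the three defining properties of a stability condition via the heart description: that $\gl{Z_{1}}{Z_{2}}$ is a central charge on $\gl{\mca A_{1}}{\mca A_{2}}$, that the Harder-Narasimhan property holds, and that the result is locally finite.

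First I will check the central charge property. For a nonzero $E\in\gl{\mca A_{1}}{\mca A_{2}}$, both $E_{1}:=\tau_{1}^{L}E\in\mca A_{1}$ and $E_{2}:=\tau_{2}^{R}E\in\mca A_{2}$, so each $Z_{i}(E_{i})$ lies in $\overline{\mca H}\setminus\bb R_{>0}$ by the central charge axiom for $\sigma_{i}$ on $\mca A_{i}$, and vanishes iff the corresponding $E_{i}$ does. If $\I\gl{Z_{1}}{Z_{2}}(E)>0$ the argument lies in $(0,\pi)$; otherwise both $\I Z_{i}(E_{i})=0$, so each $Z_{i}(E_{i})\in\bb R_{\leq 0}$, and at least one is strictly negative since $E\neq 0$, whence $\gl{Z_{1}}{Z_{2}}(E)\in\bb R_{<0}$. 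Either way the argument lies in $(0,\pi]$.

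Next I identify the basic semistable objects. For $A_{1}\in\mca A_{1}$ that is $\sigma_{1}$-semistable of phase $\phi$, the object $i_{1}A_{1}$ is $\gl{\sigma_{1}}{\sigma_{2}}$-semistable of phase $\phi$. Indeed, $\tau_{1}^{L}$ and $\tau_{2}^{R}$ are exact on the hearts by the construction in Lemma \ref{lm:SODt-structure}, so a subobject $F\hookrightarrow i_{1}A_{1}$ satisfies $\tau_{2}^{R}F\hookrightarrow\tau_{2}^{R}i_{1}A_{1}=0$, hence $F\cong i_{1}\tau_{1}^{L}F$ with $\tau_{1}^{L}F\subset A_{1}$, and $\sigma_{1}$-semistability of $A_{1}$ gives the phase bound. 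Symmetrically, $i_{2}A_{2}$ is semistable when $A_{2}$ is $\sigma_{2}$-semistable, and its $\gl{\sigma_{1}}{\sigma_{2}}$-phase equals its $\sigma_{2}$-phase, which by $\m5$ coincides with the $\sigma_{1}$-phase of the $\sigma_{1}$-semistable object $\Phi A_{2}$ (using the hypothesis of the proposition). I then define the slicing $\mca P(\phi)\subset\mb D$ to be the extension closure of $i_{1}\mca P_{1}(\phi)\cup i_{2}\mca P_{2}(\phi)$. The required Hom-vanishing $\Hom(\mca P(\phi),\mca P(\psi))=0$ for $\phi>\psi$ reduces on generators to four cases: the $ii$-$ii$ and $22$-$22$ cases follow from the $\sigma_{i}$-Hom-vanishings; $\Hom(i_{2}Y,i_{1}X)=0$ by semiorthogonality; and $\Hom(i_{1}X,i_{2}Y)\cong\Hom_{\mb D_{1}}(X,\Phi Y[-1])$ vanishes since $\Phi Y[-1]\in\mca P_{1}(\psi-1)$ and $\phi>\psi>\psi-1$.

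The hard step is constructing the Harder-Narasimhan filtration of a general $E\in\gl{\mca A_{1}}{\mca A_{2}}$. I start from the two-step filtration $0\to i_{2}E_{2}\to E\to i_{1}E_{1}\to 0$, refine each outer piece using the $\sigma_{i}$-HN filtration of $E_{i}$ (lifted via $i_{i}$ to filtrations of $i_{2}E_{2}$ and $i_{1}E_{1}$ whose factors are semistable by the previous paragraph), and then reorder adjacent factors of wrong phase order by the standard mutation argument, whose vanishings are supplied by the Hom-computations above. The main obstacle is precisely this interleaving: the two sub-filtrations have phases $\phi_{1}>\dots>\phi_{m}$ and $\psi_{1}>\dots>\psi_{n}$ which are not yet globally decreasing. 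What enables the swaps is condition $\m5$, namely $Z_{2}=Z_{1}\circ\Phi$ together with the hypothesis that $\Phi$ preserves semistability: these ensure all phases live on a single real line and that the cross Hom-vanishing is compatible with the desired reordering. Finally, local finiteness of $\gl{\sigma_{1}}{\sigma_{2}}$ reduces to that of $\sigma_{1}$ and $\sigma_{2}$, since an infinite strict chain in the quasi-abelian $\mca P(\phi-\epsilon,\phi+\epsilon)$ would project via $\tau_{1}^{L}$ and $\tau_{2}^{R}$ to chains in the corresponding quasi-abelian categories for $\sigma_{1}$ and $\sigma_{2}$, contradicting their assumed local finiteness.
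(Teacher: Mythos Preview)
Your approach is correct but takes a different route from the paper. The paper invokes \cite[Theorem~3.6]{MR2721656} directly and only verifies its two Hom-vanishing hypotheses via the adjunction $\Hom_{\mb D}(i_{1}X,i_{2}Y[p])\cong\Hom_{\mb D_{1}}(X,\Phi(Y)[p-1])$; the Harder--Narasimhan property and local finiteness then come for free from Collins--Polishchuk. You instead rebuild the slicing from scratch as the extension closure of $i_{1}\mca P_{1}(\phi)\cup i_{2}\mca P_{2}(\phi)$ and produce the HN filtration by refining the semiorthogonal two-step filtration and reordering, which essentially re-proves the relevant special case of the cited theorem. Two points in your sketch deserve a little more care. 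First, the swap of an adjacent pair $(i_{2}B_{j},i_{1}A_{k})$ with $\psi_{j}<\phi_{k}$ needs $\Ext^{1}(i_{1}A_{k},i_{2}B_{j})=\Hom_{\mb D_{1}}(A_{k},\Phi B_{j})=0$; this does follow from your adjunction computation, but it is not literally an instance of the slicing orthogonality $\Hom(\mca P(\phi),\mca P(\psi))=0$ for $\phi>\psi$ that you cite, and it is worth observing that throughout the bubble-sort the factors remain pure, so only $i_{2}$-then-$i_{1}$ inversions ever arise and no mixed-type swap is required. Second, for local finiteness you should invoke that $\tau_{1}^{L}$ and $\tau_{2}^{R}$ are exact on the glued heart and jointly conservative, so that once both projected chains stabilize the original chain must as well. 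With these details filled in your self-contained argument goes through; the paper's proof is simply shorter because it outsources exactly this work to \cite{MR2721656}.
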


\begin{proof}
Let $\mca P_{i}$ be the slicing of $\sigma_{i}$ for $i \in \{1,2\}$. 
Following \cite[Theorem 3.6]{MR2721656}, it is enough to show 
\begin{enumerate}
\item[(a)] $\Hom_{\mb D}(i_{1}E_{1}, i_{2}E_{2}[p])=0$ for any $E_{i }\in \mca A_{i}$ and any $p\leq 0$, and 
\item[(b)] there exists an $a$ in the interval $(0,1)$ such that $\Hom_{\mb D}(i_{1}F_{1}, i_{2}F_{2}[p])=0$ for any $F_{i} \in \mca P_{i}(a, a+1]$ and any $p \leq 0$. 
\end{enumerate}

Since $\Phi (\mca A_{2}) \subset \mca A_{1}$ by the condition $\m5$, we see
\begin{align*}
\Hom_{\mb D}(i_{1}E_{1}, i_{2}E_{2}[p]) &\cong \Hom_{\mb D_{1}}(E_{1}, \Phi(E_{2})[p-1])	=	0. 
\end{align*}
This gives the proof of (a). 

Now for any $F_{i} \in \mca P_{i}(a, a+1]$, there exists a canonical triangle in $\mb D_{i}$
\[ 
\xymatrix{
F_{i}^{\leq a}[1]	\ar[r]	&	F_{i}	\ar[r]	&	F_{i}^{>a}	\ar[r]	&	F_{i}^{\leq a}[2], 
}
\]
where $F_{i}^{\leq a } \in \mca P_{i}(0,a]$ and $F_{i}^{>a} \in \mca P_{i}(a, 1]$. 
The assertion (a) implies $\Hom_{\mb D}(i_{1}F_{i}^{\leq a}[1], F_{2}[p])=0$ and 
$\Hom_{\mb D}(i_{1}F_{1}^{>a}, i_{2}F_{2}^{>a}[p])=0$ for $p \leq 0$. 
Hence  it is enough to show 
$\Hom_{\mb D}(i_{1}F_{1}^{>a}, i_{2}F_{2}^{\leq a}[p+1])=0$ for any $p \leq 0$. 

By the assumption, we see $\Phi( \mca P_{2}(\phi)) \subset \mca P_{1}(\phi)$ for any $\phi \in \bb R$. 
Hence we have 
\begin{align*}
\Hom_{\mb D}(i_{1}F_{1}^{>a}, i_{2}F_{2}^{\leq a}[p+1])	 \cong 
\Hom_{\mb D_{1}}(F_{1}^{>a}, \Phi(F_{2}^{\leq a}[p]))=0, 
\end{align*}
which gives the proof. 
\end{proof}

\begin{rmk}
If the semiorthogonal decomposition $\mb D=\sod{\mb D_{1}}{\mb D_{2}}$ satisfies $\m2$, 
then $\Phi$ is an equivalence. 
Hence $\Phi(E_{2})$ is $\sigma_{1}$-semistable for any $\sigma_{2}$-semistable object $E_{2}$. 
\end{rmk}

Next we introduce a slope on the heart $\gl{\mca A_{1}}{\mca A_{2}}$ which is an analogy of the slope stability of coherent sheaves on projective surfaces. 
Similarly to the case of surfaces, a suitable ``freeness'' of objects in $\mb D=\sod{\mb D_{1}}{\mb D_{2}}$ is necessary.

\begin{prop-dfn}
Let $\mb D=\sod{\mb D_{1}}{\mb D_{2}}$ be a semiorthogonal decomposition and 
let $\sigma _{i}=(\mca A_{i}, Z_{i})$ be in $\Stab{\mb D_{i}}$. 
Suppose that $(\sigma_{1}, \sigma_{2})$ satisfies the condition $\m4$. 

Then the pair $(\mca T(\sigma_{1}, \sigma_{2}), \mca F(\sigma_{1}, \sigma _{2}))$ 
\begin{align*}
\mca T(\sigma_{1}, \sigma_{2})	&=	\{	E \in \gl{\mca A_{1}}{\mca A_{2}}	\mid	\tau_{1}^{L}E \in \mca T^{\sigma_{1}}, \tau_{2}^{R}E \in \mca T^{\sigma_{2}}	\}	, \text{ and }\\
\mca F(\sigma_{1}, \sigma_{2})	&=	\{	E \in \gl{\mca A_{1}}{\mca A_{2}}	\mid		\tau_{1}^{L}E \in \mca F^{\sigma_{1}}, \tau_{2}^{R}E \in \mca F^{\sigma_{2}}	\}	. 
\end{align*}
gives a torsion pair on the gluing $t$-structure $\gl{\mca A_{1}}{\mca A_{2}}$. 
We refer to the torsion pair as the gluing torsion pair by $(\sigma_{1},\sigma_{2})$. 

An object $E \in \gl{\mca A_{1}}{\mca A_{2}} $ is said to be $(\sigma_{1}, \sigma_{2})$-torsion (resp. $(\sigma_{1}, \sigma_{2})$-free) if $E$ is in $\mca T(\sigma_{1}, \sigma_{2})$ (resp. $\mca F(\sigma_{1}, \sigma_{2})$)
\end{prop-dfn}

\begin{proof}
The assertion follows from Lemma \ref{lm:SODt-structure}. 
\end{proof}

\begin{dfn}\label{dfn:fakeslope}
Let $\mb D=\< \mb D_{1}, \mb D_{2}\>$ be a semiorthogonal decomposition of $\mb D$ and 
let $\sigma _{i} =(\mca A_{i}, Z_{i}) \in \Stab{\mb D_{i}}$. 

Define $\Mu \colon K_{0}(\gl{\mca A_{1}}{\mca A_{2}}) \to  \bb C$ by 
\begin{equation}\label{eq:slope}
\Mu(E)	=	-\I Z_{1}(\tau_{1}^{L} E) + \omega \R Z_{2}(\tau_{2}^{R}E) -\beta \I Z_{2}(\tau_{2}^{R}E)
+\sqrt{-1}(\I Z_{1}(\tau_{1}^{L}E)	+	\I Z_{2} (\tau_{2}^{R}E)), 
\end{equation}
where $\beta \in \bb R$ and $\omega \in \bb R_{>0}$. 
\end{dfn}

\begin{rmk}
If $\I \Mu (E)=0$, then $\I \Mu (E) $ is non-negative and $\R \Mu (E)$ is non-positive. 
Moreover $\Mu(E)=0$ if and only if $\tau_{2}^{R}E=0$ and $\tau_{1}^{L}E$ is in the torsion part $\mca T^{\sigma_{1}}$ of $\sigma _{1}$. 
\end{rmk}

Now we wish to define a stability condition which is analogous to the slope stability on algebraic surfaces. 
The stability is necessary for the construction of a torsion pair on the heart $\gl{\mca A_{1}}{\mca A_{2}}$.

\begin{prop}
\label{prop:morslope}
Let $\mb D=\< \mb D_{1}, \mb D_{2}\>$ be a semiorthogonal decomposition of $\mb D$ with $\m1$. 
Choose stability conditions $\sigma _{i} =(\mca A_{i}, Z_{i}) \in \Stab{\mb D_{i}}$ such that $(\sigma_{1}, \sigma_{2})$ satisfies the condition $\m4$. 
Define a full sub category $\mca B$ of $\gl {\mca A_{1}}{\mca A_{2}}$ by 
\[
\mca B= \{	E	\in	\gl {\mca A_{1}}{\mca A_{2}}	\mid  \Mu(E)=0	\}\
\]
\begin{enumerate}
\item $\mca B$ is a Serre subcategory of $\gl{\mca A_{1}}{\mca A_{2}}$. 
\item Let $\mca C$ be the Serre quotient category $\gl{\mca A_{1}}{\mca A_{2}}/\mca B$. 
If both $\sigma_{1}$ and $\sigma _{2}$ are discrete, 
then the pair $\tau=(\mca C, \Mu)$ is a stability condition on the abelian category $\mca C$. 
\item In addition to (2), suppose that $E_{1}$ satisfies $\I Z_{1}(E_{1})>0$. Then $i_{1}(E_{1})$ is $\tau$-semistable with $\arg \Mu(i_{1}E_{1})=3\pi/4$. 
\item The stability condition $\tau$ defined in (2) satisfies the support property. 
\end{enumerate}
\end{prop}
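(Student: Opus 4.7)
The plan is to handle the four parts in turn, treating (1) and (2) as structural consequences of the exactness of $\tau_{1}^{L}$ and $\tau_{2}^{R}$ on the gluing heart, (3) as a direct computation, and (4) via a Bogomolov-type quadratic form.

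For parts (1) and (2), the key observation is that $\tau_{1}^{L}$ and $\tau_{2}^{R}$ restrict to exact functors from $\gl{\mca A_{1}}{\mca A_{2}}$ to $\mca A_{1}$ and $\mca A_{2}$ respectively, because a triangulated functor that sends a heart into a heart must convert distinguished triangles into short exact sequences. The subcategory $\mca T^{\sigma_{1}}$ is Serre in $\mca A_{1}$ as the vanishing locus of $\I Z_{1} \geq 0$, and together with the characterization $E \in \mca B$ if and only if $\tau_{2}^{R} E = 0$ and $\tau_{1}^{L} E \in \mca T^{\sigma_{1}}$, this immediately yields (1). For (2), $\Mu$ descends to $\mca C$ by the definition of $\mca B$; I would verify that $\Mu$ is a central charge using the remark after Definition \ref{dfn:fakeslope}, together with the fact that $\Mu(E)=0$ forces $E \in \mca B$ (the vanishing of both real and imaginary parts yields $Z_{2}(\tau_{2}^{R} E)=0$, and since any nonzero object of a heart has nonzero central charge, $\tau_{2}^{R}E = 0$). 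The Harder--Narasimhan property follows from the discreteness of $\Mu$ inherited from $Z_{1}$ and $Z_{2}$, via the standard chain-condition argument adapted to the Serre quotient $\mca C$.

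Part (3) is a direct computation: substituting $\tau_{1}^{L}(i_{1}E_{1})=E_{1}$ and $\tau_{2}^{R}(i_{1}E_{1})=0$ into (\ref{eq:slope}) gives $\Mu(i_{1}E_{1}) = \I Z_{1}(E_{1})(-1+\sqrt{-1})$, whose argument is $3\pi/4$. For $\tau$-semistability, a nontrivial sub-object $F$ of $i_{1}E_{1}$ in $\mca C$ lifts to a sub-object in the gluing heart; exactness of $\tau_{2}^{R}$ forces $\tau_{2}^{R}F \hookrightarrow \tau_{2}^{R}(i_{1}E_{1})=0$, and hence $\Mu(F)=\I Z_{1}(\tau_{1}^{L}F)(-1+\sqrt{-1})$, which either has argument $3\pi/4$ (matching $E$) or vanishes, in which case $F \in \mca B$ contradicts nontriviality in $\mca C$.

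Part (4) is the main obstacle. My first attempt would be the quadratic form
\[
q(E) = \I Z_{1}(\tau_{1}^{L} E) \cdot \I Z_{2}(\tau_{2}^{R} E)
\]
on $K_{0}(\mca C) \otimes \bb R$. Non-negativity on every object of $\gl{\mca A_{1}}{\mca A_{2}}$, hence on every $\tau$-semistable, is automatic since $\I Z_{1}, \I Z_{2} \geq 0$ on the respective hearts. On $\Ker \Mu$, the identity $\I \Mu = \I Z_{1}(\tau_{1}^{L}E) + \I Z_{2}(\tau_{2}^{R}E) = 0$ forces $\I Z_{2}(\tau_{2}^{R}E) = -\I Z_{1}(\tau_{1}^{L}E)$, so $q = -(\I Z_{1}(\tau_{1}^{L}E))^{2} \leq 0$. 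The delicate step, which I anticipate as the principal obstacle, is negative definiteness on the \emph{full} $\Ker \Mu$: the computation above gives negativity only on the image of the projection $K_{0}(\mca C)\otimes\bb R \to \bb R^{3}$ defined by $(\I Z_{1}(\tau_{1}^{L}\cdot), \R Z_{2}(\tau_{2}^{R}\cdot), \I Z_{2}(\tau_{2}^{R}\cdot))$, so to handle fibers of this projection inside $\Ker \Mu$ one must supplement $q$ with terms pulled back from support-property quadratic forms on $K_{0}(\mb D_{1})$ and $K_{0}(\mb D_{2})$, which presumes $\sigma_{1}$ and $\sigma_{2}$ themselves satisfy the support property.
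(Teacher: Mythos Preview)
Your treatment of (1)--(3) matches the paper's proof essentially line for line.

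For (4), both you and the paper use the same quadratic form $q(E)=\I Z_{1}(\tau_{1}^{L}E)\cdot \I Z_{2}(\tau_{2}^{R}E)$, but the two arguments diverge in two places, and in both your instinct is actually sharper than the paper's. For non-negativity on semistables, you note that $q\geq 0$ holds for \emph{every} object of the gluing heart since both factors are non-negative there; the paper instead runs the semiorthogonal decomposition $i_{2}\tau_{2}^{R}E\to E\to i_{1}\tau_{1}^{L}E$, invokes (3) to bound the arguments, and reads off $q\geq 0$ from an inner-product computation. Your one-line observation already suffices.

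For negative definiteness on $\Ker\Mu$, the paper dismisses the issue in a single sentence, asserting that $\Ker\Mu$ is trivial ``by the construction of $\mca C$.'' You correctly sense that this cannot literally hold on all of $K_{0}(\mca C)\otimes\bb R$, and propose supplementing $q$ with support-property forms pulled back from $\sigma_{1},\sigma_{2}$. That is unnecessary. The support property here is only ever used downstream (e.g.\ in Lemma~\ref{lem:tilt-torsion}) to deform the slope $\mu_{\beta,\omega}$ in the parameters $(\beta,\omega)$, so it is the support property \emph{relative to the rank-$3$ lattice} cut out by your own projection $(\I Z_{1}\circ\tau_{1}^{L},\,\R Z_{2}\circ\tau_{2}^{R},\,\I Z_{2}\circ\tau_{2}^{R})$ that matters. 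On that lattice your computation $q=-(\I Z_{1}(\tau_{1}^{L}E))^{2}$ on $\Ker\Mu$ is already decisive: if this vanishes then $\I Z_{1}(\tau_{1}^{L}E)=\I Z_{2}(\tau_{2}^{R}E)=0$, and $\R\Mu=0$ with $\omega>0$ forces $\R Z_{2}(\tau_{2}^{R}E)=0$ as well, so the class is zero. Hence $q$ is negative definite on $\Ker\Mu$ in the relevant lattice without any extra hypothesis on the $\sigma_{i}$, and your proposed supplementation can be dropped.
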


\begin{proof}
Since $\Mu$ is a group homomorphism, $\mca B$ is closed under extensions and $\Mu$ satisfies 
\begin{itemize}
\item $\I \Mu (E) \geq 0$, and 
\item If $\I \Mu (E)=0$ then $\R (E) \leq 0$. 
\end{itemize}
Hence $\mca B$ is closed under subobjects and quotients.
This gives the proof of the assertion (1). 

Since $\sigma_{i}$ is discrete, 
the hearts $\mca A_{i}$ is a Noetherian abelian category by \cite[Lemma 3.4]{MR2721656}. 
Thus the heart $\gl{\mca A_{1}}{\mca A_{2}}$ is also Noetherian and so is $\gl{\mca A_{1}}{\mca A_{2}}/\mca B$. 
Since the set $\{\I Z(E) \mid E \in \gl{\mca A_{1}}{\mca A_{2}}\}$ is discrete in $\bb R_{\geq 0}$, 
the pair $(\mca C, \Mu)$ has the Harder-Narasimhan property by \cite[Lemma 3.4]{MR2721656} (or \cite{MR2376815}). 
This gives the proof of the assertion (2).

To prove the assertion (3), let $E_{1} $ be in $\mb D_{1}$. 
If necessary we may assume that $E_{1}$ is $\sigma_{1}$-free, since there exists a $\sigma_{1}$-free object 
$E' \in \gl{\mca A_{1}}{\mca A_{2}}$ such that $E'$ is isomorphic to $i_{1}E_{1}$ in $\mca C$.  
Note that any subobject $F$ of $i_{1}E_{1}$ in $\mca C$ gives a subobject $F' \subset i_{1}E_{1}$ in $\gl{\mca A_{1}}{\mca A_{2}}$ such that $F'$ is isomorphic to $F$ in $\mca C$. 
Since the essential image of $\mca A_{1}$ by $i_{1}$ is closed under subobject, 
we have $\arg \Mu(i_{1}E_{1}) =\arg \Mu (F)$ and this gives the proof of the assertion (3).

To prove the last assertion, it is enough to construct a quadratic form 
\begin{equation}\label{eq:support}
q	\colon K_{0}(\mca C) \otimes \bb R \to \bb R
\end{equation}
satisfying 
\begin{itemize}
\item the restriction of $q$ to the subspace ${\Ker \Mu}$ of $K_{0}(\mca C) \otimes \bb R$ is negative definite, and 
\item $q([E])\geq 0$ if the class $[E]$ is represented by a semistable object $E$. 
\end{itemize}
We show that the quadratic form 
\begin{equation}
q([E])= \I Z_{1}(\tau_{1}^{L}E) \cdot \I Z_{2}( \tau_{2}^{R}E )
\end{equation}
 satisfies the desired property. 
Note that the first condition is trivial since $\Ker \Mu$ is trivial by the construction of $\mca C$.

Take a $\tau$-semistable object $E \in \mca C$. 
Then we have 
\begin{equation*}
\xymatrix{
i_{2}\tau_{2}^{R}E	\ar[r]	&	E	\ar[r]	&	i_{1}\tau_{1}^{L}E	\ar[r]	&	i_{2}\tau_{2}^{R}E[1]. 
}
\end{equation*}
If $\I Z_{1}(i_{1}\tau_{1}^{L}E)=0$, then the inequality $q([E]) \geq 0$ holds. 
If $\I Z_{1}(i_{1}\tau_{1}^{L}E)>0$, then the assertion (3) implies
\begin{equation}\label{eq:keyeq}
0 < \arg \Mu (i_{2}\tau_{2}^{R}E)\leq	\arg \Mu (E)	\leq 	\arg \Mu (i_{1}\tau_{1}^{L}E) = \frac{3\pi}{4}. 
\end{equation}
Now let us canonically identify $\bb C$ as $\bb R^{2}$. 
Then the standard inner paring of $(1+\sqrt{-1}) \Mu (i_{2}\tau_{2}E)$ and $\Mu(i_{1}\tau_{1}^{L}E)$ on $\bb R^{2}$
is just 
\[
2 \I Z_{1}(\tau_{1}^{L}E) \cdot \I Z_{2}(\tau_{2}^{R}E). 
\]
By (\ref{eq:keyeq}) and the assertion (3) we have 
 \begin{equation}
 |\arg (1+\sqrt{-1}) \Mu (i_{2}\tau_{2}E)- \arg \Mu(i_{1}\tau_{1}^{L}E)| < \pi/2, 
 \end{equation}
 and this gives the proof. 
\end{proof}

\begin{rmk}\label{rmk:discrete}
In the assertion (2), we assume that the stability conditions $\sigma_{i}$ is discrete. 
If $\sigma_{i}$ is rational then $\sigma_{i}$ is discrete since $K_{0}(\mb D)$ is finitely generated. 
However, the existence of rational stability conditions is subtle. 
Of course, if $\sigma_{i}$ is full (equivalently satisfies the support property), then one can choose rational stability conditions by shrinking $\sigma_{i}$. 
\end{rmk}

\begin{dfn}\label{dfn:torsionpairs}
Let $\mb D=\sod{\mb D_{1}}{\mb D_{2}}$ be a semiorthogonal decomposition with $\m1$. 
Let $\sigma_{i}=(\mca A_{i}, Z_{i})$ be a discrete stability condition on $\mb D_{i}$. 
Suppose $(\sigma_{1}, \sigma_{2})$ satisfies $\m4$.

Let us define $\mu_{\beta, \omega }(E)$ for $E \in \gl{\mca A_{1}}{\mca A_{2}}$ by 
\[
\mu_{\beta, \omega }(E)	=
\frac{\I Z_{1}(\tau_{1}^{L}E) -\omega \R Z_{2}(\tau_{2}^{R}E)  + \beta \I Z_{2}(\tau_{2}^{R}E)}{\I Z_{1}(\tau_{1}^{L}E) + \I Z_{2}(\tau_{2}^{R}E)}. 
\]

Similarly to the case of slope stability on projective surfaces, 
$(\sigma_{1},\sigma_{2})$-free object $E$ is $\Mu$-semistable if and only if 
for any nontrivial subobject $F \subset E$ so that $\tau_{2}^{R}F$ is a proper subobject of $\tau_{1}^{L}E$, 
the inequality $\mu_{\beta, \omega }(F) \leq \mu_{\beta, \omega }(E)$ holds (cf. \cite{MR1450870}).

Define the following subcategories of $\gl{\mca A_{1}}{\mca A_{2}}$ by  
\begin{align*}
\tilt{\mca F}{\sigma_{1}}{\sigma_{2}}		&=	\left\{	E \in \gl{\mca A_{1}}{\mca A_{2}}	\mid \text{$E$ is $(\sigma_{1}, \sigma_{2})$-free with $\mu_{\beta, \omega }^{+}(E) \leq 0$}	\right\}	\text{, and }\\
\tilt{\mca T}{\sigma_{1}}{\sigma_{2}}	&=	\left\{	E \in \gl{\mca A_{1}}{\mca A_{2}}	\mid	E \text{ is $(\sigma_{1}, \sigma_{2})$-torsion or  $(\sigma_{1}, \sigma_{2})$-free part $E_{\mr{fr}}$ of $E$ has $\mu_{\beta, \omega}^{-}(E_{\mr{fr}})>0$}	\right\}. 
\end{align*}
The tilting heart of $\gl{\mca A_{1}}{\mca A_{2}}$ by the torsion pair $(\tilt{\mca T}{\sigma_{1}}{\sigma_{2}}	, \tilt{\mca F}{\sigma_{1}}{\sigma_{2}}	)$ is denoted by $\tilt{\mca A}{\sigma_{1}}{\sigma_{2}}$. 
\end{dfn}

\begin{lem}\label{lem:mormono}
Let $\mb D=\sod{\mb D_{1}}{\mb D_{2}}$ with the condition $\m1$. 
Suppose that rational stability conditions $\sigma_{i}=(\mca A_{i}, Z_{i}) \in \Stab{\mb D_{i}} $ satisfy the condition $\m4$. 

If a $(\sigma_{1}, \sigma_{2})$-free object $E \in \gl{\mca A_{1}}{\mca A_{2}}$ satisfies $\mu_{\beta, \omega}^{+}(E) <1$, 
then the canonical morphism $\tau_{1}^{L}E \to \Phi(\tau_{2}^{R}E)$ is a monomorphism 
in $\mca A_{1}$. 
\end{lem}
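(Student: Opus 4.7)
The plan is to derive a contradiction by assuming that $K := \ker(g_E) \neq 0$ in $\mca A_1$, where $g_E \colon \tau_1^L E \to \Phi(\tau_2^R E)$ is the canonical morphism coming from the adjunction under $\m1$. The strategy is to exhibit $i_1 K$ as a $(\sigma_1,\sigma_2)$-free subobject of $E$ in $\gl{\mca A_1}{\mca A_2}$ whose slope $\mu_{\beta,\omega}$ equals $1$, contradicting $\mu_{\beta,\omega}^{+}(E)<1$.

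First I would construct the subobject. Under the adjunction of $\m1$, the extension class $[E]\in\Hom_{\mb D}(i_1\tau_1^L E, i_2\tau_2^R E[1])$ corresponds to $g_E\colon \tau_1^L E\to \Phi(\tau_2^R E)$, which lies in $\mca A_1$ since $\Phi$ is $t$-exact (from $\m3$, which $\m4$ entails). Precomposing with the monomorphism $K\hookrightarrow \tau_1^L E$ yields zero by definition of $K$, so the composite $i_1 K\to i_1\tau_1^L E\to i_2\tau_2^R E[1]$ vanishes and produces a lift $\beta\colon i_1 K\to E$. To see that $\beta$ is a monomorphism in the gluing heart, note that the projections $\tau_1^L$ and $\tau_2^R$ are $t$-exact for the $t$-structure of Lemma \ref{lm:SODt-structure}; applying them to the fiber triangle $\fib(\beta)\to i_1 K\xrightarrow{\beta} E$ and using that $K\hookrightarrow \tau_1^L E$ is mono in $\mca A_1$ and $\tau_2^R i_1 K=0$ shows $\tau_1^L\fib(\beta)\in \mb D_1^{\leq -1}$ and $\tau_2^R\fib(\beta)\cong \tau_2^R E[-1]\in \mb D_2^{\leq -1}$. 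Hence $\fib(\beta)\in \mb D^{\leq -1}$ and the long exact sequence for the $t$-structure forces $\beta$ to be injective in $\gl{\mca A_1}{\mca A_2}$.

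Finally I compute the slope. Since $\mca F(\sigma_1,\sigma_2)$ is closed under subobjects in its torsion pair on $\gl{\mca A_1}{\mca A_2}$, the subobject $i_1 K$ inherits $(\sigma_1,\sigma_2)$-freeness from $E$; in particular $K=\tau_1^L(i_1 K)\in\mca F^{\sigma_1}$, and $\I Z_1(K)>0$ because $K\neq 0$ is $\sigma_1$-free. Because $\tau_2^R(i_1 K)=0$, the formula in Definition \ref{dfn:torsionpairs} gives $\mu_{\beta,\omega}(i_1 K)=1$ immediately, and $\I\Mu(i_1K)=\I Z_1(K)>0$ so $i_1K$ represents a nontrivial subobject of $E$ in the Serre quotient $\mca C$; by Proposition \ref{prop:morslope}(3) it is $\tau$-semistable of argument $3\pi/4$. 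The Harder--Narasimhan property in $\mca C$ therefore forces $\mu_{\beta,\omega}^{+}(E)\geq \mu_{\beta,\omega}(i_1 K)=1$, contradicting the hypothesis. The step I expect to require the most care is verifying that the lift $\beta$ is a genuine monomorphism in the gluing heart (and not merely a morphism in $\mb D$), for which the $t$-exactness of the semiorthogonal projections provided by Lemma \ref{lm:SODt-structure} is essential.
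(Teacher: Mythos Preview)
Your approach is essentially the same as the paper's: both take $K=\ker(g_E)$, produce a morphism $i_1K\to E$, observe via Proposition~\ref{prop:morslope}(3) that $i_1K$ has slope $1$, and derive a contradiction with $\mu_{\beta,\omega}^+(E)<1$. The paper constructs the morphism slightly differently, by lifting $K\hookrightarrow\tau_1^LE$ to $\tau_1^RE$ along the triangle of Lemma~\ref{lem:morexact} and then invoking the adjunction $\Hom_{\mb D_1}(K,\tau_1^RE)\cong\Hom_{\mb D}(i_1K,E)$; it then concludes directly from the standard Hom-vanishing between a semistable object of slope $1$ and an object with $\mu^+<1$, so it never needs to verify that the morphism is a monomorphism. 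Your extra step of proving $\beta$ is mono is thus unnecessary, though not wrong.

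One small correction in that extra step: you write $\tau_2^R\fib(\beta)\cong\tau_2^RE[-1]\in\mb D_2^{\leq -1}$ and similarly $\tau_1^L\fib(\beta)\in\mb D_1^{\leq -1}$, but with the paper's convention $\mca A[-1]\subset\mb D^{\geq 1}$, these should read $\mb D_2^{\geq 1}$ and $\mb D_1^{\geq 1}$. The conclusion that $\beta$ is mono (i.e.\ $H^0(\fib\beta)=0$) still follows, since $\fib(\beta)\in\mb D^{\geq 1}$ is what you actually need.
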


\begin{proof}
By Lemma \ref{lem:morexact} we have the distinguished triangle in $\mb D_{1}$: 
\begin{equation*}
\xymatrix{
\tau_{1}^{R}E	\ar[r]^-{f}	&	\tau_{1}^{L}E	\ar[r]^-{g}	&	\Phi(\tau_{2}^{R}E)		\ar[r]	&	\tau_{1}^{R}E[1]
}
\end{equation*}
Since $E$ is in $\gl{\mca A_{1}}{\mca A_{2}}$, 
both $\tau_{1}^{L}E$ and $\Phi(\tau_{2}^{R}E)$ is in $\mca A_{1}$.

Let $K $ be the kernel of the morphism $g$ above. 
Note that the morphism $ \iota  \colon K \to \tau_{1}^{L}E$ lifts to $\tau_{1}^{R}E$, that is, there exists a morphism $\bar \iota \colon K \to \tau_{1}^{R}E$ such that $f \circ \bar \iota =\iota$. 
If $K$ is non-zero, then we have a non-zero morphism in 
\[
\Hom_{\mb D_{1}}( K, \tau_{1}^{R}E) \cong \Hom_{\mb D}(i_{1}K , E). 
\]
By Proposition \ref{prop:morslope} (3), $i_{1}K$ is $\mu_{\beta, \omega}$-semistable with $\mu_{\beta, \omega}(i_{1}K)=1$. 
Hence $K$ has to be zero since both $i_{1}K$ and $E$ are $(\sigma_{1}, \sigma_{2})$-free. 
\end{proof}

\section{Rational coefficients stability conditions}

In the previous section, we introduced the tilting heart $\til{\mca A}{\beta}{\omega}$. 
The aim of this section is to show that the pair $(\tilt{\mca A}{\sigma_{1}}{\sigma_{2}}, \tilt{Z}{\sigma_{1}}{\sigma_{2}})$ 
gives a locally finite stability condition on $\mb D=\sod{\mb D_{1}}{\mb D_{2}}$ 
if the semiorthogonal decomposition satisfies $\m2$ and $(\sigma_{1}, \sigma_{2})$ satisfies $\m5$. 
These assumptions are necessary to make all the arguments work well.

\begin{dfn}\label{dfn:tiltingdeformation}
Let $\mb D=\sod{\mb D_{1}}{\mb D_{2}}$ with the condition $\m2$. 
Suppose that rational stability conditions $\sigma_{i}=(\mca A_{i}, Z_{i}) \in \Stab{\mb D_{i}} $ satisfy the condition $\m5$. 
Define $\tilt{Z}{\sigma_{1}}{\sigma_{2}} \colon K_{0}(\mb D) \to \bb C$ by
\begin{equation}
\tilt{Z}{\sigma_{1}}{\sigma_{2}} (E) = Z_{1}(\tau_{1}^{L}E) + (\beta -\sqrt{-1}\omega) Z_{2}(\tau_{2}^{R}E). 
\end{equation}
Moreover the pair $(\tilt{\mca A}{\sigma_{1}}{\sigma_{2}}, \tilt{Z}{\sigma_{1}}{\sigma_{2}})$ is denoted by $\til{\Sigma}{\beta}{\omega}$. 
\end{dfn}

\begin{prop}\label{prop:centralcharge}
Let $\mb D=\sod{\mb D_{1}}{\mb D_{2}}$ be a semiorthogonal decomposition with the condition $\m2$. 
Suppose that ratonal stability conditions $\sigma_{i}=(\mca A_{i}, Z_{i}) \in \Stab{\mb D_{i}}$ satisfy the condition $\m5$. 
Then $\tilt{Z}{\sigma_{1}}{\sigma_{2}}$ is a central charge on $\tilt{\mca A}{\sigma _{1}}{\sigma _{2}}$ for any $\beta \in \bb R$ and $\omega \in \bb R_{>0}$. 
\end{prop}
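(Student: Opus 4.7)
The plan is to verify the central-charge axiom $0<\arg\tilt{Z}{\sigma_1}{\sigma_2}(E)\le\pi$ directly for every nonzero $E\in\tilt{\mca A}{\sigma_1}{\sigma_2}$. Since $\tilt{\mca A}{\sigma_1}{\sigma_2}$ is the tilt of $\gl{\mca A_1}{\mca A_2}$ along the torsion pair $(\tilt{\mca T}{\sigma_1}{\sigma_2},\tilt{\mca F}{\sigma_1}{\sigma_2})$, every such $E$ sits in a canonical short exact sequence $0\to F[1]\to E\to T\to 0$ with $T\in\tilt{\mca T}{\sigma_1}{\sigma_2}$ and $F\in\tilt{\mca F}{\sigma_1}{\sigma_2}$; by additivity it suffices to verify the axiom on $T$ and on $F[1]$. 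The guiding identity, read off from Definition \ref{dfn:tiltingdeformation} and Definition \ref{dfn:torsionpairs}, is
\[
\Im\tilt{Z}{\sigma_1}{\sigma_2}(E)=\Im Z_1(\tau_1^L E)+\beta\,\Im Z_2(\tau_2^R E)-\omega\,\Re Z_2(\tau_2^R E),
\]
which on a $(\sigma_1,\sigma_2)$-free object $E$ factors as $\mu_{\beta,\omega}(E)\cdot\bigl(\Im Z_1(\tau_1^L E)+\Im Z_2(\tau_2^R E)\bigr)$ with strictly positive second factor, so its sign matches that of $\mu_{\beta,\omega}(E)$.

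For $T$, I would split along the $(\sigma_1,\sigma_2)$-torsion pair as $0\to T_{\mathrm{tor}}\to T\to T_{\mathrm{fr}}\to 0$. On $T_{\mathrm{tor}}$, objects of $\mca T^{\sigma_2}$ lie on the negative real axis under $Z_2$, so $\Im\tilt{Z}{\sigma_1}{\sigma_2}(T_{\mathrm{tor}})=-\omega\,\Re Z_2(\tau_2^R T_{\mathrm{tor}})\ge 0$; on $T_{\mathrm{fr}}$ the defining condition $\mu_{\beta,\omega}^-(T_{\mathrm{fr}})>0$ makes every $\mu_{\beta,\omega}$-HN factor contribute strictly positively. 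Hence $\Im\tilt{Z}{\sigma_1}{\sigma_2}(T)\ge 0$, with equality only when $T_{\mathrm{fr}}=0$ and $\tau_2^R T=0$ (the latter because $Z_2$ does not vanish on nonzero objects of $\mca A_2$). In that boundary situation $T\cong i_1\tau_1^L T$ with $\tau_1^L T\in\mca T^{\sigma_1}$, so $\Re\tilt{Z}{\sigma_1}{\sigma_2}(T)=\Re Z_1(\tau_1^L T)<0$ as needed. For $F[1]$ one has $\tilt{Z}{\sigma_1}{\sigma_2}(F[1])=-\tilt{Z}{\sigma_1}{\sigma_2}(F)$, and because the $\mu_{\beta,\omega}$-HN filtration of $F$ has all factors of slope $\le 0$ and $(\sigma_1,\sigma_2)$-free, the guiding identity applied factor by factor gives $\Im\tilt{Z}{\sigma_1}{\sigma_2}(F)\le 0$, hence $\Im\tilt{Z}{\sigma_1}{\sigma_2}(F[1])\ge 0$.

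The hard part will be the boundary case $\Im\tilt{Z}{\sigma_1}{\sigma_2}(F[1])=0$, where I must show $\Re\tilt{Z}{\sigma_1}{\sigma_2}(F)>0$. All $\mu_{\beta,\omega}$-HN factors of $F$ must then have slope $0$, so I may reduce to $F$ being $\mu_{\beta,\omega}$-semistable with $\mu_{\beta,\omega}(F)=0<1$. The case $\tau_2^R F=0$ is ruled out, for then $F\cong i_1\tau_1^L F$ with $\tau_1^L F\in\mca F^{\sigma_1}$ and $\mu_{\beta,\omega}(F)=1$, contradicting $\mu_{\beta,\omega}^+(F)\le 0$. Assuming $\tau_2^R F\ne 0$, Lemma \ref{lem:mormono} supplies a monomorphism $\tau_1^L F\hookrightarrow\Phi(\tau_2^R F)$ in $\mca A_1$; let $Q\in\mca A_1$ denote the cokernel. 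Condition $\m5$ yields $Z_1(\Phi(\tau_2^R F))=Z_2(\tau_2^R F)$, so
\[
\tilt{Z}{\sigma_1}{\sigma_2}(F)=(1+\beta-\sqrt{-1}\,\omega)\,Z_2(\tau_2^R F)-Z_1(Q).
\]
Using $\Im\tilt{Z}{\sigma_1}{\sigma_2}(F)=0$ to eliminate $\Re Z_2(\tau_2^R F)$ reduces the target inequality $\Re\tilt{Z}{\sigma_1}{\sigma_2}(F)>0$ to
\[
\bigl((1+\beta)^2+\omega^2\bigr)\,\Im Z_2(\tau_2^R F)>(1+\beta)\,\Im Z_1(Q)+\omega\,\Re Z_1(Q).
\]

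Here $\Im Z_2(\tau_2^R F)>0$ strictly because $\tau_2^R F$ is $\sigma_2$-free, and $\Im Z_1(Q)\ge 0$ with $\Re Z_1(Q)\le 0$ whenever $\Im Z_1(Q)=0$; the only genuinely subtle regime is $\Im Z_1(Q)>0$. My expectation is to close that case by feeding the $\mu_{\beta,\omega}$-semistability of $F$ back into a subobject of $F$ built from $Q$ via Lemma \ref{lem:morsub} and Remark \ref{rmk:morsub}: a suitably chosen subobject of $\Phi(\tau_2^R F)$ lifts (after shifting through the equivalence in $\m2$) to a subobject of $F$ whose slope, compared against $\mu_{\beta,\omega}(F)=0$, forces a bound on the pair $(\Im Z_1(Q),\Re Z_1(Q))$ that is exactly the inequality displayed above. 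This last step is the main obstacle, and the preceding formula shows that everything has been reduced to a linear inequality among quantities already constrained by $\mca A_1$-positivity and slope-stability.
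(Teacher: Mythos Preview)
Your strategy coincides with the paper's: the same torsion/free split, the same reduction to a $\mu_{\beta,\omega}$-semistable $F\in\tilt{\mca F}{\sigma_1}{\sigma_2}$ with $\mu_{\beta,\omega}(F)=0$, and the same use of Lemma~\ref{lem:mormono} to obtain the monomorphism $\tau_1^L F\hookrightarrow\Phi(\tau_2^R F)$. Your reparameterisation via the cokernel $Q$ is harmless; the displayed inequality is exactly $\omega\cdot\Re\tilt{Z}{\sigma_1}{\sigma_2}(F)>0$ rewritten.

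The only real gap is the final step, which you leave as an ``expectation'' and describe imprecisely. Lemma~\ref{lem:morsub} lifts subobjects of $\tau_1^L F$, not of $\Phi(\tau_2^R F)$, and the relevant subobject is not ``built from $Q$''. What the paper does (and what closes your argument) is apply Lemma~\ref{lem:morsub} to the identity $\tau_1^L F\subset\tau_1^L F$: since the gluing map is already monic, Remark~\ref{rmk:morsub} produces $\tilde F\subset F$ with $\tau_1^L\tilde F\cong\tau_1^L F$ and $\Phi(\tau_2^R\tilde F)\cong\tau_1^L F$. Condition $\m5$ then forces $Z_2(\tau_2^R\tilde F)=Z_1(\tau_1^L F)$, so the semistability constraint $\mu_{\beta,\omega}(\tilde F)\le 0$ becomes
\[
(1+\beta)\,\Im Z_1(\tau_1^L F)\;\le\;\omega\,\Re Z_1(\tau_1^L F).
\]
Together with $\Im Z_1(\tau_1^L F)\le\Im Z_2(\tau_2^R F)$ (from the monomorphism and $\m5$) this yields, after the same substitution you performed,
\[
\Re\tilt{Z}{\sigma_1}{\sigma_2}(F)\;\ge\;\frac{(1+\beta)^2+\omega^2}{\omega}\,\Im Z_1(\tau_1^L F)\;>\;0
\]
when $\tau_1^L F\neq 0$; the residual case $\tau_1^L F=0$ is the direct computation you already indicated. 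So the missing ingredient is precisely this choice of $\tilde F$, after which everything is routine algebra in the variables $Z_1(\tau_1^L F)$ and $Z_2(\tau_2^R F)$ rather than in $Q$.
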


\begin{proof}
If an object $E \in \tilt{\mca T}{\sigma_{1}}{\sigma_{2}}$ is a $(\sigma_{1}, \sigma_{2})$-torsion object, then 
$\I Z_{1}(\tau_{1}^{L}E)= \I Z_{2}(\tau_{2}^{R}E)=0$. 
Hence we have 
$\I \tilt{Z}{\sigma_{1}}{\sigma_{2}}(E) = -\omega \R Z_{2}(\tau_{2}^{R}E) \geq 0$ and the equality holds if and only if $\tau_{2}^{R}E=0$. 
Thus $E$ is isomorphic to a $\sigma_{1}$-torsion object $E_{1} \in \mca T^{\sigma_{1}}$ and  we see 
$\R \tilt{Z}{\sigma_{1}}{\sigma_{2}} (E) = \R Z_{1}(E_{1}) \leq 0$ and the equality holds if and only if $E_{1}=0$. 
Moreover if the object $E \in \tilt{\mca T}{\sigma_{1}}{\sigma_{2}}$ is $(\sigma_{1}, \sigma_{2})$-free, 
the inequality $\mu_{\beta, \omega }(E) >0$ implies 
$\I \tilt{Z}{\sigma_{1}}{\sigma_{2}}(E)>0$.

Now take $E \in \tilt{\mca F}{\sigma_{1}}{\sigma_{2}}$. 
Without loss of generality, 
we may assume that the $(\sigma_{1}, \sigma_{2})$-free object $E$ is $\Mu$-semistable. 
Since $\I \tilt{Z}{\sigma_{1}}{\sigma_{2}}(E) \leq 0$ holds by the assumption $E \in \tilt{\mca F}{\sigma_{1}}{\sigma_{2}}$, we have to show   the following: 
\begin{itemize}
\item If $\I \tilt{Z}{\sigma_{1}}{\sigma_{2}}(E) =0$, then $\R \tilt{Z}{\sigma_{1}}{\sigma_{2}}(E) $ is positive. 
\end{itemize}

Let us suppose $\I \tilt{Z}{\sigma_{1}}{\sigma_{2}}(E) =0$ holds. 
By the inequality $\omega >0$,  we have  
\begin{equation}\label{eq:trivial}
 \R Z_{2}(\tau_{2}^{R}E) = \frac{1}{\omega}\cdot (\I Z_{1}(\tau_{1}^{L}E) + \beta \I Z_{2}(\tau_{2}^{R}E)). 
\end{equation}

Since the morphism $\tau_{1}^{L}E \to \Phi(\tau_{2}^{R}E)$ is a monomorphism in $\mca A_{1}$ by Lemma \ref{lem:mormono}, 
the condition $\m5$ implies 
\begin{equation}\label{eq:mono}
\I Z_{1}(\tau_{1}^{L}E)	\leq \I Z_{1} \left(\Phi(\tau_{2}^{R}E) \right) = \I Z_{2}(\tau_{2}^{R}E)
\end{equation}
Moreover 
there exists a subobject $\tilde F$ of $E$ in $\gl{\mca A_{1}}{\mca A_{2}}$ such that 
$\tau_{1}^{L}\tilde F \cong \tau_{1}^{L}E$ and $\Phi (\tau_{2}^{R}\tilde F) \cong \tau_{1}^{L}E$ by Lemma \ref{lem:morsub} and Remark \ref{rmk:morsub}. 
The $\Mu$-semistability of $E$ implies $\mu_{\beta, \omega }^{+}(\tilde F) \leq \mu_{\beta, \omega }(E) \leq 0$.

Now the condition $\m5$ implies 
\begin{align*}
\mu_{\beta, \omega }(\tilde F)	&=\frac{\I Z_{1}(\tau_{1}^{L}\tilde{F}) -\omega \R Z_{2}(\tau_{2}^{R}\tilde{F})  + \beta \I Z_{2}(\tau_{2}^{R}\tilde{F})}{\I Z_{1}(\tau_{1}^{L}\tilde{F}) + \I Z_{2}(\tau_{2}^{R}\tilde{F})}		\\
	&=\frac{\I Z_{1}(\tau_{1}^{L}\tilde{F}) -\omega \R Z_{1}(\Phi (\tau_{2}^{R}\tilde{F}))  + \beta \I Z_{1}(\Phi (\tau_{2}^{R}\tilde{F}))}{\I Z_{1}(\tau_{1}^{L}\tilde{F}) + \I Z_{1}(\Phi (\tau_{2}^{R}\tilde{F}))}	\\
	&=\frac{\I Z_{1}(\tau_{1}^{L}E) -\omega \R Z_{1}(\tau_{1}^{L}E)  + \beta \I Z_{1}(\tau_{1}^{L}E)}{2\I Z_{1}(\tau_{1}^{L}E)} \leq 0. 	
\end{align*}
Since $\I Z_{1}(\tau_{1}^{L}E)$ is positive, we see 
\begin{equation}\label{eq:sub}
\frac{1}{\omega }\left(\I Z_{1}(\tau_{1}^{L}E) + \beta \I Z_{1}(\tau_{1}^{L}E) \right)\leq  \R Z_{1}(\tau_{1}^{L}E). 
\end{equation}
Thus (\ref{eq:trivial}), (\ref{eq:mono}) and (\ref{eq:sub}) imply 
\begin{align*}
\R \tilt{Z}{\sigma_{1}}{\sigma_{2}}(E)	&=	\R Z_{1}(\tau_{1}^{L}E) +\beta \R Z_{2}(\tau_{2}^{R}E)+\omega \I Z_{2}(\tau_{2}^{R}E)	\\
	&=	\R Z_{1}(\tau_{1}^{L}E) +\frac{\beta}{\omega} (\I Z_{1}(\tau_{1}^{L}E) + \beta \I Z_{2}(\tau_{2}^{R}E))+\omega \I Z_{2}(\tau_{2}^{R}E)	\\
	&=	\R Z_{1}(\tau_{1}^{L}E) + \frac{\beta}{\omega} \I Z_{1}(\tau_{1}^{L}E) + \frac{\beta^{2}+\omega ^{2}}{\omega }\I Z_{2}(\tau_{2}^{R}E)	\\
	&\geq 	\R Z_{1}(\tau_{1}^{L}E) + \frac{\beta}{\omega} \I Z_{1}(\tau_{1}^{L}E) + \frac{\beta^{2}+\omega ^{2}}{\omega }\I Z_{1}(\tau_{1}^{L}E)	\\
	&\geq \frac{1}{\omega }\left(\I Z_{1}(\tau_{1}^{L}E) + \beta \I Z_{1}(\tau_{1}^{L}E) \right)+ \frac{\beta}{\omega} \I Z_{1}(\tau_{1}^{L}E) + \frac{\beta^{2}+\omega ^{2}}{\omega }\I Z_{1}(\tau_{1}^{L}E)	\\
	&= \frac{(1+\beta )^{2}+\omega ^{2}}{\omega } \cdot \I Z_{1}(\tau_{1}^{L}E)>0. 
\end{align*}
\end{proof}

\begin{rmk}
Suppose that a semiorthogonal decomposition $\mb D= \sod{\mb D_{1}}{\mb D_{2}}$ satisfies $\m2$. 
If stability conditions $\sigma _{i}=(\mca A_{i}, Z_{i}) \in \Stab{\mb D_{i}}$ satisfy $\m5$, 
then $\sigma_{2}$ is $\sigma_{1}[-1]$. 
\end{rmk}

\begin{prop}
Let $\mb D=\sod{\mb D_{1}}{\mb D_{2}}$ be a semiorthogonal decomposition with $\m2$. 
Suppose stability conditions $\sigma _{i}=(\mca A_{i}, Z_{i}) \in \Stab{\mb D_{i}}$ are rational and satisfy $\m5$.
If $\sigma_{1}$ and $\sigma_{2}$ are rational and both $\omega$ and $\beta$ are rational then 
the pair $\tilt{\Sigma}{\sigma_{1}}{\sigma_{2}}=(\tilt{\mca A}{\sigma_{1}}{\sigma_{2}}, \tilt{Z}{\sigma_{1}}{\sigma_{2}})$ is a locally finite stability condition. 
\end{prop}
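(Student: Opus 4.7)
By Proposition \ref{prop:centralcharge}, $\tilt{Z}{\sigma_{1}}{\sigma_{2}}$ is already a central charge on the heart $\tilt{\mca A}{\sigma_{1}}{\sigma_{2}}$, so by \cite[Proposition 5.3]{MR2373143} it suffices to verify the Harder--Narasimhan property for the pair together with local finiteness. My plan is to exploit the rationality hypotheses to reduce to a genuinely discrete setting, and then to propagate the Harder--Narasimhan property from the slope stability $\Mu$ of Proposition \ref{prop:morslope} to the new central charge $\tilt{Z}{\sigma_{1}}{\sigma_{2}}$.

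First I would record the consequences of the rationality hypothesis. Since each $\sigma_{i}$ is rational and $K_{0}(\mb D_{i})$ has finite rank, the image of $Z_{i}$ is a finitely generated torsion-free subgroup of $\bb Q + \bb Q\sqrt{-1}$, hence a free abelian group of rank at most two embedded as a lattice in $\bb C$, and therefore discrete. Because $\beta, \omega \in \bb Q$, the same holds for $\tilt{Z}{\sigma_{1}}{\sigma_{2}}(K_{0}(\mb D))$, and in particular the image of $\I \tilt{Z}{\sigma_{1}}{\sigma_{2}}$ is discrete in $\bb R$. Via Remark \ref{rmk:discrete} together with \cite[Lemma 3.4]{MR2721656}, each $\mca A_{i}$ is Noetherian, and the construction in Lemma \ref{lm:SODt-structure} then yields Noetherianness of $\gl{\mca A_{1}}{\mca A_{2}}$.

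Next I would construct Harder--Narasimhan filtrations on the tilted heart via the slope stability. For any $E \in \tilt{\mca A}{\sigma_{1}}{\sigma_{2}}$ with the defining triangle $F[1] \to E \to T$ where $T \in \tilt{\mca T}{\sigma_{1}}{\sigma_{2}}$ and $F \in \tilt{\mca F}{\sigma_{1}}{\sigma_{2}}$, Proposition \ref{prop:morslope} provides $\mu_{\beta,\omega}$-HN filtrations of $T$ and $F$ inside $\gl{\mca A_{1}}{\mca A_{2}}$. The sign conditions defining the torsion pair $(\tilt{\mca T}{\sigma_{1}}{\sigma_{2}}, \tilt{\mca F}{\sigma_{1}}{\sigma_{2}})$ force each $\mu_{\beta,\omega}$-semistable piece to have its $\tilt{Z}{\sigma_{1}}{\sigma_{2}}$-phase confined to a definite half-plane, as already exploited in the proof of Proposition \ref{prop:centralcharge}. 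Recomposing these filtrations inside the tilted heart and refining further, using the discreteness of $\I \tilt{Z}{\sigma_{1}}{\sigma_{2}}$ to split each $\mu_{\beta,\omega}$-semistable piece by $\tilt{Z}{\sigma_{1}}{\sigma_{2}}$-phase, yields a filtration with $\tilt{Z}{\sigma_{1}}{\sigma_{2}}$-semistable factors; its termination is secured by discreteness combined with Noetherianness of $\gl{\mca A_{1}}{\mca A_{2}}$, in the spirit of \cite[Proposition 2.4]{MR2373143}. Local finiteness is then a formal consequence: any slicing $\mca P(\phi-\epsilon, \phi+\epsilon)$ for sufficiently small $\epsilon$ picks out $\tilt{Z}{\sigma_{1}}{\sigma_{2}}$-semistables whose central charges lie in a narrow cone of the discrete image, ruling out infinite chains of subobjects.

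The main obstacle is the interaction between tilting and Noetherianness: tilting by a torsion pair does not in general preserve Noetherianness, so one cannot directly apply \cite[Lemma 3.4]{MR2721656} to $\tilt{\mca A}{\sigma_{1}}{\sigma_{2}}$ itself. The resolution must invoke the support property of $\Mu$ from Proposition \ref{prop:morslope}(4) together with Lemmas \ref{lem:morsub} and \ref{lem:mormono}, in order to bound ascending chains of subobjects of a given object in the tilted heart. This is analogous to the delicate Noetherianness analysis for tilted hearts on K3 surfaces in \cite{MR2376815}, and it is precisely where the conditions $\m2$ and $\m5$ play a decisive role, through their consequences for the gluing functor $\Phi$ and the compatibility $Z_{2}(E_{2})=Z_{1}(\Phi E_{2})$.
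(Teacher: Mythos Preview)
Your proposal correctly sets up the reduction (rationality $\Rightarrow$ discreteness $\Rightarrow$ Noetherianness of $\gl{\mca A_{1}}{\mca A_{2}}$) and correctly names the obstacle, but it does not overcome it. The sentence ``refining further, using the discreteness of $\I \tilt{Z}{\sigma_{1}}{\sigma_{2}}$ to split each $\mu_{\beta,\omega}$-semistable piece by $\tilt{Z}{\sigma_{1}}{\sigma_{2}}$-phase'' is circular: splitting an object by $\tilt{Z}{\sigma_{1}}{\sigma_{2}}$-phase \emph{is} the Harder--Narasimhan filtration in $\tilt{\mca A}{\sigma_{1}}{\sigma_{2}}$, which is what you are trying to establish. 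A $\mu_{\beta,\omega}$-semistable factor of $T$ or $F$ lives in $\gl{\mca A_{1}}{\mca A_{2}}$, not in the tilt, and carries no a priori $\tilt{Z}{\sigma_{1}}{\sigma_{2}}$-decomposition. Likewise, the chain conditions in \cite[Proposition 2.4]{MR2373143} or \cite[Lemma 3.4]{MR2721656} must be verified in the heart where the central charge lives; Noetherianness of $\gl{\mca A_{1}}{\mca A_{2}}$ does not transfer to $\tilt{\mca A}{\sigma_{1}}{\sigma_{2}}$. Your final paragraph concedes all of this (``the resolution must invoke\ldots'') without supplying an argument, so the proof is a plan rather than a proof.

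The paper does not attempt to build HN filtrations from the $\mu_{\beta,\omega}$-filtration at all. It applies \cite[Lemma 3.4]{MR2721656} directly to $\tilt{\mca A}{\sigma_{1}}{\sigma_{2}}$: since $\I\tilt{Z}{\sigma_{1}}{\sigma_{2}}$ is discrete, one must show that every ascending chain $E^{1}\subset E^{2}\subset\cdots\subset E$ in $\tilt{\mca A}{\sigma_{1}}{\sigma_{2}}$ with $\I\tilt{Z}{\sigma_{1}}{\sigma_{2}}(E)=0$ terminates. This is done by taking cohomology with respect to $\gl{\mca A_{1}}{\mca A_{2}}$. Noetherianness there stabilises $H^{-1}(E^{n})$; one checks that $H^{0}(E^{n})$ satisfies $\tau_{2}^{R}H^{0}(E^{n})=0$ and $\tau_{1}^{L}H^{0}(E^{n})\in\mca T^{\sigma_{1}}$, and that the maps $H^{0}(E^{n-1})\to H^{0}(E^{n})$ are monomorphisms in $\gl{\mca A_{1}}{\mca A_{2}}$. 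The crux is then to bound $\R\tilt{Z}{\sigma_{1}}{\sigma_{2}}(H^{0}(E^{n}))$ from below, which is a diagram chase using Lemma~\ref{lem:morexact} and the monomorphism from Lemma~\ref{lem:mormono} to trap $\tau_{1}^{L}(\ker\psi_{n})$ inside the $\sigma_{1}$-torsion part of a fixed quotient $\Phi(\ker\delta_{n})/\tau_{1}^{L}(\ker\delta_{n})$ independent of $n$. The support property of $\Mu$ in Proposition~\ref{prop:morslope}(4) plays no role; what is used is the structural control from $\m2$ and $\m5$ via Lemmas~\ref{lem:morexact} and \ref{lem:mormono}. Local finiteness then follows formally because the resulting stability condition is rational, hence reasonable.
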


\begin{proof}Since $\sigma_{i}$ is rational, $\sigma_{i}$ is discrete. 
In particular the heart $\mca A_{i}$ of $\sigma_{i}$ is Noetherian and so is 
$\gl{\mca A_{1}}{\mca A_{2}}$.

Suppose $E \in \tilt{\mca A}{\sigma_{1}}{\sigma _{2}}$ satisfies $\I \tilt{Z}{\sigma_{1}}{\sigma_{2}}(E)=0$. 
Due to \cite[Lemma 3.4]{MR2721656}, it is enough to show that any increasing filtration $E^{1}\subset E^{2} \subset \cdots \subset E^{n}\subset \cdots $ of subobject of $E \in \tilt{\mca A}{\sigma_{1}}{\sigma_{2}} $ terminates. 
Note that $E^{n}$ also satisfies $\I \tilt{Z}{\sigma_{1}}{\sigma_{2}}(E^{n})=0$. 

Taking the cohomology with respect to $\gl{\mca A_{1}}{\mca A_{2}}$, we have a sequence of monomorphisms
\[
H^{-1}(E^{n}) \subset H^{-1}(E^{n+1}) \subset H^{-1}(E). 
\]
We can assume $H^{-1}(E^{n})$ does not depend on $n$ since $\gl{\mca A_{1}}{\mca A_{2}}$ is Noetherian. 
Now we claim $H^{0}(E^{n})$ satisfies 
\begin{align}
\tau_{2}^{R}H^{0}(E^{n})&=0, \text{ and}	\label{eq:a}	\\
\tau_{1}^{L}H^{0}(E^{n}) &\in \mca T^{\sigma_{1}}. 	\label{eq:b}
\end{align}
Indeed, by the vanishing $\I \tilt{Z}{\sigma_{1}}{\sigma_{2}}(E^{n})=0$, 
we have $\I \tilt{Z}{\sigma_{1}}{\sigma_{2}} (H^{0}(E^{n}))=0$ which implies 
that $H^{0}(E^{n})$ is $(\sigma_{1}, \sigma_{2})$-torsion. 
The vanishing $\I \tilt{Z}{\sigma_{1}}{\sigma_{2}} (H^{0}(E^{n}))=0$ and the definition of $\til{Z}{\beta}{\omega}$ 
imply $\I Z_{1}(\tau_{1}^{L}H^{0}(E^{n}))=0$ and $\tau_{2}^{R}H^{0}(E^{n})=0$. 
This gives the proof of the claim.

 Let $F^{n}$ be the quotient $E^{n}/E^{n-1}$ in $\tilt{\mca A}{\sigma _{1}}{\sigma _{2}}$. 
 Since $H^{-1}(E^{n})$ is constant for $n$, we have a monomorphism in $\gl{\mca A_{1}}{\mca A_{2}}$
 \[
 \xymatrix{
 H^{-1}(F^{n})	\ar[r]	&	H^{0}(E^{n-1}). 
 }
 \] 
 Thus $\I \gl{Z_{1}}{Z_{2}}\left(H^{-1}(F^{n}) \right)=0$ holds by (\ref{eq:a}) and (\ref{eq:b}). 
On the other hand, if $H^{-1}(F^{n})$ is non-zero, then $\I \gl{Z_{1}}{Z_{2}}\left(H^{-1}(F^{n})\right)$ is positive by the $(\sigma_{1}, \sigma_{2})$-freeness of $H^{-1}(F)$. 
So $H^{-1}(F^{n})$ has to be $0$ and the canonical morphism 
$H^{0}(E^{n-1}) \to H^{0}(E^{n})$ is a monomorphism in $\gl{\mca A_{1}}{\mca A_{2}}$. 

To complete the proof, it is enough to show that $H^{0}(E^{n})$ is independent of $n$. 
By (\ref{eq:a}) and (\ref{eq:b}), we have $\R \tilt{Z}{\sigma_{1}}{\sigma_{2}} (H^{0}(E^{n}))= \R Z_{1}(\tau_{1}^{R}H^{0}(E^{n})) < 0$, 
it is enough to show that $ \R \tilt{Z}{\sigma_{1}}{\sigma_{2}} (H^{0}(E^{n}))$ is bounded below. 

Let $G^{n}$ be the quotient $E/E^{n}$ in $\tilt{\mca A}{\sigma_{1}}{\sigma_{2}}$. 
Then we have the following exact sequence in $\gl{\mca A_{1}}{\mca A_{2}}$: 
\begin{equation*}
\xymatrix{
0\ar[r]	&H^{-1}(E^{n})	\ar[r]	&	H^{-1}(E)	\ar[r]^-{\varphi_{n}}	&	H^{-1}(G^{n})	\ar[r]^-{\delta_{n}}	&	
H^{0}(E^{n})	\ar[r]^-{\psi_{n}}	&	H^{0}(E)	\ar[r]^-{\pi_{n}}	&	H^{0}(G^{n})\ar[r]	& 0
}
\end{equation*}
Since $H^{-1}(E^{n})= \ker  \varphi _{n}$ is independent of $n$, so is $\im \varphi _{n}=\ker  \delta _{n}$. 
Moreover we may assume that $\ker  \pi_{n}=\im \psi _{n}$ is independent of $n$ since $\gl{\mca A_{1}}{\mca A_{2}}$ is Noetherian. 
Then it is enough to show that $\R \tilt{Z}{\sigma_{1}}{\sigma_{2}}(\ker  \psi_{n})$ is bounded below since $H^{0}(E^{n})$ is an extension by $\im \psi _{n}$ and $\ker  \psi_{n}$: 
\[
\xymatrix{
0	\ar[r]	&	\ker  \psi_{n}	\ar[r]	&	H^{0}(E^{n})	\ar[r]	&	\im  \psi_{n}	\ar[r]	&	0. 
}
\]

Note that $\tau_{2}^{R} (\ker  \psi _{n}) =0$ since the functor $\tau_{2}^{R}$ is $t$-exact with respect to $\gl{\mca A_{1}}{\mca A_{2}}$ and $\mca A_{2}$. 
Consider the exact sequence in $\gl{\mca A_{1}}{\mca A_{2}}$: 
\begin{equation*}
\xymatrix{
0	\ar[r]	&	\ker  \delta_{n}	\ar[r]	&	H^{-1}(G^{n})	\ar[r]	&	\ker  \psi_{n}	\ar[r]	&	0. 
}
\end{equation*}
Then Lemma \ref{lem:morexact} implies the following diagram of distinguished triangle in $\mb D_{1}$: 
\begin{equation}
\xymatrix{
\tau_{1}^{R}(\ker  \delta_{n})	\ar[r]\ar[d]	&	\tau_{1}^{R}(H^{-1}(G^{n}))	\ar[r]\ar[d]	&	\tau_{1}^{R}(\ker  \psi_{n})	\ar[d]^-{\cong }	\\
\tau_{1}^{L}(\ker  \delta_{n})	\ar[r]\ar[d]_-{\nu}	&	\tau_{1}^{L}(H^{-1}(G^{n}))	\ar[r]\ar[d]_-{\epsilon}	&	\tau_{1}^{L}(\ker  \psi_{n})	\ar[d]	\\
\Phi(\ker  \delta_{n})	\ar[r]^-{\cong}			&	\Phi(H^{-1}(G^{n}))		\ar[r]		&	0
}
\end{equation}
The second and third row give short exact sequences in $\mca A_{1}$. 
Since $\ker  \delta_{n}$ and $H^{-1}(G^{n})$ are in $\tilt{\mca F}{\sigma_{1}}{\sigma_{2}}$, 
the morphism $\nu$ and $\epsilon$ are mono morphisms in $\mca A_{1}$ by Lemma \ref{lem:mormono}. 
Thus we see the following isomorphism
\[
\tau_{1}^{L}(\ker  \psi_{n}) \cong 
\frac{\tau_{1}^{L}(H^{-1}(G^{n}))}{\tau_{1}^{L}(\ker  \delta _{n})}
\cong 
\ker  \left(
\frac{\Phi(\ker  \delta_{n})}{\tau_{1}^{L}(\ker  \delta _{n})}	\to
\frac{\Phi(H^{-1}(G^{n}))}{\tau_{1}^{L}(H^{-1}(G^{n}))}
\right). 
\] 

The $t$-exactness of $\tau_{1}^{L}$ with respect to $\gl{\mca A_{1}}{\mca A_{2}}$ and $\mca A_{1}$ implies 
$\tau_{1}^{L}(\ker  \psi_{n}) \subset \tau_{1}^{L}(H^{0}(E^{n}))$. 
Since $H^{0}(E^{n})$ satisfies (\ref{eq:a}) and (\ref{eq:b}), 
$\tau_{1}^{L}(\ker  \psi_{n})$ is in $\mca T^{\sigma_{1}}$ and $\tau_{2}^{R}(\ker  \psi _{n})=0$. 
Moreover $\tau_{1}^{L}(\ker  \psi_{n})$ is a subobject of the torsion part $T$ of ${\Phi(\ker  \delta_{n})}/{\tau_{1}^{L}(\ker  \delta _{n})}$ with respect to the torsion pair $(\mca T^{\sigma_{1}}, \mca F^{\sigma_{1}})$. 

Thus  we see $\tilt{Z}{\sigma_{1}}{\sigma_{2}}(\ker  \psi _{n}) = Z_{1}(\tau_{1}^{L}(\ker  \psi_{n}))$ and 
\begin{equation}
\R Z_{1}(T)	\leq \R Z_{1}( \ker  \psi_{n}) = \R \tilt{Z}{\sigma_{1}}{\sigma_{2}}(\ker  \psi _{n}). 
\end{equation}
Since $\ker  \delta_{n}$ is independent of $n$, so is $\Phi(\ker  \delta_{n})/\tau_{1}^{L}(\ker  \delta_{n})$. 
Hence $\R \tilt{Z}{\sigma_{1}}{\sigma_{2}}(\ker  \psi _{n})$ is bonded below 
and the pair $\tilt{\Sigma}{\sigma_{1}}{\sigma_{2}}=(\tilt{\mca A}{\sigma_{1}}{\sigma _{2}}, \tilt{Z}{\sigma_{1}}{\sigma_{2}})$ is a stability condition on $\mb D$. 

Finally by the rationality of $\sigma_{i}$ and $(\beta, \omega)$, the stability condition $\til{\Sigma}{\beta}{\omega}$ is rational. 
Since $K_{0}(\mb D)$ is finitely generated, a rational stability condition is reasonable and in particular is locally finite. 
\end{proof}

\section{Deformation property}\label{sc:deformation}
Let $\mb D=\sod{\mb D_{1}}{\mb D_{2}}$ be a semiorthogonal decomposition with $\m2$. 
Fixing $\sigma_{i} \in \Stab{\mb D_{i}}$ with $\m5$, 
we have constructed a family $\{	\tilt{\Sigma}{\sigma_{1}}{\sigma_{2}}		\mid	\beta \in \bb Q, \omega \in \bb Q_{>0}	\}$ of locally finite stability conditions on $\mb D$. 
Now we wish to extend the family for real numbers and to show that the family is continuous for $\beta$ and $\omega$. 

Although one of standard solutions is the support property of $\til{\Sigma}{\beta}{\omega}$, 
we show that 
the central charge $\til{Z}{\beta '}{\omega '}$ for $(\beta', \omega ')$ satisfies a finiteness condition 
\begin{equation}
\|\til{Z}{\beta '}{\omega '}	\|	_{\til{\Sigma}{\beta}{\omega}} < \infty 
\end{equation}
which has been established in Bridgeland's deformation of stability conditions (see also Theorem \ref{thm:Bridgeland}). 
To show this, by the definition of $\til{Z}{\beta'}{\omega'}$, it is enough to show that 
the supremum 
\begin{equation}\label{eq:sup}
\sup	\left\{	\frac{| \til{Z}{\beta}{\omega}( \tau_{2}^{R} E) | }{|\til{Z}{\beta}{\omega}(E) |}	\middle|		E \text{ is $\til{\Sigma}{\beta}{\omega}$-semistable}		\right\}
\end{equation}
is finite. 
This will be proven in Proposition \ref{prop:fake}. 
In the proposition, we only prove (\ref{eq:sup}) for points $(\beta, \omega)$ in a contractible open set $\mca H^{+}(\epsilon_{1}) \cap \mca H^{-}(\epsilon_{2})$ of the upper half-plane $\mca H$. 
The restriction to $\mca H^{+}(\epsilon_{1}) \cap \mca H^{-}(\epsilon_{2})$ is necessary for Proposition \ref{prop:fake} by a technical reason.

A key ingredient of Proposition \ref{prop:fake} is Corollary \ref{bunkai-sup}. 
If the supremum of  $	|	\arg \til{Z}{\beta}{\omega} (\tau_{1}^{L}E) -	\arg \til{Z}{\beta}{\omega}(\tau_{2}^{R}E)	|$ for any semistable object $E$ was smaller than $\pi$, 
then the desired assertion followed from Corollary \ref{bunkai-sup}. 
However, we do not see whether the canonical decomposition 
\[
\xymatrix{
i_{2}\tau_{2}^{R}	E	\ar[r]	&	E	\ar[r]	&	i_{1}\tau_{1}^{L}E	\ar[r]	&	i_{2}\tau_{2}^{R}	E[1]
}
\]
satisfies the desired property or not since the decomposition is too ``rough''.

Thus a finer decomposition of the semistable object $E$ is necessary. 
Roughly ``finer decomposition''  reflect the following principle: 
\begin{itemize}
\item 
For a $(\sigma_{1}, \sigma_{2})$-free object $F \in \gl{\mca A_{1}}{\mca A_{2}}$, if the difference 
$\mu_{\beta, \omega}^{+}(F) - \mu_{\beta, \omega}^{-}(F)$ is sufficiently small, then 
the difference 
$	|	\arg \til{Z}{\beta}{\omega} (\tau_{1}^{L}F) -	\arg \til{Z}{\beta}{\omega}(\tau_{2}^{R}F)	|$ 
satisfies the assumption in Corollary \ref{bunkai-sup}. 
\end{itemize}
To complete Proposition \ref{prop:fake}, 
it is necessary to observe properties of $\mu_{\beta, \omega}$-semistable object. 
%

\begin{lem}\label{lem:mu>1}
Let $\mb D=\sod{\mb D_{1}}{\mb D_{2}}$ be a semiorthogonal decomposition with $\m2$. 
Choose rational stability conditions $\sigma _{i} =(\mca A_{i}, Z_{i}) \in \Stab{\mb D_{i}}$ such that $(\sigma_{1}, \sigma_{2})$ satisfies the condition $\m5$. 
Set a full subcategory of $\gl{\mca A_{1}}{\mca A_{2}}$ by  \begin{equation}
\mca T_{\beta, \omega}^{>1}  = \{E \in \gl{\mca A_{1}}{\mca A_{2}}	\mid \text{the $(\sigma_{1}, \sigma_{2})$-free part $F$ of $E \in \gl{\mca A_{1}}{\mca A_{2}}$ satisfies $\mu_{\beta, \omega}^{-}(F) >1$}	\}. 
\end{equation}

\begin{enumerate}
\item For any $E \in \mca T_{\beta, \omega}^{>1}$, the inequality $\arg \til{Z}{\beta}{\omega}(E)	\geq \arg (\beta-1+\sqrt{-1}\omega)(\beta- \sqrt{-1}\omega)$ holds. 
\item The supremum 
\begin{equation}
\sup
\left\{
\frac{|\til{Z}{\beta}{\omega}(\tau_{2}^{R}E)|}
{|\til{Z}{\beta}{\omega}(E)|}
\middle|
E \in \mca T_{\beta, \omega}^{>1}
\right\}
\end{equation}
is finite. 
\end{enumerate}
\end{lem}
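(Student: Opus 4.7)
The plan is to compare $\arg\til{Z}{\beta}{\omega}(E)$ with the target angle $\alpha := \arg\bigl((\beta-1+\sqrt{-1}\omega)(\beta-\sqrt{-1}\omega)\bigr)$. Since $(\beta-1+\sqrt{-1}\omega)(\beta-\sqrt{-1}\omega) = (\beta^{2}-\beta+\omega^{2}) + \sqrt{-1}\omega$ has strictly positive imaginary part, we have $\alpha \in (0,\pi)$. To handle an arbitrary $E \in \mca T_{\beta,\omega}^{>1}$, I decompose it via the $(\sigma_{1},\sigma_{2})$-torsion pair on $\gl{\mca A_{1}}{\mca A_{2}}$ into $E_{\mr{tor}}$ and $E_{\mr{fr}}$, and further refine $E_{\mr{fr}}$ by its Harder--Narasimhan filtration for $\mu_{\beta,\omega}$ inside the Serre quotient $\mca C$ from Proposition \ref{prop:morslope}, obtaining $\mu_{\beta,\omega}$-semistable factors $G_{1},\dots,G_{n}$, each with $\mu_{\beta,\omega}(G_{i})>1$.

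For assertion (1), the crucial structural claim is $\tau_{1}^{L} G_{i}=0$ for each factor. Since $G_{i}$ is $(\sigma_{1},\sigma_{2})$-free, if $\tau_{1}^{L} G_{i}$ were non-zero it would lie in $\mca F^{\sigma_{1}}$, so the canonical quotient $G_{i}\twoheadrightarrow i_{1}\tau_{1}^{L} G_{i}$ would survive as a non-zero quotient in $\mca C$ and satisfy $\mu_{\beta,\omega}(i_{1}\tau_{1}^{L}G_{i}) = 1$, contradicting the semistability of $G_{i}$ together with $\mu_{\beta,\omega}(G_{i}) > 1$. Therefore $G_{i}\cong i_{2}X_{i}$ for a non-zero $X_{i}\in\mca F^{\sigma_{2}}$, and $\til{Z}{\beta}{\omega}(G_{i}) = (\beta-\sqrt{-1}\omega)Z_{2}(X_{i})$. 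The condition $\mu_{\beta,\omega}(G_{i})>1$ unfolds to $\arg Z_{2}(X_{i}) > \arg(\beta-1+\sqrt{-1}\omega)$, so $\arg\til{Z}{\beta}{\omega}(G_{i}) > \alpha$. For the torsion part, both $Z_{1}(\tau_{1}^{L} E_{\mr{tor}})$ and $Z_{2}(\tau_{2}^{R} E_{\mr{tor}})$ are non-positive reals, and the identity
\[
\frac{-\beta+\sqrt{-1}\omega}{(\beta-1+\sqrt{-1}\omega)(\beta-\sqrt{-1}\omega)} = \frac{-1}{\beta-1+\sqrt{-1}\omega} = \frac{1-\beta+\sqrt{-1}\omega}{|\beta-1+\sqrt{-1}\omega|^{2}}
\]
shows $\arg(-\beta+\sqrt{-1}\omega) - \alpha = \arg(1-\beta+\sqrt{-1}\omega) \in (0,\pi)$, so $\til{Z}{\beta}{\omega}(E_{\mr{tor}})$ lies in the closed convex cone $\{z \in \bb C : \arg z \geq \alpha\} \cap \overline{\mca H}$. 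Summing all contributions inside this cone yields assertion (1).

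For assertion (2), set $z_{1} := Z_{1}(\tau_{1}^{L}E)$ and $z_{2} := (\beta-\sqrt{-1}\omega)Z_{2}(\tau_{2}^{R}E)$, so that $z_{1}+z_{2} = \til{Z}{\beta}{\omega}(E)$ and $|z_{2}| = |\til{Z}{\beta}{\omega}(\tau_{2}^{R} E)|$; the desired ratio equals $|z_{2}|/|z_{1}+z_{2}|$. Applying the same decomposition argument to $Z_{2}(\tau_{2}^{R} E) = Z_{2}(\tau_{2}^{R} E_{\mr{tor}}) + \sum_{i} Z_{2}(\tau_{2}^{R} G_{i})$ produces $\arg Z_{2}(\tau_{2}^{R} E) \geq \arg(\beta-1+\sqrt{-1}\omega)$, hence $\arg z_{2} \in [\alpha,\; \arg(\beta-\sqrt{-1}\omega)+\pi]$, while $\arg z_{1} \in (0,\pi]$ whenever $z_{1}\neq 0$ (if $z_{1}=0$ the ratio is trivially $1$). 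The difference $|\arg z_{1} - \arg z_{2}|$ is therefore bounded by $\theta := \max(\pi-\alpha,\; \pi+\arg(\beta-\sqrt{-1}\omega))$, and $\theta < \pi$ because $\alpha > 0$ and $\arg(\beta-\sqrt{-1}\omega) \in (-\pi,0)$ when $\omega > 0$. Corollary \ref{bunkai-sup} then delivers the desired uniform bound. The main technical hurdle is the structural vanishing $\tau_{1}^{L} G_{i} = 0$; once that is in place, every remaining step is a cone-geometry computation on complex numbers.
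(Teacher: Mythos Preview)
Your argument is correct and follows the same overall strategy as the paper: split $E$ into its $(\sigma_{1},\sigma_{2})$-torsion and free parts, force $\tau_{1}^{L}$ of the free part to vanish, bound the relevant arguments inside a convex cone, and finish (2) with Corollary~\ref{bunkai-sup}. Two streamlinings from the paper are worth noting. First, the Harder--Narasimhan refinement of $E_{\mr{fr}}$ is unnecessary: since $i_{1}\tau_{1}^{L}E_{\mr{fr}}$ is already a quotient of $E_{\mr{fr}}$ with $\mu_{\beta,\omega}=1$ (Proposition~\ref{prop:morslope}(3)), the hypothesis $\mu_{\beta,\omega}^{-}(E_{\mr{fr}})>1$ forces $\tau_{1}^{L}E_{\mr{fr}}=0$ directly, with no need to pass to individual factors $G_{i}$. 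Second, once $\tau_{1}^{L}E_{\mr{fr}}=0$ is known, you get $\tau_{1}^{L}E=\tau_{1}^{L}E_{\mr{tor}}\in\mca T^{\sigma_{1}}$, so in fact $\arg z_{1}=\pi$ exactly (not merely $\arg z_{1}\in(0,\pi]$); this makes the angle estimate in (2) sharper, with $\theta=\pi-\alpha$ rather than your $\max(\pi-\alpha,\,\pi+\arg(\beta-\sqrt{-1}\omega))$. Neither change is needed for correctness, but both shorten the write-up.
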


\begin{proof}
Let $T$ be the $(\sigma_{1}, \sigma_{2})$-torsion part of $E$ and $F $ the quotient $E/T$ in $\gl{\mca A_{1}}{\mca A_{2}}$. 
The definition of $\til{Z}{\beta}{\omega}$ implies 
\begin{equation}\label{eq:21}
\arg (-\beta +\sqrt{-1}\omega)	\leq \arg \til{Z}{\beta}{\omega}(T)	\leq \pi. 
\end{equation}
The assumption $\mu_{\beta, \omega}^{-}(F)>1$ implies $\tau_{1}^{L}F=0$ by the assertion (3) in Proposition \ref{prop:morslope}. 
Hence we have 
\begin{equation*}
\mu_{\beta, \omega}(F)=\frac{-\omega \R Z_{2}(\tau_{2}^{R}F) + \beta Z_{2}(\tau_{2}^{R}F)}{\I Z_{2}(\tau_{2}^{R}F)}	>1
\end{equation*}
which implies $\I \left(	(\beta -1-\sqrt{-1}\omega)Z_{2}(\tau_{2}^{R}F) \right)>0$. 
Thus we have 
\begin{equation}\label{eq:22}
\arg (\beta -1+\sqrt{-1}\omega)(\beta -\sqrt{-1}\omega)	<	\arg  (\beta -\sqrt{-1}\omega)Z_{2}(\tau_{2}^{R}F)  
=\arg \til{Z}{\beta}{\omega}(F). 
\end{equation}
Since $\til{Z}{\beta}{\omega}(E)$ is nothing but 
$\til{Z}{\beta}{\omega}(T)+\til{Z}{\beta}{\omega}(F)$, 
the inequalities (\ref{eq:21}) and (\ref{eq:22}) give the proof of the assertion (1). 

Since $F$ satisfies $\tau_{1}^{L}F=0$, we have $\arg \til{Z}{\beta}{\omega}(\tau_{1}^{L}E)= \pi$ unless $\tau_{1}^{L}E$ is zero. 
Hence there exists a $\theta \in (0,\pi)$ such that 
the inequality 
\begin{equation}
0\leq 	|	\arg \til{Z}{\beta}{\omega}(\tau_{1}^{L}E)	-\arg \til{Z}{\beta}{\omega}(\tau_{2}^{R}E)		|	\leq \theta
\end{equation}
for any $E \in \mca T_{\beta, \omega}^{>1}$. 
Then Corollary \ref{bunkai-sup} gives the proof of (2). 
\end{proof}

\begin{lem}
\label{lem:comp-slope}
Let $\mb D=\sod{\mb D_{1}}{\mb D_{2}}$ be a semiorthogonal decomposition with $\m2$. 
Choose rational stability conditions $\sigma _{i} =(\mca A_{i}, Z_{i}) \in \Stab{\mb D_{i}}$ such that $(\sigma_{1}, \sigma_{2})$ satisfies the condition $\m5$. 
Take $\epsilon_{1}$ and $\epsilon_{2}$ so that $ \epsilon_{2}\leq \epsilon_{1} < 1$. 
Suppose a $(\sigma_{1}, \sigma_{2})$-free object $E$ satisfies $\mu_{\beta, \omega}^{+}(E)<1$. 

\begin{enumerate}
\item Suppose $\tau_{1}^{L}E \neq 0$. 
If $E$ satisfies $\mu_{\beta, \omega }^{+}(E) \leq \epsilon_{1}$, then 
$\arg Z_{1}(\tau_{1}^{L}E) \leq \arg (\beta +1-2\epsilon_{1}+\sqrt{-1}\omega)$ 
for any $(\beta, \omega ) \in \bb R \times \bb R_{>0}$. 
\item 
If $E $ satisfies $\mu_{\beta, \omega }^{+}(E) \leq \epsilon_{1}$
, then 
$\arg Z_{2}(\tau_{2}^{R}E) \leq \arg (\beta -\epsilon_{1}+\sqrt{-1}\omega)$ 
for any $(\beta, \omega ) \in \bb R \times \bb R_{>0}$. 

\item If $E $ satisfies $\epsilon_{2} \leq  \mu_{\beta, \omega }(E) $ then 
$\arg (\beta + 1 -2\epsilon_{2}+\sqrt{-1}\omega) \leq  \arg Z_{2}(\tau_{2}^{R}E)$. 
\end{enumerate}
\end{lem}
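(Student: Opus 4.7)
The plan is to translate each of the three argument inequalities into a linear inequality on the real and imaginary parts of $Z_{1}(\tau_{1}^{L}E)$ and $Z_{2}(\tau_{2}^{R}E)$, and then to derive that linear inequality from the hypothesis on $\mu_{\beta,\omega}$ by expanding its definition. A preliminary observation: the standing assumption $\mu_{\beta,\omega}^{+}(E) < 1$ forces $\tau_{2}^{R}E \neq 0$, since otherwise $\mu_{\beta,\omega}(E)=1$; combined with $(\sigma_{1},\sigma_{2})$-freeness this gives $\I Z_{2}(\tau_{2}^{R}E) > 0$, and similarly $\I Z_{1}(\tau_{1}^{L}E) > 0$ whenever $\tau_{1}^{L}E\neq 0$.

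For part (2), I would simply expand the slope inequality. From $\mu_{\beta,\omega}(E) \leq \mu_{\beta,\omega}^{+}(E) \leq \epsilon_{1}$, clearing the positive denominator of $\mu_{\beta,\omega}(E)$ gives
$$(1-\epsilon_{1})\I Z_{1}(\tau_{1}^{L}E) - \omega \R Z_{2}(\tau_{2}^{R}E) + (\beta-\epsilon_{1})\I Z_{2}(\tau_{2}^{R}E) \leq 0.$$
Using $\epsilon_{1} < 1$ and $\I Z_{1}(\tau_{1}^{L}E) \geq 0$, this yields $\omega \R Z_{2}(\tau_{2}^{R}E) \geq (\beta-\epsilon_{1})\I Z_{2}(\tau_{2}^{R}E)$, which is precisely $\arg Z_{2}(\tau_{2}^{R}E) \leq \arg(\beta-\epsilon_{1}+\sqrt{-1}\omega)$. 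For part (3) the same strategy works: the inequality $\mu_{\beta,\omega}(E) \geq \epsilon_{2}$ rearranges to $\omega \R Z_{2}(\tau_{2}^{R}E) - (\beta-\epsilon_{2})\I Z_{2}(\tau_{2}^{R}E) \leq (1-\epsilon_{2})\I Z_{1}(\tau_{1}^{L}E)$, and since $\mu^{+}(E) < 1$ allows me to invoke Lemma \ref{lem:mormono} (which together with $\m5$ gives $\I Z_{1}(\tau_{1}^{L}E) \leq \I Z_{1}(\Phi(\tau_{2}^{R}E)) = \I Z_{2}(\tau_{2}^{R}E)$), substitution produces $\omega \R Z_{2}(\tau_{2}^{R}E) \leq (\beta+1-2\epsilon_{2})\I Z_{2}(\tau_{2}^{R}E)$, as required.

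Part (1) is subtler, because applying the slope inequality to $E$ itself only yields a bound on $Z_{2}(\tau_{2}^{R}E)$, not on $Z_{1}(\tau_{1}^{L}E)$. To get around this I would construct a carefully chosen subobject via Lemma \ref{lem:morsub}. Since $\mu^{+}(E) < 1$, Lemma \ref{lem:mormono} makes the canonical morphism $g_{E} \colon \tau_{1}^{L}E \to \Phi(\tau_{2}^{R}E)$ a monomorphism in $\mca A_{1}$; taking $F = \tau_{1}^{L}E$ in Lemma \ref{lem:morsub}, together with Remark \ref{rmk:morsub}, produces a subobject $\tilde F \subset E$ in $\gl{\mca A_{1}}{\mca A_{2}}$ with $\tau_{1}^{L}\tilde F \cong \tau_{1}^{L}E$ and $\Phi(\tau_{2}^{R}\tilde F) \cong \tau_{1}^{L}E$. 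Condition $\m5$ collapses the slope of $\tilde F$ to
$$\mu_{\beta,\omega}(\tilde F) = \frac{(1+\beta)Y - \omega X}{2Y},$$
where $X+\sqrt{-1}Y := Z_{1}(\tau_{1}^{L}E)$ with $Y > 0$. The inequality $\mu_{\beta,\omega}(\tilde F) \leq \mu_{\beta,\omega}^{+}(E) \leq \epsilon_{1}$, valid since $\tilde F$ is a subobject of the free object $E$, rearranges to $\omega X \geq (\beta+1-2\epsilon_{1})Y$, which is the stated argument inequality.

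The main obstacle is the construction in part (1): the ``diagonal'' subobject $\tilde F$ whose $\tau_{1}^{L}$-part coincides with $\tau_{1}^{L}E$ and whose $\Phi$-image of $\tau_{2}^{R}$-part also coincides with $\tau_{1}^{L}E$ is precisely the feature that turns a two-variable slope condition into a clean one-variable inequality for $Z_{1}(\tau_{1}^{L}E)$. This is the one step where the categorical content of Lemma \ref{lem:morsub} (relying on $\m2$) and condition $\m5$ are genuinely exploited; parts (2) and (3) are then mainly bookkeeping with the slope formula, with Lemma \ref{lem:mormono} playing in part (3) the subsidiary role of relating $\I Z_{1}(\tau_{1}^{L}E)$ to $\I Z_{2}(\tau_{2}^{R}E)$.
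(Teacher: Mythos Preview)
Your proof is correct and follows essentially the same approach as the paper. For part (1) you construct the same ``diagonal'' subobject $\tilde F$ via Lemma \ref{lem:morsub} and condition $\m5$, and for part (3) you use Lemma \ref{lem:mormono} in the same way to bound $\I Z_{1}(\tau_{1}^{L}E)$ by $\I Z_{2}(\tau_{2}^{R}E)$; the only minor variation is in part (2), where the paper passes to the subobject $i_{2}\tau_{2}^{R}E$ (whose slope reduces directly to $(-\omega \R Z_{2}+\beta\I Z_{2})/\I Z_{2}$) rather than working with $E$ itself and discarding the nonnegative term $(1-\epsilon_{1})\I Z_{1}(\tau_{1}^{L}E)$, but these two moves are equivalent.
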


\begin{proof}
 We first note that $\tau_{2}^{R}E$ is non-zero and in particular $\I Z_{2}(\tau_{2}^{R}E) \neq 0$. 
 Otherwise, $E$ is in the essential image of $i_{1} \colon \mb D_{1} \to \mb D$ and hence $E$ is $\mu_{\beta, \omega}$-semistable with $\mu_{\beta, \omega}(E)=1$ by Proposition \ref{prop:morslope}. 
 This gives a contradiction.

Since $\mu_{\beta, \omega }^{+}(E) <1$, the morphism $\tau_{1}^{L}E \to \Phi(\tau_{2}^{R}E)$ is a monomorphism in $\mca A_{1}$ by Lemma \ref{lem:mormono}. 
Moreover Lemma \ref{lem:morsub} (and the proof) implies that there exists a subobject $F \subset E \in \gl{\mca A_{1}}{\mca A_{2}}$ such that $\tau_{1}^{L}F \cong \tau_{1}^{L}E$ and $\Phi (\tau_{2}^{R}F) \cong \tau_{1}^{L}E$. 
Since the subobject $F$ also satisfies $\mu_{\beta, \omega }^{+}(F) \leq \epsilon_{1}$ by the assumption for $E$, we see: 
\begin{align*}
\epsilon_{1}	&	\geq	\frac{\I Z_{1}(\tau_{1}^{L}F)-\omega \R Z_{2}(\tau_{2}^{R}F) +\beta \I Z_{2}(\tau_{2}^{R}F)}{\I Z_{1}(\tau_{1}^{L}F) + \I Z_{2}(\tau_{2}^{R}F)}	\\
			&=	\frac{\I Z_{1}(\tau_{1}^{L}F)-\omega \R Z_{1}(\Phi (\tau_{2}^{R}F)) +\beta \I Z_{1}(\Phi (\tau_{2}^{R}F))}{\I Z_{1}(\tau_{1}^{L}F) + \I Z_{1}(\Phi (\tau_{2}^{R}F))}	\\
			&=	\frac{-\omega \R Z_{1}(\tau_{1}^{L}E) +(\beta +1)\I Z_{1}(\tau_{1}^{L}E)}{2\I Z_{1}(\tau_{1}^{L}E) }. 	
			\end{align*}
The last inequality implies 
\begin{equation}\label{eq:left}
-\omega \R Z_{1}(\tau_{1}^{L}E) +(\beta +1-2\epsilon_{1})\I Z_{1}(\tau_{1}^{L}E) \leq 0. 
\end{equation}
Since the left hand side of (\ref{eq:left}) is just the imaginary part of  $(\beta +1-2\epsilon _{1} - \sqrt{-1}\omega)Z_{1}(\tau_{1}^{L}E)$, 
this gives the proof of the assertion (1).

Note that $i_{2}\tau_{2}^{R}E$ is a subobject of $E$ in $\gl{\mca A_{1}}{\mca A_{2}}$. 
Hence $i_{2}\tau_{2}^{R}E$ also satisfies $\mu_{\beta, \omega }^{+}(i_{2}\tau_{2}^{R}E) \leq \epsilon_{1}$. 
So we see 
\begin{align*}
\epsilon_{1}	&	\geq \mu_{\beta, \omega }(i_{2}\tau_{2}^{R}E)	 =	\frac{-\omega \R Z_{2}(\tau_{2}^{R}E) + \beta \I Z_{2}(\tau_{2}^{R}E)}{\I Z_{2}(\tau_{2}^{R}E)}. 
\end{align*}
Thus we have 
\begin{equation}
-\omega \R Z_{2}(\tau_{2}^{R}E) + (\beta -\epsilon_{1} )\I Z_{2}(\tau_{2}^{R}E) \leq 0
\end{equation}
which gives the proof of the assertion (2). 

We show the assertion (3). 
By the assumption $\epsilon_{2}	\leq \mu_{\beta, \omega }(E)$, we see 
\begin{align*}
\epsilon_{2}(\I Z_{1}(\tau_{1}^{L}E) + \I Z_{2}(\tau_{2}^{R}E)) 
&\leq 
\I Z_{1}(\tau_{1}^{L}E) - \omega \R Z_{2}(\tau_{2}^{R}E) + \beta \I Z_{2}(\tau_{2}^{R}E)
\\
\iff 
(\epsilon_{2}-1)\I Z_{1}(\tau_{1}^{L}E)	&\leq 	-\omega \R Z_{2}(\tau_{2}^{R}E) + (\beta - \epsilon_{2})\I Z_{2}(\tau_{2}^{R}E). 
\end{align*}
Lemma \ref{lem:mormono} and the condition $\m5$ implies $\I Z_{1}(\tau_{1}^{L}E) \leq \I Z_{1}(\Phi (\tau_{2}^{R}E))= \I Z_{2}(\tau_{2}^{R}E)$. 
Hence we obtain
\begin{align}
\label{eq:above}
(\epsilon_{2}-1)\I Z_{2}(\tau_{2}^{R}E)	&\leq 	-\omega \R Z_{2}(\tau_{2}^{R}E) + (\beta - \epsilon_{2})\I Z_{2}(\tau_{2}^{R}E)	\\
\iff 0 &\leq -\omega \R Z_{2}(\tau_{2}^{R}E) + (\beta - 2\epsilon_{2}+1) \I Z_{2}(\tau_{2}^{R}E). 
\end{align}
By the same argument for (1), we see 
$\arg (\beta -2\epsilon_{2}+1 + \sqrt{-1}\omega) \leq \arg Z_{2}(\tau_{2}^{R}E)$. 
\end{proof}

\begin{lem}
\label{lem:slope-mor}
Let $\mb D=\sod{\mb D_{1}}{\mb D_{2}}$ be a semiorthogonal decomposition with $\m2$. 
Choose stability conditions $\sigma _{i} =(\mca A_{i}, Z_{i}) \in \Stab{\mb D_{i}}$ such that $(\sigma_{1}, \sigma_{2})$ satisfies the condition $\m
5$. 
Take $\epsilon_{1}$ and $\epsilon_{2}$ which satisfy 
\begin{equation*}
0 \leq \epsilon_{1},  0 < 1-2\epsilon_{2}, \epsilon_{2}	\leq \epsilon_{1}<1,  \text{ and }	
\end{equation*}
\begin{equation}
0 < \omega ^{2}+(\beta + 1-2\epsilon_{2})^{2}+2(1-2\epsilon_{2})(\epsilon_{2}-\epsilon_{1}). \label{eq:domain1}
\end{equation}
Suppose that a $(\sigma_{1}, \sigma_{2})$-free object $E \in \gl{\mca A_{1}}{\mca A_{2}}$ satisfies
\begin{equation}\label{eq:condition}
\epsilon_{2}\leq \mu_{\beta, \omega }(E) \leq \mu_{\beta ,\omega}^{+}(E) \leq \epsilon_{1}. 
\end{equation}
\begin{enumerate}
\item 
There exists a $\theta \in (0, \pi)$ such that 
the argument of $\til{Z}{\beta}{\omega}(E)$ satisfies 
\begin{equation*}
0	<	\arg \til{Z}{\beta}{\omega}(E)-\arg (\beta +1-2\epsilon_{2}+\sqrt{-1}\omega)(\beta -\sqrt{-1}\omega) \leq \theta_{0}
\end{equation*}
for any $E$ with the condition (\ref{eq:condition}). 
\item 
The supremum
\begin{equation*}
\sup
\left\{
\frac
{|\til{Z}{\beta}{\omega}(\tau_{2}^{R}E)|}
{|\til{Z}{\beta}{\omega}(E)|}
\middle|
E \text{ is $(\sigma_{1}, \sigma_{2})$-free with }
\epsilon_{2}\leq \mu_{\beta, \omega }(E) \leq \mu_{\beta ,\omega}^{+}(E) \leq \epsilon_{1} 
\right\}. 
\end{equation*}
is finite. 

\end{enumerate}
\end{lem}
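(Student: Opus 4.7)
The plan is to decompose $\til{Z}{\beta}{\omega}(E) = z_1 + z_2$ with $z_1 := Z_1(\tau_1^L E)$ and $z_2 := (\beta - \sqrt{-1}\omega) Z_2(\tau_2^R E)$, apply the argument bounds from Lemma \ref{lem:comp-slope}, and invoke Corollary \ref{bunkai-sup}. Since $\tau_1^L \circ i_2 = 0$, one has $z_2 = \til{Z}{\beta}{\omega}(\tau_2^R E)$, so the supremum in part (2) is exactly that of $|z_2|/|z_1 + z_2|$, and the obvious route is to bound the angular spread of $z_1, z_2$ and then feed it into Corollary \ref{bunkai-sup}.

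For notational economy I will write $p := \beta + 1 - 2\epsilon_2 + \sqrt{-1}\omega$, $p' := \beta + 1 - 2\epsilon_1 + \sqrt{-1}\omega$, $r := \beta - \epsilon_1 + \sqrt{-1}\omega$, and $q := \beta - \sqrt{-1}\omega$. Under the hypothesis $\epsilon_2 \leq \mu_{\beta, \omega}(E) \leq \mu_{\beta, \omega}^{+}(E) \leq \epsilon_1 < 1$, Lemma \ref{lem:comp-slope} (combined with the remark that $\tau_2^R E \neq 0$ under our slope constraints) yields $0 < \arg z_1 \leq \arg p'$ whenever $\tau_1^L E \neq 0$, together with $\arg(pq) \leq \arg z_2 \leq \arg(rq)$. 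A direct computation gives $\I(pq) = -\omega(1 - 2\epsilon_2) < 0$, so $\arg(pq) < 0 < \arg z_1$. I would then set $\theta_0 := \max\{\arg p', \arg(rq)\} - \arg(pq)$ and derive part (1) from the elementary fact that the argument of a sum of two nonzero complex numbers whose arguments differ by less than $\pi$ lies between them, with a short case split to handle $z_1 = 0$ (which reduces the statement directly to the bounds on $\arg z_2$); the strict lower bound then comes from $\arg z_1 > 0 > \arg(pq)$ whenever $z_1 \neq 0$.

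The crux is proving $\theta_0 < \pi$. The inequality $\arg(rq) - \arg(pq) = \arg r - \arg p < \pi$ is immediate since both $p$ and $r$ lie in the open upper half-plane. The hard part will be the companion inequality $\arg p' - \arg(pq) < \pi$, i.e.\ ruling out $p'$ and $pq$ being antipodal in $\bb C$: this is exactly where the auxiliary hypothesis $\omega^2 + (\beta + 1 - 2\epsilon_2)^2 + 2(1 - 2\epsilon_2)(\epsilon_2 - \epsilon_1) > 0$ is used, and translating this algebraic condition into the required strict angular bound via a short but delicate trigonometric rearrangement is the main obstacle. Once $\theta_0 < \pi$ is in place, part (2) follows immediately by Corollary \ref{bunkai-sup}, which yields $|z_2|/|z_1 + z_2| \leq 2/\sqrt{2 + 2\cos\theta_0}$ uniformly over all $E$ satisfying the hypothesis.
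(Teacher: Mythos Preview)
Your proposal is correct and follows essentially the same route as the paper's proof: decompose $\til{Z}{\beta}{\omega}(E)$ into $z_1$ and $z_2$, invoke Lemma~\ref{lem:comp-slope} to bound $\arg z_1\in(0,\arg p']$ and $\arg z_2\in[\arg(pq),\arg(rq)]$, set $\theta_0=\max\{\arg p',\arg(rq)\}-\arg(pq)$, verify $\theta_0<\pi$ (the paper does this via the sign of $\I\bigl(p'/(pq)\bigr)$, which is exactly the expression in~(\ref{eq:domain1}) up to a positive factor, and via the sign of $\I(r/p)$), and finish with Corollary~\ref{bunkai-sup}. Your handling of the case $z_1=0$ and the observation $\I(pq)=-\omega(1-2\epsilon_2)<0$ are the only cosmetic additions beyond the paper's argument.
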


\begin{proof}
Set $\theta_{1}=\arg (\beta +1-2\epsilon_{1}+\sqrt{-1}\omega)$, 
$\theta_{2}=\arg (\beta + 1 -2\epsilon_{2}+\sqrt{-1}\omega)	(\beta-\sqrt{-1}\omega )$, 
and 
$\theta_{3}= \arg (\beta -\epsilon_{1}+\sqrt{-1}\omega)(\beta -\sqrt{-1}\omega)$. 
Then the conditions $\epsilon_{1}>0$ and $1-2\epsilon_{2}>0$ imply that  
$\theta_{1}$ is in the open interval $(0, \pi)$, $\theta_{2}$ is non-positive, and $\theta _{3}$ is in $[0, \pi)$. 

Lemma \ref{lem:comp-slope} implies the following: 
\begin{align}
0	&<\arg \til{Z}{\beta}{\omega}(\tau_{1}^{L}E)	\leq  	\theta_{1}, \text{ and }	\label{eq:361}	\\
\theta_{2}	&<	\arg \til{Z}{\beta}{\omega}(\tau_{2}^{R}E)	\leq \theta_{3}. \label{eq:362}
\end{align}

We claim that the assumption for $(\epsilon_{1}, \epsilon_{2})$ implies 
\begin{align}\label{eq:convex1}	
0	<	\theta_{1}-\theta_{2} < \pi. 
\end{align}
Note that the inequalities (\ref{eq:convex1}) is equivalent to 
\begin{equation}\label{eq:convex-arg}
\I	\frac{\beta +1-2\epsilon_{1}+\sqrt{-1}\omega}{(\beta +1-2\epsilon_{2}+\sqrt{-1}\omega )(\beta -\sqrt{-1}\omega)} > 0. 
\end{equation}
Since the left hand side of (\ref{eq:convex-arg}) is  
$\omega \cdot (\omega ^{2}+(\beta + 1-2\epsilon_{2})^{2}+2(1- 2\epsilon_{2})(\epsilon_{2}-\epsilon_{1}))$ up to the positive constant 
$| (\beta +1-2\epsilon_{2}+\sqrt{-1}\omega )(\beta -\sqrt{-1}\omega) |^{2}$, 
the inequalities (\ref{eq:convex1}) hold.

Now the inequalities 
\begin{equation}\label{eq:convex2}
0	<	\theta_{3}-\theta_{2}	<	\pi
\end{equation}
also hold as follows. 
Since $\I (\beta-\epsilon_{1}+\sqrt{-1}\omega)/(\beta +1 +2\epsilon_{2}+\sqrt{-1}\omega)$ is $\omega(1 +\epsilon_{1}-2\epsilon_{2})$ up to positive constant, the assumption for $\epsilon_{1}$ and $\epsilon_{2}$ imply 
\begin{align*}
1+ \epsilon_{1}-2\epsilon_{2}	&\geq  1+\epsilon_{1}-2\epsilon_{1}=1-\epsilon_{1}>0. 
\end{align*}
The inequalities above imply (\ref{eq:convex2}).

Set $\theta_{0}$ by $\max \{	\theta_{1}-\theta_{2}, \theta_{3}-\theta_{2}\}$. 
Since $\theta_{2}$ is negative, 
the arguments $\arg \til{Z}{\beta}{\omega}(\tau_{1}^{L}E)$ and $\arg \til{Z}{\beta}{\omega}(\tau_{2}^{R}E)$ are in the interval 
$(\theta_{2}, \theta_{0}+\theta_{2}]$. 
By inequalities (\ref{eq:convex1}) and (\ref{eq:convex2}), $\theta_{0}$ is in $(0,\pi)$. 
Moreover (\ref{eq:361}) and (\ref{eq:362}) implies 
\begin{equation*}
\theta_{2}	<	\arg \til{Z}{\beta}{\omega}(E)	\leq \theta_{0} + \theta_{2}. 
\end{equation*}
Then Corollary \ref{bunkai-sup} implies the assertion (2). 
\end{proof}

\begin{rmk}
If $\epsilon_{1}=0$, then the inequality (\ref{eq:domain1}) is nothing but 
\begin{equation}\label{eq:domain2}
\omega^{2}+\beta^{2}+(2\beta+1)(2\epsilon_{2}+1) \geq 0. 
\end{equation}
Thus $(\beta, \omega)$ satisfies the (\ref{eq:domain2}) if $2\beta+ 1 \geq 0$. 
\end{rmk}

\begin{prop}
\label{prop:argsup1}
Let $\mb D=\sod{\mb D_{1}}{\mb D_{2}}$ be a semiorthogonal decomposition with $\m2$. 
Choose rational stability conditions $\sigma _{i} =(\mca A_{i}, Z_{i}) \in \Stab{\mb D_{i}}$ such that $(\sigma_{1}, \sigma_{2})$ satisfies the condition $\m5$. 

For an $\epsilon_{1} \in (0,1/2) $, set an open subset $\mca H^{+}(\epsilon_{1})$ by
\begin{equation}\label{eq:masi}
\mca H^{+}(\epsilon_{1})=
\left\{
(\beta , \omega) \in \bb R \times \bb R_{>0}	\middle|	
\begin{split}
0 &< \omega^{2} + (\beta +1)^{2}-2\epsilon_{1} , \text{ and }\\
0&< \omega ^{2}+(\beta +1-2\epsilon_{1})^{2}+2(1-2\epsilon_{1})(\epsilon_{1}-1)
\end{split} 
\right\}.
\end{equation}

Set a full subcategory $\mca T_{\beta, \omega}^{<1}$ of $\gl{\mca A_{1}}{\mca A_{2}}$ by 
\begin{equation}
\mca T_{\beta, \omega}^{<1}	=\{	E \in \gl{\mca A_{1}}{\mca A_{2}}	\mid E
\text{ is $(\sigma_{1}, \sigma_{2})$-free with }0 < \mu_{\beta, \omega}^{-}(E) \leq \mu_{\beta, \omega }^{+}(E) <1
	\}. 
\end{equation}

If $(\beta, \omega ) \in \mca H^{+}(\epsilon_{1})$, 
\begin{enumerate}
\item then there exists a $\theta \in (0,\pi)$ such that the following holds: 
\[
\sup\{	\arg\til{Z}{\beta}{\omega}(E) \mid E \in \mca T_{\beta, \omega}^{<1}	\} \leq \theta. 
\]
\item The supremum 
\begin{equation}
\sup
\left\{
\frac{|\til{Z}{\beta}{\omega}(\tau_{2}^{R}E)|}{|\til{Z}{\beta}{\omega}(E)|}
\middle|
E
\in 
\mca T_{\beta, \omega}^{<1}
\right\}
\end{equation}
is finite. 
\end{enumerate}
\end{prop}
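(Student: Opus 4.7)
The plan is to apply Lemma \ref{lem:slope-mor} in two regimes separated by the threshold $\epsilon_1$ and combine the resulting estimates via Proposition \ref{bunkai-improve} and Corollary \ref{bunkai-sup}. Fix $(\beta, \omega) \in \mca H^{+}(\epsilon_1)$ and take $E \in \mca T_{\beta, \omega}^{<1}$. By Proposition \ref{prop:morslope}, $(\mca C, \Mu)$ is a stability condition, so $E$ admits a Harder--Narasimhan filtration with respect to $\mu_{\beta, \omega}$ whose semistable factors all have slopes in the interval $(0, 1)$. I would split this filtration at $\epsilon_1$ to produce a subobject $E_{+} \subset E$ in $\gl{\mca A_1}{\mca A_2}$ whose HN factors all have slope $> \epsilon_1$ together with a quotient $E_{-} = E/E_{+}$ whose HN factors all have slope $\leq \epsilon_1$; the free part of $E_{-}$ is what Lemma \ref{lem:slope-mor} actually sees. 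The degenerate cases $E_{+} = 0$ or $E_{-} = 0$ would be handled by applying Lemma \ref{lem:slope-mor} directly to $E$.

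Next I would apply Lemma \ref{lem:slope-mor} twice: to $E_{+}$ with parameters $(\epsilon_1', \epsilon_2') = (\mu^{+}(E_{+}), \epsilon_1)$ and to the free part of $E_{-}$ with $(\epsilon_1', \epsilon_2') = (\epsilon_1, \mu^{-}(E_{-}))$. The essential point is that the hypothesis (\ref{eq:domain1}), i.e.\ the positivity of
\[
f(a, b) := \omega^2 + (\beta + 1 - 2b)^2 + 2(1-2b)(b - a),
\]
holds uniformly in $\mu^{+}(E_{+}) \in [\epsilon_1, 1]$ and $\mu^{-}(E_{-}) \in [0, \epsilon_1]$. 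A direct computation shows that $f$ is affine in each variable separately: at the common endpoint $f(\epsilon_1, \epsilon_1) = \omega^2 + (\beta+1-2\epsilon_1)^2 > 0$ holds trivially, the value $f(1, \epsilon_1) > 0$ is exactly the second defining inequality of $\mca H^{+}(\epsilon_1)$, and $f(\epsilon_1, 0) > 0$ is the first. Compactness of the intervals $[\epsilon_1, 1]$ and $[0, \epsilon_1]$ then yields a uniform lower bound $f \geq c > 0$, producing uniform constants $\theta_0^{\pm} \in (0, \pi)$ controlling the argument ranges given by Lemma \ref{lem:slope-mor}(1).

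For assertion (1), Lemma \ref{lem:slope-mor}(1) places $\arg \til{Z}{\beta}{\omega}(E_{\pm})$ in a bounded sub-interval of the form $(\theta_2^{\pm}, \theta_2^{\pm} + \theta_0^{\pm}]$; that the lower endpoint is strictly positive comes from $E_{\pm}$ being $(\sigma_1, \sigma_2)$-free with $\mu^{-} > 0$, which forces $\I \til{Z}{\beta}{\omega}(E_{\pm}) > 0$. Both arguments therefore lie in a single closed sub-interval $(0, \theta^{*}] \subset (0, \pi)$ of length strictly less than $\pi$, so the argument of the sum $\til{Z}{\beta}{\omega}(E) = \til{Z}{\beta}{\omega}(E_{+}) + \til{Z}{\beta}{\omega}(E_{-})$ also lies in this interval, proving (1).

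For assertion (2), Lemma \ref{lem:slope-mor}(2) yields a constant $C$ with $|\til{Z}{\beta}{\omega}(\tau_2^{R} E_{\pm})| \leq C |\til{Z}{\beta}{\omega}(E_{\pm})|$. Since the arguments of $\til{Z}{\beta}{\omega}(E_{+})$ and $\til{Z}{\beta}{\omega}(E_{-})$ differ by less than $\theta^{*} < \pi$, Proposition \ref{bunkai-improve} gives $|\til{Z}{\beta}{\omega}(E_{+})| + |\til{Z}{\beta}{\omega}(E_{-})| \leq C' |\til{Z}{\beta}{\omega}(E)|$, and the triangle inequality applied to $\til{Z}{\beta}{\omega}(\tau_2^{R} E) = \til{Z}{\beta}{\omega}(\tau_2^{R} E_{+}) + \til{Z}{\beta}{\omega}(\tau_2^{R} E_{-})$ completes the argument. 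The main obstacle will be the careful bookkeeping needed to lift the HN filtration from the Serre quotient $\mca C$ to an exact sequence of $(\sigma_1, \sigma_2)$-free subquotients in $\gl{\mca A_1}{\mca A_2}$, and to verify the affine positivity of $f$; it is precisely this linearity in each variable that pins down $\mca H^{+}(\epsilon_1)$ as the correct domain for the two-region decomposition.
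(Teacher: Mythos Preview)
Your approach is essentially the paper's: split the Harder--Narasimhan filtration of $E$ at the threshold $\epsilon_1$, apply Lemma~\ref{lem:slope-mor} separately to the two pieces, then recombine via Proposition~\ref{bunkai-improve} and Corollary~\ref{bunkai-sup}. The paper applies Lemma~\ref{lem:slope-mor} directly with the \emph{fixed} parameter pairs $(1,\epsilon_1)$ for $E^{+}$ and $(\epsilon_1,0)$ for $E^{-}$, so that the two defining inequalities of $\mca H^{+}(\epsilon_1)$ are exactly condition~(\ref{eq:domain1}) at those endpoints; you instead let the parameters track $\mu^{\pm}(E_{\pm})$ and invoke affinity plus compactness to recover uniformity. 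Both routes yield the same uniform $\theta_0$; yours is a bit more explicit about why the bound does not degenerate as $\mu^{+}(E_{+})\to 1$, while the paper's is shorter once one observes that the angles $\theta_1,\theta_2,\theta_3$ in the proof of Lemma~\ref{lem:slope-mor} vary continuously and the strict inequalities of $\mca H^{+}(\epsilon_1)$ give the needed margin at the boundary. Your remark about passing to the free part of $E_{-}$ is a point the paper leaves implicit (it simply declares $E^{-}$ free, which amounts to saturating $E^{+}$ inside $E$); since any $(\sigma_1,\sigma_2)$-torsion in the quotient would force $\mu^{+}(E^{-})=+\infty$ unless it lies in $\mca B$, and $\mca B$-torsion has $\tau_2^{R}=0$, this does not affect the ratio in part~(2).
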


\begin{proof}
Taking the Harder-Narasimhan filtration with respect to $\mu_{\beta, \omega}$-semistability, we obtain a short exact sequence in $\gl{\mca A_{1}}{\mca A_{2}}$
\begin{equation}
\xymatrix{
0	\ar[r]	&	E^{+}	\ar[r]	&	E	\ar[r]	&	E^{-}	\ar[r]	&	0, 
}
\end{equation}
where $E^{+}$ and $E^{-}$ satisfy the following: 
\begin{itemize}
\item $E^{+}$  is $(\sigma_{1}, \sigma_{2})$-free with $\epsilon_{1} < \mu_{\beta, \omega}^{-}(E^{+})\leq \mu_{\beta, \omega}^{+}(E^{+}) <1$ and 
\item $E^{-}$ is $(\sigma_{1}, \sigma_{2})$-free with $0 < \mu_{\beta, \omega}^{-}(E^{-}) \leq \mu_{\beta, \omega }^{+}(E^{-}) \leq \epsilon_{1}$. 
\end{itemize}

Now we note that two inequalities in (\ref{eq:masi}) come from the inequality (\ref{eq:domain1}) in Lemma \ref{lem:slope-mor}. 
Applying Lemma \ref{lem:slope-mor}, there exist $\theta_{+}$ and $\theta_{-}$ in $(0,\pi)$ such that $\til{Z}{\beta}{\omega}(E^{i}) \leq \theta_{i}$ where $i \in \{+, -\}$. 
If set $\max\{\theta_{+}, \theta_{-}\}$ by $\theta$, 
then the argument of $\til{Z}{\beta}{\omega}(E)=\til{Z}{\beta}{\omega}(E^{+})+\til{Z}{\beta}{\omega}(E^{-})$ is in the interval $(0, \theta]$ and this gives the proof of the assertion (1) since $\theta$ is independent of the choice of $E \in \mca T_{\beta, \omega}^{<1}$. 

Both $\arg \til{Z}{\beta}{\omega}(E^{+})$ and $\arg \til{Z}{\beta}{\omega}(E^{-})$ are positive 
since $\mu_{\beta, \omega}(E^{+})$ and $\mu_{\beta, \omega}(E^{-})$ are positive. 
Moreover the inequality $|\arg \til{Z}{\beta}{\omega}(E^{+})-\til{Z}{\beta}{\omega}(E^{-})| \leq \theta$ holds. 
Then Proposition \ref{bunkai-improve} implies that there exists a positive constant $C>0$ such that the following holds: 
\begin{equation}
|\til{Z}{\beta}{\omega}(E)| \geq C|\til{Z}{\beta}{\omega}(E^{+})|+C|\til{Z}{\beta}{\omega}(E^{-})|. 
\end{equation}

By Lemma \ref{lem:comp-slope}, there exist $\theta_{0}$ and $\theta_{0}'$ in the interval $(0,\pi)$ such that 
the following hold:
\begin{align}
\label{eq:33}	0	&<	|\arg\til{Z}{\beta}{\omega}(\tau_{1}^{L}E^{+})-\arg\til{Z}{\beta}{\omega}(\tau_{2}^{R}E^{+})|	\leq \theta_{0},	\text{ and}\\
\label{eq:34}	0	&<	|\arg\til{Z}{\beta}{\omega}(\tau_{1}^{L}E^{-})-\arg\til{Z}{\beta}{\omega}(\tau_{2}^{R}E^{-})|	\leq \theta_{0}'. \end{align}

Hence we see 
\begin{align*}
\frac{|\til{Z}{\beta}{\omega}(\tau_{2}^{R}E)|}{|\til{Z}{\beta}{\omega}(E)|}	&
\leq 
\frac{1}{C}\frac{|\til{Z}{\beta}{\omega}(\tau_{2}^{R}E^{+})+\til{Z}{\beta}{\omega}(\tau_{2}^{R}E^{-})|}{ |\til{Z}{\beta}{\omega}(E^{+})|+|\til{Z}{\beta}{\omega}(E^{-})|}	\\
&\leq 
\frac{1}{C}\left(	\frac{|\til{Z}{\beta}{\omega}(\tau_{2}^{R}E^{+})|}{|\til{Z}{\beta}{\omega}(E^{+})|+|\til{Z}{\beta}{\omega}(E^{-})|}
+\frac{|\til{Z}{\beta}{\omega}(\tau_{2}^{R}E^{-})|}{|\til{Z}{\beta}{\omega}(E^{+})|+|\til{Z}{\beta}{\omega}(E^{-})|}	\right)	\\
&\leq 
\frac{1}{C}\left(	
\frac{|\til{Z}{\beta}{\omega}(\tau_{2}^{R}E^{+})|}{|\til{Z}{\beta}{\omega}(E^{+})|}
+
\frac{|\til{Z}{\beta}{\omega}(\tau_{2}^{R}E^{-})|}{|\til{Z}{\beta}{\omega}(E^{-})|}	
\right)	
\end{align*}
Corollary \ref{bunkai-sup} implies that 
both ${|\til{Z}{\beta}{\omega}(\tau_{2}^{R}E^{+})|}/{|\til{Z}{\beta}{\omega}(E^{+})|}$ and 
${|\til{Z}{\beta}{\omega}(\tau_{2}^{R}E^{-})|}/{|\til{Z}{\beta}{\omega}(E^{-})|}$ are bounded above by (\ref{eq:33}) and (\ref{eq:34}). 
Hence the supremum is bounded above. 
\end{proof}

\begin{rmk}
In the final section, we wish to deform the stability condition $\til{\Sigma}{\beta}{\omega}$ along a path from $(1,0) \in \overline{\mca H}$ to $(\cos 2\pi/3, \sin 2\pi/3) \in \mca H$. 
If set $\epsilon_{1}$ by $1/3$, then the set $\mca H^{+}(1/3)$ includes the desired path. 
\end{rmk}

\begin{cor}\label{cor:mu<-1/2}
Let $\mb D=\sod{\mb D_{1}}{\mb D_{2}}$ be a semiorthogonal decomposition with $\m2$. 
Choose rational stability conditions $\sigma _{i} =(\mca A_{i}, Z_{i}) \in \Stab{\mb D_{i}}$ such that $(\sigma_{1}, \sigma_{2})$ satisfies the condition $\m
5$. 
Take $\epsilon_{2} \leq 0$ and $(\beta, \omega)\in \bb Q \times \bb Q_{>0}$ so that 
\begin{equation}
2\beta +1 -2\epsilon_{2}>0.  
\end{equation}
Set $\mca  F_{\beta, \omega}^{\leq \epsilon_{2}}$ by 
\begin{equation*}
\mca F_{\beta, \omega}^{\leq \epsilon_{2}}		:=	
\{
F \in \gl{\mca A_{1}}{\mca A_{2}}	\mid	
F\text{ is $(\sigma_{1}, \sigma_{2})$-free with }\mu_{\beta, \omega}^{+}(F)\leq \epsilon_{2}
\}
\end{equation*}
and take $F  \in \mca F_{\beta, \omega }^{\leq \epsilon_{2}}$.

\begin{enumerate}
\item The argument of $\til{Z}{\beta}{\omega}(F)$ is greater than $\arg (\beta- \sqrt{-1}\omega)$. 
In particular the following holds for any $F \in \mca F_{\beta, \omega}^{\leq \epsilon_{2}}$: 
\begin{equation}
\arg (\beta -\sqrt{-1}\omega)	<	\til{Z}{\beta}{\omega}(F)	\leq 0
\end{equation}

\item 
The supremum 
\begin{equation*}
\sup
\left\{
\frac{|\til{Z}{\beta}{\omega}(\tau_{2}^{R}F)|}
{|\til{Z}{\beta}{\omega}(F)|}
\middle|
F \in \mca F_{\beta, \omega}^{\leq \epsilon_{2}}
\right\}
\end{equation*}
is finite. 
\end{enumerate}
\end{cor}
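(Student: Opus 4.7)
The plan is to mirror the proofs of Lemma~\ref{lem:slope-mor}(2) and Proposition~\ref{prop:argsup1}(2) in the present negative-slope regime, applying Lemma~\ref{lem:comp-slope} with the substitution $\epsilon_1 \mapsto \epsilon_2$. Since $\mu_{\beta,\omega}^+(F)\leq \epsilon_2\leq 0<1$, that lemma delivers
\[
0<\arg Z_1(\tau_1^L F)\leq \arg(\beta+1-2\epsilon_2+\sqrt{-1}\omega),\qquad 0<\arg Z_2(\tau_2^R F)\leq \arg(\beta-\epsilon_2+\sqrt{-1}\omega),
\]
the first being vacuous when $\tau_1^L F=0$. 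Note $\tau_2^R F\neq 0$, for otherwise $F$ would lie in the essential image of $i_1$ and Proposition~\ref{prop:morslope}(3) would force $\mu_{\beta,\omega}(F)=1$, contradicting $\mu^+(F)\leq \epsilon_2\leq 0$.

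Setting $\theta_2:=\arg(\beta-\sqrt{-1}\omega)\in(-\pi,0)$, the key angular estimate is that
\[
\I\,\frac{\beta+1-2\epsilon_2+\sqrt{-1}\omega}{\beta-\sqrt{-1}\omega}=\frac{\omega(2\beta+1-2\epsilon_2)}{\beta^2+\omega^2}>0
\]
is exactly the translation of the hypothesis $2\beta+1-2\epsilon_2>0$; this shows $\arg(\beta+1-2\epsilon_2+\sqrt{-1}\omega)-\theta_2\in(0,\pi)$, while trivially $\arg(\beta-\epsilon_2+\sqrt{-1}\omega)\in(0,\pi)$. Consequently both $\arg\til{Z}{\beta}{\omega}(i_1\tau_1^L F)=\arg Z_1(\tau_1^L F)$ and $\arg\til{Z}{\beta}{\omega}(i_2\tau_2^R F)=\theta_2+\arg Z_2(\tau_2^R F)$ lie in a common half-open interval $(\theta_2,\theta_2+\theta_0]$ of length $\theta_0<\pi$. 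This parallels exactly the convexity inequality~(\ref{eq:convex1}) in the proof of Lemma~\ref{lem:slope-mor}.

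Assertion~(1) then follows in two steps: $\arg\til{Z}{\beta}{\omega}(F)\leq 0$ from $F\in\tilt{\mca F}{\sigma_1}{\sigma_2}$ (which places $F[1]\in\tilt{\mca A}{\sigma_1}{\sigma_2}$), and $\arg\til{Z}{\beta}{\omega}(F)>\theta_2$ because $\til{Z}{\beta}{\omega}(F)$ is a sum of two complex numbers whose arguments lie in an interval of length less than $\pi$. For assertion~(2), Corollary~\ref{bunkai-sup} applied to the decomposition $\til{Z}{\beta}{\omega}(F)=\til{Z}{\beta}{\omega}(i_1\tau_1^L F)+\til{Z}{\beta}{\omega}(i_2\tau_2^R F)$ gives the desired uniform bound, since the two summands have arguments differing by at most $\theta_0<\pi$. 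The only conceptual hurdle is recognizing that the opaque-looking hypothesis $2\beta+1-2\epsilon_2>0$ is exactly the convexity condition placing the two summands in a wedge of opening less than $\pi$; everything else is a direct repetition of the mechanical argument already used in Lemma~\ref{lem:slope-mor} and Proposition~\ref{prop:argsup1}.
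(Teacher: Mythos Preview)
Your proof is correct and follows essentially the same route as the paper: apply Lemma~\ref{lem:comp-slope} with $\epsilon_1$ replaced by $\epsilon_2$ to bound the arguments of the two summands $\til{Z}{\beta}{\omega}(i_1\tau_1^L F)$ and $\til{Z}{\beta}{\omega}(i_2\tau_2^R F)$, translate the hypothesis $2\beta+1-2\epsilon_2>0$ into the wedge condition $\arg(\beta+1-2\epsilon_2+\sqrt{-1}\omega)-\arg(\beta-\sqrt{-1}\omega)\in(0,\pi)$ via the imaginary-part computation, and then invoke Corollary~\ref{bunkai-sup}. Your exposition is in fact cleaner than the paper's, which contains a couple of sign typos (e.g.\ $-\beta$ in place of $\beta$) but follows the identical strategy.
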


\begin{proof}
The imaginary part $\I \frac{\beta +1-2\epsilon_{2}+\sqrt{-1}\omega}{\beta -\sqrt{-1}\omega} $
is $\omega(2\beta + 1 -2\epsilon_{2})$ up to positive constant. 
Then 
The assumption $2\beta +1 -2\epsilon_{2}>0$ directly implies 
\begin{equation}\label{eq:330}
0 < \arg \frac{\beta +1 -2\epsilon_{2}+\sqrt{-1}\omega}{\beta -\sqrt{-1}\omega} <\pi. 
\end{equation}

By the definition of $\til{Z}{\beta}{\omega}$, we have $\arg (-\beta +\sqrt{-1}\omega) < \arg \til{Z}{\beta}{\omega}(\tau_{2}^{R}E)$. 
Lemma \ref{lem:comp-slope} implies 
\begin{equation}\label{eq:ver}
0	<	\arg \til{Z}{\beta}{\omega}(\tau_{1}^{L}F) \leq \arg (\beta +1 -2\epsilon_{2}+\sqrt{-1}\omega), \text{and}
\end{equation}
\begin{equation}\label{eq:ver2}
\arg (\beta-\sqrt{-1}\omega) < \arg \til{Z}{\beta}{\omega}(\tau_{2}^{R}F) \leq \arg (\beta -\epsilon_{2}+\sqrt{-1}\omega)(\beta -\sqrt{-1}\omega). 
\end{equation}
Since $\arg (\beta -\epsilon_{2}+\sqrt{-1}\omega)(\beta -\sqrt{-1}\omega)$ is negative, 
the inequalities (\ref{eq:330}), (\ref{eq:ver}), and (\ref{eq:ver2}) imply 
$\arg (-\beta+\sqrt{-1}\omega)	<	\til{Z}{\beta}{\omega}(F)$. 	
In addition the inequalities $\mu_{\beta, \omega}^{+}(F)\leq \epsilon_{2} < 0$ imply $\arg \til{Z}{\beta}{\omega}(F) \leq 0$, and thus we have 
\begin{equation}
\arg (\beta- \sqrt{-1}\omega)	<	\til{Z}{\beta}{\omega}(F) \leq 0. 
\end{equation}

Finally, by (\ref{eq:ver}) and (\ref{eq:ver2}), Corollary \ref{bunkai-sup} implies the assertion (2). 
\end{proof}

The following lemma might be technical for readers, but is necessary for the proof of Proposition \ref{prop:free-angle}

\begin{lem}\label{lem:kihon1}
Let $\mb D=\sod{\mb D_{1}}{\mb D_{2}}$ be a semiorthogonal decomposition with $\m2$. 
Choose stability conditions $\sigma _{i} =(\mca A_{i}, Z_{i}) \in \Stab{\mb D_{i}}$ such that $(\sigma_{1}, \sigma_{2})$ satisfies the condition $\m
5$. 
Take $(\beta, \omega)\in \bb Q \times \bb Q_{>0}$. 

Suppose that $E \in \gl{\mca A_{1}}{\mca A_{2}}$ satisfies the following: 
\begin{itemize}
\item $\tau_{1}^{L}E$ is $\sigma_{1}$-free, and 
\item the $(\sigma_{1}, \sigma_{2})$-free part $E_{\mr{fr}}$ of $E$ has $\mu_{\beta, \omega }^{-}(E_{\mr{fr}})	\geq 1$. 
\end{itemize}
The the following holds: 
\begin{enumerate}
\item The $\sigma_{2}$-free part $F$ of $\tau_{2}^{R}E$ has the property $\mu_{\beta, \omega}^{-}(i_{2}F) \geq 1$. 
\item $\arg \til{Z}{\beta}{\omega}(\tau_{2}^{R}E)	\geq \arg (\beta-1+\sqrt{-1}\omega)(\beta -\sqrt{-1}\omega)$. 
\end{enumerate}
\end{lem}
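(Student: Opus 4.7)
The plan is to establish (1) by a seesaw-based contradiction, then deduce (2) by converting the resulting slope bound into an argument inequality for the central charge.

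For (1), start by identifying the $\sigma_2$-free part of $\tau_2^R E$ with $\tau_2^R E_{\mr{fr}}$: $t$-exactness of $\tau_2^R$ applied to the gluing torsion sequence $0 \to E_{\mr{tor}} \to E \to E_{\mr{fr}} \to 0$ gives a short exact sequence in $\mca A_2$ whose outer terms are $\sigma_2$-torsion and $\sigma_2$-free, and uniqueness of the torsion/free decomposition identifies them with $T$ and $F$. The hypothesis that $\tau_1^L E$ is $\sigma_1$-free forces $\tau_1^L E_{\mr{tor}} = 0$ (a torsion subobject of a free object vanishes), hence $\tau_1^L E_{\mr{fr}} = \tau_1^L E$, so the semiorthogonal decomposition of $E_{\mr{fr}}$ reads
\[
0 \to i_2 F \to E_{\mr{fr}} \to i_1 \tau_1^L E \to 0
\]
in $\gl{\mca A_1}{\mca A_2}$. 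By Proposition \ref{prop:morslope}(3), $i_1 \tau_1^L E$ is $\mu_{\beta, \omega}$-semistable of slope $1$ (or is zero, making (1) immediate since then $E_{\mr{fr}} = i_2 F$).

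Assume for contradiction $\mu^-_{\beta, \omega}(i_2 F) < 1$, and take $K \subset i_2 F$ corresponding to the penultimate step of the HN filtration of $i_2 F$, so $Q := i_2 F/K$ is $\mu_{\beta, \omega}$-semistable with $\mu_{\beta, \omega}(Q) < 1$. Forming $E_{\mr{fr}}/K$ along $K \subset i_2 F \subset E_{\mr{fr}}$ produces
\[
0 \to Q \to E_{\mr{fr}}/K \to i_1 \tau_1^L E \to 0,
\]
and seesaw (with $\mu_{\beta, \omega}(Q) < 1 = \mu_{\beta, \omega}(i_1 \tau_1^L E)$) yields $\mu_{\beta, \omega}(E_{\mr{fr}}/K) < 1$. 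But $E_{\mr{fr}}/K$ is a quotient of $E_{\mr{fr}}$, so $\mu_{\beta, \omega}(E_{\mr{fr}}/K) \geq \mu^-_{\beta, \omega}(E_{\mr{fr}}/K) \geq \mu^-_{\beta, \omega}(E_{\mr{fr}}) \geq 1$, a contradiction.

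For (2), a direct calculation checks that $\mu_{\beta, \omega}(i_2 F) \geq 1$ is equivalent to $\I[(\beta -1 -\sqrt{-1}\omega)Z_2(F)] \geq 0$, i.e.\ $\arg Z_2(F) \geq \arg(\beta - 1 + \sqrt{-1}\omega)$. Writing $\til{Z}{\beta}{\omega}(\tau_2^R E) = (\beta - \sqrt{-1}\omega)Z_2(T) + (\beta - \sqrt{-1}\omega)Z_2(F)$, the $T$-summand, when nonzero, is a negative real multiple of $(\beta - \sqrt{-1}\omega)$, hence has argument $\arg(\beta - \sqrt{-1}\omega) + \pi$, which exceeds $\arg[(\beta - 1 + \sqrt{-1}\omega)(\beta - \sqrt{-1}\omega)]$ because $\pi > \arg(\beta - 1 + \sqrt{-1}\omega)$; and the $F$-summand inherits the same lower bound by multiplying the slope-angle inequality from (1) by $(\beta - \sqrt{-1}\omega)$. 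Both summands therefore lie in the arc of width $\pi - \arg(\beta - 1 + \sqrt{-1}\omega) < \pi$ based at $\arg[(\beta - 1 + \sqrt{-1}\omega)(\beta - \sqrt{-1}\omega)]$, so their sum preserves this lower bound.

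The main obstacle is the pushout-seesaw step in (1): setting up the quotient sequence cleanly and confirming that $\mu^-_{\beta, \omega}$ is preserved by the quotient $E_{\mr{fr}} \to E_{\mr{fr}}/K$ (a general fact since any quotient of $E_{\mr{fr}}/K$ is already a quotient of $E_{\mr{fr}}$). In (2) the delicate point is the ``no wrap-around'' check, i.e.\ verifying that the arc containing both summands has width strictly less than $\pi$, which ultimately rests on $\arg(\beta - 1 + \sqrt{-1}\omega) \in (0, \pi)$.
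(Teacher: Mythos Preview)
Your proof is correct. For part (1) you argue by contradiction using the last Harder--Narasimhan factor of $i_2 F$ and a seesaw inequality on the sequence $0 \to Q \to E_{\mr{fr}}/K \to i_1\tau_1^L E \to 0$, whereas the paper proceeds directly: for every subobject $A \subset \tau_2^R E$ with $\sigma_2$-free quotient it forms $Q = E/i_2 A$, observes that $Q$ is a $(\sigma_1,\sigma_2)$-free quotient of $E_{\mr{fr}}$ (hence $\mu_{\beta,\omega}(Q)\geq 1$), and then algebraically cancels the $\tau_1^L$-contribution from the slope inequality to obtain $\mu_{\beta,\omega}(i_2\tau_2^R Q)\geq 1$ for the arbitrary quotient $\tau_2^R Q$ of $F$. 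Both routes hinge on the same fact that $\mu^{-}$ does not decrease under quotients; the paper's version avoids the contradiction framing and yields the argument bound $\arg Z_2(\tau_2^R Q)\geq \arg(\beta-1+\sqrt{-1}\omega)$ for \emph{every} such quotient as a byproduct, which it then specializes to $A=T$ to get part (2). Your treatment of part (2) is essentially identical to the paper's, including the torsion/free splitting of $\tau_2^R E$ and the bounded-arc argument.
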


\begin{proof}
Let $E_{\mr{tor}}$ be the $(\sigma_{1}, \sigma_{2})$-torsion part of $E$. 
By the assumption we see $\tau_{1}^{L}E_{\mr{tor}}=0$ and 
\begin{equation}\label{eq:42}
\arg \til{Z}{\beta}{\omega}(E_{\mr{tor}})=\arg \til{Z}{\beta}{\omega}(\tau_{2}^{R}E_{\mr{tor}})=\arg (-\beta +\sqrt{-1}\omega). 
\end{equation}

Let $A$ be an arbitrary subobject of $\tau_{2}^{R}E$ such that the quotient $\tau_{2}^{R}E/A$ is $\sigma_{2}$-free. 
Then $i_{2}A$ gives a subobject of $E$. 
Hence the quotient $Q= E/i_{2}A$ is $(\sigma_{1}, \sigma_{2})$-free with $\mu_{\beta, \omega}^{-}(Q)\geq 1$. 
Then the inequalities $1 \leq \mu_{\beta, \omega }^{-}(Q) \leq \mu_{\beta, \omega }(Q)$ imply 
\begin{equation}\label{eq:43}
-\omega \R Z_{2}(\tau_{2}^{R}Q) + (\beta -1)\I Z_{2} (\tau_{2}^{R}Q) \geq 0, 
\end{equation}
which means $\arg Z_{2}(\tau_{2}^{R}Q) \geq \arg (\beta-1+\sqrt{-1}\omega)$. 
The inequality (\ref{eq:43}) also implies 
\begin{equation}\label{eq:44}
\frac{-\omega \R Z_{2}(\tau_{2}^{R}Q)+ \beta \I Z_{2}(\tau_{2}^{R}Q)}{\I Z_{2}(\tau_{2}^{R}Q)} \geq 1. 
\end{equation}
Since $A$ is arbitrary, the inequality (\ref{eq:44}) implies the assertion (1). 

Recall $\til{Z}{\beta}{\omega}(\tau_{2}^{R}E)=\til{Z}{\beta}{\omega}(\tau_{2}^{R}E_{\mr{tor}})+\til{Z}{\beta}{\omega}(i_{2}F)$ 
and $Q = E/i_{2}$. 
Applying the inequality (\ref{eq:43}) $A$ as $\sigma_{2}$-torsion part of $\tau_{2}^{R}E$, 
we have 
\begin{equation}\label{eq:45}
\arg \til{Z}{\beta}{\omega}(i_{2}F) = \arg (\beta-\sqrt{-1}\omega)Z_{2}(F)	\geq \arg (\beta -1+\sqrt{-1}\omega)(\beta -\sqrt{-1}\omega). 
\end{equation}
Then (\ref{eq:42}) and (\ref{eq:45}) imply the assertion (2). 
\end{proof}

\begin{prop}\label{prop:free-angle}
Let $\mb D=\sod{\mb D_{1}}{\mb D_{2}}$ be a semiorthogonal decomposition with $\m2$. 
Choose rational stability conditions $\sigma _{i} =(\mca A_{i}, Z_{i}) \in \Stab{\mb D_{i}}$ such that $(\sigma_{1}, \sigma_{2})$ satisfies the condition $\m
5$. 
Take $(\beta, \omega)\in  \mca H^{+}(\epsilon_{1})_{\bb Q}$ for an $\epsilon_{1} \in (0,1/2)$ (cf. (\ref{eq:masi})).

Then the supremum 
\begin{equation}
\sup
\left\{
\frac{|\til{Z}{\beta}{\omega}(\tau_{2}^{R}T)|}{|\til{Z}{\beta}{\omega}(T)|} 
\middle|
T \in \til{\mca T}{\beta}{\omega} \cap 	\til{\mca P}{\beta}{\omega}(\phi) \text{ where 
}  \pi \phi \in (0, \arg (-\beta+\sqrt{-1}\omega)]
\right\}
\end{equation}
is finite. 
\end{prop}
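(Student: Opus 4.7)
The plan is to decompose $T$ via the $(\sigma_1,\sigma_2)$-torsion pair and the $\mu_{\beta,\omega}$-slope, bound the $\tau_2^R$-ratio on each piece using the preceding lemmas, and recombine via Proposition \ref{bunkai-improve}. The phase restriction $\pi\phi \leq \arg(-\beta+\sqrt{-1}\omega)$ together with semistability of $T$ is the main tool for controlling the $(\sigma_1,\sigma_2)$-torsion piece.

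First, consider the short exact sequence
\[
0 \to T_{\mr{tor}} \to T \to T_{\mr{fr}} \to 0
\]
in $\gl{\mca A_1}{\mca A_2}$; both outer terms lie in $\til{\mca T}{\beta}{\omega}$ (since $\mu^-_{\beta,\omega}(T_{\mr{fr}})>0$ is inherited from $T \in \til{\mca T}{\beta}{\omega}$), so the sequence is also exact in $\til{\mca A}{\beta}{\omega}$. Semistability of $T$ combined with the phase bound forces $\arg \til{Z}{\beta}{\omega}(T_{\mr{tor}}) \leq \arg(-\beta+\sqrt{-1}\omega)$, while a direct calculation shows $\arg \til{Z}{\beta}{\omega}(T_{\mr{tor}}) \in [\arg(-\beta+\sqrt{-1}\omega), \pi]$ for any nonzero $(\sigma_1,\sigma_2)$-torsion object. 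Together these imply $\tau_1^L T_{\mr{tor}} = 0$, so $T_{\mr{tor}} \cong i_2 X$ with $X \in \mca T^{\sigma_2}$, and $\til{Z}{\beta}{\omega}(\tau_2^R T_{\mr{tor}}) = \til{Z}{\beta}{\omega}(T_{\mr{tor}})$ lies on the ray of argument $\arg(-\beta+\sqrt{-1}\omega)$.

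Next, I HN-filter $T_{\mr{fr}}$ with respect to $\mu_{\beta,\omega}$ on the Serre quotient $\mca C$ and split at slope $1$: let $T_{\mr{fr}}^{\geq 1} \subset T_{\mr{fr}}$ be the subobject corresponding to the $\mu \geq 1$ part, with quotient $T_{\mr{fr}}^{<1}$, and let $\tilde T^{\geq 1} \subset T$ be the pre-image of $T_{\mr{fr}}^{\geq 1}$ (so $T/\tilde T^{\geq 1} \cong T_{\mr{fr}}^{<1}$). By the vanishing $\tau_1^L T_{\mr{tor}} = 0$, one has $\tau_1^L \tilde T^{\geq 1} = \tau_1^L T_{\mr{fr}}^{\geq 1}$, which as a subobject in $\mca A_1$ of the $\sigma_1$-free object $\tau_1^L T_{\mr{fr}}$ is $\sigma_1$-free; hence Lemma \ref{lem:kihon1} applies to $\tilde T^{\geq 1}$ and yields the lower bound $\arg \til{Z}{\beta}{\omega}(\tau_2^R \tilde T^{\geq 1}) \geq \arg(\beta-1+\sqrt{-1}\omega)(\beta-\sqrt{-1}\omega) =: \theta_1 > 0$. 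Combined with $\arg \til{Z}{\beta}{\omega}(\tau_2^R \tilde T^{\geq 1}) \leq \arg(-\beta+\sqrt{-1}\omega)$ (from $\tau_2^R \tilde T^{\geq 1} \in \mca A_2$) and $\arg Z_1(\tau_1^L \tilde T^{\geq 1}) \in (0,\pi)$, the difference of arguments of the two summands of
\[
\til{Z}{\beta}{\omega}(\tilde T^{\geq 1}) = Z_1(\tau_1^L \tilde T^{\geq 1}) + (\beta-\sqrt{-1}\omega) Z_2(\tau_2^R \tilde T^{\geq 1})
\]
is strictly less than $\pi$, so Corollary \ref{bunkai-sup} bounds the ratio $|\til{Z}{\beta}{\omega}(\tau_2^R \tilde T^{\geq 1})|/|\til{Z}{\beta}{\omega}(\tilde T^{\geq 1})|$. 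Meanwhile, for $T_{\mr{fr}}^{<1}$, the assumption $(\beta,\omega)\in \mca H^+(\epsilon_1)$ makes Proposition \ref{prop:argsup1} applicable, giving both $\arg \til{Z}{\beta}{\omega}(T_{\mr{fr}}^{<1}) \in (0,\theta]$ for some $\theta<\pi$ and a bound on the corresponding ratio; any residual $(\sigma_1,\sigma_2)$-torsion appearing in the quotient $T_{\mr{fr}}^{<1}$ is handled by a repeat of the torsion argument above.

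Finally, both arguments $\arg \til{Z}{\beta}{\omega}(\tilde T^{\geq 1})$ and $\arg \til{Z}{\beta}{\omega}(T_{\mr{fr}}^{<1})$ lie inside an interval inside $(0, \arg(-\beta+\sqrt{-1}\omega)] \subsetneq (0,\pi)$, so applying Proposition \ref{bunkai-improve} to $\til{Z}{\beta}{\omega}(T) = \til{Z}{\beta}{\omega}(\tilde T^{\geq 1}) + \til{Z}{\beta}{\omega}(T_{\mr{fr}}^{<1})$ gives a uniform constant $C>0$ with $|\til{Z}{\beta}{\omega}(T)| \geq C(|\til{Z}{\beta}{\omega}(\tilde T^{\geq 1})| + |\til{Z}{\beta}{\omega}(T_{\mr{fr}}^{<1})|)$; combining with the triangle inequality $|\til{Z}{\beta}{\omega}(\tau_2^R T)| \leq |\til{Z}{\beta}{\omega}(\tau_2^R \tilde T^{\geq 1})| + |\til{Z}{\beta}{\omega}(\tau_2^R T_{\mr{fr}}^{<1})|$ and the per-piece bounds yields the desired uniform bound. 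The main obstacle I expect is the careful treatment of $\tilde T^{\geq 1}$: since $Z_1(\tau_1^L \tilde T^{\geq 1})$ can have argument arbitrarily close to $\pi$, the positivity $\theta_1 > 0$ from Lemma \ref{lem:kihon1} must be fully exploited; a secondary technical point is cleanly handling any $(\sigma_1,\sigma_2)$-torsion that appears in $T_{\mr{fr}}^{<1}$ as a quotient of the $(\sigma_1,\sigma_2)$-free $T_{\mr{fr}}$.
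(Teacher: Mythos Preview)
Your proposal is correct and follows essentially the same strategy as the paper: reduce to the case where $\tau_1^L T$ is $\sigma_1$-free via the phase constraint on the $(\sigma_1,\sigma_2)$-torsion part, split $T$ at $\mu_{\beta,\omega}$-slope $1$ into $T^+$ (your $\tilde T^{\geq 1}$) and $T^-$ (your $T_{\mr{fr}}^{<1}$), bound the ratio on $T^-$ via Proposition~\ref{prop:argsup1}, bound the ratio on $T^+$ via Lemma~\ref{lem:kihon1} and Corollary~\ref{bunkai-sup}, and recombine via Proposition~\ref{bunkai-improve}. Two minor remarks: first, your worry about ``residual $(\sigma_1,\sigma_2)$-torsion'' in $T_{\mr{fr}}^{<1}$ is unfounded, since Harder--Narasimhan quotients of a $(\sigma_1,\sigma_2)$-free object with respect to $\mu_{\beta,\omega}$ are automatically $(\sigma_1,\sigma_2)$-free; second, your common interval in the final step should be $(0,\max(\theta,\arg(-\beta+\sqrt{-1}\omega))]$ rather than $(0,\arg(-\beta+\sqrt{-1}\omega)]$, but this still has length $<\pi$ so the recombination works as stated.
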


\begin{proof}
We first note that the $(\sigma_{1}, \sigma_{2})$-torsion part $E_{\mr{tor}}$ of any $E \in \til{\mca T}{\beta}{\omega}$ satisfies 
\[
\arg (-\beta +\sqrt{-1}\omega)	\leq \arg \til{Z}{\beta}{\omega}(E_{\mr{tor}})	\leq \pi
\]
by the definition of $\til{Z}{\beta}{\omega}$. 
Moreover the equality $\arg (-\beta +\sqrt{-1}\omega)=\til{Z}{\beta}{\omega}(E_{\mr{tor}})$ holds if and only if 
$\tau_{1}^{L}E_{\mr{tor}}=0$. 
Thus, for an object $T \in \til{\mca T}{\beta}{\omega} \cap 	\til{\mca P}{\beta}{\omega}(\phi)$ with $\pi\phi \in (0, \arg (-\beta+\sqrt{-1}\omega)]$, we may assume that $\tau_{1}^{L}T$ is $\sigma_{1}$-free 
since $\arg \til{Z}{\beta}{\omega}(T) \leq \arg (-\beta +\sqrt{-1}\omega)$.

Taking the Harder-Narasimhan filtration of $T$ with respect to $\mu_{\beta, \omega}$-semistability, 
we have the following short exact sequence in $\gl{\mca A_{1}}{\mca A_{2}}$ 
\begin{equation}\label{eq:41}
\xymatrix{
0	\ar[r]	&	T^{+}	\ar[r]	&	T	\ar[r]	&	T^{-}	\ar[r]	&	0, 
}
\end{equation}
where $T^{+}$ and $T^{-}$ satisfy 
\begin{itemize}
\item the $(\sigma_{1}, \sigma_{2})$-free part $T^{+}_{\mr{fr}}$ of $T^{+}$ has $\mu_{\beta, \omega}^{-}(T^{+}_{\mr{fr}}) \geq 1$, and 
\item $T^{-}$ is $(\sigma_{1}, \sigma_{2})$-free with $0 < \mu_{\beta,\omega}^{-}(T^{-})\leq \mu_{\beta, \omega }^{+}(T^{-}) <1 $. 
\end{itemize}

Note that $T^{-}$ also satisfies $\arg \til{Z}{\beta}{\omega}(T^{-})>0$. 
By Lemma \ref{prop:argsup1}, there exists a $\theta $ in $(0, \pi)$ such that the inequalities
\begin{equation}\label{eq:46}
0	<	\arg \til{Z}{\beta}{\omega}(T^{-})	\leq \theta 
\end{equation}
holds for any $T$.

Now the sequence (\ref{eq:41}) gives an exact sequence not only in $\til{\mca T}{\beta}{\omega}$ but also in $\til{\mca A}{\beta}{\omega}$. 
Thus $T^{+}$ is a subobject of $T$ in $\til{\mca A}{\beta}{\omega}$ and $T^{+}$ satisfies 
\begin{equation}\label{eq:47}
0<	\phi^{-} (T^{+})	\leq \phi^{+}(T^{+})	\leq 	\phi^{+}(T)\leq 	\arg (-\beta +\sqrt{-1}\omega)
\end{equation}
with respect to the stability condition $\til{\Sigma}{\beta}{\omega}$.

By Proposition \ref{bunkai-improve}, 
there exists a constant $C_{1}>0$ which is independent of $T$ such that the inequality below holds: 
\begin{equation}\label{eq:48}
|\til{Z}{\beta}{\omega}(T)|	\geq C_{1}|\til{Z}{\beta}{\omega}(T^{+})|+C_{1}|\til{Z}{\beta}{\omega}(T^{-})|. 
\end{equation}
 Then, the inequality (\ref{eq:48}) implies the following: 
\begin{align}
\frac{|\til{Z}{\beta}{\omega}(\tau_{2}^{R}T)|}{|\til{Z}{\beta}{\omega}(T)|} 
&	\leq 
\frac{1}{C_{1}}
\left(
\frac{|\til{Z}{\beta}{\omega}(\tau_{2}^{R}T)|}{|\til{Z}{\beta}{\omega}(T^{+})|+|\til{Z}{\beta}{\omega}(T^{-})|} 
\right)	\\
	&	\leq 
	\frac{1}{C_{1}}
\left(
\frac{|\til{Z}{\beta}{\omega}(\tau_{2}^{R}T^{+})|}{|\til{Z}{\beta}{\omega}(T^{+})|+|\til{Z}{\beta}{\omega}(T^{-})|} 
+\frac{|\til{Z}{\beta}{\omega}(\tau_{2}^{R}T^{-})|}{|\til{Z}{\beta}{\omega}(T^{+})|+|\til{Z}{\beta}{\omega}(T^{-})|} 
\right)	\\
	&	\leq 
	\frac{1}{C_{1}}
\left(
\frac{|\til{Z}{\beta}{\omega}(\tau_{2}^{R}T^{+})|}{|\til{Z}{\beta}{\omega}(T^{+})|} 
+
\frac{|\til{Z}{\beta}{\omega}(\tau_{2}^{R}T^{-})|}{|\til{Z}{\beta}{\omega}(T^{-})|} 
\right). 
\end{align}

 By Proposition \ref{prop:argsup1}, 
there exists a positive constant $M_{1}>0$ such that 
the following holds: 
\begin{equation}
\frac{|\til{Z}{\beta}{\omega}(\tau_{2}^{R}T^{-})|}{|\til{Z}{\beta}{\omega}(T^{-})|} 
\leq M_{1}
\end{equation}

 Taking semiorthogonal decomposition of $T^{+}$, there exists a distinguished triangle below: 
 \begin{equation}\label{eq:49}
 \xymatrix{
 \tau_{2}^{R}T^{+}	\ar[r]	&	T^{+}	\ar[r]	&	\tau_{1}^{L}T^{+}. 
 }
 \end{equation}
Since $\tau_{1}^{L}T_{+}$ is a subobject of $\tau_{1}^{L}T$ in $\mca A_{1}$, 
$\tau_{1}^{L}T_{+}$ is $\sigma_{1}$-free. 
Then, by Lemma \ref{lem:kihon1}, $\tau_{2}^{R}T$ is in $\til{\mca T}{\beta}{\omega}$ and the inequality 
\begin{equation}
\label{eq:410}
\theta_{1}
\leq \arg \til{Z}{\beta}{\omega}(\tau_{2}^{R}T^{+})	
\end{equation}
holds where $\theta_{1}=\arg (\beta -1+\sqrt{-1}\omega)(\beta -\sqrt{-1}\omega)$. 
In parituclar the distinguished triangle (\ref{eq:49})
gives a short exact sequence not only in $\gl{\mca A_{1}}{\mca A_{2}}$ but also in $\til{\mca A}{\beta}{\omega}$. 
Hence $\tau_{2}^{R}T^{+}$ is a subobject of $T$ in $\til{\mca A}{\beta}{\omega}$, and the inequalities
\begin{equation}
\label{eq:411}
\arg \til{Z}{\beta}{\omega}(\tau_{2}^{R}T^{+})	\leq \pi\phi \leq \theta_{2}:=\arg (-\beta +\sqrt{-1}\omega)
\end{equation}
holds.

Now the closed interval $[\theta_{1}, \theta_{2}]$ is contained in $(0,\pi]$. 
Since $\arg \til{Z}{\beta}{\omega}(\tau_{1}^{L}T^{+})$ is in the interval $(0,\pi]$, 
there exists a $\theta \in [0,\pi)$ such that the inequalities
\begin{equation*}
0	\leq |\arg \til{Z}{\beta}{\omega}(\tau_{1}^{L}T^{+}) -\arg \til{Z}{\beta}{\omega}(\tau_{2}^{R}T^{+})|	\leq \theta
\end{equation*}
hold for any $T$. 
By Corollary \ref{bunkai-sup}, 
there exists a positive constant $M_{2}$ which is independent of the choice of $T$ and satisfies the following: 
\begin{equation}
\frac
{|\til{Z}{\beta}{\omega}(\tau_{2}^{R}T^{+})|}
{|\til{Z}{\beta}{\omega}(T^{+})|}	
=
\frac
{|\til{Z}{\beta}{\omega}(\tau_{2}^{R}T^{+})|}
{|\til{Z}{\beta}{\omega}(\tau_{1}^{L}T^{+})+\til{Z}{\beta}{\omega}(\tau_{2}^{R}T^{+})|}	
\leq M_{2}. 
\end{equation}
Thus we have finished the proof. 
\end{proof}

\begin{prop}\label{prop:F-bound}
Let $\mb D=\sod{\mb D_{1}}{\mb D_{2}}$ be a semiorthogonal decomposition with $\m2$. 
Choose rational stability conditions $\sigma _{i} =(\mca A_{i}, Z_{i}) \in \Stab{\mb D_{i}}$ such that $(\sigma_{1}, \sigma_{2})$ satisfies the condition $\m
5$. 
For an $\epsilon_{2}<0$ and set a subset 
$\mca  H^{-}(\epsilon_{2})$ of $\mca H$ by 
\begin{equation}
\mca H^{-}(\epsilon_{2})	:=
\left\{
(\beta , \omega) \in \mca H 	\middle|
\begin{split}
0 &< 2\beta +1-2\epsilon_{2}, \text{ and}	\\
0	&<	\omega^{2}+(\beta+1-2\epsilon_{2})^{2}+2\epsilon_{2}(1-2\epsilon_{2})
\end{split}
\right\}. 
\end{equation}
Suppose that $(\beta, \omega ) \in \mca H^{-}(\epsilon_{2})$. 
Then the supremum 
\begin{equation}
\sup
\left\{
\frac{|\til{Z}{\beta}{\omega}(\tau_{2}^{R}F)|}{|\til{Z}{\beta}{\omega}(F)|}
\middle|
F \in \til{\mca F}{\beta}{\omega}
\right\}
\end{equation}
is finite. 
\end{prop}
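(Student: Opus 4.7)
The plan is to mimic the argument of Proposition \ref{prop:argsup1}, splitting $F$ into two pieces via the Harder--Narasimhan filtration with respect to $\mu_{\beta,\omega}$-semistability and controlling each piece by a result already established. Since $F \in \til{\mca F}{\beta}{\omega}$ is $(\sigma_{1},\sigma_{2})$-free with $\mu_{\beta,\omega}^{+}(F) \leq 0$, cutting its HN filtration at the slope value $\epsilon_{2}$ produces a short exact sequence
\[
0 \to F^{+} \to F \to F^{-} \to 0
\]
in $\gl{\mca A_{1}}{\mca A_{2}}$ in which $F^{+}$ is $(\sigma_{1},\sigma_{2})$-free with $\epsilon_{2} < \mu_{\beta,\omega}^{-}(F^{+}) \leq \mu_{\beta,\omega}^{+}(F^{+}) \leq 0$, and $F^{-}$ is $(\sigma_{1},\sigma_{2})$-free with $\mu_{\beta,\omega}^{+}(F^{-}) \leq \epsilon_{2}$.

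For $F^{-}$, the first defining condition $2\beta + 1 - 2\epsilon_{2} > 0$ of $\mca H^{-}(\epsilon_{2})$ is exactly the hypothesis of Corollary \ref{cor:mu<-1/2}, so $|\til{Z}{\beta}{\omega}(\tau_{2}^{R}F^{-})|/|\til{Z}{\beta}{\omega}(F^{-})|$ is uniformly bounded and $\arg \til{Z}{\beta}{\omega}(F^{-}) \in (\arg(\beta-\sqrt{-1}\omega),\,0]$. For $F^{+}$, I would apply Lemma \ref{lem:slope-mor} with $\epsilon_{1}=0$: the second defining condition $\omega^{2}+(\beta+1-2\epsilon_{2})^{2}+2\epsilon_{2}(1-2\epsilon_{2}) > 0$ of $\mca H^{-}(\epsilon_{2})$ is precisely the specialization of the domain inequality (\ref{eq:domain1}) to $\epsilon_{1}=0$, so the lemma applies to $F^{+}$, supplying some $\theta_{0} \in (0,\pi)$ independent of $F^{+}$ with $\arg \til{Z}{\beta}{\omega}(F^{+}) \in (\theta_{2},\theta_{2}+\theta_{0}]$, where $\theta_{2}=\arg(\beta+1-2\epsilon_{2}+\sqrt{-1}\omega)(\beta-\sqrt{-1}\omega)$, together with a uniform bound on $|\til{Z}{\beta}{\omega}(\tau_{2}^{R}F^{+})|/|\til{Z}{\beta}{\omega}(F^{+})|$.

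I would then finish as at the end of Proposition \ref{prop:argsup1}: verify that the two intervals $(\arg(\beta-\sqrt{-1}\omega),\,0]$ and $(\theta_{2},\theta_{2}+\theta_{0}]$ lie inside a common interval of length strictly less than $\pi$, invoke Proposition \ref{bunkai-improve} to obtain a constant $C>0$, independent of $F$, with $|\til{Z}{\beta}{\omega}(F)| \geq C\bigl(|\til{Z}{\beta}{\omega}(F^{+})|+|\til{Z}{\beta}{\omega}(F^{-})|\bigr)$, and then apply the triangle inequality
\[
\frac{|\til{Z}{\beta}{\omega}(\tau_{2}^{R}F)|}{|\til{Z}{\beta}{\omega}(F)|} \leq \frac{1}{C}\left(\frac{|\til{Z}{\beta}{\omega}(\tau_{2}^{R}F^{+})|}{|\til{Z}{\beta}{\omega}(F^{+})|} + \frac{|\til{Z}{\beta}{\omega}(\tau_{2}^{R}F^{-})|}{|\til{Z}{\beta}{\omega}(F^{-})|}\right),
\]
whose right-hand side is finite by the two previous bounds. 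The main obstacle is the width check on the union of argument intervals: one must unpack $\theta_{2}=\arg(\beta+1-2\epsilon_{2}+\sqrt{-1}\omega)+\arg(\beta-\sqrt{-1}\omega)$ and use the two defining conditions of $\mca H^{-}(\epsilon_{2})$ simultaneously, the sign condition controlling the position of $\arg(\beta-\sqrt{-1}\omega)$ relative to $\theta_{2}$ while the quadratic condition controls $\theta_{0}$, so that the combined spread stays strictly below $\pi$.
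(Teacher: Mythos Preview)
Your proposal is correct and follows essentially the same route as the paper: the same Harder--Narasimhan split at slope $\epsilon_{2}$, Lemma~\ref{lem:slope-mor} (with $\epsilon_{1}=0$) for $F^{+}$, Corollary~\ref{cor:mu<-1/2} for $F^{-}$, and Proposition~\ref{bunkai-improve} to recombine. The only simplification you are missing is that the ``width check'' you flag as the main obstacle is immediate: since $\mu_{\beta,\omega}^{+}(F^{\pm})\leq 0$ forces $\I\til{Z}{\beta}{\omega}(F^{\pm})\leq 0$, both arguments lie in $(\theta,0]$ for some $\theta\in(-\pi,0)$ coming from the cited results, so no further unpacking of $\theta_{2}$ is needed.
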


\begin{proof}
Choose $F \in \til{\mca F}{\beta}{\omega}$. 
Taking the Harder-Narasimhan filtration of $F$ with respect to $\mu_{\beta, \omega}$-semistability, 
we have the following short exact sequence in $\gl{\mca A_{1}}{\mca A_{2}}$ 
\begin{equation*}
\xymatrix{
0	\ar[r]	&	F^{+}	\ar[r]	&	F	\ar[r]	&	F^{-}	\ar[r]	&	0, 
}
\end{equation*}
where $F^{+}$ and $F^{-}$ satisfy 
\begin{itemize}
\item $F^{+}$ is $(\sigma_{1}, \sigma_{2})$-free with $\epsilon_{2} < \mu_{\beta, \omega}^{-}(F^{+}) \leq \mu_{\beta, \omega}^{+}(F^{+})	\leq 0$, and 
\item $F^{-}$ is $(\sigma_{1}, \sigma_{2})$-free with $\mu_{\beta, \omega}^{+}(F^{-})\leq \epsilon_{2}$. 
\end{itemize}
Since $\mu_{\beta, \omega}^{+}(F^{+})\leq 0$, applying Lemma \ref{lem:slope-mor} to $F^{+}$, 
there exists a $\theta_{+} \in (-\pi,0)$ such that the inequalities hold: 
\begin{equation}\label{eq:f+}
\theta_{+}	<	\arg\til{Z}{\beta}{\omega}(F^{+})	\leq 0. 
\end{equation}
Moreover, applying Corollary \ref{cor:mu<-1/2} to $F^{-}$, there exists a $\theta_{-}$ in the open interval $(-\pi, 0)$ such that the following holds: 
\begin{equation}\label{eq:f-}
\theta_{-}\leq \til{Z}{\beta}{\omega}(F^{-})	\leq 0. 
\end{equation}
Thus one can use Proposition \ref{bunkai-improve} by (\ref{eq:f+}) and (\ref{eq:f-}) and there exists a constant $C_{1}>0$ such that 
the following holds: 
\begin{equation}\label{eq:425}
|\til{Z}{\beta}{\omega}(F)|	\geq C_{1} |\til{Z}{\beta}{\omega}(F^{+})|+C_{1} |\til{Z}{\beta}{\omega}(F^{-})|. 
\end{equation}

Similarly to the proof of Proposition \ref{prop:free-angle}, we see 
\begin{align*}
\frac
{|\til{Z}{\beta}{\omega}(\tau_{2}^{R}F)|}
{|\til{Z}{\beta}{\omega}(F)|}
&\leq 
\frac{1}{C_{1}}
\left(
\frac
{|\til{Z}{\beta}{\omega}(\tau_{2}^{R}F)|}
{|\til{Z}{\beta}{\omega}(F^{+})|+|\til{Z}{\beta}{\omega}(F^{-})|}
\right)	\\
	&\leq 
\frac{1}{C_{1}}
\left(
\frac
{|\til{Z}{\beta}{\omega}(\tau_{2}^{R}F^{+})|}
{|\til{Z}{\beta}{\omega}(F^{+})|+|\til{Z}{\beta}{\omega}(F^{-})|}
+
\frac
{|\til{Z}{\beta}{\omega}(\tau_{2}^{R}F^{-})|}
{|\til{Z}{\beta}{\omega}(F^{+})|+|\til{Z}{\beta}{\omega}(F^{-})|}
\right)	\\
	&\leq 
\frac{1}{C_{1}}
\left(
\frac
{|\til{Z}{\beta}{\omega}(\tau_{2}^{R}F^{+})|}
{|\til{Z}{\beta}{\omega}(F^{+})|}
+
\frac
{|\til{Z}{\beta}{\omega}(\tau_{2}^{R}F^{-})|}
{|\til{Z}{\beta}{\omega}(F^{-})|}
\right)	\\
\end{align*}

Now, by Lemma \ref{lem:slope-mor} and Corollary \ref{cor:mu<-1/2}, there exist 
upper bounds for ${|\til{Z}{\beta}{\omega}(\tau_{2}^{R}F^{+})|}/{|\til{Z}{\beta}{\omega}(F^{+})|}$ and 
${|\til{Z}{\beta}{\omega}(\tau_{2}^{R}F^{-})|}/{|\til{Z}{\beta}{\omega}(F^{-})|}$. 
 This gives the proof. 
 \end{proof}

\begin{prop}\label{prop:boundofT}
Let $\mb D=\sod{\mb D_{1}}{\mb D_{2}}$ be a semiorthogonal decomposition with $\m2$. 
Choose rational stability conditions $\sigma _{i} =(\mca A_{i}, Z_{i}) \in \Stab{\mb D_{i}}$ such that $(\sigma_{1}, \sigma_{2})$ satisfies the condition $\m
5$. 
Take $(\beta, \omega)\in \mca H^{+}(\epsilon_{1})$ for an $\epsilon_{1} \in (0,1/2)$. 

Set $\mca T_{\beta, \omega}'$ by 
\begin{equation}
\mca T_{\beta, \omega}'	
=
\til{\mca T}{\beta}{\omega}	\cap	
\til{\mca P}{\beta}{\omega}\left[\frac{1}{\pi}\arg (-\beta +\sqrt{-1}\omega), 1	\right]
\end{equation}
Then the supremum 
\begin{equation}
\sup
\left\{
\frac
{|\til{Z}{\beta}{\omega}(\tau_{2}^{R}T)|}
{|\til{Z}{\beta}{\omega}(T)|}
\middle|
T \in \mca T_{\beta, \omega}'	
\right\}
\end{equation}
is finite. 
\end{prop}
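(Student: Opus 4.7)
The plan is to split $T \in \mca T_{\beta,\omega}'$ into three summands whose $\til{Z}{\beta}{\omega}$-arguments all lie in a common sector of opening angle strictly less than $\pi$, and then invoke Proposition \ref{bunkai-improve} to lower-bound $|\til{Z}{\beta}{\omega}(T)|$ by the sum of the absolute central charges of the summands. First, using the torsion pair on $\gl{\mca A_1}{\mca A_2}$, form
\[
\xymatrix{0 \ar[r] & T_{\mr{tor}} \ar[r] & T \ar[r] & T_{\mr{fr}} \ar[r] & 0,}
\]
and then take the Harder--Narasimhan filtration of the free part $T_{\mr{fr}}$ with respect to $\mu_{\beta, \omega}$, split at slope $1$:
\[
\xymatrix{0 \ar[r] & T^+ \ar[r] & T_{\mr{fr}} \ar[r] & T^- \ar[r] & 0,}
\]
with $\mu_{\beta, \omega}^-(T^+) \geq 1$ and $0 < \mu_{\beta, \omega}^-(T^-) \leq \mu_{\beta, \omega}^+(T^-) < 1$. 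All three summands $T_{\mr{tor}}, T^+, T^-$ belong to $\til{\mca T}{\beta}{\omega} \subset \til{\mca A}{\beta}{\omega}$ (directly by their definitions), so both sequences remain short exact in the tilted heart $\til{\mca A}{\beta}{\omega}$.

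Next I would control the $\til{Z}{\beta}{\omega}$-argument of each summand. For $T_{\mr{tor}}$, the explicit form of $\til{Z}{\beta}{\omega}$ on a $(\sigma_1,\sigma_2)$-torsion object places $\arg\til{Z}{\beta}{\omega}(T_{\mr{tor}})$ in $[\arg(-\beta + \sqrt{-1}\omega), \pi]$ and yields $|\til{Z}{\beta}{\omega}(\tau_2^R T_{\mr{tor}})|/|\til{Z}{\beta}{\omega}(T_{\mr{tor}})| \leq |\beta - \sqrt{-1}\omega|/\omega$ by a direct computation. For $T^+$, Lemma \ref{lem:mu>1} produces the lower bound $\arg\til{Z}{\beta}{\omega}(T^+) \geq \arg((\beta - 1 + \sqrt{-1}\omega)(\beta - \sqrt{-1}\omega))$ along with a uniform bound on $|\til{Z}{\beta}{\omega}(\tau_2^R T^+)|/|\til{Z}{\beta}{\omega}(T^+)|$, while the inclusion $T^+ \hookrightarrow T$ inside $\til{\mca A}{\beta}{\omega}$ caps $\arg\til{Z}{\beta}{\omega}(T^+)$ at $\pi$. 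For $T^-$, Proposition \ref{prop:argsup1} caps $\arg\til{Z}{\beta}{\omega}(T^-)$ at some $\theta < \pi$ and bounds $|\til{Z}{\beta}{\omega}(\tau_2^R T^-)|/|\til{Z}{\beta}{\omega}(T^-)|$; moreover the quotient $T \twoheadrightarrow T^-$ in $\til{\mca A}{\beta}{\omega}$ together with the hypothesis $T \in \til{\mca P}{\beta}{\omega}[\tfrac{1}{\pi}\arg(-\beta + \sqrt{-1}\omega), 1]$ forces $\arg\til{Z}{\beta}{\omega}(T^-) \geq \arg(-\beta + \sqrt{-1}\omega)$ by convexity of Harder--Narasimhan factors. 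Thus the three arguments lie in a common interval $[L, \pi]$ with $L := \arg((\beta - 1 + \sqrt{-1}\omega)(\beta - \sqrt{-1}\omega)) > 0$, and iterating Proposition \ref{bunkai-improve} yields a constant $C > 0$, depending only on $(\beta, \omega)$, with
\[
|\til{Z}{\beta}{\omega}(T)| \geq C\bigl(|\til{Z}{\beta}{\omega}(T_{\mr{tor}})| + |\til{Z}{\beta}{\omega}(T^+)| + |\til{Z}{\beta}{\omega}(T^-)|\bigr).
\]
Combining this with the three individual ratio bounds and with the additivity of $\tau_2^R$ on distinguished triangles gives the desired uniform bound.

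The main obstacle I anticipate is precisely the lower bound on $\arg\til{Z}{\beta}{\omega}(T^-)$. A priori a $(\sigma_1,\sigma_2)$-free slope-$<1$ object can have argument approaching $0$, and since $T_{\mr{tor}}$ can have argument near $\pi$, the pairwise angular gap would exceed any constant below $\pi$ and the convexity estimate of Proposition \ref{bunkai-improve} would degenerate. Rescuing this requires propagating the phase hypothesis on $T$ carefully through the two-step decomposition; in particular one must verify that $T \twoheadrightarrow T_{\mr{fr}} \twoheadrightarrow T^-$ is a chain of quotients not merely in $\gl{\mca A_1}{\mca A_2}$ but already inside $\til{\mca A}{\beta}{\omega}$, so that $\phi^-(T^-) \geq \phi^-(T)$ and this phase inequality can be translated into the required lower bound on $\arg\til{Z}{\beta}{\omega}(T^-)$.
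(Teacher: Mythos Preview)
Your approach is essentially the paper's: decompose $T$ along the $\mu_{\beta,\omega}$-Harder--Narasimhan filtration into pieces whose $\til{Z}{\beta}{\omega}$-arguments lie in a common sector of opening $<\pi$, invoke Proposition~\ref{bunkai-improve} iteratively, and bound each ratio individually via Lemma~\ref{lem:mu>1}, Proposition~\ref{prop:argsup1}, and Corollary~\ref{bunkai-sup}; the key use of the hypothesis $T \in \til{\mca P}{\beta}{\omega}[\tfrac{1}{\pi}\arg(-\beta+\sqrt{-1}\omega),1]$ to force the lower argument bound on the low-slope quotient is exactly what the paper does. The only difference is bookkeeping---the paper keeps the $(\sigma_1,\sigma_2)$-torsion together with the slope-$>1$ free part as a single piece $T^{>1}$ and then splits the remainder into $T^{=1}$ and $T^{<1}$, whereas you peel off the torsion first and fold the slope-$=1$ factors into your $T^+$; consequently your appeal to Lemma~\ref{lem:mu>1} for $T^+$ (stated for strict $\mu^->1$) needs the one-line extension to $\mu^-\geq 1$ that the paper achieves via its separate $T^{=1}$ computation.
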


\begin{proof}
Taking the Harder-Narasimhan filtration of $T \in \mca {T}_{\beta, \omega}'$ with respect to $\mu_{\beta, \omega}$-semistability, 
there exists a short exact sequence in $\gl{\mca A_{1}}{\mca A_{2}}$
\[
\xymatrix{
0	\ar[r]	&	T^{>1}	\ar[r]	&	T	\ar[r]	&	T^{\leq 1}	\ar[r]	&	0
}
\]
such that 
\begin{itemize}
\item the $(\sigma_{1}, \sigma_{2})$-free part $T^{>1}_{\mr{fr}}$ of $T^{>1}$ satisfies $\mu_{\beta, \omega}^{-}(T^{>1}_{\mr{fr}}) >1$, and 
\item $T^{\leq 1}$ is $(\sigma_{1}, \sigma_{2})$-free with $0 < \mu_{\beta, \omega}^{-}(T^{\leq 1})\leq \mu_{\beta, \omega}^{+}(T^{\leq 1})\leq 1$. 
\end{itemize}
Then Lemma \ref{lem:mu>1} implies 
\begin{equation} 
\arg (\beta-1+\sqrt{-1}\omega)(\beta -\sqrt{-1}\omega)
\leq 
\arg \til{Z}{\beta}{\omega}(T^{>1}). 
\end{equation}
Moreover the canonical morphism $T \to T^{\leq 1}$ is an epimorphism not only in $\til{\mca T}{\beta}{\omega}$ but also in 
$\til{\mca A}{\beta}{\omega}$. 
Hence the quotient $T^{\leq 1}$ in $\til{\mca A}{\beta}{\omega}$ also satisfies 
\begin{equation}
\arg(-\beta +\sqrt{-1}\omega) \leq \til{Z}{\beta}{\omega}(T^{\leq 1}). 
\end{equation}
Thus, by Proposition \ref{bunkai-improve}, 
there exists a constant $C_{1}$ such that 
the following holds: 
\begin{equation}
|\til{Z}{\beta}{\omega}(T)|	\geq C_{1}\left(	|\til{Z}{\beta}{\omega}(T^{>1})|+|\til{Z}{\beta}{\omega}(T^{\leq 1})|	\right)
\end{equation}

Taking the Harder-Narasimhan filtration of $T^{\leq 1}$, we have the short exact sequence in $\gl{\mca A_{1}}{\mca A_{2}}$
\[
\xymatrix{
0	\ar[r]	&	T^{=1}	\ar[r]	&	T^{\leq 1}	\ar[r]	&	T^{<1}	\ar[r]	&	0, 
}
\]
where $T^{=1}$ and $T^{<1}$ are 
\begin{itemize}
\item $T^{=1}$ is $(\sigma_{1}, \sigma_{2})$-free and is $\mu_{\beta, \omega}$-semistable with $\mu_{\beta, \omega}(T^{=1})=1$, and 
\item $T^{<1}$ is $(\sigma_{1}, \sigma_{2})$-free with $\mu_{\beta, \omega}^{+}(T^{<1})<1$. 
\end{itemize}
Note that the inequality $\mu_{\beta , \omega}^{-}(T^{<1})>0$ also holds. 
Then the canonical morphism $T \to T^{<1}$ gives an epimorphism in $\til{\mca A}{\beta}{\omega}$. 
Thus $T^{<1}$ satisfies 
$\arg (-\beta +\sqrt{-1}\omega) < \arg \til{Z}{\beta}{\omega}(T^{<1})$. 
By Proposition \ref{prop:argsup1}, there exists a $\theta _{1} \in (0,\pi)$ such that the inequality  
$\arg \til{Z}{\beta}{\omega}(T^{<1})	\leq \theta_{1}$ holds. 
Thus we obtain 
\begin{equation}
 	\arg (-\beta +\sqrt{-1}\omega) < \arg \til{Z}{\beta}{\omega}(T^{<1})	 \leq \theta _{1}
\end{equation}
and we see that there exists a $\theta \in [0,\pi)$ such that $|\arg \til{Z}{\beta}{\omega}(T^{=1})-\arg \til{Z}{\beta}{\omega}(T^{<1})| < \theta$. 
By Proposition \ref{bunkai-improve}, 
there exists a positive constant $C_{2}$ such that the following holds: 
\begin{equation}
|\til{Z}{\beta}{\omega}(T^{\leq 1})| \geq C_{2}\left(	|\til{Z}{\beta}{\omega}(T^{=1})|+|\til{Z}{\beta}{\omega}(T^{<1})|	\right). 
\end{equation}
Thus we see 
\begin{align}
\notag \frac
{|\til{Z}{\beta}{\omega}(\tau_{2}^{R}T)|}
{|\til{Z}{\beta}{\omega}(T)|}	
&	\leq 
\frac{1}{C_{1}}
\left(
\frac{|\til{Z}{\beta}{\omega}(\tau_{2}^{R}T)|}
{|\til{Z}{\beta}{\omega}(T^{>1})|+|\til{Z}{\beta}{\omega}(T^{\leq 1})|}	
\right)	\\
\notag &	\leq 
\frac{1}{C_{1}}
\left(
\frac{|\til{Z}{\beta}{\omega}(\tau_{2}^{R}T^{>1})|+|\til{Z}{\beta}{\omega}(\tau_{2}^{R}T^{\leq 1})|}
{|\til{Z}{\beta}{\omega}(T^{>1})|+|\til{Z}{\beta}{\omega}(T^{\leq 1})|}	
\right)	\\
\notag &\leq 
\frac{1}{C_{1}}
\left(
\frac{|\til{Z}{\beta}{\omega}(\tau_{2}^{R}T^{>1})|}
{|\til{Z}{\beta}{\omega}(T^{>1})|}	
+
\frac{|\til{Z}{\beta}{\omega}(\tau_{2}^{R}T^{\leq 1})|}
{|\til{Z}{\beta}{\omega}(T^{\leq 1})|}	
\right)	
\\
\notag &\leq 
\frac{1}{C_{1}}
\left(
\frac{|\til{Z}{\beta}{\omega}(\tau_{2}^{R}T^{>1})|}
{|\til{Z}{\beta}{\omega}(T^{>1})|}	
+
\frac{|\til{Z}{\beta}{\omega}(\tau_{2}^{R}T^{\leq 1})|}
{C_{2}|\til{Z}{\beta}{\omega}(T^{=1})|+C_{2}|\til{Z}{\beta}{\omega}(T^{<1})|}	
\right)	
\\
\notag &\leq 
\frac{1}{C_{1}}
\left(
\frac{|\til{Z}{\beta}{\omega}(\tau_{2}^{R}T^{>1})|}
{|\til{Z}{\beta}{\omega}(T^{>1})|}	
+
\frac{|\til{Z}{\beta}{\omega}(\tau_{2}^{R}T^{=1})|+|\til{Z}{\beta}{\omega}(\tau_{2}^{R}T^{<1})|}
{C_{2}|\til{Z}{\beta}{\omega}(T^{=1})|+C_{2}|\til{Z}{\beta}{\omega}(T^{<1})|}	
\right)	
\\
\label{eq:last}&\leq 
\frac{1}{C_{1}}
\left(
\frac{|\til{Z}{\beta}{\omega}(\tau_{2}^{R}T^{>1})|}
{|\til{Z}{\beta}{\omega}(T^{>1})|}	
+
\frac
{|\til{Z}{\beta}{\omega}(\tau_{2}^{R}T^{=1})|}
{C_{2}|\til{Z}{\beta}{\omega}(T^{=1})|}	
+
\frac{|\til{Z}{\beta}{\omega}(\tau_{2}^{R}T^{<1})|}
{C_{2}|\til{Z}{\beta}{\omega}(T^{<1})|}	
\right)	
\end{align}

By Lemma \ref{lem:mu>1}, there exists an upper bound for  
${|\til{Z}{\beta}{\omega}(\tau_{2}^{R}T^{>1})|}/{|\til{Z}{\beta}{\omega}(T^{>1})|}	$. 
The assumption $\mu_{\beta, \omega}(T^{=1})=1$ implies $\arg \til{Z}{\beta}{\omega}(\tau_{2}^{R}T^{=1})=\arg (\beta-1+\sqrt{-1}\omega)(\beta- \sqrt{-1}\omega)$ and hence 
the difference 
$|\arg \til{Z}{\beta}{\omega}(\tau_{1}^{L}T^{=1}) -\arg \til{Z}{\beta}{\omega}(\tau_{2}^{R}T^{=1})|$
is strictly smaller than $\pi$. 
Hence there exists an upper bound for ${|\til{Z}{\beta}{\omega}(\tau_{2}^{R}T^{=1})|}/{C_{2}|\til{Z}{\beta}{\omega}(T^{=1})|}	$ by Corollary \ref{bunkai-sup}. 
Finally, by Lemma \ref{lem:slope-mor}, there exists an upper bound for 
${|\til{Z}{\beta}{\omega}(\tau_{2}^{R}T^{<1})|}/{|\til{Z}{\beta}{\omega}(T^{<1})|}	$. 
Hence (\ref{eq:last}) is bounded above. 
\end{proof}

\begin{lem}\label{lem:hamigaki}
Let $\mb D=\sod{\mb D_{1}}{\mb D_{2}}$ be a semiorthogonal decomposition with $\m2$. 
Choose rational stability conditions $\sigma _{i} =(\mca A_{i}, Z_{i}) \in \Stab{\mb D_{i}}$ such that $(\sigma_{1}, \sigma_{2})$ satisfies the condition $\m
5$. 
If $(\beta, \omega) $ is in $\mca H^{-}(\epsilon_{2})$
then the inequality 
\[
\arg (-\beta +\sqrt{-1}\omega) < \til{Z}{\beta}{\omega}(F[1])
\]
holds for any $F \in \til{\mca F}{\beta}{\omega}$. 
\end{lem}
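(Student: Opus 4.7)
The plan is to reduce the claim to a statement about $\arg \til{Z}{\beta}{\omega}(F)$ via the identity $\til{Z}{\beta}{\omega}(F[1])=-\til{Z}{\beta}{\omega}(F)$, which (using the convention $\arg\in(-\pi,\pi]$ and $\arg\til{Z}{\beta}{\omega}(F)\in(-\pi,0]$ since $F[1]\in\til{\mca A}{\beta}{\omega}$) is equivalent to
\[
\arg \til{Z}{\beta}{\omega}(F)	>	\arg (\beta-\sqrt{-1}\omega).
\]
The right-hand side lies in $(-\pi,0)$ since $\omega>0$, so the target inequality says that $\til{Z}{\beta}{\omega}(F)$ lies in an open convex cone of angular extent strictly less than $\pi$.

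The first step is to take the Harder--Narasimhan filtration of $F$ with respect to $\mu_{\beta,\omega}$-semistability in $\gl{\mca A_1}{\mca A_2}$ exactly as in the opening of the proof of Proposition~\ref{prop:F-bound}, producing a short exact sequence
\[
\xymatrix@C=14pt{
0	\ar[r]	&	F^{+}	\ar[r]	&	F	\ar[r]	&	F^{-}	\ar[r]	&	0
}
\]
with $F^{+}$ a $(\sigma_{1},\sigma_{2})$-free object satisfying $\epsilon_{2}<\mu^{-}_{\beta,\omega}(F^{+})\leq\mu^{+}_{\beta,\omega}(F^{+})\leq 0$ and $F^{-}$ a $(\sigma_{1},\sigma_{2})$-free object satisfying $\mu^{+}_{\beta,\omega}(F^{-})\leq \epsilon_{2}$. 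Both summands still lie in $\til{\mca F}{\beta}{\omega}$, so both $\til{Z}{\beta}{\omega}(F^{\pm})$ have argument in $(-\pi,0]$ by Proposition~\ref{prop:centralcharge}.

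For $F^{-}$ I will apply Corollary~\ref{cor:mu<-1/2}(1). The hypothesis $2\beta+1-2\epsilon_{2}>0$ is precisely the first inequality defining $\mca H^{-}(\epsilon_{2})$, so the conclusion reads $\arg(\beta-\sqrt{-1}\omega)<\arg \til{Z}{\beta}{\omega}(F^{-})\leq 0$. For $F^{+}$ I will apply Lemma~\ref{lem:slope-mor}(1) with $\epsilon_{1}=0$: the requirements $0\leq\epsilon_{1}$, $1-2\epsilon_{2}>0$, $\epsilon_{2}\leq\epsilon_{1}<1$ are immediate from $\epsilon_{2}<0$, and the remaining condition (\ref{eq:domain1}) becomes $0<\omega^{2}+(\beta+1-2\epsilon_{2})^{2}+2(1-2\epsilon_{2})\epsilon_{2}$, which is exactly the second inequality defining $\mca H^{-}(\epsilon_{2})$. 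The conclusion gives $\arg \til{Z}{\beta}{\omega}(F^{+})>\arg(\beta+1-2\epsilon_{2}+\sqrt{-1}\omega)(\beta-\sqrt{-1}\omega)$; since $\omega>0$ forces $\arg(\beta+1-2\epsilon_{2}+\sqrt{-1}\omega)\in(0,\pi)$, summing arguments yields $\arg \til{Z}{\beta}{\omega}(F^{+})>\arg(\beta-\sqrt{-1}\omega)$.

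Finally I will combine the two estimates. Both $\til{Z}{\beta}{\omega}(F^{+})$ and $\til{Z}{\beta}{\omega}(F^{-})$ (if nonzero) lie in the open convex cone $\mca K=\{z\in\bb C\setminus\{0\}\mid \arg(\beta-\sqrt{-1}\omega)<\arg z\leq 0\}$, whose angular extent $-\arg(\beta-\sqrt{-1}\omega)\in(0,\pi)$ is strictly less than $\pi$; hence $\mca K$ is closed under addition (the degenerate case where one summand vanishes is handled tautologically). Therefore $\til{Z}{\beta}{\omega}(F)=\til{Z}{\beta}{\omega}(F^{+})+\til{Z}{\beta}{\omega}(F^{-})\in\mca K$, giving the desired $\arg \til{Z}{\beta}{\omega}(F)>\arg(\beta-\sqrt{-1}\omega)$, which shifts back to the statement of the lemma. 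The only delicate point is the convexity step, which is why the two inequalities in the definition of $\mca H^{-}(\epsilon_{2})$ are precisely the right ones: they ensure both $F^{\pm}$ get a lower bound on $\arg \til{Z}{\beta}{\omega}$ that is strictly above $\arg(\beta-\sqrt{-1}\omega)$, so the cone $\mca K$ is a valid convex ``container''.
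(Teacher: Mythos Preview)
Your proof is correct and follows essentially the same route as the paper: the same Harder--Narasimhan splitting $0\to F^{+}\to F\to F^{-}\to 0$, the same application of Corollary~\ref{cor:mu<-1/2} to $F^{-}$ and of Lemma~\ref{lem:slope-mor} (with $\epsilon_{1}=0$) to $F^{+}$, and the same convex-cone argument to recombine. Your write-up is in fact slightly more explicit than the paper's in identifying the parameter choice $\epsilon_{1}=0$ and in matching the two hypotheses of $\mca H^{-}(\epsilon_{2})$ to the needed inputs.
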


\begin{proof}
Taking the Harder-Narashimhan filtration of $F \in \til{\mca F}{\beta}{\omega}$, 
we obtain the short exact sequence 
\begin{equation*}
\xymatrix{
0	\ar[r]	&	F^{+}	\ar[r]	&	F	\ar[r]	&	F^{-}	\ar[r]	&	0	, 
}
\end{equation*}
where $F^{+}$ and $F^{-}$ satisfy 
\begin{itemize}
\item $F^{+}$ is $(\sigma_{1}, \sigma_{2})$-free with $\epsilon_{2} < \mu_{\beta, \omega}^{-}(F^{+})\leq \mu_{\beta, \omega}^{+}(F^{+})\leq 0$, and 
\item $F^{-}$ is $(\sigma_{1}, \sigma_{2})$-free with $\mu_{\beta, \omega}^{+}(F^{-})\leq \epsilon_{2}$. 
\end{itemize}
In particular we have 
$\arg \til{Z}{\beta}{\omega}(F^{\pm}) \leq 0$.

Since  $\epsilon_{2}$ is negative, Lemma \ref{lem:slope-mor} implies 
\begin{equation}\label{eq:yohukashi}
\arg (\beta +1-2\epsilon_{2}+\sqrt{-1}\omega)(\beta -\sqrt{-1}\omega)	\leq \arg \til{Z}{\beta}{\omega}(F^{+}) \leq 0. 
\end{equation}
In addition Corollary \ref{cor:mu<-1/2} implies 
\begin{equation}\label{eq:yohukashi2}
\arg (\beta -\sqrt{-1}\omega)	< \arg \til{Z}{\beta}{\omega}(F^{-})\leq 0 . 
\end{equation}
Since the inequality $ \arg (\beta -\sqrt{-1}\omega) < \arg (\beta +1-2\epsilon_{2}+\sqrt{-1}\omega)(\beta -\sqrt{-1}\omega)$ clearly holds, 
we obtain
\begin{equation*}
\arg (\beta -\sqrt{-1}\omega)	<	\til{Z}{\beta}{\omega}(F)	\leq 0
\end{equation*}
which gives the proof. 
\end{proof}

\begin{prop}\label{prop:keyprop}
Let $\mb D=\sod{\mb D_{1}}{\mb D_{2}}$ be a semiorthogonal decomposition with $\m2$. 
Choose rational stability conditions $\sigma _{i} =(\mca A_{i}, Z_{i}) \in \Stab{\mb D_{i}}$ such that $(\sigma_{1}, \sigma_{2})$ satisfies the condition $\m
5$. 
Suppose that $(\beta, \omega) $ is in $ \mca H^{-}(\epsilon_{2})$, 

Let $E \in \til{\mca A}{\beta}{\omega}$ be $\til{\Sigma}{\beta}{\omega}$-semistable with the phase $\phi$. 
If $\pi \phi \leq  \arg (-\beta +\sqrt{-1}\omega)$, then $E$ is in $\til{\mca T}{\beta}{\omega}$. 
In particular, 
$\til{\mca P}{\beta}{\omega}(0, \arg (-\beta+\sqrt{-1}\omega)/\pi]$ is the full subcategory of $\til{\mca T}{\beta}{\omega}$. 
\end{prop}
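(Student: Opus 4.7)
The plan is to exploit the canonical two-step filtration of any object in a tilted heart together with the lower bound on arguments provided by Lemma \ref{lem:hamigaki}. By the general tilting formalism applied to the torsion pair $(\til{\mca T}{\beta}{\omega}, \til{\mca F}{\beta}{\omega})$ on $\gl{\mca A_{1}}{\mca A_{2}}$, every object $E \in \til{\mca A}{\beta}{\omega}$ fits into a canonical short exact sequence
\begin{equation*}
0 \to F[1] \to E \to T \to 0
\end{equation*}
in $\til{\mca A}{\beta}{\omega}$ with $F \in \til{\mca F}{\beta}{\omega}$ and $T \in \til{\mca T}{\beta}{\omega}$. The first assertion $E \in \til{\mca T}{\beta}{\omega}$ is therefore equivalent to $F=0$.

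I would argue by contradiction: assume $F \neq 0$. Then $F[1]$ is a nonzero subobject of $E$ in $\til{\mca A}{\beta}{\omega}$, and the $\til{\Sigma}{\beta}{\omega}$-semistability of $E$ with phase $\phi$ yields $\arg \til{Z}{\beta}{\omega}(F[1]) \leq \pi \phi$. Lemma \ref{lem:hamigaki} provides the opposite bound $\arg(-\beta+\sqrt{-1}\omega) < \arg \til{Z}{\beta}{\omega}(F[1])$, and combining both with the hypothesis $\pi \phi \leq \arg(-\beta+\sqrt{-1}\omega)$ produces the contradiction
\begin{equation*}
\arg(-\beta+\sqrt{-1}\omega) \;<\; \arg \til{Z}{\beta}{\omega}(F[1]) \;\leq\; \pi\phi \;\leq\; \arg(-\beta+\sqrt{-1}\omega).
\end{equation*}
Hence $F=0$, so $E \cong T \in \til{\mca T}{\beta}{\omega}$, which proves the first claim.

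For the ``in particular'' statement, any object of $\til{\mca P}{\beta}{\omega}(0, \arg(-\beta+\sqrt{-1}\omega)/\pi]$ is by definition an iterated extension of $\til{\Sigma}{\beta}{\omega}$-semistable objects of phase in that interval. Each such semistable factor lies in $\til{\mca T}{\beta}{\omega}$ by the first part, and since $\til{\mca T}{\beta}{\omega}$ is a torsion class it is closed under extensions; the containment of $\til{\mca P}{\beta}{\omega}(0, \arg(-\beta+\sqrt{-1}\omega)/\pi]$ in $\til{\mca T}{\beta}{\omega}$ follows.

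I do not anticipate any real obstacle: the analytical heart of the argument, namely the sharp lower bound on $\arg \til{Z}{\beta}{\omega}(F[1])$ for nonzero $F$ in the free part, has already been established in Lemma \ref{lem:hamigaki}. The only conceptual point is the standard observation that the tilt turns $F \in \til{\mca F}{\beta}{\omega}$ into a subobject $F[1]$ of $E$ inside $\til{\mca A}{\beta}{\omega}$, at which stage the phase-monotonicity of semistability clashes directly with the argument bound.
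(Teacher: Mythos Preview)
Your argument is correct and essentially identical to the paper's proof: both use the canonical short exact sequence $0\to F[1]\to E\to T\to 0$ in the tilted heart, combine the semistability bound $\arg\til{Z}{\beta}{\omega}(F[1])\leq\pi\phi$ with Lemma~\ref{lem:hamigaki} to force $F=0$, and deduce the ``in particular'' statement from extension closure. The only cosmetic difference is that you spell out the extension-closure argument for the final sentence, whereas the paper simply appeals to the definition of $\til{\mca P}{\beta}{\omega}(I)$.
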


\begin{proof}
Since the pair $(\til{\mca T}{\beta}{\omega}, \til{\mca F}{\beta}{\omega}[1])$ is a torsion pair on 
$\til{\mca A}{\beta}{\omega}$, there is an short exact sequence 
\begin{equation*}
\xymatrix{
0	\ar[r]	&	F[1]	\ar[r]	&	E	\ar[r]	&	T	\ar[r]	&	0, 
}
\end{equation*}
where $T \in \til{\mca T}{\beta}{\omega}$ and $F \in \til{\mca F}{\beta}{\omega}$. 
By the $\til{\Sigma}{\beta}{\omega}$-semistability of $E$, we have 
\begin{equation}\label{lem:ellie}
\arg \til{Z}{\beta}{\omega}(F[1])	\leq \arg \til{Z}{\beta}{\omega}(E) \leq \arg (-\beta +\sqrt{-1}\omega). 
\end{equation}
Then the inequalities (\ref{lem:ellie}) contradict Lemma \ref{lem:hamigaki}. 
Hence $F[1]$ has to be zero. 
The last assertion follows from the definition of $\til{\mca P}{\beta}{\omega}(0, \arg (-\beta+\sqrt{-1}\omega)/\pi]$. 
\end{proof}

\begin{prop}\label{prop:muzukasi}
Let $\mb D=\sod{\mb D_{1}}{\mb D_{2}}$ be a semiorthogonal decomposition with $\m2$. 
Choose rational stability conditions $\sigma _{i} =(\mca A_{i}, Z_{i}) \in \Stab{\mb D_{i}}$ such that $(\sigma_{1}, \sigma_{2})$ satisfies the condition $\m
5$. 

Then the supremum
\begin{equation}
\sup
\left\{
\frac{|\til{Z}{\beta}{\omega} (\tau_{2}^{R}E)|}{|\til{Z}{\beta}{\omega} (E) |}
\middle|
E \in \til{\mca P}{\beta}{\omega}(\phi) \text{ where } \pi\phi \in \left(\arg (-\beta +\sqrt{-1}\omega), \pi \right]
\right\}
\end{equation}
is finite if $(\beta, \omega) $ is in $\mca H^{+}(\epsilon_{1}) \cap \mca H^{-}(\epsilon_{2})$. 
\end{prop}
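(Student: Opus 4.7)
The plan is to decompose a $\til{\Sigma}{\beta}{\omega}$-semistable object $E$ of phase $\phi$ with $\pi\phi \in (\arg(-\beta+\sqrt{-1}\omega), \pi]$ via the torsion pair $(\til{\mca T}{\beta}{\omega}, \til{\mca F}{\beta}{\omega}[1])$ on $\til{\mca A}{\beta}{\omega}$, producing a short exact sequence
\[
\xymatrix{0 \ar[r] & F[1] \ar[r] & E \ar[r] & T \ar[r] & 0}
\]
with $F \in \til{\mca F}{\beta}{\omega}$ and $T \in \til{\mca T}{\beta}{\omega}$. Semistability of $E$ forces $\phi_{\sigma}^{+}(F[1]) \leq \phi \leq \phi_{\sigma}^{-}(T)$, so that $T$ lies in $\til{\mca P}{\beta}{\omega}[\phi, 1]$; since $\phi > \arg(-\beta+\sqrt{-1}\omega)/\pi$, this places $T$ in the subcategory $\mca T'_{\beta,\omega}$ of Proposition \ref{prop:boundofT}.

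The crucial angular estimate is that both $\til{Z}{\beta}{\omega}(T)$ and $\til{Z}{\beta}{\omega}(F[1])$ have arguments lying in $(\arg(-\beta+\sqrt{-1}\omega), \pi]$: for $T$ this follows from $\phi_{\sigma}^{-}(T) \geq \phi$, while for $F[1]$ this is exactly the content of Lemma \ref{lem:hamigaki}, which applies because $(\beta,\omega) \in \mca H^{-}(\epsilon_{2})$. Since $\arg(-\beta+\sqrt{-1}\omega) \in (0,\pi)$, the two arguments differ by at most $\pi - \arg(-\beta+\sqrt{-1}\omega) < \pi$, so Proposition \ref{bunkai-improve} produces a constant $C_{1} > 0$, depending only on $(\beta,\omega)$, with
\[
|\til{Z}{\beta}{\omega}(E)| \geq C_{1}\bigl(|\til{Z}{\beta}{\omega}(T)| + |\til{Z}{\beta}{\omega}(F[1])|\bigr).
\]

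Next I would bound the two pieces separately: Proposition \ref{prop:boundofT} (applicable since $(\beta,\omega) \in \mca H^{+}(\epsilon_{1})$ and $T \in \mca T'_{\beta,\omega}$) uniformly bounds $|\til{Z}{\beta}{\omega}(\tau_{2}^{R}T)|/|\til{Z}{\beta}{\omega}(T)|$, and Proposition \ref{prop:F-bound} (applicable since $(\beta,\omega) \in \mca H^{-}(\epsilon_{2})$) uniformly bounds $|\til{Z}{\beta}{\omega}(\tau_{2}^{R}F)|/|\til{Z}{\beta}{\omega}(F)|$. Because $\tau_{2}^{R}$ is triangulated and $\til{Z}{\beta}{\omega}$ changes sign under the shift, the latter ratio equals the corresponding ratio for $F[1]$. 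Combining these with the triangle inequality $|\til{Z}{\beta}{\omega}(\tau_{2}^{R}E)| \leq |\til{Z}{\beta}{\omega}(\tau_{2}^{R}F[1])| + |\til{Z}{\beta}{\omega}(\tau_{2}^{R}T)|$ and the preceding lower bound yields
\[
\frac{|\til{Z}{\beta}{\omega}(\tau_{2}^{R}E)|}{|\til{Z}{\beta}{\omega}(E)|} \leq \frac{1}{C_{1}}\left(\frac{|\til{Z}{\beta}{\omega}(\tau_{2}^{R}F[1])|}{|\til{Z}{\beta}{\omega}(F[1])|} + \frac{|\til{Z}{\beta}{\omega}(\tau_{2}^{R}T)|}{|\til{Z}{\beta}{\omega}(T)|}\right),
\]
whose right-hand side is uniformly bounded in $E$.

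The main obstacle is verifying that $\til{Z}{\beta}{\omega}(F[1])$ is strictly separated from the ray of argument $\arg(-\beta+\sqrt{-1}\omega)$, which is what prevents the angular sum from collapsing and enables Proposition \ref{bunkai-improve}; Lemma \ref{lem:hamigaki} provides exactly this, and is the reason that the complementary range of phases treated in Proposition \ref{prop:free-angle} required a completely different argument. The simultaneous appearance of the hypotheses $(\beta,\omega) \in \mca H^{+}(\epsilon_{1})$ and $(\beta,\omega) \in \mca H^{-}(\epsilon_{2})$ is exactly the dual use of Propositions \ref{prop:boundofT} and \ref{prop:F-bound}.
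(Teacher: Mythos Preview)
Your argument is correct and follows the same route as the paper: decompose $E$ along the torsion pair on $\til{\mca A}{\beta}{\omega}$ into $F[1]\to E\to T$, use Lemma~\ref{lem:hamigaki} and the quotient bound $\phi^{-}(T)\ge\phi$ to confine both $\arg\til{Z}{\beta}{\omega}(F[1])$ and $\arg\til{Z}{\beta}{\omega}(T)$ to $(\arg(-\beta+\sqrt{-1}\omega),\pi]$, apply Proposition~\ref{bunkai-improve}, and finish with Propositions~\ref{prop:boundofT} and~\ref{prop:F-bound}. One cosmetic slip: the torsion pair on $\til{\mca A}{\beta}{\omega}$ is $(\til{\mca F}{\beta}{\omega}[1],\til{\mca T}{\beta}{\omega})$, not $(\til{\mca T}{\beta}{\omega},\til{\mca F}{\beta}{\omega}[1])$, though your short exact sequence is written with the correct ordering.
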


\begin{proof}
Let $E$ be in the slicing $\til{\mca P}{\beta}{\omega}(\phi)$ where $\pi \phi $ is in $\left(\arg (-\beta +\sqrt{-1}\omega), \pi \right]$. 
Since $E \in \til{\mca A}{\beta}{\omega}$, there exists a canonical distinguished triangle 
\[
\xymatrix{
E^{-1}[1]	\ar[r]	&	E	\ar[r]	&	E^{0}
}
\]
where $E^{0} \in \til{\mca T}{\beta}{\omega}$ and $E^{-1} \in \til{\mca F}{\beta}{\omega}$. 
Then $E^{-1}$ satisfies $\arg \til{Z}{\beta}{\omega}(E^{-1}[1])	>	\arg (-\beta +\sqrt{-1}\omega)$ by Lemma \ref{lem:hamigaki}. 
Since the morphism $E \to E^{0}$ is an epimorphism in $\til{\mca A}{\beta}{\omega}$, we see 
\begin{equation}
\phi(E^{0})	\geq \phi^{-}(E^{0})	\geq \phi ^{-}(E)=\phi(E)	\geq \frac{1}{\pi}\arg (-\beta +\sqrt{-1}\omega). 
\end{equation}
Hence inequalities $0	\leq |	\arg \til{Z}{\beta}{\omega}(E^{0})	-	\til{Z}{\beta}{\omega}(E^{-1}[1])|  < \pi $ hold and there exists a positive constant $C_{1}$ such that the inequality 
\begin{equation}\label{eq:431}
|\til{Z}{\beta}{\omega}(E)|	\geq C_{1}|\til{Z}{\beta}{\omega}(E^{0})| +C_{1}|\til{Z}{\beta}{\omega}(E^{-1}[1])|
\end{equation}
holds for any $E$. 
Then we see
\begin{align*}
\frac
{|\til{Z}{\beta}{\omega}(\tau_{2}^{R}E)|}
{|\til{Z}{\beta}{\omega}(E)|}
&	\leq 
\frac{1}{C_{1}}
\left(
\frac
{|\til{Z}{\beta}{\omega}(\tau_{2}^{R}E)|}
{|\til{Z}{\beta}{\omega}(E^{0})|+|\til{Z}{\beta}{\omega}(E^{-1}[1])|}
\right)	\\
&	\leq 
\frac{1}{C_{1}}
\left(
\frac
{|\til{Z}{\beta}{\omega}(\tau_{2}^{R}E^{0})|+|\til{Z}{\beta}{\omega}(\tau_{2}^{R}(E^{-1}[1]))|}
{|\til{Z}{\beta}{\omega}(E^{0})|+|\til{Z}{\beta}{\omega}(E^{-1}[1])|}
\right)	\\
&	= 
\frac{1}{C_{1}}
\left(
\frac
{|\til{Z}{\beta}{\omega}(\tau_{2}^{R}E^{0})|}
{|\til{Z}{\beta}{\omega}(E^{0})|+|\til{Z}{\beta}{\omega}(E^{-1}[1])|}
+
\frac
{|\til{Z}{\beta}{\omega}(\tau_{2}^{R}(E^{-1}[1]))|}
{|\til{Z}{\beta}{\omega}(E^{0})|+|\til{Z}{\beta}{\omega}(E^{-1}[1])|}
\right)	\\
&	\leq
\frac{1}{C_{1}}
\left(
\frac
{|\til{Z}{\beta}{\omega}(\tau_{2}^{R}E^{0})|}
{|\til{Z}{\beta}{\omega}(E^{0})|}
+
\frac
{|\til{Z}{\beta}{\omega}(\tau_{2}^{R}(E^{-1}[1]))|}
{|\til{Z}{\beta}{\omega}(E^{-1}[1])|}
\right). 
\end{align*}
Since $E^{-1}$ is in $\til{\mca F}{\beta}{\omega}$,  Proposition \ref{prop:F-bound} implies that 
${|\til{Z}{\beta}{\omega}(\tau_{2}^{R}(E^{-1}[1]))|}/{|\til{Z}{\beta}{\omega}(E^{-1}[1])|}$ is bounded above. 
Moreover, by Proposition \ref{prop:boundofT}, 
${|\til{Z}{\beta}{\omega}(\tau_{2}^{R}(E^{0}))|}/{|\til{Z}{\beta}{\omega}(E^{0})|}$ is bounded above. 
\end{proof}

\begin{prop}\label{prop:fake}
Let $\mb D=\sod{\mb D_{1}}{\mb D_{2}}$ be a semiorthogonal decomposition with $\m2$. 
Choose rational stability conditions $\sigma _{i} =(\mca A_{i}, Z_{i}) \in \Stab{\mb D_{i}}$ such that $(\sigma_{1}, \sigma_{2})$ satisfies the condition $\m
5$. 

If $(\beta, \omega)$ be in $\left( \mca H^{+}(\epsilon_{1}) \cap \mca H^{-}(\epsilon_{2}) \right)_{\bb Q}$ then the stability condition $\til{\Sigma}{\beta}{\omega}$ satisfies 
that the supremum 
\begin{equation}
\sup
\left\{
\frac{|\til{Z}{\beta}{\omega}(\tau_{2}^{R}E)|}{|\til{Z}{\beta}{\omega}(E)|}
\middle|
\text{$E$ is $\til{\Sigma}{\beta}{\omega}$-semistable}
\right\}
\end{equation}
is finite. 
\end{prop}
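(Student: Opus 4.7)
The plan is to split a $\til{\Sigma}{\beta}{\omega}$-semistable object $E$ according to its phase $\phi$, which automatically lies in the interval $(0,1]$ since $E \in \til{\mca A}{\beta}{\omega}$. The threshold phase will be $\phi_{0} := \frac{1}{\pi}\arg(-\beta+\sqrt{-1}\omega) \in (0,1]$, which is precisely the phase appearing at the boundary between the regimes already treated in Propositions \ref{prop:free-angle}, \ref{prop:boundofT}, \ref{prop:keyprop} and \ref{prop:muzukasi}.

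First I would observe that in the range $\pi\phi \in (0, \arg(-\beta+\sqrt{-1}\omega)]$, Proposition \ref{prop:keyprop} (which requires $(\beta,\omega) \in \mca H^{-}(\epsilon_{2})$) forces the semistable object $E$ to lie in $\til{\mca T}{\beta}{\omega}$. Hence $E$ belongs to the subset of $\til{\mca T}{\beta}{\omega} \cap \til{\mca P}{\beta}{\omega}(\phi)$ considered in Proposition \ref{prop:free-angle}, which (using $(\beta,\omega) \in \mca H^{+}(\epsilon_{1})$) produces a uniform upper bound $M_{\mr{low}}$ on $|\til{Z}{\beta}{\omega}(\tau_{2}^{R}E)|/|\til{Z}{\beta}{\omega}(E)|$ independent of $E$.

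Next I would handle the remaining range $\pi\phi \in (\arg(-\beta+\sqrt{-1}\omega), \pi]$ directly: this is exactly the hypothesis of Proposition \ref{prop:muzukasi}, which again using $(\beta,\omega) \in \mca H^{+}(\epsilon_{1}) \cap \mca H^{-}(\epsilon_{2})$ yields a uniform bound $M_{\mr{high}}$ on the same ratio. (Internally, that proposition reduces to Propositions \ref{prop:F-bound} and \ref{prop:boundofT} via the $(\til{\mca F}{\beta}{\omega}[1], \til{\mca T}{\beta}{\omega})$-decomposition of $E$, but we can treat it as a black box here.) Setting $M := \max\{M_{\mr{low}}, M_{\mr{high}}\}$, the displayed supremum is bounded by $M$, finishing the proof.

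In this proposition itself there is no genuine obstacle left: the real work has all been front-loaded into Propositions \ref{prop:free-angle} and \ref{prop:muzukasi}, whose proofs depended on the three-piece Harder--Narasimhan decomposition of $\mu_{\beta,\omega}$-semistable pieces together with Corollary \ref{bunkai-sup} and the convexity estimate of Proposition \ref{bunkai-improve}. Thus the remaining step is simply a phase-dichotomy argument: recognize that $\til{\mca P}{\beta}{\omega}(0,1] = \til{\mca P}{\beta}{\omega}(0,\phi_{0}] \cup \til{\mca P}{\beta}{\omega}(\phi_{0},1]$, invoke Proposition \ref{prop:keyprop} on the first piece to legitimately apply Proposition \ref{prop:free-angle}, and invoke Proposition \ref{prop:muzukasi} on the second piece. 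The conditions $(\beta,\omega) \in \mca H^{+}(\epsilon_{1}) \cap \mca H^{-}(\epsilon_{2})$ and rationality of $(\beta,\omega)$ and of $\sigma_{i}$ are exactly what is needed so that all invoked results apply simultaneously.
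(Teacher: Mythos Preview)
Your proposal is correct and matches the paper's proof essentially verbatim: the paper also splits according to whether $\pi\phi \leq \arg(-\beta+\sqrt{-1}\omega)$ or $\pi\phi > \arg(-\beta+\sqrt{-1}\omega)$, invokes Proposition~\ref{prop:keyprop} followed by Proposition~\ref{prop:free-angle} in the first case, and Proposition~\ref{prop:muzukasi} in the second. Your identification of which hypotheses on $(\beta,\omega)$ are needed for each invoked result is also accurate.
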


\begin{proof}
Let $E$ be a $\til{\Sigma}{\beta}{\omega}$-semistable object with the phase $\phi$. 
Without loss of generality, we may assume that $\phi \in (0,1]$. 

If $\pi \phi \leq \arg (-\beta +\sqrt{-1}\omega)$, then $E$ is in $\til{\mca T}{\beta}{\omega}$ by Proposition \ref{prop:keyprop}. 
Then, by Proposition \ref{prop:free-angle}, 
there exists an $M_{1}>0$ such that 
\begin{equation}
\frac{|\til{Z}{\beta}{\omega}(\tau_{2}^{R}E)|}{|\til{Z}{\beta}{\omega}(E)|}	<M_{1}. 
\end{equation}

Suppose $\pi \phi >  \arg (-\beta +\sqrt{-1}\omega)$. 
By Proposition \ref{prop:muzukasi}, we see the supremum
\begin{equation}
\sup
\left\{
\frac{|\til{Z}{\beta}{\omega}(\tau_{2}^{R}E)|}{|\til{Z}{\beta}{\omega}(E)|}
\middle|
\text{$E$ is $\til{\Sigma}{\beta}{\omega}$-semistable with }\pi \phi >  \arg (-\beta +\sqrt{-1}\omega)
\right\}
\end{equation}
is bounded. 
Hence we have finished the desired assertion. 
\end{proof}

\begin{cor}\label{cor:fakesupport}
Let $\mb D=\sod{\mb D_{1}}{\mb D_{2}}$ be a semiorthogonal decomposition with $\m2$. 
Choose rational stability conditions $\sigma _{i} =(\mca A_{i}, Z_{i}) \in \Stab{\mb D_{i}}$ such that $(\sigma_{1}, \sigma_{2})$ satisfies the condition $\m
5$. 

If a rational points $(\beta, \omega)$ is in $\mca H^{+}(\epsilon_{1}) \cap \mca H^{-}(\epsilon_{2})$, then 
the supremum 
\begin{equation}
\sup
\left\{
\frac{|Z_{2}(\tau_{2}^{R}E)|}{|\til{Z}{\beta}{\omega}(E)|}
\middle|
\text{$E$ is $\til{\Sigma}{\beta}{\omega}$-semistable}
\right\}
\end{equation}
is finite. 
\end{cor}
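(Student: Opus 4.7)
The plan is to reduce this to Proposition \ref{prop:fake} via a direct computation of the central charge on objects in the essential image of $i_{2} \colon \mb D_{2} \to \mb D$. First I would observe that for any object $A \in \mb D_{2}$, the semiorthogonality of the decomposition forces $\tau_{1}^{L}(i_{2}A) = 0$ and $\tau_{2}^{R}(i_{2}A) = A$. Applying Definition \ref{dfn:tiltingdeformation} of the central charge $\til{Z}{\beta}{\omega}$ to $i_{2}(\tau_{2}^{R}E)$, viewed as an object of $\mb D$, therefore yields
\[
\til{Z}{\beta}{\omega}(\tau_{2}^{R}E) = Z_{1}(\tau_{1}^{L}i_{2}\tau_{2}^{R}E) + (\beta - \sqrt{-1}\omega)Z_{2}(\tau_{2}^{R}i_{2}\tau_{2}^{R}E) = (\beta - \sqrt{-1}\omega)\, Z_{2}(\tau_{2}^{R}E).
\]
Consequently $|Z_{2}(\tau_{2}^{R}E)| = |\beta - \sqrt{-1}\omega|^{-1}\, |\til{Z}{\beta}{\omega}(\tau_{2}^{R}E)|$.

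Next, since $\omega > 0$ and $(\beta,\omega)$ is fixed, the factor $|\beta - \sqrt{-1}\omega|$ is a strictly positive real constant independent of $E$. Dividing by $|\til{Z}{\beta}{\omega}(E)|$ and taking the supremum over all $\til{\Sigma}{\beta}{\omega}$-semistable objects $E$ gives
\[
\sup\!\left\{\frac{|Z_{2}(\tau_{2}^{R}E)|}{|\til{Z}{\beta}{\omega}(E)|}\;\middle|\;E \text{ is }\til{\Sigma}{\beta}{\omega}\text{-semistable}\right\} = \frac{1}{|\beta - \sqrt{-1}\omega|} \cdot \sup\!\left\{\frac{|\til{Z}{\beta}{\omega}(\tau_{2}^{R}E)|}{|\til{Z}{\beta}{\omega}(E)|}\;\middle|\;E \text{ is }\til{\Sigma}{\beta}{\omega}\text{-semistable}\right\}.
\]

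Finally I would invoke Proposition \ref{prop:fake}, whose hypotheses are satisfied since $(\beta,\omega)$ is a rational point of $\mca H^{+}(\epsilon_{1}) \cap \mca H^{-}(\epsilon_{2})$, to conclude that the right-hand supremum is finite, hence so is the left-hand one. There is no real obstacle to overcome: the statement is a bookkeeping corollary of Proposition \ref{prop:fake} that merely rewrites the bound in terms of the intrinsic central charge $Z_{2}$ on $\mb D_{2}$, which will be convenient when later transferring estimates to the boundary stability condition $\til{\Sigma}{1}{0}$ in the specialization argument of Section \ref{sc:support}.
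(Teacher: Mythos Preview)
Your proposal is correct and follows essentially the same approach as the paper: the paper's proof is a two-line argument noting that $|\til{Z}{\beta}{\omega}(\tau_{2}^{R}E)|=|\beta-\sqrt{-1}\omega|\cdot |Z_{2}(\tau_{2}^{R}E)|$ by definition and then invoking Proposition~\ref{prop:fake}. You have simply spelled out the justification for this identity via the semiorthogonal projections, which is fine and matches the paper's reasoning.
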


\begin{proof}
By the definition, we see $|\til{Z}{\beta}{\omega}(\tau_{2}^{R}E)|=|\beta-\sqrt{-1}\omega|\cdot |Z_{2}(\tau_{2}^{R}E)|$ for any $E \in \mb D$. 
Thus Proposition \ref{prop:fake} implies the desired assertion. 
\end{proof}

Now we will show that the following map 
\begin{equation}
s_{\bb Q}	\colon \mca H^{+}(\epsilon_{1})_{\bb Q} \cap \mca H^{-}(\epsilon_{2})_{\bb Q} \to \Stab{\mb D}, 
s_{\bb Q}(\beta, \omega) =\til{\Sigma}{\beta}{\omega}
\end{equation}
is continuous.

\begin{lem}\label{lem:heart}
Let $\mb D=\sod{\mb D_{1}}{\mb D_{2}}$ be a semiorthogonal decomposition with $\m2$. 
Suppose rational stability conditions $\sigma_{i}=(\mca A_{i}, Z_{i}) \in \Stab{\mb D_{i}}$ satisfies $\m5$.  
Take $E \in \til{\mca A}{\beta}{\omega}$ and $E'\in \til{\mca A}{\beta'}{\omega'}$ where $(\beta, \omega)$ and $(\beta', \omega ') \in \bb R \times \bb R_{>0}$.  
Then the vanishing $\Hom_{\mb D}(E[p], E')=0$ holds for any $p \geq 2$.  
\end{lem}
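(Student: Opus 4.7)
The plan is to exploit the fact that, by construction, the heart $\til{\mca A}{\beta}{\omega}$ is the tilt of the gluing heart $\gl{\mca A_{1}}{\mca A_{2}}$ at the torsion pair $(\til{\mca T}{\beta}{\omega}, \til{\mca F}{\beta}{\omega})$, and likewise for $\til{\mca A}{\beta'}{\omega'}$ at $(\til{\mca T}{\beta'}{\omega'}, \til{\mca F}{\beta'}{\omega'})$. In particular, every object of either tilted heart has its cohomology with respect to the bounded $t$-structure on $\mb D$ whose heart is $\gl{\mca A_{1}}{\mca A_{2}}$ concentrated in degrees $-1$ and $0$.

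Concretely, I write down the canonical triangles
\[
F[1] \to E \to T \to F[2] \quad \text{and} \quad F'[1] \to E' \to T' \to F'[2],
\]
with $F \in \til{\mca F}{\beta}{\omega}$, $T \in \til{\mca T}{\beta}{\omega}$, $F' \in \til{\mca F}{\beta'}{\omega'}$, $T' \in \til{\mca T}{\beta'}{\omega'}$; all four of $F,T,F',T'$ belong to the heart $\gl{\mca A_{1}}{\mca A_{2}}$. Applying $\Hom_{\mb D}(-, E')$ to the $[p]$-shift of the first triangle reduces the vanishing of $\Hom_{\mb D}(E[p], E')$ to the vanishing of $\Hom_{\mb D}(T[p], E')$ and $\Hom_{\mb D}(F[p+1], E')$; then applying $\Hom_{\mb D}(T[p], -)$ and $\Hom_{\mb D}(F[p+1], -)$ to the triangle for $E'$ reduces the problem to the simultaneous vanishing of the four groups
\[
\Hom_{\mb D}(T, T'[-p]), \quad \Hom_{\mb D}(T, F'[1-p]), \quad \Hom_{\mb D}(F, T'[-p-1]), \quad \Hom_{\mb D}(F, F'[-p]).
\]

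For $p \geq 2$ each of the integers $-p$, $1-p$, $-p-1$ is strictly negative, so all four Hom groups vanish by the defining $t$-structure axiom for the heart $\gl{\mca A_{1}}{\mca A_{2}}$. Chasing the two long exact sequences backward then delivers $\Hom_{\mb D}(E[p], E') = 0$, as required. There is no real obstacle here: the substantive work---verifying that $\til{\mca A}{\beta}{\omega}$ is actually the heart of a bounded $t$-structure, and that its objects live in the window $\gl{\mca A_{1}}{\mca A_{2}}[1] * \gl{\mca A_{1}}{\mca A_{2}}$---was already accomplished in the preceding sections, and the present lemma is a purely formal cohomological consequence that does not even require $(\beta, \omega)$ and $(\beta', \omega')$ to be related in any way.
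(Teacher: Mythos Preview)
Your argument is correct. Both tilted hearts are obtained from the common gluing heart $\gl{\mca A_{1}}{\mca A_{2}}$ by tilting at a torsion pair, so their objects lie in cohomological amplitude $[-1,0]$ with respect to that heart; the four-term reduction you wrote down then vanishes for $p\geq 2$ by the negative-Ext axiom for any heart of a bounded $t$-structure.

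The paper takes a different route: rather than comparing the two tilts through the gluing heart, it works directly with the semiorthogonal decomposition. It observes that for $E\in\til{\mca A}{\beta}{\omega}$ one has $\tau_{1}^{L}E\in\mca P^{\sigma_{1}}(0,2]$, $\tau_{2}^{R}E\in\mca P^{\sigma_{2}}(0,2]$, and (by $t$-exactness of $\Phi$) also $\Phi(\tau_{2}^{R}E')\in\mca P^{\sigma_{1}}(0,2]$. A phase count then forces the three Hom-groups
\[
\Hom_{\mb D_{1}}(\tau_{1}^{L}E[p],\tau_{1}^{L}E'),\quad
\Hom_{\mb D_{2}}(\tau_{2}^{R}E[p],\tau_{2}^{R}E'),\quad
\Hom_{\mb D_{1}}(\tau_{1}^{L}E[p],\Phi(\tau_{2}^{R}E')[-1])
\]
to vanish for $p\geq 2$, and Lemma~\ref{lem:morvanishing} finishes. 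Your approach is arguably cleaner here: it uses nothing about the semiorthogonal structure beyond the existence of the gluing heart, and it applies verbatim to any two tilts of a common heart. The paper's approach, by contrast, rehearses the mechanism of Lemma~\ref{lem:morvanishing}, which is genuinely needed elsewhere (e.g.\ in Corollary~\ref{cor:keyvanishing} and the path construction of \S7) but is overkill for this particular vanishing.
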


\begin{proof}
Let $\mca P^{\sigma_{i}}$ be the slicing of $\sigma_{i}$. 
By the construction of the heart $\til{\mca A}{\beta}{\omega}$, 
if  $E $ is in $\til{\mca A}{\beta}{\omega}$ then 
$\tau_{1}^{L}E \in \mca P^{\sigma_{1}}(0,2]$ and $\tau_{2}^{R}E \in \mca P^{\sigma_{2}}(0,2]$. 
Since $\sigma_{i}$ satisfies $\m3$, we see $\Phi(\tau_{2}^{R}E) \in \mca P^{\sigma_{1}}(0,2]$. 
Thus the vanishings
\begin{equation}
\begin{cases}
\Hom_{\mb D_{1}}(\tau_{1}^{L}E[p], \tau_{1}^{L}E')=0	&	\\
\Hom_{\mb D_{2}}(\tau_{2}^{R}E[p], \tau_{2}^{R}E')=0	&	\\
\Hom_{\mb D_{1}}(\tau_{1}^{L}E[p], \Phi(\tau_{2}^{R}E')[-1])=0
\end{cases}
\end{equation}
hold. 
Then Lemma \ref{lem:morvanishing} implies the desired assertion. 
\end{proof}

\begin{prop}\label{prop:t-str-cont}
Let $\mb D=\sod{\mb D_{1}}{\mb D_{2}}$ be a semiorthogonal decomposition with $\m2$. 
Choose rational stability conditions $\sigma _{i} =(\mca A_{i}, Z_{i}) \in \Stab{\mb D_{i}}$ such that $(\sigma_{1}, \sigma_{2})$ satisfies the condition $\m
5$. 
Take a $(\beta, \omega)  \in (\mca H^{+}(\epsilon_{1}) \cap \mca H^{-}(\epsilon_{2}))_{\bb Q}$ for an $\epsilon_{1}\in (0, 1/2)$ and an $\epsilon _{2} <0$. 

For an arbitrary $\varepsilon \in (0, \varepsilon_{0})$, 
choose $(\beta', \omega ') \in (\mca H^{+}(\epsilon_{1}) \cap \mca H^{-}(\epsilon_{2}) )_{\bb Q} $ so that the following holds: 
\begin{equation}\label{eq:distance}
\sup
\{
|\mu_{\beta', \omega'}^{+}(E)-\mu_{\beta, \omega }^{+}(E)|
, 
|\mu_{\beta', \omega'}^{-}(E)-\mu_{\beta, \omega }^{-}(E)|
\mid	
E \text{ is } (\sigma_{1}, \sigma_{2})\text{-free}
\} < \varepsilon. 
\end{equation}

\begin{enumerate}
\item Let $E \in \til{\mca A}{\beta}{\omega}$ be $\til{\Sigma}{\beta}{\omega}$-semistable with 
\begin{equation*}
\arg \til{Z}{\beta}{\omega}(E) > 
\arg (\beta + 1 +\sqrt{-1}\omega)(\beta -\sqrt{-1}\omega)+\pi
\end{equation*}
Then the vanishing $\Hom_{\mb D}(E[1], E')=0$ holds for any $E' \in \til{\mca A}{\beta'}{\omega'}$. 
\item Let $E \in \til{\mca A}{\beta}{\omega}$ be $\til{\Sigma}{\beta}{\omega}$-semistable with 
\begin{equation*}
\arg \til{Z}{\beta}{\omega}(E[-1]) \leq \arg (\beta+1 + 2\epsilon_{0}+\sqrt{-1}\omega)(\beta - \sqrt{-1}\omega). 
\end{equation*}
Then the vanishing $\Hom_{\mb D}(E', E[-1])=0$ holds for any $E' \in \til{\mca A}{\beta'}{\omega'}$. 
\end{enumerate}
\end{prop}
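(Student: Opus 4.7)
The plan is a standard torsion-pair plus Harder–Narasimhan filtration argument, reducing each statement to a Hom-vanishing in the gluing heart $\gl{\mca A_{1}}{\mca A_{2}}$.

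For part (1), I first decompose $E'$ via the torsion triangle $F'[1] \to E' \to T'$ associated to $(\til{\mca T}{\beta'}{\omega'}, \til{\mca F}{\beta'}{\omega'}[1])$ on $\til{\mca A}{\beta'}{\omega'}$, and likewise decompose $E$ via $F_{E}[1] \to E \to T_{E}$ with $T_{E} \in \til{\mca T}{\beta}{\omega}$ and $F_{E} \in \til{\mca F}{\beta}{\omega}$. Applying $\Hom(E[1], -)$ to the first triangle reduces the target vanishing to two pieces. The piece $\Hom(E[1], T') = 0$ is automatic: inserting the shifted triangle $F_{E}[2] \to E[1] \to T_{E}[1]$ and applying $\Hom(-, T')$ shows it is sandwiched between $\Ext^{-1}(T_{E}, T')$ and $\Ext^{-2}(F_{E}, T')$, both of which vanish because $T_{E}, F_{E}, T'$ all lie in the heart $\gl{\mca A_{1}}{\mca A_{2}}$. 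The remaining piece $\Hom(E, F') = 0$ reduces by the same mechanism to $\Hom_{\gl{\mca A_{1}}{\mca A_{2}}}(T_{E}, F') = 0$.

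To prove that last vanishing, I Harder–Narasimhan-filter $T_{E}$ and $F'$ with respect to $\mu_{\beta,\omega}$-slope in the sense of Proposition \ref{prop:morslope}, so that every factor is semistable in the auxiliary stability condition $\tau = (\mca C, \Mu)$ on the Serre quotient $\gl{\mca A_{1}}{\mca A_{2}}/\mca B$. The closeness assumption (\ref{eq:distance}) forces every $\mu_{\beta,\omega}$-HN factor of $F'$ to satisfy $\mu_{\beta,\omega}\leq \varepsilon$, because $F' \in \til{\mca F}{\beta'}{\omega'}$ gives $\mu_{\beta',\omega'}^{+}(F') \leq 0$. On the other side, the $\til{\Sigma}{\beta}{\omega}$-semistability of $E$ propagates to its torsion quotient $T_{E}$ (so $\phi^{-}(T_{E}) \geq \phi(E)$), and the numerical hypothesis $\arg \til{Z}{\beta}{\omega}(E) > \arg(\beta+1+\sqrt{-1}\omega)(\beta-\sqrt{-1}\omega)+\pi$ translates, via the central-charge/slope correspondence used throughout Section \ref{sc:deformation}, into a lower bound $\mu_{\beta,\omega}^{-}(T_{E}) > \varepsilon$. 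Hence every $\mu_{\beta,\omega}$-factor of $T_{E}$ has strictly larger slope than every $\mu_{\beta,\omega}$-factor of $F'$, and Hom-vanishing in the abelian category $\mca C$ (equivalently in $\gl{\mca A_{1}}{\mca A_{2}}$ modulo objects killed by $\Mu$) follows from the semistability provided by Proposition \ref{prop:morslope}, together with the observation that the only maps killed by passage to $\mca C$ are those through $(\sigma_{1},\sigma_{2})$-torsion, which $F'$ has none of.

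Part (2) is dual: the identical torsion-pair decompositions reduce $\Hom(E', E[-1]) = 0$ to $\Hom_{\gl{\mca A_{1}}{\mca A_{2}}}(T', F_{E}) = 0$, and the hypothesis $\arg \til{Z}{\beta}{\omega}(E[-1]) \leq \arg(\beta+1+2\epsilon_{0}+\sqrt{-1}\omega)(\beta-\sqrt{-1}\omega)$ symmetrically yields $\mu_{\beta,\omega}^{+}(F_{E}) < -\varepsilon'$ (with $\epsilon_{0}$ ensuring the threshold lies below the $\mu_{\beta,\omega}^{-}$-bound on $T'$), so the same slope-separation argument applies. The main obstacle in both parts is the precise translation between the Bridgeland phase bound and the $\mu_{\beta,\omega}$-slope bound on $T_{E}$ (respectively $F_{E}$); this is a trigonometric calculation of the same flavour as Lemmas \ref{lem:comp-slope} and \ref{lem:slope-mor}, and the specific thresholds in the hypotheses are chosen precisely so that this translation produces a slope gap exceeding $\varepsilon$.
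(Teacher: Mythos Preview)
Your reduction to $\Hom_{\gl{\mca A_{1}}{\mca A_{2}}}(T_{E},F')=0$ is correct and matches the paper's. The gap is the step you describe as ``the numerical hypothesis translates, via the central-charge/slope correspondence, into a lower bound $\mu_{\beta,\omega}^{-}(T_{E})>\varepsilon$.'' This is not a trigonometric calculation: there is no direct dictionary between the Bridgeland phase $\phi^{-}(T_{E})$ (coming from the $\til{\Sigma}{\beta}{\omega}$--HN filtration) and the slope $\mu_{\beta,\omega}^{-}(T_{E})$ (coming from the $\Mu$--HN filtration). Lemmas \ref{lem:comp-slope} and \ref{lem:slope-mor} go from slope bounds to phase bounds, not the other way, and $T_{E}$ is in general neither $\mu_{\beta,\omega}$--semistable nor $\til{\Sigma}{\beta}{\omega}$--semistable, so neither lemma applies to $T_{E}$ as stated.

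The paper does not attempt to prove $\mu_{\beta,\omega}^{-}(T_{E})>\varepsilon$. Instead it argues by contradiction from a nonzero map $\varphi\colon T\to F'$: after factoring $\varphi$ through the piece $F'_{+}$ of $F'$ with $\mu$--slopes in $(0,\varepsilon]$, it takes the image $\im\varphi'$ inside $\til{\mca T}{\beta}{\omega}$ and then truncates its $\mu_{\beta,\omega}$--HN filtration to extract a $(\sigma_{1},\sigma_{2})$--free quotient $G_{-}$ with $0<\mu_{\beta,\omega}^{-}(G_{-})\le\mu_{\beta,\omega}^{+}(G_{-})\le\varepsilon$. The crucial observation is that every term of this chain lies in $\til{\mca T}{\beta}{\omega}$, so the short exact sequences in $\gl{\mca A_{1}}{\mca A_{2}}$ are also short exact in $\til{\mca A}{\beta}{\omega}$; hence $G_{-}$ is a quotient of the semistable object $E$ in the tilted heart, giving $\arg\til{Z}{\beta}{\omega}(G_{-})\ge\arg\til{Z}{\beta}{\omega}(E)$. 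Now Lemma \ref{lem:slope-mor} (with $\epsilon_{2}=0$, $\epsilon_{1}=\varepsilon$) applies to $G_{-}$ and bounds $\arg\til{Z}{\beta}{\omega}(G_{-})$ strictly below $\arg(\beta+1+\sqrt{-1}\omega)(\beta-\sqrt{-1}\omega)+\pi$, contradicting the hypothesis. Part (2) is the mirror image. Your claimed slope bound $\mu_{\beta,\omega}^{-}(T_{E})>\varepsilon$ is in fact true, but establishing it requires exactly this quotient-in-the-tilted-heart argument; it is this step, not a computation with arguments of complex numbers, that you are missing.
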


\begin{proof}
Throughout the proof, 
let $T$ (resp. $T'$) be the free part of $E$ (resp. $E'$) and let $F[1]$ (resp. $F'[1]$) be the torsion part of $E$ (resp. $E'$) with respect to 
the torsion pair $(\til{\mca F}{\beta}{\omega}[1], \til{\mca T}{\beta}{\omega}) $ on the heart 
$\til{\mca A}{\beta}{\omega}$ (resp. on $\til{\mca A}{\beta'}{\omega'}$). 
Then one easily see 
$\Hom_{\mb D}(E[1], E') \cong \Hom_{\mb D}(T, F')$. 

Basically the proof of the assertions are similar. 
Since $F'$ is in $\til{\mca F}{\beta'}{\omega'}$, 
we have $\mu_{\beta, \omega}^{+}(F') \leq \epsilon$ by (\ref{eq:distance}). 
By truncating the Harder-Narasimhan filtration of $F'$ with respect to the $\mu_{\beta, \omega}$-semistability, 
there exists a subobject $F'_{+}$ of $F'$ in $\gl{\mca A_{1}}{\mca A_{2}}$ such that 
\begin{itemize}
\item $F'_{+}$ is $(\sigma_{1}, \sigma_{2})$-free with $0 < \mu_{\beta, \omega}^{-}(F'_{+}) \leq \mu_{\beta, \omega}^{+}(F'_{+}) \leq \epsilon$, 
\item the quotient $F'_{-}:=F'/F'_{+}$ in $\gl{\mca A_{1}}{\mca A_{2}}$ is $(\sigma_{1}, \sigma_{2})$-free with 
$\mu_{\beta, \omega}^{+} (F'_{-})\leq 0$. 
\end{itemize}
Then any morphism $\varphi \colon T \to F' $ factors through $F'_{+}$, that is, 
we have the following diagram: 
\[
\xymatrix{
T	\ar[r]^-{\varphi}	\ar[rd]_-{\varphi'}	& F'. 	\\
		&	F'_{+}\ar[u]
}
\]

Note that $\til{\mca T}{\beta}{\omega}$ is not only the free part of $\til{\mca A}{\beta}{\omega}$ but also the torsion part of $\gl{\mca A_{1}}{\mca A_{2}}$. 
Since the morphism $\varphi' \colon T \to F'_{+}$ above lives in $\til{\mca T}{\beta}{\omega}$ which is closed under subobjects in $\til{\mca A}{\beta}{\omega}$, 
the image $\im \varphi'$ of $\varphi'$ in $\til{\mca A}{\beta}{\omega}$ belongs to $\til{\mca T}{\beta}{\omega}$. 

Again by truncating the Harder-Narasimhan filtration of $\im \varphi'$ with respect to the $\mu_{\beta,\omega}$-semistability, 
we obtain a subobject $G$ of $\im \varphi'$ in the abelian category $\gl{\mca A_{1}}{\mca A_{2}}$ such that 
\begin{itemize}
\item the $(\sigma_{1}, \sigma_{2})$-free part $G_{\mr {fr}}$ of $G_{+}$ satisfies $\mu_{\beta, \omega}^{-}(G_{\mr {fr}}) >\epsilon$, and 
\item the quotient $G_{-}:=\im \varphi'/G_{+}$ is $(\sigma_{1}, \sigma_{2})$-free with 
$0  < \mu_{\beta , \omega}^{-}(G_{-}) \leq \mu_{\beta, \omega}^{+}(G_{-}) \leq \epsilon$. 
\end{itemize}
Thus we have the diagram: 
\[
\xymatrix{
	&	T\ar[d]	&	\\
G_{+}	\ar[r]	&	\im \varphi'	\ar[r]\ar[d]	&	G_{-}. 	\ar@{-->}[ld]	\\
&F'_{+}&
}
\]
The vanishing $\Hom_{\mb D}(G_{+}, F'_{+}) = 0$ gives the lift of $\im \varphi' \to F'_{+}$ to $G^{-}$. 
The horizontal line gives a short exact sequence not only in $\gl{\mca A_{1}}{\mca A_{2}}$ but also in $\til{\mca A}{\beta}{\omega}$ 
since all objects are in $\til{\mca T}{\beta}{\omega}$. 
Hence the composite $E \to T \to \im \varphi' \to G_{-}$ gives an epi morphism in $\til{\mca A}{\beta}{\omega}$, 
in particular $G_{-}$ is a quotient of $E$. 
So the argument $\arg \til{Z}{\beta}{\omega}(G^{-})$ should satisfy
\begin{equation}\label{eq:contradiction}
\arg \til{Z}{\beta}{\omega}(G^{-}) \geq \arg \til{Z}{\beta}{\omega}(E)  \geq \arg (\beta + 1 +\sqrt{-1}\omega)(\beta -\sqrt{-1}\omega)+\pi. 
\end{equation}
Since $G^{-}$ satisfies $0  < \mu_{\beta , \omega}^{-}(G_{-}) \leq \mu_{\beta, \omega}^{+}(G_{-}) \leq \epsilon$, 
the inequality (\ref{eq:contradiction}) contradicts Lemma \ref{lem:slope-mor}. 
Thus we have finished the proof of the assertion (1). 

Similarly to the proof for (1), we easily see 
$\Hom_{\mb D}(E', E[-1]) \cong \Hom_{\mb D}(T', F)$. 
Since the $(\sigma_{1}, \sigma_{2})$-free part of $T' \in \til{\mca T}{\beta'}{\omega'}$ satisfies $\mu_{\beta, \omega}^{-}(T')>-\epsilon$, 
there exists a subobject $T'_{+}$ of $T'$ in $\gl{\mca A_{1}}{\mca A_{2}}$ such that
\begin{itemize}
\item the $(\sigma_{1}, \sigma_{2})$-free part $T'_{\mr {fr}}$ of $T'_{+}$ satisfies $\mu_{\beta, \omega}^{-} (T'_{\mr {fr}})>0$, and 
\item the quotient $T'_{-}:= T'/T'_{+}$ is $(\sigma_{1}, \sigma_{2})$-free with 
$-\epsilon < \mu_{\beta, \omega}^{-}(T'_{-}) \leq \mu_{\beta, \omega}^{+}(T'_{-}) \leq 0$. 
\end{itemize}

Then any morphism $\psi \colon T ' \to F$ lifts to $ \psi ' \colon T'_{-} \to F$ by $\mu_{\beta, \omega}^{+}(F)\leq 0$: 
\[
\xymatrix{
T'	\ar[r]^-{\psi}\ar[d]	&	F	\\
T'_{-}\ar[ur]_-{\psi'}
}. 
\]
Since the morphism $\psi'$ is in the torsion part $\til{\mca F}{\beta}{\omega}$ of the abelian category $\til{\mca A}{\beta}{\omega}[-1]$, 
the image $\im \psi'$ of $\psi '$ in $\til{\mca A}{\beta}{\omega}[-1]$ is also in $\til{\mca F}{\beta}{\omega}$. 
Truncating the Harder-Narasimhan filtration of $\im \psi '$ with respect to the $\mu_{\beta, \omega}$-semistability, 
we find a subobject $H_{+} \subset \im \psi '$ such that 
\begin{itemize}
\item $H_{+}$ is $(\sigma_{1}, \sigma_{2})$-free with $-\epsilon < \mu_{\beta, \omega}^{-} \leq \mu_{\beta, \omega}^{+}(H_{+}) \leq 0$, 
and 
\item the quotient $H_{-}:= \im \psi'/H_{+}$ is $(\sigma_{1}, \sigma_{2})$-free with $\mu_{\beta, \omega}^{+}(H_{-})\leq -\epsilon$. 
\end{itemize}
Thus we have the diagram: 
\[
\xymatrix{
	&	T'_{-}\ar[d]	\ar@{-->}[ld]&	\\
H_{+}	\ar[r]	&	\im \psi'	\ar[r]\ar[d]	&	H_{-}. 		\\
&F&
}
\]
By the assumption for $T'_{-}$, the morphism $T'_{-} \to \im \psi '$ factors through $H_{+}$. 
Moreover the composite $H_{+}\to \im \psi' \to F \to E[-1]$ is a mono morphism in $\til{\mca A}{\beta}{\omega}$ since any object in the horizontal line lives in the torsion part $\til{\mca F}{\beta}{\omega}$ of $\til{\mca A}{\beta}{\omega}[-1]$. 
Hence $H_{+}$ should satisfies 
\begin{equation}
\label{eq:contradiction2}
0	<	\arg \til{Z}{\beta}{\omega}(H_{+}) \leq \arg \til{Z}{\beta}{\omega}(E[-1]) \leq \arg (\beta+1 + 2\epsilon_{0}+\sqrt{-1}\omega)(\beta - \sqrt{-1}\omega). 
\end{equation}
Then the inequality above gives a contradiction applying Lemma \ref{lem:slope-mor} as $\epsilon_{2}=-\epsilon_{0}$ and $\epsilon_{1}=0$. 
\end{proof}

\begin{prop}\label{prop:conti-rational}
Let $\mb D=\sod{\mb D_{1}}{\mb D_{2}}$ be a semiorthogonal decomposition with $\m2$. 
Choose stability conditions $\sigma _{i} =(\mca A_{i}, Z_{i}) \in \Stab{\mb D_{i}}$ such that $(\sigma_{1}, \sigma_{2})$ satisfies the condition $\m
5$. 

Then the map $s_{\bb Q} \colon \mca H^{+}(\epsilon_{1})_{\bb Q} \cap \mca H^{-}(\epsilon_{2})_{\bb Q} \to \Stab{\mb D}$ defined by 
\begin{equation}
s_{\bb Q}(\beta, \omega)=\til{\Sigma}{\beta}{\omega}
\end{equation}
is continuous. 
\end{prop}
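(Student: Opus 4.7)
The plan is to verify the two conditions defining the basic open set $B_{\epsilon}(\til{\Sigma}{\beta}{\omega})$ in Theorem \ref{thm:Bridgeland}(1): for sufficiently small $\epsilon > 0$ and any rational $(\beta', \omega') \in \mca H^{+}(\epsilon_{1}) \cap \mca H^{-}(\epsilon_{2})$ sufficiently close to $(\beta, \omega)$, I show (a) $\|\til{Z}{\beta'}{\omega'} - \til{Z}{\beta}{\omega}\|_{\til{\Sigma}{\beta}{\omega}} < \sin(\pi \epsilon)$, and (b) $\met{\til{\mca P}{\beta'}{\omega'}}{\til{\mca P}{\beta}{\omega}} < \epsilon$.

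Step (a) is the easy half. The identity
\[
\til{Z}{\beta'}{\omega'}(E) - \til{Z}{\beta}{\omega}(E) = ((\beta'-\beta) - \sqrt{-1}(\omega'-\omega))\, Z_{2}(\tau_{2}^{R}E),
\]
combined with the uniform bound from Corollary \ref{cor:fakesupport}, yields a constant $M > 0$ for which
\[
\|\til{Z}{\beta'}{\omega'} - \til{Z}{\beta}{\omega}\|_{\til{\Sigma}{\beta}{\omega}} \leq M \cdot |(\beta'-\beta) - \sqrt{-1}(\omega'-\omega)|,
\]
so (a) follows by shrinking $(\beta', \omega')$ toward $(\beta, \omega)$.

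For step (b), the key input is Proposition \ref{prop:t-str-cont}. Since $\mu_{\beta, \omega}$ depends continuously on $(\beta, \omega)$, the hypothesis \eqref{eq:distance} holds with arbitrarily small auxiliary $\varepsilon$ once $(\beta', \omega')$ is close enough to $(\beta, \omega)$. The two vanishings of Proposition \ref{prop:t-str-cont} then imply that any $E \in \til{\mca A}{\beta}{\omega}$ which is $\til{\Sigma}{\beta}{\omega}$-semistable with sufficiently high phase lies in $\til{\mca P}{\beta'}{\omega'}(>0)$, and, symmetrically, any such $E$ of sufficiently low phase lies in $\til{\mca P}{\beta'}{\omega'}(\leq 1)$. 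Applied to the Harder--Narasimhan factors (with respect to $\til{\Sigma}{\beta}{\omega}$) of an arbitrary $\til{\Sigma}{\beta'}{\omega'}$-semistable object, these vanishings confine the extremal factors to an $\epsilon$-window around the phase of the original object; for factors of intermediate $\til{\Sigma}{\beta}{\omega}$-phase the closeness of central charges from (a) delivers the same conclusion. Swapping the roles of $(\beta, \omega)$ and $(\beta', \omega')$ completes (b).

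The main technical obstacle lies in (b): one must calibrate the auxiliary thresholds
\[
\arg(\beta+1+\sqrt{-1}\omega)(\beta-\sqrt{-1}\omega) + \pi \quad \text{and} \quad \arg(\beta+1+2\epsilon_{0}+\sqrt{-1}\omega)(\beta-\sqrt{-1}\omega)
\]
from Proposition \ref{prop:t-str-cont}(1)--(2) so that together they sweep out a large enough portion of the $\til{\Sigma}{\beta}{\omega}$-phase interval $(0, 1]$ to control every $\til{\Sigma}{\beta'}{\omega'}$-semistable object, and then translate the Hom-vanishings into the numerical bound defining the slicing distance. Matching the auxiliary $\varepsilon$ in \eqref{eq:distance} with the target $\epsilon$, while respecting the position of $(\beta, \omega)$ inside $\mca H^{+}(\epsilon_{1}) \cap \mca H^{-}(\epsilon_{2})$, is the most delicate calibration.
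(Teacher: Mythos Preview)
Your step (a) is correct and matches the paper exactly: Corollary \ref{cor:fakesupport} gives the bound on $\|\til{Z}{\beta'}{\omega'}-\til{Z}{\beta}{\omega}\|_{\til{\Sigma}{\beta}{\omega}}$.

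Step (b), however, has a genuine gap. The vanishings in Proposition \ref{prop:t-str-cont} do not produce an $\epsilon$-window on phases: the thresholds $\arg(\beta+1+\sqrt{-1}\omega)(\beta-\sqrt{-1}\omega)+\pi$ and $\arg(\beta+1+2\epsilon_0+\sqrt{-1}\omega)(\beta-\sqrt{-1}\omega)$ are fixed once $(\beta,\omega)$ is, and shrinking $(\beta',\omega')$ toward $(\beta,\omega)$ does not make them approach $0$ or $\pi$. So your assertion that the extremal $\til{\Sigma}{\beta}{\omega}$-HN factors of a $\til{\Sigma}{\beta'}{\omega'}$-semistable object are ``confined to an $\epsilon$-window around the phase of the original object'' is not supported by those vanishings; what they actually yield is only a \emph{fixed} shrinkage of the coarse bound, not an arbitrarily small one. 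The sentence about ``intermediate phase'' factors being handled by closeness of central charges is also unjustified: closeness of central charges alone does not control phases of HN factors in another stability condition.

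The paper bridges this gap differently and more cleanly. It first uses Lemma \ref{lem:heart} to get the coarse containment $\til{\mca A}{\beta'}{\omega'}\subset\til{\mca P}{\beta}{\omega}(-1,2]$, and then uses Proposition \ref{prop:t-str-cont} only to improve this to $\til{\mca A}{\beta'}{\omega'}\subset\til{\mca P}{\beta}{\omega}(-1+\varepsilon,2-\varepsilon]$ for some (not every) small $\varepsilon>0$. The crucial step you are missing is the invocation of \cite[Proposition 4.2]{MR2721656}: this result says precisely that a heart containment of the form $\mca A'\subset\mca P(a,b]$ with $b-a<2$, together with sufficient closeness of central charges, forces $\met{\mca P}{\mca P'}$ to be small. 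That proposition is what converts the fixed-shrinkage heart bound plus step (a) into membership in $B_\varepsilon(\til{\Sigma}{\beta}{\omega})$. Without it, your plan for (b) does not close.
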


\begin{proof}

We first show that 
$\til{\mca P}{\beta'}{\omega'}(0,1] \subset \til{\mca P}{\beta}{\omega}(-1, 2]$ for any $(\beta, \omega)$ and $(\beta', \omega') $ in $\bb R \times \bb R_{>0}$. 
To show this, 
Let $E$ be in $\til{\mca P}{\beta}{\omega}(0,1] $ and $E'$ in $\til{\mca P}{\beta'}{\omega'}(0,1] $. 
We denote by $\mca A_{0}$ the heart $\gl{\mca A_{1}}{\mca A_{2}}$ on $\mb D$. 
Then cohomologies of $E$ and of $E'$ with respect to $\mca A_{0}$ are concentrated in degree $-1$ and $0$. 
Hence the vanishings $\Hom_{\mb D}(E[p], E')=\Hom_{\mb D}(E', E[-p])=0$ hold for any $p\geq 2$ by Lemma \ref{lem:heart}. 
Thus we see $\til{\mca P}{\beta'}{\omega'}(0,1] \subset \til{\mca P}{\beta}{\omega}(-1, 2]$. 

Take $0 < \varepsilon <1/8$ arbitrary. 
By Proposition \ref{prop:t-str-cont}, if $(\beta', \omega')$ is sufficiently close, then the inclusion $\til{\mca A}{\beta'}{\omega'} \subset \til{\mca P}{\beta}{\omega}(-1+\varepsilon, 2-\varepsilon]$ holds. 
By Corollary \ref{cor:fakesupport}, shrinking $(\beta', \omega')$ if necessary, we may assume that the following holds: 
\begin{equation}\label{eq:deformation}
\sup
\left\{
\frac{|\til{Z}{\beta'}{\omega'}(E)-\til{Z}{\beta}{\omega}(E)|}
{|\til{Z}{\beta}{\omega}(E)|}
\middle|
E \text{ is $\til{\Sigma}{\beta}{\omega}$-semistable}
\right\}
< \sin (\pi \varepsilon). 
\end{equation}
By \cite[Proposition 4.2]{MR2721656} we see that $\til{\Sigma}{\beta}{\omega}$ is in the open neighborhood $B_{\varepsilon}(\til{\Sigma}{\beta}{\omega})$ and this gives the proof. 
\end{proof}

\section{Non-rational coefficients stability conditions}

In the previous section, we have constructed stability conditions $\til{\Sigma}{\beta}{\omega}$ for rational points $(\beta, \omega)$ in 
the open set $\mca H^{+}(\epsilon_{1}) \cap \mca H^{-}(\epsilon_{2})$. 
These stability conditions $\til{\Sigma}{\beta}{\omega}$ could be deformed for non-rational points in $\mca H^{+}(\epsilon_{1}) \cap \mca H^{-}(\epsilon_{2})$ by Proposition \ref{prop:fake} and Theorem \ref{thm:Bridgeland}. 
We wish to show that the extended stability condition is given by the same construction as rational stability conditions $\til{\Sigma}{\beta}{\omega}$ and that 
the family $\{\til{\Sigma}{\beta}{\omega} \mid (\beta, \omega) \in \mca H^{+}(\epsilon_{1}) \cap \mca H^{-}(\epsilon_{2}) \}$ is continuous for $(\beta, \omega)$.


\begin{lem}\label{lem:torsion}
Let $\mb D=\sod{\mb D_{1}}{\mb D_{2}}$ be a semiorthogonal decomposition with $\m2$. 
Choose rational stability conditions $\sigma _{i} =(\mca A_{i}, Z_{i}) \in \Stab{\mb D_{i}}$ such that $(\sigma_{1}, \sigma_{2})$ satisfies the condition $\m
5$. 
Suppose $\tau =(\til{Z}{\beta}{\omega}, \mca Q)$ is a stability condition obtained by a deformation of $\til{\Sigma}{\beta_{0}}{\omega_{0}}$ for a rational point $(\beta_{0}, \omega_{0}) \in \mca H^{+}(\epsilon_{1})_{\bb Q} \cap \mca H^{-}(\epsilon_{2})_{\bb Q}$ (see also Definition \ref{dfn:deformation}). 
Any $(\sigma_{1}, \sigma_{2})$-torsion object is in $\mca Q(0,1]$. 
\end{lem}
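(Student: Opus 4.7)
The plan is to decompose $T$ via the canonical semiorthogonal triangle and reduce the problem to two simpler cases.

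First, since $T$ is $(\sigma_1,\sigma_2)$-torsion, it belongs to $\tilt{\mca T}{\sigma_1}{\sigma_2}\subset\til{\mca A}{\beta_0}{\omega_0}$ by the construction of the gluing torsion pair. The semiorthogonal triangle
\[
i_2\tau_2^R T \to T \to i_1\tau_1^L T \to i_2\tau_2^R T[1]
\]
has all three terms in $\gl{\mca A_1}{\mca A_2}$, and both $i_2\tau_2^R T$ and $i_1\tau_1^L T$ are themselves $(\sigma_1,\sigma_2)$-torsion, so this yields a short exact sequence in $\til{\mca A}{\beta_0}{\omega_0}$. Since $\mca Q(0,1]$ is closed under extensions, I reduce to showing that each end term lies in $\mca Q(0,1]$ separately.

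For the piece $i_2 E_2$ with $E_2 := \tau_2^R T \in \mca T^{\sigma_2}$, the central charge $\til{Z}{\beta}{\omega}(i_2 E_2) = (\beta-\sqrt{-1}\omega)Z_2(E_2)$ has argument (when nonzero) equal to $\arg(-\beta+\sqrt{-1}\omega)$, which lies strictly inside $(0,\pi)$. I would first verify that $i_2 E_2$ is $\til{\Sigma}{\beta_0}{\omega_0}$-semistable by showing that any proper subobject in $\til{\mca A}{\beta_0}{\omega_0}$ has central charge on the same ray (using that subobjects inherit the $(\sigma_1,\sigma_2)$-torsion structure on the $\tau_2^R$-side), and then use that the resulting phase is bounded strictly away from both $0$ and $1$, so that the closeness condition $\met{\til{\mca P}{\beta_0}{\omega_0}}{\mca Q}<\epsilon$ forces all $\tau$-HN phases of $i_2 E_2$ to remain inside $(0,1)$.

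For the piece $i_1 E_1$ with $E_1 := \tau_1^L T \in \mca T^{\sigma_1}$, the central charge $\til{Z}{\beta}{\omega}(i_1 E_1) = Z_1(E_1) \in \bb R_{\leq 0}$ is independent of $(\beta,\omega)$, because the difference $\til{Z}{\beta}{\omega}-\til{Z}{\beta_0}{\omega_0}$ equals $((\beta-\beta_0)-\sqrt{-1}(\omega-\omega_0))Z_2\circ \tau_2^R$ and vanishes on the essential image of $i_1$. A direct calculation — using that $\til{\Sigma}{\beta_0}{\omega_0}$-HN phases of any object in $\til{\mca A}{\beta_0}{\omega_0}$ lie in $(0,1]$ and that the imaginary parts of factor central charges must sum to zero — shows that $i_1 E_1$ is $\til{\Sigma}{\beta_0}{\omega_0}$-semistable of phase exactly $1$. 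The goal is to conclude that the $\tau$-HN factors also remain at phase $1$, placing $i_1 E_1 \in \mca Q(1)\subset \mca Q(0,1]$.

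The main obstacle is this last conclusion: the imaginary-part vanishing alone permits a split into $\tau$-HN factors at phases slightly above and slightly below $1$, which would put $i_1 E_1$ outside $\mca Q(0,1]$. I expect the resolution to exploit that the perturbation of the central charge is supported entirely on $\mb D_2$ via $\tau_2^R$; combined with the uniqueness clause of Theorem \ref{thm:Bridgeland}(2) applied to the restriction of the stability data to the subcategory $i_1(\mb D_1)$, this should force $\tau$ to agree with $\sigma_1$ along $i_1(\mb D_1)$, under which $i_1 E_1$ is semistable of phase $1$.
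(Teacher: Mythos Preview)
Your decomposition strategy differs from the paper's approach and carries two genuine gaps. For the $i_2 E_2$ piece, your justification that subobjects in $\til{\mca A}{\beta_0}{\omega_0}$ ``inherit the $(\sigma_1,\sigma_2)$-torsion structure on the $\tau_2^R$-side'' is not correct: a subobject $F\hookrightarrow i_2E_2$ in the tilted heart lies in $\gl{\mca A_1}{\mca A_2}$, but the induced map $F\to i_2E_2$ there can have a nonzero kernel $K\in\til{\mca F}{\beta_0}{\omega_0}$, so $\tau_2^R F$ need not be $\sigma_2$-torsion and the central charge of $F$ need not lie on the ray through $\arg(-\beta_0+\sqrt{-1}\omega_0)$. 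Even if semistability were established, you would still need $\epsilon<\min(\phi_0,1-\phi_0)$ with $\phi_0=\arg(-\beta_0+\sqrt{-1}\omega_0)/\pi$, which the hypotheses do not provide. For the $i_1E_1$ piece, your proposed fix is not valid: Theorem~\ref{thm:Bridgeland}(2) concerns stability conditions on all of $\mb D$, and there is no general mechanism to ``restrict'' $\tau$ to $i_1(\mb D_1)$---the slicing $\mca Q$ has no reason to preserve that subcategory, so uniqueness on $\mb D_1$ says nothing about the $\tau$-phases of $i_1E_1$.

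The paper sidesteps both issues by treating $T$ as a whole and varying the rational base point. Since every $(\sigma_1,\sigma_2)$-torsion object lies in $\til{\mca T}{\beta'}{\omega'}\subset\til{\mca P}{\beta'}{\omega'}(0,1]$ for \emph{every} rational $(\beta',\omega')$ in the region, passing to the limit (via the continuity of $s_{\bb Q}$ and local uniqueness of deformations) yields $T\in\mca Q[0,1]$ directly---this already handles the upper bound that blocked your $i_1E_1$ argument. To exclude phase $0$, the paper proves the uniform bound $\phi^{-}_{\beta',\omega'}(T)\geq\frac{1}{\pi}\arg(-\beta'+\sqrt{-1}\omega')$ using Proposition~\ref{prop:keyprop}: if the minimal destabilizing quotient $Q$ had smaller phase, that proposition forces $Q\in\til{\mca T}{\beta'}{\omega'}$, so the short exact sequence $0\to K\to T\to Q\to 0$ lives in $\gl{\mca A_1}{\mca A_2}$; then $Q$ is itself $(\sigma_1,\sigma_2)$-torsion, whence $\arg\til{Z}{\beta'}{\omega'}(Q)\geq\arg(-\beta'+\sqrt{-1}\omega')$, a contradiction. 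This lower bound passes to the limit and gives $T\in\mca Q(0,1]$.
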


\begin{proof}
Let $U$ be an open neighborhood of $(\beta, \omega)$ in $\mca H$. 
By the definition of $\til{\Sigma}{\beta'}{\omega'}$ for any rational point $(\beta', \omega') \in U_{\bb Q}$, any $(\sigma_{1}, \sigma_{2})$-torsion object $T$ is in $\til{\mca T}{\beta'}{\omega'}$, in particular, in $\til{\mca P}{\beta'}{\omega'}(0,1]$. 
Taking limit $(\beta', \omega')$ to $(\beta, \omega)$, 
we see that $T$ is in $\mca Q[0,1]$. 

Now we wish to show that $T$ is in $\mca Q(0,1]$. 
To show this, let $\phi^{-}_{\beta', \omega'}(T)$ be the phase of the maximal destabilizing quotient of $T$ with respect to $\til{\Sigma}{\beta'}{\omega'}$ for $(\beta', \omega') \in U_{\bb Q}$.  
We claim 
\begin{equation} \label{eq:torsion}
\phi^{-}_{\beta', \omega'}(T) \geq \frac{1}{\pi}\arg (-\beta' +\sqrt{-1}\omega').
\end{equation} 
In fact, otherwise, there exists a subobject $K$ of $T$ in $\til{\mca A}{\beta'}{\omega'}$ such that 
the quotient $Q= T/K$ is $\til{\Sigma}{\beta'}{\omega'}$-semistable with $\arg \til{Z}{\beta'}{\omega'}(Q) < \arg (-\beta' +\sqrt{-1}\omega')$. 
By Proposition \ref{prop:keyprop}, $Q$ is in $\til{\mca T}{\beta'}{\omega'}$. 
Moreover, $K$ is also in $\til{\mca T}{\beta'}{\omega'}$
since $\til{\mca T}{\beta'}{\omega'}$ is the free part of $\til{\mca A}{\beta'}{\omega'}$.  
Hence the sequence 
\begin{equation}
\xymatrix{
0	\ar[r]	&	K	\ar[r]	&	T	\ar[r]	&	Q	\ar[r]	&	0
}
\end{equation}
is exact not only in $\til{\mca A}{\beta'}{\omega'}$ but also in $\gl{\mca A_{1}}{\mca A_{2}}$. 
Thus $Q$ has to be $(\sigma_{1}, \sigma_{2})$-torsion. 
Then the definition of $\til{Z}{\beta'}{\omega'}$ implies the inequality
\[
	\arg (-\beta' +\sqrt{-1}\omega')	\leq \arg \til{Z}{\beta'}{\omega'}(Q)
\]
which contradicts the assumption for $Q$. 

Hence $\phi^{-}(T)$ has to satisfy (\ref{eq:torsion}). 
Since one can choose arbitrary close $(\beta', \omega') \in U_{\bb Q}$ to $(\beta ,\omega)$, 
$T$ belongs to $\mca Q(0,1]$. 
\end{proof}

\begin{lem}\label{lem:tilt-torsion}
Let $\mb D=\sod{\mb D_{1}}{\mb D_{2}}$ be a semiorthogonal decomposition with $\m2$. 
Choose rational stability conditions $\sigma _{i} =(\mca A_{i}, Z_{i}) \in \Stab{\mb D_{i}}$ such that $(\sigma_{1}, \sigma_{2})$ satisfies the condition $\m
5$. 
Suppose $\tau =(\til{Z}{\beta}{\omega}, \mca Q)$ is a stability condition obtained by a deformation of $\til{\Sigma}{\beta_{0}}{\omega_{0}}$ for a rational point $(\beta_{0}, \omega_{0}) \in \mca H^{+}(\epsilon_{1})_{\bb Q} \cap \mca H^{-}(\epsilon_{2})_{\bb Q}$. 
Then $\til{\mca T}{\beta}{\omega}	\subset 	\mca Q(0,1]$ and $\til{\mca F}{\beta}{\omega} \subset \mca Q(-1,0]$. 
\end{lem}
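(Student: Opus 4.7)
The plan is to split $\til{\mca A}{\beta}{\omega}$ into its torsion and free parts, handle the torsion part by Lemma \ref{lem:torsion}, and handle the free part by a continuity argument approximating $(\beta, \omega)$ by rational points in $\mca H^{+}(\epsilon_1) \cap \mca H^{-}(\epsilon_2)$. The preliminary step, which I establish first, is the convergence $\til{\Sigma}{\beta'}{\omega'} \to \tau$ in $\Stab{\mb D}$ as $(\beta', \omega') \to (\beta, \omega)$ through rational points: this follows by combining Proposition \ref{prop:conti-rational} with Theorem \ref{thm:Bridgeland}(2), the point being that both $\til{\Sigma}{\beta'}{\omega'}$ and $\tau$ are deformations of $\til{\Sigma}{\beta_0}{\omega_0}$ whose central charges converge to one another.

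For the first inclusion $\til{\mca T}{\beta}{\omega} \subset \mca Q(0, 1]$, I take $E \in \til{\mca T}{\beta}{\omega}$ and split it in $\gl{\mca A_1}{\mca A_2}$ as
\[
\xymatrix{
0 \ar[r] & T \ar[r] & E \ar[r] & F \ar[r] & 0
}
\]
with $T$ being $(\sigma_1, \sigma_2)$-torsion and $F$ being $(\sigma_1, \sigma_2)$-free with $\mu_{\beta, \omega}^{-}(F) > 0$; both terms again lie in $\til{\mca T}{\beta}{\omega}$. Since $\mca Q(0, 1]$ is extension-closed and contains $T$ by Lemma \ref{lem:torsion}, it suffices to show $F \in \mca Q(0, 1]$. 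The strict inequality $\mu_{\beta, \omega}^{-}(F) > 0$ persists under small perturbations, so $F \in \til{\mca T}{\beta'}{\omega'} \subset \til{\mca P}{\beta'}{\omega'}(0, 1]$ for rational $(\beta', \omega')$ close enough, which combined with $\met{\til{\mca P}{\beta'}{\omega'}}{\mca Q} \to 0$ yields $F \in \mca Q[0, 1]$. To upgrade this to $F \in \mca Q(0, 1]$ I will imitate the endgame of Lemma \ref{lem:torsion}: a maximal $\til{\Sigma}{\beta'}{\omega'}$-destabilizing quotient of $F$ is analyzed via its own $(\sigma_1, \sigma_2)$-torsion/free splitting, whose torsion contribution is bounded below using $\arg \til{Z}{\beta'}{\omega'} \geq \arg(-\beta' + \sqrt{-1}\omega')$ and whose free contribution with $\mu^{-} > 0$ has strictly positive imaginary $\til{Z}{\beta'}{\omega'}$; together with Corollary \ref{bunkai-sup} this should furnish a positive lower bound on $\phi^{-}_{\til{\Sigma}{\beta'}{\omega'}}(F)$ that survives the limit.

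The second inclusion $\til{\mca F}{\beta}{\omega} \subset \mca Q(-1, 0]$ is treated dually. Given $F \in \til{\mca F}{\beta}{\omega}$, I split $F$ by its $\mu_{\beta, \omega}$-Harder-Narasimhan filtration into a piece with $\mu^{+} \leq \epsilon_2 < 0$, where the strict inequality persists under perturbation and Corollary \ref{cor:mu<-1/2} supplies the required argument bounds on $\til{Z}{\beta'}{\omega'}$, and a boundary piece with $\epsilon_2 < \mu^{\pm} \leq 0$, where the substitute for Lemma \ref{lem:torsion} is Lemma \ref{lem:hamigaki}, yielding the strict inequality $\arg(-\beta + \sqrt{-1}\omega) < \arg \til{Z}{\beta}{\omega}(F[1])$ that separates $F[1]$ from phase $0$. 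The main obstacle throughout is securing this uniform strict positivity as $(\beta', \omega') \to (\beta, \omega)$: it is precisely what distinguishes $\mca Q(0, 1]$ from its closure $\mca Q[0, 1]$, and it is the same subtlety that Lemma \ref{lem:torsion} already had to confront in the purely $(\sigma_1, \sigma_2)$-torsion case.
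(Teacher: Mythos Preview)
Your overall strategy---approximate $(\beta,\omega)$ by rational points, use Lemma~\ref{lem:torsion} for the $(\sigma_1,\sigma_2)$-torsion piece, and pass to the limit---matches the paper's opening moves and correctly yields $F\in\mca Q[0,1]$ for a $(\sigma_1,\sigma_2)$-free $F$ with $\mu_{\beta,\omega}^{-}(F)>0$.  The genuine gap is in the ``upgrade'' from $\mca Q[0,1]$ to $\mca Q(0,1]$.  Your plan is to bound $\phi^{-}_{\til{\Sigma}{\beta'}{\omega'}}(F)$ uniformly away from~$0$ by decomposing a minimal destabilising quotient into its torsion and free parts, but the free contribution only gives $\I\til Z_{\beta',\omega'}>0$, which does not by itself bound the argument away from~$0$; Corollary~\ref{bunkai-sup} controls ratios of moduli, not lower bounds on phases, so the sentence ``together with Corollary~\ref{bunkai-sup} this should furnish a positive lower bound'' does not go through as stated.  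The same issue recurs in your treatment of $\til{\mca F}{\beta}{\omega}$: splitting along $\mu^{+}\le\epsilon_2$ versus $\epsilon_2<\mu^{\pm}\le0$ and invoking Lemma~\ref{lem:hamigaki} gives information about $\arg\til Z_{\beta}{\omega}$, but you still need to exclude a nonzero component in $\mca Q(-1)$ or $\mca Q(0,1]$, and the sketch does not do this.

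The paper closes this gap by a structural detour rather than a phase estimate.  It first proves (as an internal Claim) that every object of $\mca Q(0,1]$ is a two-term complex in degrees $-1,0$ with respect to $\gl{\mca A_1}{\mca A_2}$; this uses a carefully chosen rational $(\beta'',\omega'')$ with $\arg(-\beta+\sqrt{-1}\omega)<\arg(-\beta''+\sqrt{-1}\omega'')$ together with Proposition~\ref{prop:keyprop}.  With that in hand, for $E$ which is $\mu_{\beta,\omega}$-semistable one takes the triangle $E^{+}\to E\to E^{-}$ with $E^{+}\in\mca Q(0,1]$, $E^{-}\in\mca Q(0)$, and the Claim forces both $E^{\pm}$ to lie in $\gl{\mca A_1}{\mca A_2}$, so the triangle is a short exact sequence there.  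Then $\til Z_{\beta,\omega}(E^{-})\in\bb R_{>0}$ is contradicted directly: the $\mu_{\beta,\omega}$-semistability of $E$ forces $\mu_{\beta,\omega}(E^{-})>0$, hence $\I\til Z_{\beta,\omega}(E^{-})>0$.  The free case $\til{\mca F}{\beta}{\omega}\subset\mca Q(-1,0]$ is handled analogously: one first gets $F\in\mca Q(-1,1]$ via the Claim, then kills the $\mca Q(0,1]$-part $F^{0}$ by observing it is a subobject of $F$ in $\gl{\mca A_1}{\mca A_2}$, hence lies in $\til{\mca F}{\beta}{\omega}$, forcing $\I\til Z_{\beta,\omega}(F^{0})=0$ and then $\R\til Z_{\beta,\omega}(F^{0})>0$ by Proposition~\ref{prop:centralcharge}, a contradiction.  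You should replace the uniform-phase heuristic with this two-term complex argument.
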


\begin{proof}
We first show $\til{\mca T}{\beta}{\omega}	\subset 	\mca Q(0,1]$. 
Let $E$ be in $\til{\mca T}{\beta}{\omega}$. 
If $E$ is $(\sigma_{1}, \sigma_{2})$-torsion, then $E$ is in $\mca Q(0,1]$ by Lemma \ref{lem:torsion}. 
Now suppose that the object $E$ is $(\sigma_{1}, \sigma_{2})$-free and $\mu_{\beta, \omega}$-semistable. 
Since $\mca Q(0,1]$ is closed under extension, it is enough to show that 
such an $E$ belongs to $\mca Q(0,1]$. 

Since the $\mu_{\beta, \omega}$-stability satisfies the support property by Proposition \ref{prop:morslope}, 
there exists an open neighborhood $U$ of $(\beta, \omega)$ such that $\mu_{\beta', \omega'}^{-}(E)>0$ for any $(\beta', \omega' ) \in U$. 
Hence $E$ is in $\til{\mca T}{\beta'}{\omega'}$ which is a subcategory of $\til{\mca P}{\beta'}{\omega'}(0,1]$ for any $(\beta', \omega')\in U_{\bb Q}$. 
Taking the limit $(\beta', \omega')$ to $(\beta, \omega)$, we see that  $E$ is in $\mca Q[0,1]$.

Now we claim the following: 
\begin{claim}\label{claim:length}
Notations being as above, any object in $\mca Q(0,1]$ is quasi isomorphic to a $2$-term complex concentrated in degree $0$ and $-1$ with respect to the heart $\gl{\mca A_{1}}{\mca A_{2}}$. 
\end{claim}

Take $F \in \mca Q(\phi)$ where $\phi \in (0,1]$. 
Since the deformation of a stability condition is locally unique, we can find 
a rational point $(\beta'', \omega'') \in \mca H^{+}(\epsilon_{1})_{\bb Q} \cap \mca H^{-}(\epsilon_{2})_{\bb Q}$ such that 
$\arg (-\beta+\sqrt{-1}\omega) < \arg (-\beta ''+\sqrt{-1}\omega'') $ and 
the stability condition $\til{\Sigma}{\beta''}{\omega''}$ satisfies $\tau  \in B_{\theta}(\til{\Sigma}{\beta''}{\omega''})$ for some $\theta>0$. 
Then $F$ is in $\til{\mca P}{\beta''}{\omega''}(\phi-\theta, \phi + \theta)$. 
Shrinking $(\beta'', \omega'')$ if necessary, we may assume $\phi -\theta >0$ and $\phi+\theta <  \arg (-\beta +\sqrt{-1}\omega)/\pi + 1$. 
Then we have the sequence 
\[
[\phi-\theta, \phi+\theta] \subset (0, \arg (-\beta +\sqrt{-1}\omega)/\pi +1 ) \subset (0, \arg (-\beta'' +\sqrt{-1}\omega'')/\pi +1 ).
\] 
Hence the object $F$ is given by an extension of objects in $ \til{\mca P}{\beta''}{\omega''}(0,1]$ and $ \til{\mca P}{\beta''}{\omega''}(1, \arg (-\beta'' +\sqrt{-1}\omega'')/\pi + 1]$. 
By Proposition \ref{prop:keyprop}, any object in $ \til{\mca P}{\beta''}{\omega''}(1, \arg (-\beta'' +\sqrt{-1}\omega'')/\pi +1 ]$ is in $\til{\mca T}{\beta''}{\omega''}[1]$, in particular, is concentrated in degree $-1$. 
Since any object in $\til{\mca P}{\beta''}{\omega''}(0,1]$ is concentrated in degree $-1$ and $0$, so is $F$.

To show $\til{\mca T}{\beta}{\omega}	\subset 	\mca Q(0,1]$, 
consider the distinguished triangle 
\begin{equation}\label{eq:bunkai}
\xymatrix{
E^{+}	\ar[r]	&	E	\ar[r]	&	E^{-}	\ar[r]	&	E^{+}[1], 
}
\end{equation}
where $E^{+} \in \mca Q(0,1]$ and 
$E^{-}\in \mca Q(0)$. 
It is enough to show that $E^{-}$ is zero. 
Taking the cohomology with respect to the heart $\gl{\mca A_{1}}{\mca A_{2}}$, 
we see that both $E^{+}$ and $E^{-}$ are concentrated in degree $0$ part by Claim \ref{claim:length}. 
Hence the triangle (\ref{eq:bunkai}) gives a short exact sequence in $\gl{\mca A_{1}}{\mca A_{2}}$.

Then the complex number $\til{Z}{\beta}{\omega}(E^{-})$ is in the ray $\bb R_{>0}$. 
If $E^{-}$ is $(\sigma_{1}, \sigma_{2})$-torsion with $\I \til{Z}{\beta}{\omega}(E^{-})=0$, then $E^{-}$ satisfies $\tau_{2}^{R}E^{-}=0$ and $\tau_{1}^{L}E^{-} \in \mca P_{\sigma_{1}}(1)$. 
This contradicts the fact $\til{Z}{\beta}{\omega}(E^{-}) \in \bb R_{>0}$. 
Hence $E^{-}$ is not $(\sigma_{1}, \sigma_{2})$-torsion. 
Thus we see 
$\I Z_{1}(\tau_{1}^{L}E^{-})+\I Z_{2}(\tau_{2}^{R}E^{-})\neq 0$, and 
the exact sequence (\ref{eq:bunkai}) implies 
\[
0 <  \mu_{\beta,\omega}(E) \leq \mu_{\beta, \omega }(E^{-}) 
\]
by the $\mu_{\beta, \omega}$-semistability of $E$. 
Hence $\I \til{Z}{\beta}{\omega}(E^{-})$ has to be positive and this contradicts $\til{Z}{\beta}{\omega}(E^{-}) \in \bb R_{>0}$. 
Thus $E^{-}$ has to be zero.

We secondly show  $\til{\mca F}{\beta}{\omega}	\subset 	\mca Q(-1, 0]$. 
Let $F$ be in $\til{\mca F}{\beta}{\omega}$ and let $U$ be an open neighborhood of $(\beta, \omega)$. 
For any rational point $(\beta', \omega ') \in U_{\bb Q}$, we have 
$\til{\mca F}{\beta'}{\omega'} \subset \til{\mca P}{\beta'}{\omega'}(-1,0]$ and 
$\til{\mca T}{\beta'}{\omega'} \subset \til{\mca P}{\beta'}{\omega'}(0,1]$. 
Since $F$ is in the heart $\gl{\mca A_{1}}{\mca A_{2}}$ which is the extension closure of $\til{\mca F}{\beta'}{\omega'} $ and 
$\til{\mca T}{\beta'}{\omega'} $, 
$F$ is in $\til{\mca P}{\beta'}{\omega'}(-1,1]$.

Taking the limit $(\beta', \omega')$ to $(\beta, \omega)$, we see $F \in \mca Q[-1,1]$.  
On the other hand any object in $\mca Q(-1) \subset \mca Q(-2, -1] $ is in the extension closure of $\gl{\mca A_{1}}{\mca A_{2}}[-2]$ and $\gl{\mca A_{1}}{\mca A_{2}}[-1]$ by Claim \ref{claim:length}. 
Since $F$ is in $\gl{\mca A_{1}}{\mca A_{2}}$, 
$F$ is actually in $\mca Q(-1,1]$. 
Thus we obtain the distinguished triangle 
\begin{equation}\label{eq:F}
\xymatrix{
F^{0}	\ar[r]	&	F	\ar[r]	&	F^{1}	\ar[r]	&	F^{0}[1], 
}
\end{equation}
where $F^{0} \in \mca Q(0,1]$ and $\mca F^{1} \in \mca Q(-1,0]$. 
Again, by Claim \ref{claim:length}, we see that 
both $F^{0}$ and $F^{1}$ are in $\gl{\mca A_{1}}{\mca A_{2}}$ by taking the cohomology of the sequence (\ref{eq:F}). 
Hence the sequence (\ref{eq:F}) gives a short exact sequence in $\gl{\mca A_{1}}{\mca A_{2}}$. 
Since $F^{0}$ is a subobject of $F$ in $\gl{\mca A_{1}}{\mca A_{2}}$, $F^{0}$ is also in $\til{\mca F}{\beta}{\omega}$. 
Hence $F^{0}$ satisfies $\mu_{\beta, \omega}^{+}(F^{0}) \leq 0$ which implies 
\begin{equation}
\I \til{Z}{\beta}{\omega}(F^{0})	\leq 0. 
\end{equation}
On the other hand, we have 
\begin{equation}
\I \til{Z}{\beta}{\omega}(F^{0})	\geq 0
\end{equation}
since $F \in \mca Q(0,1]$. 
Hence $F$ must satisfy $\I \til{Z}{\beta}{\omega}(F^{0})	=	0$ and $\R \til{Z}{\beta}{\omega}(F^{0})	\leq 0$. 
This contradicts the proof of Proposition \ref{prop:centralcharge}. 
Thus $F^{0}$ has to be zero and hence $F$ is in $\mca Q(-1,0]$. 
\end{proof}

\begin{prop}\label{prop:t-st_underderomation}
Let $\mb D=\sod{\mb D_{1}}{\mb D_{2}}$ be a semiorthogonal decomposition with $\m2$. 
Choose rational stability conditions $\sigma _{i} =(\mca A_{i}, Z_{i}) \in \Stab{\mb D_{i}}$ such that $(\sigma_{1}, \sigma_{2})$ satisfies the condition $\m
5$. 
Suppose $\tau =(\til{Z}{\beta}{\omega}, \mca Q)$ is a stability condition is a deformation of $\til{\Sigma}{\beta_{0}}{\omega_{0}}$ for a rational point $(\beta_{0}, \omega_{0}) \in \mca H_{\bb Q}$. 
Then the heart $\mca Q(0,1]$ is the tilting heart $\sod{\til{\mca T}{\beta}{\omega}}{\til{\mca F}{\beta}{\omega}[1]}$ defined in Definition \ref{dfn:torsionpairs}. 
\end{prop}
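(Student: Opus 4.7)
The bulk of the work is already done in Lemma \ref{lem:tilt-torsion}, so the plan is to promote the two inclusions established there into an equality of hearts. First, recall that $\til{\mca A}{\beta}{\omega}$ is by construction the tilt of $\gl{\mca A_{1}}{\mca A_{2}}$ with respect to the torsion pair $(\til{\mca T}{\beta}{\omega},\til{\mca F}{\beta}{\omega})$, and hence by the Happel--Reiten--Smal\o{} tilting theorem it is the heart of a bounded $t$-structure on $\mb D$; similarly, $\mca Q(0,1]$ is the heart of a bounded $t$-structure on $\mb D$ by the general theory of slicings. Thus both sides of the claimed equality are bounded hearts, and it suffices to prove one containment.

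For the containment, I would first apply Lemma \ref{lem:tilt-torsion} to obtain
\[
\til{\mca T}{\beta}{\omega}\subset \mca Q(0,1],
\qquad
\til{\mca F}{\beta}{\omega}\subset \mca Q(-1,0],
\]
and in particular $\til{\mca F}{\beta}{\omega}[1]\subset \mca Q(0,1]$. Since $\mca Q(0,1]$ is an extension-closed abelian subcategory of $\mb D$ and $\til{\mca A}{\beta}{\omega}$ is, by definition, the extension closure of $\til{\mca T}{\beta}{\omega}$ and $\til{\mca F}{\beta}{\omega}[1]$ inside $\mb D$, we conclude
\[
\til{\mca A}{\beta}{\omega}\;\subset\; \mca Q(0,1].
\]

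To upgrade this inclusion to an equality, I will invoke the standard fact that if $\mca A$ and $\mca A'$ are hearts of bounded $t$-structures on the same triangulated category and $\mca A\subset \mca A'$, then $\mca A=\mca A'$ (one argues via cohomology with respect to the two $t$-structures: any object of $\mca A'$ is an iterated extension of shifts of its $\mca A$-cohomology objects lying in $\mca A\subset \mca A'$, and boundedness of both $t$-structures forces all cohomologies to sit in degree zero). Applying this with $\mca A=\til{\mca A}{\beta}{\omega}$ and $\mca A'=\mca Q(0,1]$ yields the result.

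The only nontrivial point in this argument is the preparatory Lemma \ref{lem:tilt-torsion} that is already established; the remainder is essentially formal, the main subtlety being the appeal to the elementary but non-obvious fact that one bounded heart cannot strictly contain another. Once that fact is cited, the proof is immediate and no further computation with $\til{Z}{\beta}{\omega}$ or with the torsion pair is needed here.
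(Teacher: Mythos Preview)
Your proposal is correct and follows essentially the same route as the paper: both observe that $\til{\mca A}{\beta}{\omega}$ and $\mca Q(0,1]$ are hearts of bounded $t$-structures, invoke Lemma \ref{lem:tilt-torsion} to get $\til{\mca T}{\beta}{\omega}\subset\mca Q(0,1]$ and $\til{\mca F}{\beta}{\omega}[1]\subset\mca Q(0,1]$, take the extension closure, and then use the standard fact that a bounded heart cannot strictly contain another. The paper's proof is slightly terser but logically identical.
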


\begin{proof}
Let us fix the pair $(\beta, \omega)$ through the proof. 
Both $\mca Q(0,1]$ and $\til{\mca A}{\beta}{\omega}=\sod{\til{\mca T}{\beta}{\omega}}{\til{\mca F}{\beta}{\omega}[1]}$ are hearts of bounded $t$-structures on $\mb D$, it is enough to show 
\begin{equation}\label{eq:heartinclusion}
\sod{\til{\mca T}{\beta}{\omega}}{\til{\mca F}{\beta}{\omega}[1]}
\subset \mca Q(0,1]. 
\end{equation}
By Proposition \ref{lem:tilt-torsion}, 
we see both $\til{\mca F}{\beta}{\omega}[1]$ and $\til{\mca T}{\beta}{\omega}$ are contained in $\mca Q(0,1]$. 
Since $\mca Q(0,1]$ is a extension closure, (\ref{eq:heartinclusion}) holds. 
\end{proof}

\begin{thm}\label{thm:cotinuous}
Let $\mb D=\sod{\mb D_{1}}{\mb D_{2}}$ be a semiorthogonal decomposition with $\m2$. 
Choose rational stability conditions $\sigma _{i} =(\mca A_{i}, Z_{i}) \in \Stab{\mb D_{i}}$ such that $(\sigma_{1}, \sigma_{2})$ satisfies the condition $\m
5$.

Then the map 
\begin{equation}
 s\colon	\mca H^{+}(\epsilon_{1} ) \cap  \mca H^{-}(\epsilon_{2}) \to \Stab{\mb D};  (\beta, \omega) \mapsto \til{\Sigma}{\beta}{\omega}
\end{equation}
is continuous, where $\til{\Sigma}{\beta}{\omega}$ is the pair defined in Definition \ref{dfn:tiltingdeformation}. 
\end{thm}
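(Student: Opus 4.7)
The strategy is to leverage Bridgeland's deformation theorem (Theorem \ref{thm:Bridgeland}) together with the uniform bound established in Corollary \ref{cor:fakesupport}, and then identify the resulting deformation with the explicitly constructed pair $\til{\Sigma}{\beta}{\omega}$ via Proposition \ref{prop:t-st_underderomation}. Since rational points are dense in $\mca H^{+}(\epsilon_{1}) \cap \mca H^{-}(\epsilon_{2})$ and $s_{\bb Q}$ is already continuous by Proposition \ref{prop:conti-rational}, it suffices to work locally near an arbitrary point $(\beta, \omega)$.

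Fix $(\beta, \omega) \in \mca H^{+}(\epsilon_{1}) \cap \mca H^{-}(\epsilon_{2})$ and an arbitrary $\varepsilon \in (0, 1/8)$. Because the set $\mca H^{+}(\epsilon_{1}) \cap \mca H^{-}(\epsilon_{2})$ is open, we can choose a rational point $(\beta_{0}, \omega_{0}) \in \left(\mca H^{+}(\epsilon_{1}) \cap \mca H^{-}(\epsilon_{2})\right)_{\bb Q}$ arbitrarily close to $(\beta, \omega)$. By Corollary \ref{cor:fakesupport}, there exists a constant $C_{0} > 0$ such that
\[
\frac{|Z_{2}(\tau_{2}^{R}E)|}{|\til{Z}{\beta_{0}}{\omega_{0}}(E)|} \leq C_{0}
\]
for every $\til{\Sigma}{\beta_{0}}{\omega_{0}}$-semistable object $E \in \mb D$. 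Since the difference of central charges satisfies
\[
\til{Z}{\beta}{\omega}(E) - \til{Z}{\beta_{0}}{\omega_{0}}(E) = \left((\beta - \beta_{0}) - \sqrt{-1}(\omega - \omega_{0})\right) Z_{2}(\tau_{2}^{R}E),
\]
we obtain the estimate $\| \til{Z}{\beta}{\omega} - \til{Z}{\beta_{0}}{\omega_{0}}\|_{\til{\Sigma}{\beta_{0}}{\omega_{0}}} \leq C_{0}\, |(\beta - \beta_{0}) - \sqrt{-1}(\omega - \omega_{0})|$. Shrinking $(\beta_{0}, \omega_{0})$ towards $(\beta, \omega)$, this norm can be made strictly less than $\sin(\pi \varepsilon)$.

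By Theorem \ref{thm:Bridgeland}(2), there exists a unique locally finite stability condition $\tau = (\til{Z}{\beta}{\omega}, \mca Q)$ with $\tau \in B_{\varepsilon}(\til{\Sigma}{\beta_{0}}{\omega_{0}})$. Now Proposition \ref{prop:t-st_underderomation} identifies the heart of such a deformation as $\mca Q(0,1] = \sod{\til{\mca T}{\beta}{\omega}}{\til{\mca F}{\beta}{\omega}[1]} = \til{\mca A}{\beta}{\omega}$, which is precisely the heart appearing in Definition \ref{dfn:tiltingdeformation}. Therefore $\tau = \til{\Sigma}{\beta}{\omega}$, showing simultaneously that the pair defined in Definition \ref{dfn:tiltingdeformation} is a genuine stability condition at a non-rational point and that $s(\beta, \omega) = \tau \in B_{\varepsilon}(s(\beta_{0}, \omega_{0}))$.

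To conclude continuity of $s$ at $(\beta, \omega)$, note that for any $(\beta', \omega')$ sufficiently close to $(\beta, \omega)$, we may by the same argument choose a common rational reference point $(\beta_{0}, \omega_{0})$ so that both $s(\beta, \omega)$ and $s(\beta', \omega')$ lie in $B_{\varepsilon}(\til{\Sigma}{\beta_{0}}{\omega_{0}})$; combined with the continuity of the central charge $(\beta', \omega') \mapsto \til{Z}{\beta'}{\omega'}$ and the local uniqueness of the deformation, this yields $s(\beta', \omega') \in B_{2\varepsilon}(s(\beta, \omega))$. The principal obstacle in executing this plan is the verification that $\tau = \til{\Sigma}{\beta}{\omega}$; however, the preparatory Lemmas \ref{lem:torsion} and \ref{lem:tilt-torsion}, culminating in Proposition \ref{prop:t-st_underderomation}, have already done the essential work of pinning down the heart of any deformation in terms of the gluing torsion pair, so only a straightforward assembly is required here.
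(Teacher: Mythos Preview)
Your proposal is correct and follows essentially the same approach as the paper: use Corollary \ref{cor:fakesupport} to control $\|\til{Z}{\beta}{\omega}-\til{Z}{\beta_{0}}{\omega_{0}}\|_{\til{\Sigma}{\beta_{0}}{\omega_{0}}}$, invoke Bridgeland's deformation theorem to produce $\tau$, and then apply Proposition \ref{prop:t-st_underderomation} to identify the heart of $\tau$ with $\til{\mca A}{\beta}{\omega}$. Your write-up is in fact more explicit than the paper's terse argument, particularly in spelling out the common-rational-reference-point step that yields continuity at an arbitrary (possibly irrational) $(\beta,\omega)$.
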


\begin{proof}
Recall that the map is continuous on rational points in $\mca H^{+}(\epsilon_{1} ) \cap  \mca H^{-}(\epsilon_{2})$ by Proposition \ref{prop:conti-rational}. 
By Corollary \ref{cor:fakesupport} and \cite[Theorem 1.2]{MR2373143}, there exists a stability condition $\tau$ given by a deformation of $\til{\Sigma}{\beta_{0}}{\omega_{0}}$ and whose central charge is $\til{Z}{\beta}{\omega}$ where $(\beta_{0}, \omega_{0})\in \mca (H^{+}(\epsilon_{1})\cap \mca H^{-}(\epsilon_{2}))_{\bb Q}$. 
Moreover the heart of $\tau$ is just $\til{\mca A}{\beta}{\omega}$ by Proposition \ref{prop:t-st_underderomation}. 
Hence $\tau$ is the pair $\til{\Sigma}{\beta}{\omega}$ and the map $s$ is continuous. 
\end{proof}

\section{Support Property}\label{sc:support}

We have obtained the continuous family 
$ \mca S(\epsilon_{1}, \epsilon_{2}) = \{	\til{\Sigma}{\beta}{\omega}	\mid	(\beta, \omega)\in \mca H^{+}(\epsilon_{1} ) \cap  \mca H^{-}(\epsilon_{2})  	\}$ of stability conditions on $\mb D=\sod{\mb D_{1}}{\mb D_{2}}$. 
Unfortunately it was not possible to prove the support property for $\til{\Sigma}{\beta}{\omega}$ directly. 
In this section, we show that any $\til{\Sigma}{\beta}{\omega}$ satisfies the support property via ``specialization''. 
More precisely one can find a stability conditions $\til{\Sigma}{1}{0}$ which satisfies the support property in the boundary of the family.

The following proposition is a generalization of our previous work \cite[Proposition 4.8]{morphismstability}. 

\begin{prop}
\label{prop:support}
Let $\mb D=\sod{\mb D_{1}}{\mb D_{2}}$ be a semiorthogonal decomposition with $\m2$. 
Choose stability conditions $\sigma _{i} =(\mca A_{i}, Z_{i}) \in \Stab{\mb D_{i}}$ such that $(\sigma_{1}, \sigma_{2})$ satisfies the condition $\m
5$.

\begin{enumerate}
\item If $E \in \gl{\mca A_{1}}{\mca A_{2}}$ is $\gl{\sigma_{1}}{\sigma_{2}}$-semistable then $\tau_{1}^{L}(E)$ is $\sigma_{1}$-semistable, and $\tau_{2}^{R}E$ is $\sigma_{2}$-semistable. 
\item The equality $\arg Z_{1}(\tau_{1}^{L}E)=\arg Z_{1}(\Phi (\tau_{2}^{R}E))$ holds. 
\end{enumerate}
\end{prop}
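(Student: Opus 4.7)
The plan is to compare the Harder--Narasimhan filtrations of $E_{1}:=\tau_{1}^{L}E$ in $\sigma_{1}$ and $E_{2}:=\tau_{2}^{R}E$ in $\sigma_{2}$ against the phase $\phi$ of $E$, and to show that every HN phase on both sides must equal $\phi$. Statement (1) is then immediate, and (2) follows from $\arg Z_{1}(E_{1}) = \pi\phi = \arg Z_{2}(E_{2})$ together with $\m5$. Write the HN factors of $E_{1}$ as $A_{1},\ldots,A_{n}$ of phases $\phi_{1}>\cdots>\phi_{n}$, and those of $E_{2}$ as $B_{1},\ldots,B_{m}$ of phases $\psi_{1}>\cdots>\psi_{m}$.

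The easy inequalities $\psi_{1}\le\phi$ and $\phi_{n}\ge\phi$ come from the canonical short exact sequence $0\to i_{2}E_{2}\to E\to i_{1}E_{1}\to 0$ in $\gl{\mca A_{1}}{\mca A_{2}}$. A small preliminary lemma, proved by induction on HN length using semiorthogonality, says that $i_{\alpha}$ sends $\sigma_{\alpha}$-semistables to $\gl{\sigma_{1}}{\sigma_{2}}$-semistables of the same phase; hence $i_{2}B_{1}$ is a subobject of $E$ of phase $\psi_{1}$ and $i_{1}A_{n}$ is a quotient of $E$ of phase $\phi_{n}$, and semistability of $E$ yields the two bounds.

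The harder bounds $\phi_{1}\le\phi$ and $\psi_{m}\ge\phi$ use Lemma \ref{lem:morsub} together with the $\Hom$-vanishing between semistables of distinct phases. Let $g_{E}\colon E_{1}\to\Phi E_{2}$ be the extension class. For $\phi_{1}\le\phi$: apply Lemma \ref{lem:morsub} with $F=\ker g_{E}$; since $\im f=0$, the produced subobject is simply $i_{1}\ker g_{E}\subset E$, so semistability forces $\phi_{\sigma_{1}}^{+}(\ker g_{E})\le\phi$. Either the maximal destabilizer $F_{1}$ lies in $\ker g_{E}$, giving $\phi_{1}\le\phi$ directly; or $g_{E}|_{F_{1}}\neq 0$, whose image is simultaneously a non-zero quotient of the $\sigma_{1}$-semistable $F_{1}$ (hence semistable of phase $\phi_{1}$) and a subobject of $\Phi E_{2}$, forcing $\phi_{1}\le\phi_{\sigma_{1}}^{+}(\Phi E_{2})=\psi_{1}\le\phi$ via the standard $\Hom$-vanishing.

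Symmetrically, Lemma \ref{lem:morsub} applied to $F=E_{1}$ produces a subobject whose quotient in $\gl{\mca A_{1}}{\mca A_{2}}$ identifies (using $\m2$ and the formula for $\tau_{2}^{R}\tilde F$) with $i_{2}\bigl((\coker g_{E})[-1]\bigr)$; semistability of $E$ combined with $\m5$ then yields $\phi_{\sigma_{1}}^{-}(\coker g_{E})\ge\phi$, and the analogous dichotomy on the composite $E_{1}\xrightarrow{g_{E}}\Phi E_{2}\twoheadrightarrow\Phi B_{m}$ gives $\psi_{m}\ge\phi$. Combining $\psi_{1}\le\phi\le\psi_{m}$ with $\psi_{1}>\cdots>\psi_{m}$ forces $m=1$, and likewise $n=1$, so $E_{1}$ and $E_{2}$ are $\sigma_{i}$-semistable of phase $\phi$; this proves (1), and (2) is then the identity $\arg Z_{1}(E_{1})=\pi\phi=\arg Z_{2}(E_{2})=\arg Z_{1}(\Phi E_{2})$. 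The chief technical obstacle is the preliminary lemma on $i_{\alpha}$-compatibility with HN filtrations; once it is in place, the rest is a careful case analysis.
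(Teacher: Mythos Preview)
Your argument is correct, though organized differently from the paper's. The paper argues by direct contradiction: assuming a destabilizing subobject $A\subset\tau_{1}^{L}E$, it applies Lemma~\ref{lem:morsub} with $F=A$ (not $F=\ker g_{E}$) and uses that both $Z_{1}(A)$ and $Z_{2}((\im g)[-1])=Z_{1}(\im g)$ have argument at least $\arg Z_{1}(A)$ to obtain $\arg\gl{Z_{1}}{Z_{2}}(\tilde F)\ge\arg Z_{1}(A)>\arg Z_{1}(\tau_{1}^{L}E)\ge\arg\gl{Z_{1}}{Z_{2}}(E)$, with no case split. For $\tau_{2}^{R}E$ the paper then uses the now-established semistability of $\tau_{1}^{L}E$ to lift a destabilizing quotient $\tau_{2}^{R}E\twoheadrightarrow B$ to an epimorphism $E\twoheadrightarrow i_{2}B$, and part~(2) is handled afterward by showing that a strict inequality of phases forces $E$ to split. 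Your route via simultaneous HN-phase bounds is more symmetric and yields (2) for free, at the cost of the two dichotomies; note also that the ``preliminary lemma'' is a one-line check (any subobject of $i_{\alpha}X$ in $\gl{\mca A_{1}}{\mca A_{2}}$ already lies in the essential image of $i_{\alpha}$), not a genuine obstacle. One small slip: a quotient of a semistable object need not be semistable, so ``hence semistable of phase $\phi_{1}$'' is not literally correct; but the desired bound $\phi_{1}\le\psi_{1}$ already follows from the nonzero map $F_{1}\to\Phi E_{2}$ together with $\phi_{\sigma_{1}}^{+}(\Phi E_{2})=\psi_{1}$, so your argument survives unchanged.
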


\begin{proof}
Any $E \in \gl{\mca A_{1}}{\mca A_{2}}$ has the canonical decomposition: 
\[
\xymatrix{
0	\ar[r]	&	i_{2}\tau_{2}^{R}E	\ar[r]	&	E	\ar[r]	&	i_{1}\tau_{1}^{L}E	\ar[r]	&	0. 
}
\]
Suppose that $E$ is $\gl{\sigma_{1}}{\sigma_{2}}$-semistable. 
Then, by the condition $\m5$, we obtain the following: 
\begin{equation}\label{eq:602}
\arg Z_{1}(\Phi (\tau_{2}^{R}E))=\arg Z_{2}(\tau_{2}^{R}E)	\leq \arg \gl{Z_{1}}{Z_{2}}(E)	\leq Z_{1}(\tau_{1}^{L}E). 
\end{equation}

Suppose to the contrary that $\tau_{1}^{L}E$ is not $\sigma_{1}$-semistable.  
Then there exists a subobject $A$ of $\tau_{1}^{L}E$ which is $\sigma_{1}$-semistable with 
\begin{equation}\label{eq:603}
 \arg Z_{1}(\tau_{1}^{L}E) <\arg Z_{1}(A). 
\end{equation}

By Lemma \ref{lem:morsub}, there exists a subobject $F$ of $E$ such that $\tau_{1}^{L}F \cong A$ and $\tau_{2}^{R}F \cong \im g[-1]$ where $g$ is the composite $A \to \tau_{1}^{L}E \to \Phi(\tau_{2}^{R}E)$. 
Since $E$ is $\gl{\sigma_{1}}{\sigma_{2}}$-semistable,  we have 
\begin{equation}\label{eq:601}
\arg \gl{Z_{1}}{Z_{2}}(F)	\leq \arg \gl{Z_{1}}{Z_{2}}(E). 
\end{equation}
The $\sigma_{1}$-semistability of $A$ and the condition $\m2$ imply 
\begin{equation*}\label{eq:604}
\arg Z_{1}(A)	\leq Z_{1}(\im g) = \arg Z_{2}(\im g[-1]). 
\end{equation*}
By the construction of $F$, we have 
\begin{equation}\label{eq:605}
\arg Z_{1}(A)	
\leq 
\arg \gl{Z_{1}}{Z_{2}}(F)	\leq 
\arg Z_{2}(\im g[-1])	. 
\end{equation}
Then (\ref{eq:602}), (\ref{eq:603}) and (\ref{eq:605}) imply the following inequalities 
\begin{equation}
\arg \gl{Z_{1}}{Z_{2}}(E)	\leq \arg Z_{1}(\tau_{1}^{L}E)	<	\arg Z_{1}(A)	\leq 
\arg \gl{Z_{1}}{Z_{2}}(F)
\end{equation}
which contradict (\ref{eq:601}). 
Hence $\tau_{1}^{L}E$ is $\sigma_{1}$-semistable.

Now we wish to show that $\tau_{2}^{R} E$ is $\sigma_{2}$-semistable. 
Suppose to the contrary that $\tau_{2}^{R}E$ is not $\sigma_{2}$-semistable. 
Then there exists a quotient $B$ of $\tau_{2}^{R}E$ in $\mca A_{2}$ which is $\sigma_{2}$-semistable with 
\begin{equation}\label{eq:606}
\arg Z_{2}(B)	<	\arg Z_{2}(\tau_{2}^{R}E). 
\end{equation}
Then (\ref{eq:602}) implies 
 \begin{equation*}
 \arg Z_{2}(B) =\arg Z_{1}(\Phi B) < \arg Z_{1}(\tau_{1}^{L}E). 
 \end{equation*}
Since $\tau_{1}^{L}E$ is $\sigma_{1}$-semistable, we have 
$\Hom_{\mb D}(i_{1}\tau_{1}^{L}E[-1], i_{2}B)=\Hom_{\mb D_{1}}(\tau_{1}^{L}E, \Phi (B))=0$. 
Thus the morphism $q \colon   i_{2} \tau_{2}^{R}E \to i_{2}B$ lifts to $E$, that is, we have the following commutative diagram in $\mb D$: 
\[
\xymatrix{
i_{2}\tau_{2}^{R}E	\ar[r]^{q}	\ar[d]	&	i_{2}B\ar[d]_{\1}	\\
E	\ar[r]_{\bar q}		\ar[r]		&	i_{2}B. 	\\	
}
\]
The morphism $\bar q $ is an epi morphism in $\gl{\mca A_{1}}{\mca A_{2}}$ since $\tau_{1}^{L}q $ and $\tau_{2}^{R}q$ are respectively epi morphisms in $\mca A_{1}$ and in $\mca A_{2}$. 
Since $E$ is $\gl{\sigma_{1}}{\sigma_{2}}$-semistable, we have the inequalities 
\begin{equation}
\arg \gl{Z_{1}}{Z_{2}}(E)	\leq \arg \gl{Z_{1}}{Z_{2}}(i_{2}B)=\arg Z_{2}(B) 
\end{equation}
which contradicts (\ref{eq:602}) and (\ref{eq:606}). 
Hence $\tau_{2}^{R}E$ is $\sigma_{2}$-semistable.

For the proof of (2), suppose to contrary 
$\arg Z_{2}(\tau_{2}^{R}E) \neq \arg Z_{1}(\tau_{1}^{L}E)$. 
Then (\ref{eq:602}) implies 
\begin{equation}
\arg Z_{1}(\Phi (\tau_{2}^{R}E)) = \arg Z_{2}(\tau_{2}^{R}E)	< \arg Z_{1}(\tau_{1}^{L}E). 
\end{equation}
Since $\Phi(\tau_{2}^{R}E)$ is $\sigma_{1}$-semistable, 
Then we have 
\[
0=\Hom (\tau_{1}^{L}E, \Phi (\tau_{2}^{R}E) ) \cong \Hom (i_{1}\tau_{1}^{L}E, i_{2}\tau_{2}^{R}E[1])
\]
which implies $E \cong i_{1}\tau_{1}^{L}E \oplus i_{2}\tau_{2}^{R}E $. 
Since $E$ is $\gl{\sigma_{1}}{\sigma_{2}}$-semistable, either $\tau_{1}^{L}E$ or $ \tau_{2}^{R}E$ has to be zero. 
\end{proof}

\begin{cor}\label{cor:tikakuwocontrol}
Let $\mb D=\sod{\mb D_{1}}{\mb D_{2}}$ be a semiorthogonal decomposition with $\m2$. 
Choose stability conditions $\sigma _{i} =(\mca A_{i}, Z_{i}) \in \Stab{\mb D_{i}}$ such that $(\sigma_{1}, \sigma_{2})$ satisfies the condition $\m
5$. 

Then the supremum 
\begin{equation}
\sup
\left\{
\frac
{|Z_{2}(\tau_{2}^{R}E)|}
{|\gl{Z_{1}}{Z_{2}}(E)|}
\middle|
E \text{ is $\gl{\sigma_{1}}{\sigma_{2}}$-semistable}
\right\}
\end{equation}
is smaller than or equal to $1$. 
\end{cor}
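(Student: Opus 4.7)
The plan is to deduce the bound directly from Proposition \ref{prop:support}(2) and the condition $\m5$, without any new computation. The strategy is to show that for any $\gl{\sigma_1}{\sigma_2}$-semistable object $E$, the two contributions $Z_1(\tau_1^L E)$ and $Z_2(\tau_2^R E)$ to the central charge $\gl{Z_1}{Z_2}(E)$ either have the same argument or one of them vanishes. Once this is established, the triangle inequality degenerates into an equality and the claimed bound follows.

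First I would fix a $\gl{\sigma_1}{\sigma_2}$-semistable object $E \in \gl{\mca A_1}{\mca A_2}$. By the condition $\m5$, we have $Z_2(\tau_2^R E) = Z_1(\Phi(\tau_2^R E))$, so $\gl{Z_1}{Z_2}(E) = Z_1(\tau_1^L E) + Z_1(\Phi(\tau_2^R E))$. Next, invoking Proposition \ref{prop:support}(2), the equality
\[
\arg Z_1(\tau_1^L E) = \arg Z_1(\Phi(\tau_2^R E)) = \arg Z_2(\tau_2^R E)
\]
holds whenever both sides are defined, i.e.\ whenever $\tau_1^L E$ and $\tau_2^R E$ are both nonzero. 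If either of $\tau_1^L E$, $\tau_2^R E$ is zero, one of the summands in $\gl{Z_1}{Z_2}(E)$ vanishes and the bound is immediate.

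In the remaining case, both $Z_1(\tau_1^L E)$ and $Z_2(\tau_2^R E)$ are nonzero complex numbers with the same argument $\theta \in (0,\pi]$, so they are positive real multiples of $e^{\sqrt{-1}\theta}$. Consequently,
\[
|\gl{Z_1}{Z_2}(E)| = |Z_1(\tau_1^L E)| + |Z_2(\tau_2^R E)| \geq |Z_2(\tau_2^R E)|,
\]
which gives $|Z_2(\tau_2^R E)|/|\gl{Z_1}{Z_2}(E)| \leq 1$. Taking the supremum over all $\gl{\sigma_1}{\sigma_2}$-semistable objects yields the claim.

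There is no real obstacle here; the entire content of the corollary is packaged in Proposition \ref{prop:support}(2), whose proof used Lemma \ref{lem:morsub} and the semiorthogonality together with condition $\m2$. The only subtlety worth spelling out in the written proof is the degenerate case where one of $\tau_1^L E$ or $\tau_2^R E$ vanishes, so that the argument equality in Proposition \ref{prop:support}(2) is vacuous but the desired inequality becomes trivial.
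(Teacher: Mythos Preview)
Your proposal is correct and follows essentially the same approach as the paper: both use Proposition~\ref{prop:support}(2) to conclude that $Z_1(\tau_1^L E)$ and $Z_2(\tau_2^R E)$ have the same argument (or one vanishes), so that $|\gl{Z_1}{Z_2}(E)| = |Z_1(\tau_1^L E)| + |Z_2(\tau_2^R E)|$, from which the bound is immediate. You are slightly more explicit about the degenerate cases, but the argument is the same.
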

\begin{proof}
Suppose that $E$ is $\gl{\sigma_{1}}{\sigma_{2}}$-semistable. 
By Proposition \ref{prop:support}, we have 
\begin{align*}
|\gl{Z_{1}}{Z_{2}}(E) |	&=	|Z_{1}(\tau_{1}^{L}E)| + |Z_{2}(\tau_{2}^{R}E)|		
\end{align*}
which directly implies the desired assertion. 
\end{proof}

\begin{prop}\label{prop:mitiwotunagu}
Let $\mb D=\sod{\mb D_{1}}{\mb D_{2}}$ be a semiorthogonal decomposition with $\m2$. 
Choose rational stability conditions $\sigma _{i} =(\mca A_{i}, Z_{i}) \in \Stab{\mb D_{i}}$ such that $(\sigma_{1}, \sigma_{2})$ satisfies the condition $\m
5$. 

If $(\beta, \omega)\in \bb R \times \bb R_{>0}$ satisfies $|\beta -1 +\sqrt{-1}\omega | < \sin (\pi \epsilon)$ for a sufficiently small $\epsilon > 0$ , then
the stability condition $\til{\Sigma}{\beta}{\omega}$ is in $B_{\epsilon}(\gl{\sigma_{1}}{\sigma_{2}})$
\end{prop}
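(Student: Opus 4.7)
Definition \ref{dfn:tiltingdeformation} gives $\til{Z}{\beta}{\omega}(E) = Z_{1}(\tau_{1}^{L}E) + (\beta - \sqrt{-1}\omega)Z_{2}(\tau_{2}^{R}E)$, while $\gl{Z_{1}}{Z_{2}}(E) = Z_{1}(\tau_{1}^{L}E) + Z_{2}(\tau_{2}^{R}E)$, so the two central charges differ by
\begin{equation*}
\til{Z}{\beta}{\omega}(E) - \gl{Z_{1}}{Z_{2}}(E) = (\beta - 1 - \sqrt{-1}\omega)\, Z_{2}(\tau_{2}^{R}E)
\end{equation*}
for every $E \in \mb D$. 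Combined with Corollary \ref{cor:tikakuwocontrol}, which bounds $|Z_{2}(\tau_{2}^{R}E)|/|\gl{Z_{1}}{Z_{2}}(E)| \leq 1$ on every $\gl{\sigma_{1}}{\sigma_{2}}$-semistable object, this produces the norm estimate
\begin{equation*}
\|\til{Z}{\beta}{\omega} - \gl{Z_{1}}{Z_{2}}\|_{\gl{\sigma_{1}}{\sigma_{2}}} \leq |\beta - 1 + \sqrt{-1}\omega| < \sin(\pi\epsilon).
\end{equation*}

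Given the above bound, Theorem \ref{thm:Bridgeland} (2) yields, for $\epsilon \in (0, 1/8)$, a unique locally finite stability condition $\tau = (\til{Z}{\beta}{\omega}, \mca Q_{\tau})$ satisfying $\met{\mca Q_{\tau}}{\mca P_{\gl{\sigma_{1}}{\sigma_{2}}}} < \epsilon$; such a $\tau$ automatically lies in $B_{\epsilon}(\gl{\sigma_{1}}{\sigma_{2}})$. Moreover, for $(\beta, \omega)$ close enough to $(1, 0)$ with $\omega > 0$, a direct check places $(\beta, \omega)$ inside $\mca H^{+}(\epsilon_{1}) \cap \mca H^{-}(\epsilon_{2})$ for some $\epsilon_{1} \in (0, 1/2)$ and $\epsilon_{2} < 0$, so $\til{\Sigma}{\beta}{\omega}$ is well defined by Theorem \ref{thm:cotinuous}. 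The proposition therefore reduces to the identification $\tau = \til{\Sigma}{\beta}{\omega}$.

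This identification is the main obstacle. Since both stability conditions share the central charge $\til{Z}{\beta}{\omega}$, the uniqueness in Theorem \ref{thm:Bridgeland} (2) reduces the task to showing $\met{\mca P_{\til{\Sigma}{\beta}{\omega}}}{\mca P_{\gl{\sigma_{1}}{\sigma_{2}}}} < \epsilon$. The plan is to verify that the torsion pair $(\til{\mca T}{\beta}{\omega}, \til{\mca F}{\beta}{\omega}[1])$ on $\gl{\mca A_{1}}{\mca A_{2}}$ producing $\til{\mca A}{\beta}{\omega}$ coincides with the Bridgeland tilting torsion pair generating $\mca Q_{\tau}(0, 1]$. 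By Proposition \ref{prop:support}, every $\gl{\sigma_{1}}{\sigma_{2}}$-semistable $E$ satisfies $\arg Z_{1}(\tau_{1}^{L}E) = \arg Z_{2}(\tau_{2}^{R}E)$; computing $\arg \til{Z}{\beta}{\omega}(E)$ from this equality, one sees that $E$ is either $(\sigma_{1}, \sigma_{2})$-torsion or is $(\sigma_{1}, \sigma_{2})$-free with $\mu_{\beta, \omega}(E) > 0$ exactly when the new phase $\arg \til{Z}{\beta}{\omega}(E)$ remains in $(0, \pi]$, matching the Bridgeland torsion class. The equality of torsion pairs on semistables then extends to all of $\gl{\mca A_{1}}{\mca A_{2}}$ by the Harder-Narasimhan property of $\gl{\sigma_{1}}{\sigma_{2}}$, yielding the desired slicing bound and completing the proof.
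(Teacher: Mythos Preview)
Your norm estimate is correct and matches the paper. The gap is in the identification $\tau = \til{\Sigma}{\beta}{\omega}$, where two separate problems arise.

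First, there is no ``Bridgeland tilting torsion pair'' on $\gl{\mca A_{1}}{\mca A_{2}}$ to speak of. The deformed heart $\mca Q_{\tau}(0,1]$ sits only in $\mca P_{\gl{\sigma_{1}}{\sigma_{2}}}(-\epsilon,1+\epsilon]$: objects of $\gl{\sigma_{1}}{\sigma_{2}}$-phase near $0$ may land in $\mca Q_{\tau}(-\epsilon,0]$, while objects of phase near $1$ may land in $\mca Q_{\tau}(1,1+\epsilon]$. So $\mca Q_{\tau}(0,1]$ is neither a forward nor a backward tilt of $\gl{\mca A_{1}}{\mca A_{2}}$, and comparing torsion pairs is not meaningful.

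Second, even on $\gl{\sigma_{1}}{\sigma_{2}}$-semistable objects your dichotomy does not match $(\til{\mca T}{\beta}{\omega},\til{\mca F}{\beta}{\omega})$. For $(\sigma_{1},\sigma_{2})$-free $E$ you correctly observe $\I \til{Z}{\beta}{\omega}(E)>0 \iff \mu_{\beta,\omega}(E)>0$, but membership in $\til{\mca T}{\beta}{\omega}$ demands $\mu_{\beta,\omega}^{-}(E)>0$. These agree only when $E$ is $\mu_{\beta,\omega}$-semistable, and nothing in Proposition~\ref{prop:support} forces a $\gl{\sigma_{1}}{\sigma_{2}}$-semistable object to be $\mu_{\beta,\omega}$-semistable; the two slope functions are built from genuinely different linear combinations of $Z_{1}\circ\tau_{1}^{L}$ and $Z_{2}\circ\tau_{2}^{R}$. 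The Harder--Narasimhan extension step therefore does not go through.

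The paper avoids the auxiliary $\tau$ entirely. After the norm bound it shows directly that $\til{\mca A}{\beta}{\omega}\subset\mca P_{\gl{\sigma_{1}}{\sigma_{2}}}(0,2-\epsilon]$: the inclusion $\til{\mca T}{\beta}{\omega}\subset\gl{\mca A_{1}}{\mca A_{2}}=\mca P(0,1]$ is immediate, while for $F\in\til{\mca F}{\beta}{\omega}$ one passes to an extremal $\gl{\sigma_{1}}{\sigma_{2}}$-HN factor (still in $\til{\mca F}{\beta}{\omega}$ since this is closed under subobjects) and applies Lemma~\ref{lem:comp-slope} with $\epsilon_{1}=0$ to bound $\arg Z_{1}(\tau_{1}^{L}F)$ and $\arg Z_{2}(\tau_{2}^{R}F)$ by $\arg(\beta+1+\sqrt{-1}\omega)$ and $\arg(\beta+\sqrt{-1}\omega)$ respectively. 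For $(\beta,\omega)$ near $(1,0)$ this forces $\til{\mca F}{\beta}{\omega}[1]\subset\mca P(1,1+\theta]$ with $\theta$ small, and one concludes $\til{\Sigma}{\beta}{\omega}\in B_{\epsilon}(\gl{\sigma_{1}}{\sigma_{2}})$ via \cite[Proposition~4.2]{MR2721656}.
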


\begin{proof}
By Corollary \ref{cor:tikakuwocontrol}, we have 
\[
|| \til{Z}{\beta}{\omega}-\gl{Z_{1}}{Z_{2}}	||_{\gl{\sigma_{1}}{\sigma_{2}}}	=	|| (\beta -1+\sqrt{-1}\omega)Z\circ \tau_{2}^{R}	||_{\gl{\sigma_{1}}{\sigma_{2}}} <\sin (\pi \epsilon). 
\]
Let $\mca Q$ be the slicing of the stability condition $\gl{\sigma_{1}}{\sigma_{2}}$. 
To complete the proof, it is enough to show the tilting heart $\til{\mca A}{\beta}{\omega}$ is contained in $\mca Q(-1+\epsilon, 2-\epsilon]$ by \cite[Proposition 4.2]{MR2721656}.

Recall $\til{\mca A}{\beta}{\omega}$ is the extension closure of $\til{\mca T}{\beta}{\omega}$ and $\til{\mca F}{\beta}{\omega}[1]$. 
The subcategory $\til{\mca T}{\beta}{\omega}$ is clearly contained in $\gl{\mca A_{1}}{\mca A_{2}} = \mca Q(0,1]$. 
The inclusion 
$\til{\mca F}{\beta}{\omega} \subset \gl{\mca A_{1}}{\mca A_{2}}$ 
implies $\til{\mca F}{\beta}{\omega}[1] \subset \mca Q(1,2]$. 
Hence we wish to show $\til{\mca F}{\beta}{\omega}[1] \subset \mca Q(1, 2-\epsilon]$.

Now take $E \in \til{\mca F}{\beta}{\omega}$. 
let $F$ be the maximal destabilizing quotient of $E$ with respect to the stability condition $\gl{\sigma_{1}}{\sigma_{2}}$. 
Since $\til{\mca F}{\beta}{\omega}$ is the free part of $\gl{\mca A_{1}}{\mca A_{2}}$, the subobject $F$ of $E$ in $\gl{\mca A_{1}}{\mca A_{2}}$ is also in $\til{\mca F}{\beta}{\omega}$. 
Lemma \ref{lem:comp-slope} implies the inequalities 
$0	<	\arg Z_{1}(\tau_{1}^{L}F)  \leq \arg (\beta +1+\sqrt{-1}\omega)$ and $0	<	\arg Z_{2}(\tau_{2}^{R}F)  \leq \arg (\beta +\sqrt{-1}\omega)$. 
Since $\gl{Z_{1}}{Z_{2}}(F)=Z_{1}(\tau_{1}^{L}F) +Z_{2}(\tau_{2}^{R}F)$, we see 
\begin{equation}
\til{\mca F}{\beta}{\omega } [1]\subset \mca Q(1, 1+ \theta], 
\end{equation}
where $\theta = \arg (\beta+ \sqrt{-1}\omega)/\pi$. 
By the assumption $|\beta -1 +\sqrt{-1}\omega | < \sin (\pi \epsilon)$, 
if $\epsilon$ is sufficiently small, then $\theta $ is smaller than $1/2$. 
Thus we may assume the inequality $1+\epsilon < 2-\epsilon$ holds for a sufficiently small $\epsilon$. 
Hence we have 
\[
\til{\mca F}{\beta}{\omega}[1]	\subset \mca Q(1, 1+\epsilon] \subset \mca Q(1,2-\epsilon]. 
\]
Thus, if $\epsilon $ is sufficiently small, then the tilting heart $\til{\mca A}{\beta}{\omega}$ is contained in 
$\mca Q(0, 2-\epsilon]$. 
\end{proof}

\begin{rmk}
If $(\beta, \omega)$ is sufficiently close, the stability condition $\til{\Sigma}{\beta}{\omega}$ is close to $\gl{\sigma_{1}}{\sigma_{2}}$. 
Hence $\gl{\sigma_{1}}{\sigma_{2}}$ can be regarded as $\til{\Sigma}{1}{0}$. 
\end{rmk}

\begin{cor}
Let $\mb D=\sod{\mb D_{1}}{\mb D_{2}}$ be a semiorthogonal decomposition with $\m2$. 
Choose rational stability conditions $\sigma _{i} =(\mca A_{i}, Z_{i}) \in \Stab{\mb D_{i}}$ such that $(\sigma_{1}, \sigma_{2})$ satisfies the condition $\m
5$. 

If $\sigma_{i}$ satisfy the support property, then any stability condition in $\mca H^{+}(\epsilon_{1}) \cap \mca H^{-}(\epsilon_{2})$ satisfies the support property. 
\end{cor}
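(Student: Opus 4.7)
The plan is to combine three ingredients: (i) that $\gl{\sigma_{1}}{\sigma_{2}}$ itself satisfies the support property, (ii) that Proposition \ref{prop:mitiwotunagu} realizes $\gl{\sigma_{1}}{\sigma_{2}}$ as the specialization $\til{\Sigma}{1}{0}$ at the boundary of the family $\mca S(\epsilon_{1}, \epsilon_{2})$, and (iii) that by Remark \ref{rmk:openclosed} the subset $\Stabf{\mb D}$ is both open and closed in $\Stab{\mb D}$, so the support property is constant on connected components.

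First I would show that $\gl{\sigma_{1}}{\sigma_{2}}$ is full. Fix a norm $\|\cdot\|$ on $K_{0}(\mb D)\otimes \bb R \cong (K_{0}(\mb D_{1}) \oplus K_{0}(\mb D_{2}))\otimes \bb R$ compatible with the direct sum decomposition, and use the equivalent formulation Proposition \ref{prop:supportproperty}(3): there exist $C_{1}, C_{2}>0$ with $\|[A_{i}]\| \leq C_{i}|Z_{i}(A_{i})|$ for every $\sigma_{i}$-semistable $A_{i}$. If $E$ is $\gl{\sigma_{1}}{\sigma_{2}}$-semistable, then Proposition \ref{prop:support} gives that $\tau_{1}^{L}E$ (resp.\ $\tau_{2}^{R}E$) is $\sigma_{1}$-semistable (resp.\ $\sigma_{2}$-semistable) and, crucially, that $\arg Z_{1}(\tau_{1}^{L}E) = \arg Z_{2}(\tau_{2}^{R}E)$; consequently $|\gl{Z_{1}}{Z_{2}}(E)| = |Z_{1}(\tau_{1}^{L}E)| + |Z_{2}(\tau_{2}^{R}E)|$. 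Combining this with the triangle inequality $\|[E]\| \leq \|\tau_{1}^{L}E\| + \|\tau_{2}^{R}E\|$ and the two support estimates yields $\|[E]\|/|\gl{Z_{1}}{Z_{2}}(E)| \leq \max(C_{1},C_{2})$, so $\gl{\sigma_{1}}{\sigma_{2}} \in \Stabf{\mb D}$.

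Next I would verify that every $\til{\Sigma}{\beta}{\omega}$ with $(\beta,\omega) \in \mca H^{+}(\epsilon_{1}) \cap \mca H^{-}(\epsilon_{2})$ lies in the same connected component of $\Stab{\mb D}$ as $\gl{\sigma_{1}}{\sigma_{2}}$. By Theorem \ref{thm:cotinuous} the assignment $s(\beta,\omega) = \til{\Sigma}{\beta}{\omega}$ is continuous on the path-connected open set $\mca H^{+}(\epsilon_{1}) \cap \mca H^{-}(\epsilon_{2})$, which admits a sequence of points approaching $(1,0) \in \overline{\mca H}$ (an elementary check shows $(1,0)$ lies in the closure: both defining inequalities of $\mca H^{\pm}$ remain strict at $(1,0)$ for $\epsilon_{1} \in (0,1/2)$ and $\epsilon_{2}<0$). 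Proposition \ref{prop:mitiwotunagu} then shows that $\til{\Sigma}{\beta}{\omega}$ enters any prescribed ball $B_{\epsilon}(\gl{\sigma_{1}}{\sigma_{2}})$ as $(\beta,\omega) \to (1,0)$, so $\gl{\sigma_{1}}{\sigma_{2}}$ belongs to the image of $s$ in the closure, and continuity of $s$ places the whole family in a single connected component of $\Stab{\mb D}$ containing $\gl{\sigma_{1}}{\sigma_{2}}$.

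Finally, applying Remark \ref{rmk:openclosed} to this connected component and the known full point $\gl{\sigma_{1}}{\sigma_{2}}$ shows that every $\til{\Sigma}{\beta}{\omega} \in \mca S(\epsilon_{1}, \epsilon_{2})$ is full, hence satisfies the support property by Proposition \ref{prop:supportproperty}. The main obstacle is the first step: deriving the support property of $\gl{\sigma_{1}}{\sigma_{2}}$ from that of the factors. The naive pulled-back quadratic form $q_{1}\circ \tau_{1}^{L} + q_{2}\circ \tau_{2}^{R}$ need not be negative definite on $\Ker \gl{Z_{1}}{Z_{2}}$ since the kernel is not the direct sum of the kernels; it is precisely the phase-coincidence in Proposition \ref{prop:support}(2) that saves the estimate and lets us work with the norm formulation rather than constructing the quadratic form explicitly.
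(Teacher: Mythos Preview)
Your proposal is correct and follows essentially the same route as the paper: establish that $\gl{\sigma_{1}}{\sigma_{2}}$ is full, use Proposition \ref{prop:mitiwotunagu} to place nearby $\til{\Sigma}{\beta}{\omega}$ in $B_{\epsilon}(\gl{\sigma_{1}}{\sigma_{2}})$, and then invoke the open-and-closedness of fullness (Remark \ref{rmk:openclosed}) together with the connectedness of $\mca H^{+}(\epsilon_{1})\cap\mca H^{-}(\epsilon_{2})$ and the continuity of $s$ from Theorem \ref{thm:cotinuous}. The paper's own proof is a three-line version of the same argument; the only difference is that the paper leaves step (i) implicit, writing simply ``by Proposition \ref{prop:mitiwotunagu}, if $(\beta,\omega)$ is sufficiently close to $(1,0)$ then $\til{\Sigma}{\beta}{\omega}$ satisfies the support property,'' whereas you spell out the fullness of $\gl{\sigma_{1}}{\sigma_{2}}$ via Proposition \ref{prop:support} and the norm formulation of Proposition \ref{prop:supportproperty}. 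Your explicit derivation of $|\gl{Z_{1}}{Z_{2}}(E)| = |Z_{1}(\tau_{1}^{L}E)| + |Z_{2}(\tau_{2}^{R}E)|$ from the phase coincidence in Proposition \ref{prop:support}(2) is exactly the computation appearing in the proof of Corollary \ref{cor:tikakuwocontrol}, so you are not introducing any new idea here, just making explicit what the paper takes for granted.
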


\begin{proof}
Recall that the support property is open and closed by Remark \ref{rmk:openclosed}. 
By Proposition \ref{prop:mitiwotunagu}, 
if $(\beta, \omega)$ is sufficiently closed to $(1,0)$, then $\til{\Sigma}{\beta}{\omega}$ satisfies the support property. 
Since the set $\mca H^{+}(\epsilon_{1}) \cap \mca H^{-}(\epsilon_{2})$ is connected, any $\til{\Sigma}{\beta}{\omega}$ for $(\beta, \omega) \in \mca H^{+}(\epsilon_{1}) \cap \mca H^{-}(\epsilon_{2})$ satisfies the support property. 
\end{proof}

\section{Construction of a path}

The aim of this section is a construction of a path in the space of stability conditions on the category of morphisms. 
Before the construction, let us introduce notation. 
Let $(\mb D^{\leq 0}, \mb D^{\geq 0})$ be a bounded $t$-structure of a triangulated category $\mb D$. 
To simplify notation, we denote by $\mca A_{[-q, -p]}$ the subcategory $\mb D^{\leq q} \cap \mb D^{\geq p}$ if $p \leq q$. 
The subcategory $\mca A_{[p,p]}$ is nothing but the shift $\mca A[p]$ of $\mca A$. 
Moreover the subcategory $\mca A_{[p, p+1]}$ is the extension closure of $\mca A[p]$ and $\mca A[p+1]$.

Now let us specialize $\mb D$ as the category of morphisms. 
Let $\ms C^{\Delta^{1}}$ be the infinity category of functors from $\Delta^{1}$ to a stable infinity category $\ms C$. 
By \cite{higheralgebra}, $\ms C^{\Delta^{1}}$ is stable and hence the homotopy category $\ho{\ms C^{\Delta^{1}}}$ is triangulated.  
We refer to $\ho{\ms C^{\Delta^{1}}}$ as the category of morphisms in $\ho{\ms C}$. 

Recall three functors between $\ms C$ and $\ms C^{\Delta^{1}}$ introduced in (\ref{eq:threefunctors}). 
Then the adjoint pairs $d_{0}\dashv s$ and $s\dashv d_{1}$ determine semiorthogonal decompositions on 
$\ho{\ms C^{\Delta^{1}}}=\sod{\mb D_{1}^{i}}{\mb D_{2}^{i}}$ ($i\in \{0,1\}$) respectively. 
Both subcategories $\mb D_{1}^{0}$ and $\mb D_{2}^{1}$ are given by full sub category of $\ho {\ms C^{\Delta^{1}}}$ consisting of identity morphisms in $\ho{\ms C}$. 
Moreover $\mb D_{1}^{2}$ is the full subcategory of morphisms to zero objects, 
and $\mb D_{1}^{1}$ is the full subcategory of morphisms from zero objects: 
\begin{align}
\mb D_{1}^{0}	=	\mb D_{2}^{1}	&=	\{	[\1 \colon x\to x]	\mid x \in \ho{\ms C}	\},  \notag\\
\mb D_{2}^{0}	&=	\{	[y\to 0] \mid y \in \ho{\ms C}	\} = : \ho{\ms C_{/0}}, \text{ and} 	\label{eq:0-}\\
\mb D_{1}^{1}	&=	\{	[0 \to z ] \mid z \in \ho{\ms C}	\} =: \ho{\ms C_{0/}} \label{eq:-0}. 
\end{align}
These subcategories are canonically equivalent to $\ho{\ms C}$. 
Under the equivalence, the inclusion functor $\mb D_{2}^{0} \to \ho{\ms C^{\Delta^{1}}}$ (resp. $\mb D_{1}^{1} \to \ho{\ms C^{\Delta^{1}}}$) is denoted by $j_{!}$ (resp. $j_{*}$).

For a stability condition $\sigma =(\mca A, Z)\in \Stab{\mb D_{1}^{0}}$, 
set $(\sigma_{1}^{0}, \sigma_{2}^{0})\in \Stab{\mb D_{1}^{0}}\times \Stab{\mb D_{2}^{0}}$ by $(\sigma, \sigma[-1])$ 
and set $(\sigma_{1}^{1}, \sigma_{2}^{1})\in \Stab{\mb D_{1}^{1}}\times \Stab{\mb D_{2}^{1}}$ by $(\sigma[1], \sigma)$. 

The gluing hearts derived from the semiorthogonal decompositions $\ho{\ms C^{\Delta^{1}}}=\sod{\mb D_{1}^{i}}{\mb D_{2}^{i}}$ are denoted by $d_{i}^{*}\mca A$ respectively. 
Then we obtain the torsion pairs $(d_{i}^{*}\mca T_{\beta, \omega}, d_{i}^{*}\mca F_{\beta, \omega})$ on $d_{i}^{*}\mca A$ via Definition \ref{dfn:torsionpairs} and 
 the stability conditions $\Sigma_{\beta, \omega}^{\sigma_{1}^{i}, \sigma_{2}^{i}}$ which will be denoted by $d_{i}^{*}\sigma_{\beta, \omega}=(d_{i}^{*}\mca A_{\beta, \omega}, d_{i}^{*}Z_{\beta, \omega})$.  
By the construction of the continuous map $d_{i}^{*}$, 
the stability condition $d_{i}^{*}\sigma$ is nothing but $\Sigma_{1, 0}^{\sigma_{1}^{i}, \sigma_{2}^{i}}$ (see also \cite{morphismstability}).

\begin{lem}\label{lem:TF1}
Let $\ms C$ be a stable infinity category and let $\sigma  =(\mca A, Z)$ be a discrete stability condition on $\ho{\ms C}$. 
Take $f \in d_{0}^{*}\mca T_{\beta, \omega}$ and $g \in d_{1}^{*}\mca F_{\beta, \omega}$ arbitrary. 
For any integer $p \leq -1$, the vanishing $\Hom_{\ho{\ms C^{\Delta^{1}}}}(f, g[p])=0$ holds. 
%
\end{lem}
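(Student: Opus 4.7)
The plan is to reduce the Hom vanishing to a statement in $\ho{\ms C}$ via the two semiorthogonal decompositions $\sod{\mb D_{1}^{i}}{\mb D_{2}^{i}}$ and then exploit the slope conditions defining $d_{0}^{*}\mca T_{\beta, \omega}$ and $d_{1}^{*}\mca F_{\beta, \omega}$. I will write $f\colon x\to y$ (so that $y, \cof(f) \in \mca A$) and $g \colon z\to w$ (so that $z \in \mca A$ and $K := \cof(g)[-1] \in \mca A$ plays the role of the kernel of the epimorphism $g$ in $\mca A$), and split the argument according to whether $p\leq -2$ or $p=-1$.

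For $p \leq -2$, I would apply Corollary~\ref{cor:keyvanishing} to an arbitrary $\tau\colon f\to g[p]$. Since $f \in \gl{\mca A_{1}^{0}}{\mca A_{2}^{0}}$ and $g \in \gl{\mca A_{1}^{1}}{\mca A_{2}^{1}}$, the sources $x,z$ and targets $y,w$ have cohomology bounded within one step of $\mca A$; shifting by $p\leq -2$ therefore puts $d_{0}\tau \in \Hom(y, w[p])$, $d_{1}\tau \in \Hom(x, z[p])$, and the auxiliary group $\Hom(x, w[p-1])$ appearing in the corollary outside the $t$-structure range, so all three vanish by the $t$-structure axioms and the corollary forces $\tau=0$.

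The delicate case is $p = -1$. Here I would apply $\Hom(-, g[-1])$ to the canonical triangle $j_{!}(\fib(f)) \to f \to s(y)$ from the $d_{0}$-decomposition. Using the adjunctions $s \dashv d_{1}$ and $j_{!} = i_{2}^{0} \dashv \tau_{2}^{R,0}$, the outer Hom groups simplify to $\Hom(y, z[-1]) = 0$ and $\Hom(\fib(f), \fib(g)[-1]) = \Hom(\cof(f), K)$, while the connecting morphism $s(y) \to j_{!}(\fib(f))[1]$ corresponds under these adjunctions to the canonical $\mca A$-map $y \to \cof(f)$. Combined with the fact that $\psi \colon K \hookrightarrow z$ is a monomorphism in $\mca A$ (so post-composition with $\psi$ is injective on Hom), this identifies
\begin{equation*}
\Hom_{\ho{\ms C^{\Delta^{1}}}}(f, g[-1]) \;\cong\; \Hom_{\mca A}(H^{1}(x), K),
\end{equation*}
where $H^{1}(x) = \cok(y \to \cof(f)) \in \mca A$.

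The hard part will be showing $\Hom_{\mca A}(H^{1}(x), K) = 0$, which I plan to attack by a slope contradiction. A non-zero $\phi \colon H^{1}(x) \to K$ produces a non-zero subobject $[\phi(H^{1}(x)) \to 0]$ of $g$ in $\gl{\mca A_{1}^{1}}{\mca A_{2}^{1}}$, and the hypothesis $\mu_{\beta, \omega}^{+}(g) \leq 0$ forces $\R Z(\phi(H^{1}(x)))/\I Z(\phi(H^{1}(x))) \geq (1+\beta)/\omega$ on each $\sigma$-Harder--Narasimhan factor. Simultaneously, $H^{1}(x)$ appears as the $\tau_{2}^{R,0}$-part of the quotient $f \twoheadrightarrow j_{!}(H^{1}(x))[-1]$ in $\gl{\mca A_{1}^{0}}{\mca A_{2}^{0}}$, so $f \in d_{0}^{*}\mca T_{\beta, \omega}$ together with the compatibility of $\mu_{\beta, \omega}$-Harder--Narasimhan filtrations under quotients gives the opposite inequality $\R Z(H^{1}(x))/\I Z(H^{1}(x)) < \beta/\omega$ on its factors. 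Since factors of $\phi(H^{1}(x))$ are simultaneously quotients of factors of $H^{1}(x)$ and subobjects of factors of $K$, a common factor would have to satisfy both bounds, which is impossible because $(1+\beta)/\omega > \beta/\omega$. The sub-case when $H^{1}(x)$ is $\sigma$-torsion is handled separately: the $\sigma$-freeness of $K$ (coming from the $(\sigma_{1}^{1}, \sigma_{2}^{1})$-freeness of $g$) yields $\Hom_{\mca A}(H^{1}(x), K) = 0$ by the standard torsion-free orthogonality in $\mca A$.
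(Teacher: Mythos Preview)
Your overall architecture is sound and in fact your identification step is cleaner than the paper's. The case $p\leq -2$ matches the paper exactly. For $p=-1$, the paper instead first quotients $f$ by $\1_{\ker u}$ (where $u\colon y\to\cof f$ is the gluing morphism on the $d_0$-side) to reduce to the case $d_1 f = H^1(x)[-1]$, and then works directly with a morphism $\tau\colon f\to g[-1]$; your use of the adjunctions $s\dashv d_1$ and $j_!\dashv\fib$ to produce the isomorphism $\Hom(f,g[-1])\cong\Hom_{\mca A}(H^1(x),K)$ accomplishes the same reduction more transparently and without that preliminary step.

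The gap is in the execution of the slope contradiction. Your two inequalities live on \emph{different} objects: the $g$-side bound is on $\im\phi$, while the $f$-side bound is on $H^1(x)$. You then try to bridge them via $\sigma$-Harder--Narasimhan factors, asserting that ``factors of $\phi(H^1(x))$ are simultaneously quotients of factors of $H^1(x)$ and subobjects of factors of $K$''. This is not true in general: $\sigma$-HN factors of a subquotient need not be subquotients of the $\sigma$-HN factors of the ambient object, so neither inequality transfers to the individual $\sigma$-HN pieces in the way you claim.

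The repair is simple and is exactly what the paper does: put both inequalities on $\im\phi$ itself. Since $\im\phi$ is a quotient of $H^1(x)$ and is $\sigma$-free (being a subobject of $K$), the object $j_!(\im\phi[-1])$ is a quotient of $f$ in $d_0^*\mca A$ (factoring through your $j_!(H^1(x))[-1]$) and is $(\sigma_1^0,\sigma_2^0)$-free, hence lies in $d_0^*\mca T_{\beta,\omega}$ with $\mu_{\beta,\omega}>0$, giving $\beta\,\I Z(\im\phi)-\omega\,\R Z(\im\phi)>0$. On the other side, $j_!(\im\phi)\subset g$ in $d_1^*\mca F_{\beta,\omega}$ gives $(1+\beta)\,\I Z(\im\phi)-\omega\,\R Z(\im\phi)\leq 0$. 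Subtracting yields $\I Z(\im\phi)<0$, contradicting $\im\phi\in\mca A$. No passage to HN factors is needed.
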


\begin{proof}
Consider the semiorthogonal decomposition $\ho{\ms C^{\Delta^{1}}} = \sod{\mb D_{1}^{1}}{\mb D_{2}^{1}}$ associated 
with $s \dashv d_{1}$. 
For $[g \colon z \to w] \in d_{1}^{*}\mca F_{\beta, \omega} $, the gluing morphism is nothing but 
the canonical morphism $v \colon \cof g \to z[1]$. 
Since $g \in d_{1}^{*}\mca F_{\beta, \omega}$, Lemma \ref{lem:mormono} implies that $v$ is a monomorphism in the abelian category $\mca A[1]$. 
Hence we see that both $d_{1}g=z$ and $d_{0}g=w$ are in $\mca A$ and that the morphism $g \colon z\to w$ is an epi morphism in $\mca A$. 

Moreover $f \in d_{0}^{*}\mca T_{\beta, \omega}$ satisfies $d_{1}f \in \mca A_{[-1, 0]}$, $d_{0}f \in \mca A$ and $\cof f \in \mca A$. 
Hence we easily see 
$\Hom(d_{1}f, d_{1}g[p])=\Hom(d_{0}f, d_{0}g[p])=\Hom (d_{1}f, d_{0}g[p-1])=0$ for $p \leq -2$. 
Then Corollary \ref{cor:keyvanishing} implies $\Hom(f, g[p])=0$ for $p \leq -2$. 

To discuss the case $p=-1$, suppose that $f$ is $(\sigma_{1}, \sigma_{2})$-torsion.  
Then $d_{0}f$ and $\cof f$ are both $\sigma$-torsion. 
Moreover the first cohomology $H^{1}(x)$ of $x=d_{1}f$ is also $\sigma$-torsion since $H^{1}(x)$ is a quotient of $\cof f$ in $\mca A$. 
Since $d_{1}g$ is $\sigma$-free, we see $\Hom(H^{1}(x)[-1], d_{1}g[-1])=0$. 
The trivial vanishing $\Hom(H^{0}(x), d_{1}g[-1])=0$ implies $\Hom(d_{1}f, d_{1}g[-1])=0$. 
Moreover the vanishings $\Hom(d_{0}f , d_{0}g[-1]) = \Hom (d_{1}f, d_{0}g[-2])=0$ hold by the cohomological degree reason. 
Again Corollary \ref{cor:keyvanishing} implies $\Hom(f, g[-1])=0$. 

Now let us suppose $f$ is not $(\sigma_{1}, \sigma_{2})$-torsion. 
Let $u$ be the gluing morphism $u \colon d_{0}f \to \cof f$ of $f$. 
Since $H^{0}(x)$ is the kernel $\ker u$ of $u$ in $\mca A$, we obtain the following distinguished triangle $\1 _{\ker u }\to f \to \bar f$ in $\ho{\ms C^{\Delta^{1}}}$ denoted by 
\begin{equation}
\xymatrix{
\ker u	\ar[r]	\ar[d]_{\1}	&	d_{1}f	\ar[r]	\ar[d]	&	\cok u[-1]	\ar[d]_{\bar f}\\
\ker u \ar[r]		 		&	d_{0}f	\ar[r] 		&	\im u	.  	\\
}
\end{equation}
By the adjunction $s \dashv d_{1}$, we see 
\[
\Hom_{\ho{\ms C^{\Delta^{1}}}}(\1 _{\ker u}, g[-1])\cong \Hom_{\ho{\ms C}}(\ker u, d_{1}g[-1])=0
\]
by degree reason. 
Hence we may assume that $u \colon d_{0}f \to \cof f$ is a mono morphism in $\mca A$ without loss of generality. 
Then $x$ is quasi-isomorphic to $(\cof f/d_{0}f)[-1]$. 

Take a morphism $\tau \in \Hom_{\ho{\ms C^{\Delta^{1}}}}(f, g[-1])$ arbitrary. 
We wish to show $\tau=0$. 
Since $d_{0}g[-1]$ is in $\mca A[-1]$, it is easily see $\Hom (d_{0}f, d_{0}g[-1])=\Hom(d_{1}f, d_{0}g[-2])=0$. 
Thus it is enough to show $d_{1}\tau=0$. 

Put the morphism $d_{1}\tau[1] \colon x[1]=\cof f/y \to z$ in $\mca A$ by $\tau_{1}$. 
We obtain the following commutative diagram in $\ho{\ms C}$: 
\begin{equation}
\xymatrix{
x	\ar[d]_-{f}\ar[r]^-{\tau_{1}[-1]}	&	z[-1]	\ar[d]^-{g[-1]}\\	
y	\ar[r]_-{d_{0}\tau}		&	w[-1]	. 
}
\end{equation}
The commutativity implies the composite $g \circ \tau_{1}$ is zero. 
Hence the morphism $\tau_{1}$ factors through the kernel $\ker g$, which means $\im \tau_{1} \subset \ker g$.  
Thus we obtain the following commutative diagram in $\ho{\ms C}$: 
\begin{equation}
\xymatrix{
x	\ar[r]\ar[d]_-{f}	&	\im \tau_{1}[-1]	\ar[d]\ar[r]	&	z[-1]	\ar[d]^-{g[-1]}	\\
y	\ar[r]		& 	0				\ar[r]	&	w[-1]
}
\end{equation}
By \cite[Lemma 2.11]{morphismstability}, 
the above diagram gives a sequence $f \to j_{!}(\im \tau_{1}[-1]) \to g[-1]$. 
We claim that the morphism $\rho \colon f \to j_{!}(\im \tau_{1}[-1])$ is an epimorphism in $d_{0}^{*}\mca A$ and 
the morphism $\rho ' \colon j_{!}(\im \tau_{1}) \to g$ is a monomorphism in $d_{1}^{*}\mca A$.

Let $h$ be the fiber of $\rho$ in $\ho{\ms C^{\Delta^{1}}}$. 
Then clearly $d_{0}h =y$, and $\cof h $ is the kernel of the morphism $\cof f \to \cof f/y \to \im \delta$ in $\mca A$. 
Hence $h$ is in $d_{0}^{*}\mca A$ which implies that $\rho$ is an epimorphism in $d_{0}^{*}\mca A$. 

To discuss $\rho'$, note that $j_{!}(\im \tau_{1})$ is a subobject $j_{!}(\ker g)$ in $d_{1}^{*}\mca A$ by $\im \tau_{1} \subset \ker g$. 
Since $g$ is epi, there exists a morphism $\iota \colon  j_{!}(\ker g) \to g$ in $\ho{\ms C^{\Delta^{1}}}$. 
Then one can easily see that  
the morphism $\iota $ is a monomorphism in $d_{1}^{*}\mca A$. 
Hence $j_{!}(\im \tau_{1})$ is a subobject of $g$ in $d_{1}^{*}\mca A$.

Since $j_{!}(\im \tau_{1}[-1])$ is a quotient of $f$ in $d_{0}^{*}\mca A$, 
$j_{!}(\im \tau_{1}[-1])$ is in $d_{0}^{*}\mca T_{\beta, \omega}$. 
Hence $j_{!}(\im \tau_{1}[-1])$ satisfies $\I d_{0}^{*}Z_{\beta, \omega}(j_{!}(\im \tau_{1}[-1]))$ is positive which implies
\begin{equation}\label{eq:801}
\beta \I Z(\im \tau_{1})-\omega \R Z(\im \tau_{1}) >0. 
\end{equation}
On the other hand, $j_{!}(\im \tau_{1})$ is in $d_{1}^{*}\mca F_{\beta, \omega}$ since 
it is a subobject of $g \in d_{1}^{*}\mca F_{\beta, \omega}$. 
Thus the non-positivity $d_{1}^{*}Z_{\beta, \omega}(j_{!}(\im \tau_{1})) \leq 0$ implies 
\begin{equation}\label{eq:802}
(\beta +1)\I Z(\im \tau_{1})-\omega \R Z(\im \tau_{1}) \leq 0. 
\end{equation}
Then (\ref{eq:802}) implies $\beta \I Z(\im \tau_{1})-\omega \R Z(\im \tau_{1}) \leq 0$ since $\Im Z(\im \tau_{1})$ is non-negative. 
This clearly contradicts (\ref{eq:801}). 
Hence $\im \tau_{1}$ is zero which implies $d_{1}\tau=0$. 
\end{proof}

\begin{lem}\label{lem:TF2}
Let $\ms C$ be a stable infinity category and let $\sigma  =(\mca A, Z)$ be a discrete stability condition on $\ho{\ms C}$. 
Take $f \in d_{0}^{*}\mca F_{\beta, \omega}$ and $g \in d_{1}^{*}\mca T_{\beta, \omega}$ arbitrary. 
For any integer $p \leq 0$, the vanishing $\Hom_{\ho{\ms C^{\Delta^{1}}}}(g, f[p])=0$ holds. 
%
\end{lem}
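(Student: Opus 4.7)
The plan is to parallel Lemma \ref{lem:TF1}, but to exploit the canonical semiorthogonal triangle arising from $\sod{\mb D_{1}^{1}}{\mb D_{2}^{1}}$ so as to avoid a case analysis on $g$. The first step is to pin down the cohomological positions of the components of $f$. Since $f \in d_{0}^{*}\mca F_{\beta, \omega}$, Lemma \ref{lem:mormono} applied inside $\sod{\mb D_{1}^{0}}{\mb D_{2}^{0}}$ shows that the gluing morphism $d_{0}f \to \cof f$ is a monomorphism in $\mca A$; rotating the defining triangle $d_{1}f \to d_{0}f \to \cof f \to d_{1}f[1]$ then identifies $d_{1}f[1]$ with the cokernel of this monomorphism in $\mca A$, so $d_{0}f \in \mca A$ and $d_{1}f \in \mca A[-1]$.

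Next I would invoke the canonical triangle from $\sod{\mb D_{1}^{1}}{\mb D_{2}^{1}}$ applied to $g \in d_{1}^{*}\mca T_{\beta, \omega} \subset d_{1}^{*}\mca A$, namely
\[
s(d_{1}g) \to g \to j_{*}(\cof g) \to s(d_{1}g)[1],
\]
where $d_{1}g \in \mca A$ and $\cof g \in \mca A[1]$. Applying $\Hom_{\ho{\ms C^{\Delta^{1}}}}(-, f[p])$ sandwiches $\Hom(g, f[p])$ in a long exact sequence between $\Hom(j_{*}(\cof g), f[p])$ and $\Hom(s(d_{1}g), f[p])$. The adjunctions $j_{*} \dashv d_{0}$ and $s \dashv d_{1}$ convert these into $\Hom_{\ho{\ms C}}(\cof g, d_{0}f[p])$ and $\Hom_{\ho{\ms C}}(d_{1}g, d_{1}f[p])$ respectively. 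Substituting the cohomological positions recorded above, both reduce to instances of $\Hom_{\ho{\ms C}}(\mca A, \mca A[p-1])$, which vanish for every $p \leq 0$ by the $t$-structure axioms since $p-1 \leq -1$. This forces $\Hom(g, f[p]) = 0$.

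The only substantive point is the initial structural input $d_{1}f \in \mca A[-1]$; everything else is a formal manipulation with adjoints together with the vanishing $\Hom_{\ho{\ms C}}(\mca A, \mca A[<0]) = 0$. In particular, no case split on whether $g$ is $(\sigma_{1}^{1}, \sigma_{2}^{1})$-torsion is needed, in contrast with the proof of Lemma \ref{lem:TF1}: here the outer terms of the $\sod{\mb D_{1}^{1}}{\mb D_{2}^{1}}$-triangle for $g$ are pulled back from $\ho{\ms C}$ along $s$ and $j_{*}$, whose right adjoints $d_{1}$ and $d_{0}$ land cleanly in $\mca A$ (respectively $\mca A[-1]$ after paring with $f$), whereas the SOD-triangle for $f$ in $\sod{\mb D_{1}^{0}}{\mb D_{2}^{0}}$ contains $j_{!}(\fib f)$, whose only useful adjoint is $d_{1} \dashv j_{!}$, which is what forces the delicate $p = -1$ analysis of Lemma \ref{lem:TF1} through Corollary \ref{cor:keyvanishing}.
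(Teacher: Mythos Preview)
Your argument is correct and genuinely simpler than the paper's. Both proofs begin with the same structural fact about $f$: since $f \in d_{0}^{*}\mca F_{\beta,\omega}$, Lemma~\ref{lem:mormono} forces the gluing morphism $d_{0}f \to \cof f$ to be a monomorphism in $\mca A$, hence $d_{1}f \in \mca A[-1]$ and $d_{0}f \in \mca A$. From here the approaches diverge. The paper works through Corollary~\ref{cor:keyvanishing}, which demands three separate vanishings $\Hom(d_{1}g,d_{1}f[p])$, $\Hom(d_{0}g,d_{0}f[p])$, $\Hom(d_{1}g,d_{0}f[p-1])$; because $d_{0}g$ only lies in $\mca A_{[0,1]}$, the middle term does not vanish for $p=0$ on degree grounds alone, and the paper is forced into a case split on whether $g$ is $(\sigma_{1}^{1},\sigma_{2}^{1})$-torsion, followed in the non-torsion case by a further reduction via the triangle $j_{*}(\ker u)[1] \to g \to \bar g$ and an explicit analysis of $d_{0}\bar\varphi$.

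Your route instead decomposes $g$ along the semiorthogonal decomposition $\sod{\mb D_{1}^{1}}{\mb D_{2}^{1}}$ itself, giving the triangle $s(d_{1}g) \to g \to j_{*}(\cof g)$. The adjunctions $s \dashv d_{1}$ and $j_{*} \dashv d_{0}$ then convert the two outer $\Hom$-groups into $\Hom_{\ho{\ms C}}(d_{1}g, d_{1}f[p])$ and $\Hom_{\ho{\ms C}}(\cof g, d_{0}f[p])$; since $d_{1}g \in \mca A$, $\cof g \in \mca A[1]$, $d_{1}f \in \mca A[-1]$, and $d_{0}f \in \mca A$, both land in $\Hom_{\ho{\ms C}}(\mca A, \mca A[p-1])$ and vanish for all $p \leq 0$ uniformly. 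This bypasses the case analysis entirely. The gain comes from choosing the semiorthogonal decomposition adapted to $g$ (namely $\sod{\mb D_{1}^{1}}{\mb D_{2}^{1}}$), whose projection functors have right adjoints $d_{1}$ and $d_{0}$ that pair cleanly with the cohomological positions of $f$; the paper's use of Corollary~\ref{cor:keyvanishing} is implicitly tied to the other decomposition and so sees $d_{0}g$, which is less well-controlled.
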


\begin{proof}
Since $f$ is in $d_{0}^{*}\mca F_{\beta, \omega}$, the gluing morphism $d_{0}f \to \cof f$ is mono by Lemma \ref{lem:morvanishing}. 
Hence $d_{1}f$ is in $\mca A[-1]$. 
Since $g $ is in $d_{1}^{*}\mca T_{\beta, \omega}$, in particular in $d_{1}^{*}\mca A$, 
$d_{1}g$ is in $\mca A$ and $d_{0}g$ is in $\mca A_{[0,1]}$. 
Hence Corollary \ref{cor:keyvanishing} implies the vanishing $\Hom(g, f[p])=0$ for $p \leq -1$. 

To discuss the case $p=0$, suppose that $g$ is $(\sigma_{1}, \sigma_{2})$-torsion. 
Then $d_{1}g$ and $\fib g$ are $\sigma$-torsion objects in $\mca A$. 
Moreover the $0$-th cohomology $H^{0}(d_{0}g)$ of $d_{0}g$ is $\sigma $-torsion since it is a quotient of the $\sigma$-torsion object $d_{1}g$. 
Then we see $\Hom(d_{0}g, d_{0}f)=0$ by the $\sigma$-freeness of $d_{0}f$. 
In addition the vanishings $\Hom(d_{1}g, d_{1}f)=\Hom(d_{1}g, d_{0}f[-1])=0$ hold by the cohomological degree reason. 
Thus Corollary \ref{cor:keyvanishing} implies $\Hom(g, f)=0$. 

Now suppose that $g$ is not $(\sigma_{1}, \sigma_{2})$-torsion. 
Let $u$ be the universal morphism $\fib g \to d_{1}g$. 
Then there exists a diagram of the distinguished triangle in $\ho{\ms C}$:
\begin{equation}
\xymatrix{
0	\ar[r]	\ar[d]	&	z	\ar[d]^-{g}	\ar[r]	&	z \ar[d]^-{\bar g}\\
\ker u[1]	\ar[r]	&	w		\ar[r]	&	\cok u. 
}
\end{equation}
Put the right horizontal arrow by $[\bar g \colon z \to \cok u]$ which gives an object in $\ho{\ms C^{\Delta^{1}}}$. 
By \cite[Lemma 2.11]{morphismstability}, the above digram gives a distinguished triangle in $\ho{\ms C^{\Delta^{1}}}$, in particular, a short exact sequence in $d_{1}^{*}\mca A$. 
Since the vanishings $\Hom (j_{*}(\ker u[1]), f) \cong \Hom(\ker u[1], d_{0}f)=0$ hold by the fact $d_{0}f \in \mca A$, 
it is enough to show $\Hom(\bar g , f)=0$. 

Take $\bar \varphi  \in \Hom(\bar g, f)$. 
Since $\Hom(d_{1}\bar g, d_{1}f)=\Hom(d_{1}\bar g, d_{0}f[-1])=0$ hold by the cohomological degree reason, 
it is enough to show $d_{0}\bar \varphi=0$.  
Since $(d_{0}\bar \varphi) \circ  \bar g = f \circ (d_{1}\bar \varphi)=0$, 
$\bar g$ factors through the kernel $\ker d_{0}\bar \varphi$. 
Since $\bar g$ is an epimorphism in $\mca A$, the canonical morphism $\ker d_{0}\bar \varphi \to \cok u$ has to be an epimorphism. 
Thus $d_{0}\bar \varphi$ is zero morphism.  
\end{proof}

\begin{prop}\label{prop:pathnoheart}
Let $\ms C$ be a stable infinity category and let $\sigma  =(\mca A, Z)$ be a locally finite stability condition on $\ho{\ms C}$. 
\begin{enumerate}
\item $d_{0}^{*}\mca T_{\beta, \omega} \subset (d_{1}^{*}\mca A_{\beta, \omega})_{[-1, 0]}$. 
\item $d_{0}^{*}\mca F_{\beta, \omega} \subset (d_{1}^{*}\mca A_{\beta, \omega})_{[ -2, -1]}$. 
\end{enumerate}
\end{prop}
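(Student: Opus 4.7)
The plan is to verify each inclusion by checking the cohomological vanishing conditions that characterize $(d_1^* \mca A_{\beta, \omega})_{[a, b]}$: an object $E$ lies in this subcategory precisely when $\Hom(E, G[n]) = 0$ for every $G \in d_1^* \mca A_{\beta, \omega}$ and $n < a$, and $\Hom(G[n], E) = 0$ for every such $G$ and $n > b$. Since the tilted heart decomposes as $d_1^* \mca A_{\beta, \omega} = \sod{d_1^* \mca T_{\beta, \omega}}{d_1^* \mca F_{\beta, \omega}[1]}$, these conditions reduce to four families of $\Hom$-vanishings, classified by whether $G$ sits in the torsion or torsion-free part and by the direction of the $\Hom$.

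For each part, three of the four families I would settle uniformly by applying Lemma \ref{lem:morvanishing} to the $d_1^*$ semiorthogonal decomposition, for which $\tau_1^L = \cof$, $\tau_2^R = d_1$, and $\Phi = [1]$. A morphism $\tau$ in $\ho{\ms C^{\Delta^1}}$ vanishes once $\cof \tau = 0$, $d_1 \tau = 0$, and one auxiliary $\Hom$ in $\ho{\ms C}$ vanishes. Using that $f \in d_0^* \mca A$ has $\cof f, d_0 f \in \mca A$ and $d_1 f \in \mca A_{[-1, 0]}$, and that $g \in d_1^* \mca A$ has $d_1 g \in \mca A$ and $\cof g \in \mca A[1]$, each of these three conditions becomes a negative-degree $\Ext$ statement in $\mca A$, which vanishes by the boundedness of the $t$-structure on $\ho{\ms C}$ provided the shift is sufficiently large.

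The fourth family is the tightest cross-type vanishing --- $\Hom(f, g[-1]) = 0$ for $f \in d_0^* \mca T_{\beta, \omega}$ and $g \in d_1^* \mca F_{\beta, \omega}$ in part (1), and $\Hom(g, f) = 0$ for $g \in d_1^* \mca T_{\beta, \omega}$ and $f \in d_0^* \mca F_{\beta, \omega}$ in part (2). In these boundary cases the reduction to negative-$\Ext$ vanishing in $\mca A$ breaks down because $\Ext^0(\mca A, \mca A)$ need not vanish; this is exactly what Lemmas \ref{lem:TF1} and \ref{lem:TF2} are designed to resolve by invoking the extra torsion / torsion-free structure. The main obstacle, and really the only work, is the bookkeeping of shifts, ensuring that the ranges coming from the four families line up precisely with $[-1, 0]$ and $[-2, -1]$ respectively; the one-degree discrepancy between the two parts reflects the extra $[1]$ on the torsion-free side of the tilt, so part (2) requires exactly one additional degree of shift throughout compared to part (1).
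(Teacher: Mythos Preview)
Your proposal is correct and follows essentially the same strategy as the paper: characterize membership in $(d_1^*\mca A_{\beta,\omega})_{[a,b]}$ via $\Hom$-vanishing, settle most cases by degree counting through a morphism-vanishing criterion, and invoke Lemmas~\ref{lem:TF1} and~\ref{lem:TF2} precisely at the two boundary cases you identify.

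There is one cosmetic difference worth noting. You propose applying Lemma~\ref{lem:morvanishing} to the $d_1^*$ semiorthogonal decomposition (with $\tau_1^L=\cof$, $\tau_2^R=d_1$, auxiliary vanishing $\Hom(\cof E, d_1 F)=0$), whereas the paper instead invokes Corollary~\ref{cor:keyvanishing} (checking $d_0\tau=d_1\tau=0$ together with $\Hom(d_1 E, d_0 F[-1])=0$), which comes from yet another semiorthogonal decomposition $\sod{\ho{\ms C_{/0}}}{\ho{\ms C_{0/}}}$. Both criteria are instances of Lemma~\ref{lem:morvanishing}, and the required degree bounds go through in either setup; the paper's choice has the mild advantage that the ranges of $d_0 g$ and $d_1 g$ for $g\in d_1^*\mca A_{\beta,\omega}$ are uniformly in $\mca A_{[0,1]}$, so cases (1a) and (2b) can be handled without splitting $g$ into its torsion and torsion-free parts. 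Your four-way split is slightly more bookkeeping but equally valid.
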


\begin{proof}
Any object $g \in d_{1}^{*}\mca A_{\beta, \omega}$ has a decomposition 
\[
\xymatrix{
H^{-1}(g)[1] \ar[r]	&	g \ar[r]	&	H^{0}(g)
}
\]
with respect to the heart $d_{1}^{*}\mca A$ where 
$H^{-1}(g) \in d_{1}^{*}\mca F_{\beta, \omega}$ and $H^{0}(g)	 \in d_{1}^{*}\mca T_{\beta ,\omega}$. 
Clearly $d_{1}(H^{0}(g))$ is in $\mca A$, and $d_{0}(H^{0}(g)) $ is in $\mca A_{[0,1]}$. 
By Lemma \ref{lem:mormono}, both $d_{0}(H^{-1}(g))$ and $d_{1}(H^{-1}(g))$ are in $\mca A$. 
Hence both $d_{0}g$ and $d_{1}g$ are in $\mca A_{[0,1]}$. 

Take $f \in d_{0}^{*}\mca T_{\beta, \omega}$ and $g \in d_{1}^{*}\mca A_{\beta, \omega}$. 
To prove the assertion (1), it is enough to show 
\begin{enumerate}
\item[(1a)] $\Hom(g[p], f)=0$ for $p \geq 1$ and 
\item[(1b)] $\Hom(f, g[q])=0$ for $q \leq -2$. 
\end{enumerate}
Since $d_{1}g[p]$ and $d_{0}g[p]$ are in $\mca A_{[p,p+1]}$, 
$\Hom(d_{1}g[p], d_{1}f)$, $\Hom(d_{0}g[p], d_{0}f)$, and $\Hom(d_{1}g[p], d_{0}f[-1])$ vanish. 
Then Corollary \ref{cor:keyvanishing} implies (1a). 

Now suppose $g \in d_{1}^{*}\mca T_{\beta, \omega}$. 
Then $d_{1}g[q] \in \mca A[q]$ and $d_{0}g \in \mca A_{[q,q+1]}$.  
The assumption $q \leq -2$ implies the vanishings
\[
\Hom(d_{1}f, d_{1}g[q])=\Hom(d_{0}f, d_{0}g[q])=\Hom(d_{1}f, d_{0}g[q-1])=0. 
\]
Corollary \ref{cor:keyvanishing} implies $\Hom(f, g[q])=0$ for $g \in d_{1}^{*}\mca T_{\beta, \omega}$. 
Now Lemma \ref{lem:TF1} implies $\Hom(f, g[q])=0$ for $g \in d_{1}^{*}\mca F_{\beta, \omega}[1]$. 
Thus we have proven (1b).

Take $f \in d_{0}^{*}\mca F_{\beta, \omega}$ and $g \in d_{1}^{*}\mca A_{\beta, \omega}$. 
To prove the assertion (2), it is enough to show 
\begin{enumerate}
\item[(2a)] $\Hom(g[p], f)=0$ for $p\geq 0$ and 
\item[(2b)] $\Hom(f, g[q])=0$ for $q \leq -3$. 
\end{enumerate}
Lemma \ref{lem:TF2} implies $\Hom(g[p], f)=0$ for $g \in d_{1}^{*}\mca T_{\beta, \omega}$. 
Suppose $g \in d_{1}^{*}\mca F_{\beta, \omega}[1]$. 
Then $d_{1}g[p]$ and $d_{0}g[p]$ are in $\mca A[p+1]$. 
Since $f$ is in $d_{0}^{*}\mca F$, $d_{1}f$ is in $\mca A[-1]$ and $d_{0}f $ is in $\mca A$. 
Corollary \ref{cor:keyvanishing} implies $\Hom(g[p], f)=0$ for $g \in d_{1}^{*}\mca F_{\beta, \omega}[1]$. 
This gives the proof of claim (2a). 

To prove (2b), recall $g[q]$ satisfies 
that both $d_{0}^{*}g[q]$ and $d_{1}g[q]$ are in $\mca A_{[q, q+1]}$. 
Since $f$ satisfies $d_{0}f \in \mca A$ and $d_{1}f \in \mca A[-1]$, 
Corollary \ref{cor:keyvanishing} implies the claim (2b). 
\end{proof}

\begin{cor}\label{cor:heartnohenkei}
Let $\ms C$ be a stable infinity category and let $\sigma=(\mca A, Z)$ be a locally finite stability condition on $\ho{\ms C}$. 
Then $d_{0}^{*}\mca A_{\beta, \omega} \subset (d_{1}^{*}\mca A_{\beta, \omega})_{[-1,0]}$. 
\end{cor}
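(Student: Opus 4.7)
The plan is to read this off directly from Proposition \ref{prop:pathnoheart}. By the construction of the tilting heart, $d_{0}^{*}\mca A_{\beta, \omega}$ carries the torsion pair $(d_{0}^{*}\mca T_{\beta, \omega}, d_{0}^{*}\mca F_{\beta, \omega}[1])$, so for any $f \in d_{0}^{*}\mca A_{\beta, \omega}$ there is a short exact sequence
\[
0 \to F[1] \to f \to T \to 0
\]
in $d_{0}^{*}\mca A_{\beta, \omega}$ with $T \in d_{0}^{*}\mca T_{\beta, \omega}$ and $F \in d_{0}^{*}\mca F_{\beta, \omega}$, which gives a distinguished triangle in $\ho{\ms C^{\Delta^{1}}}$.

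First I would apply Proposition \ref{prop:pathnoheart}(1) to place $T$ in $(d_{1}^{*}\mca A_{\beta, \omega})_{[-1, 0]}$. Next I would apply Proposition \ref{prop:pathnoheart}(2) to place $F$ in $(d_{1}^{*}\mca A_{\beta, \omega})_{[-2, -1]}$, and then shift by $[1]$ to conclude that $F[1]$ also lies in $(d_{1}^{*}\mca A_{\beta, \omega})_{[-1, 0]}$.

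Finally, the subcategory $(d_{1}^{*}\mca A_{\beta, \omega})_{[-1, 0]} = (\ho{\ms C^{\Delta^{1}}})^{\leq 1} \cap (\ho{\ms C^{\Delta^{1}}})^{\geq 0}$ (with respect to the $t$-structure whose heart is $d_{1}^{*}\mca A_{\beta, \omega}$) is closed under extensions in $\ho{\ms C^{\Delta^{1}}}$, since any interval of cohomological degrees is. Hence the extension $f$ of $T$ by $F[1]$ lies in $(d_{1}^{*}\mca A_{\beta, \omega})_{[-1, 0]}$, which is exactly the claim.

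There is no genuine obstacle here: all of the work has been absorbed into Proposition \ref{prop:pathnoheart}, and the corollary is simply the observation that the torsion-pair decomposition in the tilted heart sends each factor into an interval of length one shifted by $-1$ relative to the other side.
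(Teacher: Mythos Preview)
Your proof is correct and essentially identical to the paper's: both use that $d_{0}^{*}\mca A_{\beta,\omega}$ is the extension closure of $d_{0}^{*}\mca T_{\beta,\omega}$ and $d_{0}^{*}\mca F_{\beta,\omega}[1]$, apply Proposition~\ref{prop:pathnoheart} to each piece, and conclude by extension-closure of the interval. One small notational slip: on the tilted heart the torsion pair is $(d_{0}^{*}\mca F_{\beta,\omega}[1],\, d_{0}^{*}\mca T_{\beta,\omega})$ (torsion part first), not the order you wrote, though your exact sequence is the correct one.
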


\begin{proof}
Since the heart $d_{0}^{*}\mca A_{\beta, \omega} $ is the extension closure of $d_{0}^{*}\mca T_{\beta, \omega}$ and 
$d_{0}^{*}\mca F_{\beta, \omega}[1]$, the assertion follows from Proposition \ref{prop:pathnoheart}. 
\end{proof}

\begin{thm}\label{thm:pathconnection}
Let $\ms C$ be a stable infinity category and let $\sigma=(\mca A, Z)$ be a locally finite stability condition on $\ho{\ms C}$. 
If $\sigma$ is rational, 
then $d_{0}^{*}\sigma$ and $d_{1}^{*}\sigma$ are path connected in $\Stab{\ho{\ms C^{\Delta^{1}}}}$. 
\end{thm}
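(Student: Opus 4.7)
\emph{Proof plan.}
The plan is to connect $d_0^*\sigma$ and $d_1^*\sigma$ by concatenating three continuous paths in $\Stab{\ho{\ms C^{\Delta^1}}}$: a deformation inside the family of stability conditions associated with the semiorthogonal decomposition $\ho{\ms C^{\Delta^1}}=\sod{\mb D_1^0}{\mb D_2^0}$, a short bridging segment, and a deformation back inside the family attached to $\ho{\ms C^{\Delta^1}}=\sod{\mb D_1^1}{\mb D_2^1}$ terminating at $d_1^*\sigma$.

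First I would verify that each of the two decompositions $\ho{\ms C^{\Delta^1}}=\sod{\mb D_1^i}{\mb D_2^i}$ together with $(\sigma_1^i,\sigma_2^i)$ satisfies $\m2$ and $\m5$: the gluing functor of each decomposition is the shift $[1]$ under the canonical equivalences $\mb D_1^i \simeq \mb D_2^i \simeq \ho{\ms C}$, and the prescribed shifts $(\sigma,\sigma[-1])$ and $(\sigma[1],\sigma)$ are designed precisely so that $\m5$ holds. Since $\sigma$ is rational and $K_0(\ho{\ms C})$ is finitely generated, each $\sigma_j^i$ is rational. Hence Theorem~\ref{thm:cotinuous} applies, producing for $i=0,1$ a continuous map
\[
s_i \colon \mca H^+(\epsilon_1) \cap \mca H^-(\epsilon_2) \longrightarrow \Stab{\ho{\ms C^{\Delta^1}}},\qquad (\beta,\omega) \longmapsto d_i^*\sigma_{\beta,\omega}.
\]
Proposition~\ref{prop:mitiwotunagu} extends each $s_i$ continuously to the boundary point $(1,0)\in\overline{\mca H}$, where its value is $\gl{\sigma_1^i}{\sigma_2^i} = d_i^*\sigma$. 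Hence $d_i^*\sigma$ is already path-connected, inside the image of $s_i$, to every $d_i^*\sigma_{\beta,\omega}$.

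To bridge the two families I would fix a convenient point $(\beta^*,\omega^*) \in \mca H^+(\epsilon_1)\cap\mca H^-(\epsilon_2)$, for example $(\cos\tfrac{2\pi}{3},\sin\tfrac{2\pi}{3})$ when $\epsilon_1=\tfrac13$, as advertised in the remark after Proposition~\ref{prop:argsup1}. At $(\beta^*,\omega^*)$, Corollary~\ref{cor:heartnohenkei} yields the inclusion
\[
d_0^*\mca A_{\beta^*,\omega^*} \subset \bigl(d_1^*\mca A_{\beta^*,\omega^*}\bigr)_{[-1,0]},
\]
so the heart of $d_0^*\sigma_{\beta^*,\omega^*}$ is a single tilt of the heart of $d_1^*\sigma_{\beta^*,\omega^*}$. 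Combining this tilt with the right-action of the connected group $\widetilde{\mr{GL}}_2^+(\bb R)$ from Theorem~\ref{thm:Bridgeland}(4), which rotates central charges and slicings in tandem, one produces an explicit continuous path in $\Stab{\ho{\ms C^{\Delta^1}}}$ from $d_0^*\sigma_{\beta^*,\omega^*}$ to $d_1^*\sigma_{\beta^*,\omega^*}$. Concatenating $s_0$ along a path in $\mca H^+(\epsilon_1)\cap\mca H^-(\epsilon_2)$ from $(1,0)$ to $(\beta^*,\omega^*)$, this bridge, and $s_1$ along a path back from $(\beta^*,\omega^*)$ to $(1,0)$ yields the desired path from $d_0^*\sigma$ to $d_1^*\sigma$.

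The main obstacle is the bridging step: the central charges $d_0^*Z_{\beta^*,\omega^*}$ and $d_1^*Z_{\beta^*,\omega^*}$ do not differ by a single global complex rotation of $K_0(\ho{\ms C^{\Delta^1}})$, so one cannot simply translate by one element of $\widetilde{\mr{GL}}_2^+(\bb R)$. One must instead combine the tilt of Corollary~\ref{cor:heartnohenkei} with a small auxiliary deformation of the central charge, using the finiteness bounds of Corollary~\ref{cor:fakesupport} together with the structural description of the deformed hearts in Proposition~\ref{prop:t-st_underderomation} to guarantee that every intermediate pair remains a locally finite stability condition. The specific choices $\epsilon_1=\tfrac13$ and $(\beta^*,\omega^*)=(\cos\tfrac{2\pi}{3},\sin\tfrac{2\pi}{3})$ are tuned so that the rotation angle required for the bridge lies strictly inside the open interval $(0,\pi)$ on which the quantitative estimate of Corollary~\ref{bunkai-sup} applies.
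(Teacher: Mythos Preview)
Your three-piece skeleton—run $s_0$ from $(1,0)$ to $(\beta^*,\omega^*)=(\cos\tfrac{2\pi}{3},\sin\tfrac{2\pi}{3})$, bridge to $d_1^*\sigma_{\beta^*,\omega^*}$, then run $s_1$ back to $(1,0)$—is exactly the paper's argument, and your invocations of Theorem~\ref{thm:cotinuous}, Proposition~\ref{prop:mitiwotunagu}, and Corollary~\ref{cor:heartnohenkei} are all correct. The gap is in your diagnosis of the bridge.

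You assert that $d_0^*Z_{\beta^*,\omega^*}$ and $d_1^*Z_{\beta^*,\omega^*}$ do \emph{not} differ by a single global rotation, and then propose a vague workaround (``a small auxiliary deformation of the central charge'' controlled by Corollary~\ref{cor:fakesupport} and Proposition~\ref{prop:t-st_underderomation}). This is backwards: the whole reason for choosing $(\beta^*,\omega^*)$ is that the central charges \emph{do} differ by a rotation there. Writing $a=Z(d_1 f)$, $b=Z(d_0 f)$ and $\zeta=\beta-\sqrt{-1}\,\omega$, one computes from the two semiorthogonal decompositions that
\[
d_0^*Z_{\beta,\omega}(f)=(1+\zeta)\,b-\zeta\,a,\qquad d_1^*Z_{\beta,\omega}(f)=(1+\zeta)\,a-b.
\]
At $\zeta=e^{-2\pi\sqrt{-1}/3}$ one has $1+\zeta=-e^{2\pi\sqrt{-1}/3}$, and a one-line check gives
\[
d_0^*Z_{\beta^*,\omega^*}(f)=e^{2\pi\sqrt{-1}/3}\cdot d_1^*Z_{\beta^*,\omega^*}(f)\qquad\text{for all }f.
\]
Together with the heart inclusion $d_0^*\mca A_{\beta^*,\omega^*}\subset (d_1^*\mca A_{\beta^*,\omega^*})_{[-1,0]}$ from Corollary~\ref{cor:heartnohenkei}, this is precisely the input to \cite[Proposition~4.1]{MR2721656}, which then identifies $d_0^*\sigma_{\beta^*,\omega^*}$ with the image of $d_1^*\sigma_{\beta^*,\omega^*}$ under an element of the connected group $\widetilde{\mr{GL}}_2^+(\bb R)$; the bridge is immediate. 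Your ``main obstacle'' is thus a non-obstacle, and the proposed workaround is both unnecessary and too unspecified to stand as a proof—without the rotation identity you have not explained how any auxiliary deformation would land on the $s_1$-family.
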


\begin{proof}
Let $p$ be a path in $\bar{\mca H}$ given by 
\[
p	\colon [0,1] \to\bar{\mca H} ; p(t)=
\left(
\cos(2\pi t/3), \sin (2 \pi t /3)
\right). 
\]
Set $\epsilon_{1}$ as $1/3$ and $\epsilon_{2}$ as $-1/2$ in $\mca H^{+}(\epsilon_{1}) \cap \mca H^{-}(\epsilon_{2})$. 
Then the domain $\mca H^{+}(1/3) \cap \mca H^{-}(-1/2)$ contains the image $\Im p$ of the path $p$. 
Hence the set 
$\{	d_{0}^{*}\sigma_{\beta, \omega}	\mid	(\beta, \omega) \in \Im p	\}$ 
of stability conditions 
gives a continuous family. 

Now, when $(\beta, \omega)\in \Im p$, we denote by $d_{i}^{*} \sigma_{p(t)} =(d_{i}^{*}Z_{p(t)}, d_{i}^{*}\mca A_{p(t)}) $ instead of $d_{i}^{*}\sigma_{\beta, \omega}$. 
By direct calculation, one can check that 
\[
d_{0}^{*}Z_{p(2\pi/3)}(f)
=\exp(2\pi \sqrt{-1}/3)
d_{1}^{*}Z_{p(2\pi/3)}(f) 
\]
for any $f \in \ho{\ms C^{\Delta^{1}}}$. 
Corollary \ref{cor:heartnohenkei} implies that the heart of the stability condition $d_{0}^{*}\sigma_{p(2\pi/3)}$ is contained in 
$ (d_{1}^{*}\mca A_{p(2\pi/3)})_{[-1,1]}$. 
Then \cite[Proposition 4.1]{MR2721656} implies the desired assertion. 
\end{proof}

\begin{rmk}
If $\sigma$ is full then one can assume that $\sigma$ is rational. 
Hence for any full stability $\sigma$, $d_{0}^{*}\sigma$ is path connected to $d_{1}^{*}\sigma$. 
Theorem \ref{thm:pathconnection} is a generalization of our previous result \cite[Theorem 1.2]{morphismstability}. 
Hence we obtain the following: 
\end{rmk}

\begin{cor}
Let $\ms C$ be a stable infinity category and let $\Stabf{\ho{\ms C}}$ be the space of full stability conditions. 
Then the images of maps 
\[
d_{0}^{*}, d_{1}^{*} \colon \Stabf{\ho{\ms  C}} \rightrightarrows \Stabf{\ho{\ms C^{\Delta^{1}}}}
\]
are path connected to each other. 
In other word, 
the restricted map $d_{0}^{*}$ and $d_{1}^{*}$ gives the same map from the $0$-th homotopy of $\Stab{\ho{\ms C}}$ to that of $\Stab{\ho{\ms C^{\Delta^{1}}}}$
\end{cor}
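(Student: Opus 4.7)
The plan is to reduce to the rational case already handled in Theorem~\ref{thm:pathconnection}. Fix a full stability condition $\sigma \in \Stabf{\ho{\ms C}}$. The first step is to produce a rational $\sigma'$ in the same path component of $\Stabf{\ho{\ms C}}$ as $\sigma$. By Remark~\ref{rmk:discrete} (and the discussion after Proposition~\ref{prop:supportproperty}), fullness permits one to shrink the central charge to have rational coefficients: since $V_{\sigma}=\Hom(K_{0}(\ho{\ms C}),\bb C)$ (Theorem~\ref{thm:Bridgeland}(3)) and $K_{0}(\ho{\ms C})$ has finite rank, a small perturbation of $Z$ can be chosen with $Z' \in \Hom(K_{0},\bb C_{\bb Q})$. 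For small enough perturbation, Theorem~\ref{thm:Bridgeland}(2) supplies a unique stability condition $\sigma'=(\mca Q',Z')$ with $\met{\mca P}{\mca Q'}<\epsilon$, hence lying in $B_{\epsilon}(\sigma)$. Because $\Stabf{\mb D}$ is open and closed (Remark~\ref{rmk:openclosed}), $\sigma'$ is itself full, and the straight-line homotopy through $B_{\epsilon}(\sigma)$ yields a path $\gamma\colon[0,1]\to\Stabf{\ho{\ms C}}$ from $\sigma$ to $\sigma'$.

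Next, Theorem~\ref{thm:pathconnection} applied to the rational stability condition $\sigma'$ produces a path $\delta\colon[0,1]\to\Stab{\ho{\ms C^{\Delta^{1}}}}$ from $d_{0}^{*}\sigma'$ to $d_{1}^{*}\sigma'$. Since the family $\{\til{\Sigma}{\beta}{\omega}\mid (\beta,\omega)\in\mca H^{+}(1/3)\cap\mca H^{-}(-1/2)\}$ used to build $\delta$ satisfies the support property by the final corollary of \S\ref{sc:support} (here we use that $\sigma'$ is full, so $\sigma_{1}$ and $\sigma_{2}$ on the two factors of the semiorthogonal decomposition satisfy the support property), the path $\delta$ lies entirely in $\Stabf{\ho{\ms C^{\Delta^{1}}}}$.

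Finally, by Theorem~\ref{thm1-1}(\ref{main1-1}), both $d_{0}^{*}$ and $d_{1}^{*}$ are continuous, and they restrict to continuous maps on $\Stabf{\ho{\ms C}}$ by Theorem~\ref{thm1-1}(\ref{main1-4}). Consequently $d_{0}^{*}\!\circ\gamma$ and $d_{1}^{*}\!\circ\gamma$ are paths in $\Stabf{\ho{\ms C^{\Delta^{1}}}}$ joining $d_{0}^{*}\sigma$ to $d_{0}^{*}\sigma'$ and $d_{1}^{*}\sigma$ to $d_{1}^{*}\sigma'$ respectively. Concatenating $d_{0}^{*}\!\circ\gamma$, $\delta$, and the reverse of $d_{1}^{*}\!\circ\gamma$ yields a continuous path in $\Stabf{\ho{\ms C^{\Delta^{1}}}}$ from $d_{0}^{*}\sigma$ to $d_{1}^{*}\sigma$, proving the first assertion. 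The statement at the level of $\pi_{0}$ is then immediate: for every connected component $\mathsf{C}\subset\Stabf{\ho{\ms C}}$, the elements $d_{0}^{*}\sigma$ and $d_{1}^{*}\sigma$ lie in the same component of $\Stabf{\ho{\ms C^{\Delta^{1}}}}$ for any $\sigma\in\mathsf{C}$, so $[d_{0}^{*}]=[d_{1}^{*}]$ as maps $\pi_{0}(\Stabf{\ho{\ms C}})\to\pi_{0}(\Stabf{\ho{\ms C^{\Delta^{1}}}})$.

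The only subtlety — and what counts here as the ``main obstacle'' — is to check that one really can realize the passage $\sigma\rightsquigarrow\sigma'$ inside a single open ball $B_{\epsilon}(\sigma)$ while simultaneously keeping the induced pair $(\sigma'_{1},\sigma'_{2})$ on $(\mb D_{1}^{i},\mb D_{2}^{i})$ satisfying condition~$\m5$. But condition $\m5$ is automatic in the morphism setting since $\sigma_{2}^{i}$ is forced to equal the appropriate shift of $\sigma_{1}^{i}$ under the equivalence $\mb D_{1}^{i}\simeq\mb D_{2}^{i}\simeq\ho{\ms C}$ coming from $\m2$; rationality is preserved by the shift, and the perturbation of $Z$ carries simultaneously to both factors, so no additional work is required.
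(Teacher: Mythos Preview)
Your proof is correct and follows essentially the same approach as the paper: reduce to the rational case via fullness (the paper does this in the remark preceding the corollary), then invoke Theorem~\ref{thm:pathconnection}. Your write-up is considerably more explicit than the paper's two-line proof---in particular you spell out the concatenation via continuity of $d_0^*,d_1^*$ and you verify that the connecting path $\delta$ stays inside $\Stabf{\ho{\ms C^{\Delta^1}}}$ using the support property from \S\ref{sc:support}---but the underlying argument is the same.
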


\begin{proof}
Recall that the $0$-th homotopy $\pi_{0}(X)$ of a topological space $X$ is nothing but the set of connected components of $X$. 
Let $[x]$ be the equivalence class of a point $x \in X$ with respect to path connectedness. 
By Theorem \ref{thm:pathconnection}, $[d_{0}^{*}\sigma]$ is the same as $[d_{1}^{*}\sigma]$ for $\sigma \in \Stabf{\ho{\ms C}}$. 
\end{proof}

\subsection*{Acknoledgement}
The author would like to thank his family for their great support and thank Atsushi Kanazawa for his encouragement. 
This work is partially supported by JSPS KAKENHI Grant Number JP21K03212.

%


\begin{thebibliography}{BMT14}




\bibitem[AB13]{MR2998828}
Daniele Arcara and Aaron Bertram.
\newblock Bridgeland-stable moduli spaces for {$K$}-trivial surfaces.
\newblock {\em J. Eur. Math. Soc. (JEMS)}, 15(1):1--38, 2013.
\newblock With an appendix by Max Lieblich.


\bibitem[ABC{\etalchar{+}}09]{MR2567952}
Paul~S. Aspinwall, Tom Bridgeland, Alastair Craw, Michael~R. Douglas, Mark
  Gross, Anton Kapustin, Gregory~W. Moore, Graeme Segal, Bal{\'a}zs
  Szendro\H{o}i, and P.~M.~H. Wilson.
\newblock {\em Dirichlet branes and mirror symmetry}, volume~4 of {\em Clay
  Mathematics Monographs}.
\newblock American Mathematical Society, Providence, RI; Clay Mathematics
  Institute, Cambridge, MA, 2009.

\bibitem[BB17]{MR3592689}
Arend Bayer and Tom Bridgeland.
\newblock Derived automorphism groups of {K}3 surfaces of {P}icard rank 1.
\newblock {\em Duke Math. J.}, 166(1):75--124, 2017.

\bibitem[BM11]{MR2852118}
Arend Bayer and Emanuele Macr\`\i.
\newblock The space of stability conditions on the local projective plane.
\newblock {\em Duke Math. J.}, 160(2):263--322, 2011.

\bibitem[BMS16]{MR3573975}
Arend Bayer, Emanuele Macr\`\i, and Paolo Stellari.
\newblock The space of stability conditions on abelian threefolds, and on some
  {C}alabi-{Y}au threefolds.
\newblock {\em Invent. Math.}, 206(3):869--933, 2016.

\bibitem[BMT14]{MR3121850}
Arend Bayer, Emanuele Macr\`\i, and Yukinobu Toda.
\newblock Bridgeland stability conditions on threefolds {I}:
  {B}ogomolov-{G}ieseker type inequalities.
\newblock {\em J. Algebraic Geom.}, 23(1):117--163, 2014.

\bibitem[Bri07]{MR2373143}
Tom Bridgeland.
\newblock Stability conditions on triangulated categories.
\newblock {\em Ann. of Math. (2)}, 166(2):317--345, 2007.

\bibitem[Bri08]{MR2376815}
Tom Bridgeland.
\newblock Stability conditions on {$K3$} surfaces.
\newblock {\em Duke Math. J.}, 141(2):241--291, 2008.

\bibitem[CP10]{MR2721656}
John Collins and Alexander Polishchuk.
\newblock Gluing stability conditions.
\newblock {\em Adv. Theor. Math. Phys.}, 14(2):563--607, 2010.

\bibitem[HL97]{MR1450870}
Daniel Huybrechts and Manfred Lehn.
\newblock {\em The geometry of moduli spaces of sheaves}.
\newblock Aspects of Mathematics, E31. Friedr. Vieweg \& Sohn, Braunschweig,
  1997.

\bibitem[IUU10]{MR2629510}
Akira Ishii, Kazushi Ueda, and Hokuto Uehara.
\newblock Stability conditions on {$A_n$}-singularities.
\newblock {\em J. Differential Geom.}, 84(1):87--126, 2010.

\bibitem[Kaw19]{morphismstability}
Kotaro Kawatani.
\newblock Stability conditions on morphisms in a category.
\newblock {\em arXiv e-prints}, pages arXiv:1905.05470, to appear in Kyoto
  Journal of Math., May 2019.

\bibitem[Kaw20]{kawatani2020stability}
Kotaro Kawatani.
\newblock Stability conditions on affine noetherian schemes, 2020.

\bibitem[KS08]{kontsevich2008stability}
Maxim Kontsevich and Yan Soibelman.
\newblock Stability structures, motivic donaldson-thomas invariants and cluster
  transformations, 2008.

\bibitem[Lur17]{higheralgebra}
Jacob Lurie.
\newblock \textit{Higher Algebra}.
\newblock available on \url{https://www.math.ias.edu/~lurie/}, 2017.

\bibitem[Mac07]{MR2335991}
Emanuele Macr\`\i.
\newblock Stability conditions on curves.
\newblock {\em Math. Res. Lett.}, 14(4):657--672, 2007.

\end{thebibliography}
%
\newcommand{\etalchar}[1]{$^{#1}$}

\end{document}